\newtheorem{theorem}{Theorem}[section]
\newtheorem{corollary}[theorem]{Corollary}
\newtheorem{lemma}[theorem]{Lemma}
\newtheorem{proposition}[theorem]{Proposition}
\theoremstyle{definition}
\newtheorem{definition}[theorem]{Definition}
\newtheorem{prop/def}[theorem]{Proposition/Definition}
\newtheorem{example}[theorem]{Example}
\newtheorem{observation}[theorem]{Observation}
\newtheorem{setting}[theorem]{Setting}
\newtheorem{remark}[theorem]{Remark}
\newcommand{\rmb}{\mathrm{b}}
\newcommand{\rmc}{\mathrm{c}}
\newcommand{\rmD}{\mathrm{D}}
\newcommand{\rmH}{\mathrm{H}}
\newcommand{\calD}{\mathcal{D}}
\newcommand{\calJ}{\mathcal{J}}
\newcommand{\calL}{\mathcal{L}}
\newcommand{\calM}{\mathcal{M}}
\newcommand{\calO}{\mathcal{O}}
\newcommand{\calR}{\mathcal{R}}
\newcommand{\calT}{\mathcal{T}}
\newcommand{\calZ}{\mathcal{Z}}
\newcommand{\fkc}{\mathfrak{c}}
\newcommand{\fkS}{\mathfrak{S}}
\newcommand{\ZZ}{\mathbb{Z}}
\newcommand{\RR}{\mathbb{R}}
\newcommand{\CC}{\mathbb{C}}
\newcommand{\BS}{\mathbb{S}}
\newcommand{\TT}{\mathbb{T}}
\newcommand{\dd}{\underline{d}}
\newcommand{\sfM}{\mathsf{M}}
\newcommand{\sfN}{\mathsf{N}}
\newcommand{\sfP}{\mathsf{P}}
\newcommand{\Spec}{\operatorname{Spec}}
\newcommand{\Supp}{\operatorname{Supp}}
\newcommand{\Hom}{\operatorname{Hom}}
\newcommand{\id}{\operatorname{id}}
\newcommand{\GL}{\operatorname{GL}}
\newcommand{\SL}{\operatorname{SL}}
\newcommand{\Pic}{\operatorname{Pic}}
\newcommand{\End}{\operatorname{End}}
\newcommand{\mc}{\mathsf{mod}}
\newcommand{\coh}{\mathsf{coh}}
\newcommand{\sgn}{\operatorname{sgn}}
\newcommand{\hd}{\mathsf{hd}}
\newcommand{\tl}{\mathsf{tl}}
\newcommand{\barone}{\underline{1}}
\newcommand{\Zig}{\mathsf{Zig}}
\newcommand{\Zag}{\mathsf{Zag}}
\newcommand{\aZig}{\mathsf{zig}}
\newcommand{\aZag}{\mathsf{zag}}
\newcommand{\PM}{\mathsf{PM}}
\newcommand{\Hex}{\mathsf{Hex}}
\newcommand{\Graph}{\mathsf{Graph}}
\newcommand{\cPM}{\sfP_\diamond}
\newcommand{\typeII}{{\rm I\hspace{-0.7pt}I}}
\newcommand{\typeIII}{{\rm I\hspace{-0.7pt}I\hspace{-0.7pt}I}}
\newcommand{\perm}{\pi}
\newcommand{\permII}{\omega}
\newcommand{\pth}{\gamma}
\newcommand{\lan}{\langle}
\newcommand{\ran}{\rangle}
\newcommand{\CrepantResola}
{
\foreach \n/\a/\b in {00/0/0,10/1/0,01/0/1,11/1/1,20/2/0} {
\coordinate (V\n) at (\a,\b); };
\draw [line width=\edgewidth, fill=white] (V00)--(V20)--(V11)--(V01)--(V00) ;
\foreach \s/\t in {10/01,10/11} {\draw [line width=\edgewidth] (V\s)--(V\t) ;}; 
\foreach \n in {00,10,01,11,20}{
\draw [fill=black] (V\n) circle [radius=\noderad] ; };
}
\newcommand{\CrepantResolb}
{
\foreach \n/\a/\b in {00/0/0,10/1/0,01/0/1,11/1/1,20/2/0} {
\coordinate (V\n) at (\a,\b); };
\draw [line width=\edgewidth, fill=white] (V00)--(V20)--(V11)--(V01)--(V00) ;
\foreach \s/\t in {00/11,10/11} {\draw [line width=\edgewidth] (V\s)--(V\t) ;}; 
\foreach \n in {00,10,01,11,20}{
\draw [fill=black] (V\n) circle [radius=\noderad] ; };
}
\newcommand{\CrepantResolc}
{
\foreach \n/\a/\b in {00/0/0,10/1/0,01/0/1,11/1/1,20/2/0} {
\coordinate (V\n) at (\a,\b); };
\draw [line width=\edgewidth, fill=white] (V00)--(V20)--(V11)--(V01)--(V00) ;
\foreach \s/\t in {10/01,20/01} {\draw [line width=\edgewidth] (V\s)--(V\t) ;}; 
\foreach \n in {00,10,01,11,20}{
\draw [fill=black] (V\n) circle [radius=\noderad] ; };
}
\begin{document}

\title[Variations of GIT quotients and dimer combinatorics]{Variations of GIT quotients and dimer combinatorics\\ for toric compound Du Val singularities}
\author[Y. Nakajima]{Yusuke Nakajima} 

\address[Y. Nakajima]{Department of Mathematics, Kyoto Sangyo University, Motoyama, Kamigamo, Kita-Ku, Kyoto, 603-8555, Japan}
\email{ynakaji@cc.kyoto-su.ac.jp}

\subjclass[2020]{Primary 14E15, 16G20; Secondary 14D22, 14M25, 14L24}
\keywords{dimer models, toric compound Du Val singularities, variations of GIT quotients} 


\maketitle

\begin{abstract} 
A dimer model is a bipartite graph described on the real two-torus, and it gives the quiver as the dual graph. 
It is known that for any three-dimensional Gorenstein toric singularity, there exists a dimer model such that 
a GIT quotient parametrizing stable representations of the associated quiver is a projective crepant resolution of this singularity 
for some stability parameter. 
It is also known that the space of stability parameters has the wall-and-chamber structure, 
and for any projective crepant resolution of a three-dimensional Gorenstein toric singularity 
can be realized as the GIT quotient associated to a stability parameter contained in some chamber. 

In this paper, we consider dimer models giving rise to projective crepant resolutions of a toric compound Du Val singularity. 
We show that sequences of zigzag paths, which are special paths on a dimer model, determine the wall-and-chamber structure of the space of stability parameters. 
Moreover, we can track the variations of stable representations under wall-crossing using the sequences of zigzag paths. 
\end{abstract}

\setcounter{tocdepth}{1}
\tableofcontents

\section{\bf Introduction} 
\label{sec_intro}
\subsection{Moduli descriptions of crepant resolutions of singularities}
\label{subsec_intro_moduli}

For some singularities, resolutions of singularities can be described as moduli spaces of certain objects. 
For example, a minimal resolution  of a two-dimensional Gorenstein quotient singularity $\CC^2/G$ defined by the action of a finite subgroup 
$G\subset\SL(2,\CC)$ on $\CC^2$ can be given as the $G$-Hilbert scheme {\rm $G$-Hilb\,$\CC^2$}, 
which is the Hilbert scheme parametrizing certain $G$-invariant subschemes, see \cite{ito_nakamura_GHilb}. 
This result was generalized to three-dimensional Gorenstein quotient singularities by \cite{Nak_GHilb} for the abelian case and by \cite{BKR} for arbitrary cases. 
Precisely, for a quotient singularity $\CC^3/G$ defined by the action of a finite subgroup $G\subset\SL(3,\CC)$ on $\CC^3$, 
a crepant resolution of $\CC^3/G$ can be given as {\rm $G$-Hilb\,$\CC^3$}. 
Also, {\rm $G$-Hilb\,$\CC^3$} is described as the moduli space of representations of the McKay quiver of $G$ satisfying some stability condition. 
The moduli space of representations of a quiver, introduced in \cite{King_moduli}, is defined as the GIT quotient associated to a stability parameter 
(see Subsection~\ref{subsec_crepant_resolution} for the detail). 
The space $\Theta(Q)_\RR$ of stability parameters associated to a quiver $Q$ has the wall-and-chamber structure, 
that is, it is decomposed into chambers (open cones in $\Theta(Q)_\RR$) separated by walls (codimension one faces of the closures of chambers). 
The moduli spaces associated to stability parameters contained in the same chamber are isomorphic, but if we take a stability parameter from 
other chambers, then it would give a different moduli space. 

For a particular choice of stability parameters, the moduli space of representations of the McKay quiver of $G$ is 
isomorphic to {\rm $G$-Hilb\,$\CC^3$}. 
On the other hand, a crepant resolution of $\CC^3/G$ is not unique in general, thus it is natural to expect that 
any crepant resolution has a moduli description. 
In fact, it was shown in \cite{CrawIshii} that for any finite abelian subgroup $G\subset\SL(3,\CC)$, 
any projective crepant resolution of $\CC^3/G$ is isomorphic to the moduli space of representations of the McKay quiver of $G$ for some stability parameter. 
Recently, it was shown in \cite{Yamagishi_CIconj} that the same statement holds for any finite subgroup $G\subset\SL(3,\CC)$. 
These results can be obtained by observing the variations of moduli spaces under crossing walls. 
In particular, it is important to classify walls in $\Theta(Q)_\RR$ according to an effect on moduli spaces. 
Along this line, it is also important to detect the precise description of chambers and walls (i.e., detect the complete structure of $\Theta(Q)_\RR$), 
although it would not be necessary to obtain the result of \cite{CrawIshii,Yamagishi_CIconj}. 
For example, there are some results, e.g., \cite{craw_thesis,MT_stability,worm_GHilb}, which observe the descriptions of walls and chambers for some McKay quivers. 

Also, there are several generalizations of the result in \cite{CrawIshii} for other singularities, e.g., \cite{IU_anycrepant,Jung_CIconj,Wem_MMP}. 
In particular, it was shown in \cite{IU_anycrepant} that any projective crepant resolution of a three-dimensional Gorenstein toric singularity can be 
described as the moduli space of representations of a quiver associated to a dimer model for some stability parameter (see Theorem~\ref{thm_crepant_dimer}). 
In this paper, we discuss the wall-and-chamber structure and the variations of moduli spaces (projective crepant resolutions) under crossing walls.  
for a particular class of three-dimensional Gorenstein toric singularities called toric compound Du Val (cDV) singularities. 

\subsection{Toric compound Du Val singularities}
\label{subsec_cDVtoric}

Compound Du Val (cDV) singularities, which are fundamental pieces in the minimal model program, 
are singularities giving rise to Du Val (or Kleinian, ADE) singularities as hyperplane sections.  
In this paper, we mainly consider toric cDV singularities. 
It is known that toric cDV singularities can be classified into the following two types (e.g., see \cite[footnote (18)]{Dais_toric}): 
\begin{align*}
(cA_{a+b-1}): & \hspace{5pt} \CC[x,y,z,w]/(xy-z^aw^b), \\
(cD_4): & \hspace{5pt} \CC[x,y,z,w]/(xyz-w^2),
\end{align*}
where $a, b$ are integers with $a\ge 1$ and $a\ge b\ge0$. 
Note that the former one is a cDV singularity of type $cA_{a+b-1}$ and the latter one is of type $cD_4$. 
These can be described combinatorially as follows. 
If $R\coloneqq\CC[\sigma^\vee\cap \ZZ^3]$ is a three-dimensional Gorenstein toric ring, then we have the lattice polygon $\Delta_R$, 
called the \emph{toric diagram} of $R$, as the intersection of the cone $\sigma$ and a hyperplane at height one (see Subsection~\ref{subsec_toric}). 
The toric diagram of the above toric cDV singularities of type $cA_{a+b-1}$ and $cD_4$ take the forms 
as shown in Figure~\ref{fig_hypersurf_lattice}, respectively, up to unimodular transformations (see Examples~\ref{ex_toric1} and \ref{ex_toric2}). 
We will denote the polygon of type $cA_{a+b-1}$ by $\Delta(a,b)$. 

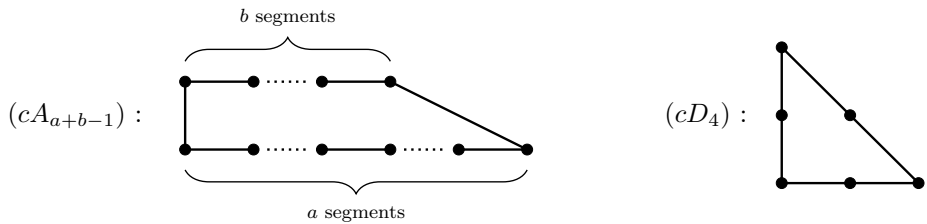
\begin{figure}[H]
\begin{center}
\begin{tikzpicture}
\newcommand{\edgewidth}{0.035cm} 
\newcommand{\noderad}{0.08} 

\node at (-3.7,0) {$(cA_{a+b-1})$ :}; 
\node at (4.6,0) {$(cD_4)$ :}; 

\node at (0,0)
{\scalebox{0.9}{
\begin{tikzpicture}
\foreach \n/\a/\b in {00/0/0,10/1/0,01/0/1, 11/1/1, 20/2/0, 30/3/0, 40/4/0, 50/5/0, 21/2/1, 31/3/1} {
\coordinate (V\n) at (\a,\b); 
};

\foreach \s/\t in {00/10,20/30,40/50,00/01,01/11,21/31,31/50} {
\draw [line width=\edgewidth] (V\s)--(V\t) ;}; 
\draw [line width=\edgewidth, dotted] (1.2,0)-- ++(0.6,0);  
\draw [line width=\edgewidth, dotted] (3.2,0)-- ++(0.6,0);  
\draw [line width=\edgewidth, dotted] (1.2,1)-- ++(0.6,0);  

\foreach \n in {00,10,01,11,20,30,40,50,21,31}{
\draw [fill=black] (V\n) circle [radius=\noderad] ; };

\draw [line width=0.02cm, decorate, decoration={brace, amplitude=10pt}](0,1.3)--(3,1.3) node[black,midway,xshift=0cm,yshift=0.65cm] {\footnotesize $b$ segments}; 
\draw [line width=0.02cm, decorate, decoration={brace, mirror, amplitude=10pt}](0,-0.3)--(5,-0.3) node[black,midway,xshift=0cm,yshift=-0.65cm] {\footnotesize $a$ segments}; 
\end{tikzpicture}
}};

\node at (6.5,0)
{\scalebox{0.9}{
\begin{tikzpicture}
\foreach \n/\a/\b in {00/0/0,10/1/0, 20/2/0, 01/0/1, 02/0/2, 11/1/1} {
\coordinate (V\n) at (\a,\b); 
};

\foreach \s/\t in {00/10,10/20,00/01,01/02,02/11,11/20} {
\draw [line width=\edgewidth] (V\s)--(V\t) ;}; 

\foreach \n in {00,10,20,01,02,11}{
\draw [fill=black] (V\n) circle [radius=\noderad] ; };
\end{tikzpicture}
}};

\end{tikzpicture}
\caption{Toric diagrams of toric cDV singularities}
\label{fig_hypersurf_lattice}
\end{center}
\end{figure}

The toric diagrams of type $cA_{a+b-1}$ and $cD_4$ contain no interior lattice points, 
which means that the exceptional locus of a crepant resolution of a toric cDV singularity consists of curves 
by the Orbit-Cone correspondence (e.g., see Subsection~\ref{subsec_cone_pm}). 
See e.g., \cite{reid1983minimal,wemyss_lockdown} for more details on cDV singularities.

\subsection{Summary of results}
We now summarize the main results of the paper. 
A \emph{dimer model} $\Gamma$ is a bipartite graph described on the real two-torus, which will be introduced in Subsection~\ref{subsec_dimermodel} in detail. 
As the dual of a dimer model $\Gamma$, we can obtain the quiver $Q_\Gamma$ with relations, see Subsection~\ref{subsec_quiver}. 
For such a quiver $Q\coloneqq Q_\Gamma$, we consider the space $\Theta(Q)_\RR$ of stability parameters which takes the form 
\[
\Theta(Q)_\RR=\Big\{\theta=(\theta_v)_{v\in Q_0}\in\RR^{Q_0} \mid \sum_{v\in Q_0}\theta_v=0\Big\} 
\]
where $Q_0$ is the set of vertices of $Q$. 
For a stability parameter $\theta\in\Theta(Q)_\RR$, there is a moduli space $\calM_\theta(Q,\barone)$ parametrizing $\theta$-stable representations of $Q$ of dimension vector $\barone\coloneqq(1,\dots,1)$, see Subsection~\ref{subsec_crepant_resolution}. 
Under some conditions, the moduli space $\calM_\theta\coloneqq\calM_\theta(Q,\barone)$ is 
a projective crepant resolution of a three-dimensional Gorenstein toric singularity. 
The space $\Theta(Q)_\RR$ has the wall-and-chamber structure, and 
the moduli spaces (projective crepant resolutions) associated to stability parameters contained in the same chamber are isomorphic, 
whereas a change of stability parameters crossing a wall would cause a change of the associated moduli space. 
Precisely, let $C, C^\prime$ be adjacent chambers separated by a wall $W$, and consider stability parameters $\theta\in C$, $\theta^\prime\in C^\prime$. 
Then we sometimes have that $\calM_\theta\not\cong\calM_{\theta^\prime}$, in which case they are related by a flop and the wall $W$ is called of type I. 
Also, if $\calM_\theta\cong\calM_{\theta^\prime}$, then although the moduli spaces are isomorphic the parametrized representations of $Q$ differ from each other, in which case the wall is either of type 0 or type $\typeIII$, see Subsection~\ref{subsec_wall_chamber}. 

The purposes of this paper are to detect the wall-and-chamber structure of $\Theta(Q)_\RR$ 
and to observe the variations of projective crepant resolutions under crossing walls in $\Theta(Q)_\RR$ for toric cDV singularities. 
As we mentioned, toric cDV singularities are classified into type $cA_{a+b-1}$ and $cD_4$. 
We here consider the case $cA_{n-1}$ where $n\coloneqq a+b$. 
In this situation, the walls in $\Theta(Q)_\RR$ are either of type I or type $\typeIII$. 
The wall-and-chamber structure of $\Theta(Q)_\RR$ and the types of walls can be determined by the combinatorics of the associated dimer models. 
To state our theorem, we consider a special class of paths on a dimer model called \emph{zigzag paths} (see Definition~\ref{def_zigzag}). 
A zigzag path $z$ can be considered as an element in the homology group $\rmH_1(\TT)\cong\ZZ^2$, in which we denote by $[z]\in\rmH_1(\TT)$. 
Also, for any lattice polygon $\Delta$ there exists a consistent dimer model $\Gamma$ such that the zigzag paths on $\Gamma$ one-to-one correspond to 
outer normal vectors of primitive side segments of $\Delta$ by identifying zigzag paths with elements in $\rmH_1(\TT)$ (see Subsection~\ref{subsec_dimermodel}). 
Thus, we can consider a consistent dimer model giving the outer normal vectors of the polygon $\Delta(a,b)$. 
In general, such a dimer model is not unique, thus we choose one of them and denote it by $\Gamma_{a,b}$. 
Then we consider the set $\{u_1, \dots, u_n\}$ of zigzag paths on $\Gamma_{a,b}$ such that $[u_k]$ is either $(0,-1)$ or $(0,1)$ for $k=1, \dots, n$, 
and $a=\#\{k \mid [u_k]=(0,-1)\}$, $b=\#\{k \mid [u_k]=(0,1)\}$. 
Note that this set of zigzag paths is determined uniquely by the correspondence between zigzag paths on $\Gamma_{a,b}$ and the outer normal vectors of $\Delta(a,b)$. 
We rearrange $u_1, \dots, u_n$ if necessary, and construct the sequence $(u_1, \dots, u_n)$ of the zigzag paths 
so that it satisfies the condition \eqref{eq_label_z_w2}. 
Also, we define a total order $<$ on $\{u_1, \dots, u_n\}$ as $u_n<u_{n-1}<\cdots< u_2< u_1$. 
Then these zigzag paths control the wall-and-chamber structure as follows. 

\begin{theorem}[{see Theorems~\ref{thm_main_wall}, \ref{thm_adjacentchamber_reflection}, and Corollary~\ref{cor_chamber_zigzag_correspondence} for more details}]
\label{thm_summary_main}
Let $\Delta(a,b)$ be the toric diagram of the toric cDV singularity $R_{a,b}\coloneqq\CC[x,y,z,w]/(xy-z^aw^b)$. 
Let $\Gamma\coloneqq\Gamma_{a,b}$ be a consistent dimer model associated to $\Delta(a,b)$  
and $Q$ be the quiver obtained as the dual of $\Gamma$. 
Let $n\coloneqq a+b$, and consider the set of zigzag paths $\{u_1, \dots, u_n\}$ as above. 
Then, there exists a one-to-one correspondence between the following sets: 
\begin{itemize}
\setlength{\parskip}{0pt} 
\setlength{\itemsep}{3pt}
\item[$(a)$] the set of chambers in $\Theta(Q)_\RR$, 
\item[$(b)$] the set $\big\{\calZ_\permII=(u_{\permII(1)}, \dots, u_{\permII(n)}) \mid \permII\in\fkS_n\big\}$ of sequences of zigzag paths, 
\end{itemize}
such that under this correspondence, if a chamber $C\subset\Theta(Q)_\RR$ corresponds to a sequence $\calZ_\permII$, 
then for any $k=1, \dots, n-1$, we have the following: 
\begin{itemize}
\setlength{\parskip}{0pt} 
\setlength{\itemsep}{3pt}
\item[(1)] 
We see that $W_{k}\coloneqq\{\theta\in\Theta(Q)_\RR \mid \sum_{v\in \calR_k}\theta_v=0\}$ is a wall of $C$, where $\calR_k\coloneqq\calR(u_{\permII(k)}, u_{\permII(k+1)})$ is the region determined by the zigzag paths $u_{\permII(k)}, u_{\permII(k+1)}$ $($see {\rm Figure~\ref{fig_region_zigzag}}$)$. 
\item[(2)] The wall $W_k$ is of type {\rm I} $($resp. type {\typeIII}$)$ if and only if $[u_{\permII(k)}]= -[u_{\permII(k+1)}]$ $($resp. $[u_{\permII(k)}]=[u_{\permII(k+1)}]$$)$. 
\item[(3)] Any parameter $\theta\in C$ satisfies $\sum_{v\in \calR_k}\theta_v>0$ $($resp. $\sum_{v\in \calR_k}\theta_v<0$$)$ 
if $u_{\permII(k)}< u_{\permII(k+1)}$ $($resp. $u_{\permII(k+1)}< u_{\permII(k)}$$)$. 
\item[(4)] The action of the adjacent transposition $s_k\in\fkS_n$ swapping $k$ and $k+1$ on $\calZ_\permII$ 
induces a crossing of the wall $W_k$ in $\Theta(Q)_\RR$. 
In particular, the chambers in $\Theta_\RR(Q)$ can be identified with the Weyl chambers of type $A_{n-1}$. 
\item[(5)] For any $\theta\in C$, the projective crepant resolution $\calM_\theta$ of $\Spec R_{a,b}$ 
is the toric variety associated to the smooth toric fan induced from the triangulation of $\Delta(a,b)$ having the same sign with $\calZ_\permII$ $($see {\rm Subsection~\ref{subsec_zigzag_ass_chamber}}$)$. 
\end{itemize}
\end{theorem}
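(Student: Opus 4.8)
The plan is to pin down one distinguished chamber, to understand how the pair consisting of the moduli space and its tautological bundle changes across a single wall, and then to propagate the whole picture by the action of $\fkS_n$; the remaining assertions are bookkeeping along this propagation. For the distinguished chamber, I would use the explicit consistent dimer model $\Gamma_{a,b}$ and the bijection between its zigzag paths and the outer normal vectors of $\Delta(a,b)$ to compute the GIT quotient $\calM_\theta$ through the perfect matchings of $\Gamma_{a,b}$. For $\theta$ in the cone $C_{\mathrm{id}}$ cut out by $\sum_{v\in\calR(u_k,u_{k+1})}\theta_v>0$ for all $k=1,\dots,n-1$, one checks that $\calM_\theta$ is smooth and is the toric variety attached to the triangulation of $\Delta(a,b)$ whose sign sequence is that of $\calZ_{\mathrm{id}}$; hence $C_{\mathrm{id}}$ meets a chamber, and by \cite{IU_anycrepant} this chamber is one for which $\calM_\theta$ is a projective crepant resolution of $\Spec R_{a,b}$. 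This settles the base case of (5) and fixes the orientation of the bijection (a)$\leftrightarrow$(b) together with the sign convention in (3) dictated by the condition \eqref{eq_label_z_w2}.

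Next I would identify the walls of the chamber $C=C_\permII$ corresponding to an arbitrary sequence $\calZ_\permII$, namely the $n-1$ hyperplanes $W_k=\{\theta\mid\sum_{v\in\calR(u_{\permII(k)},u_{\permII(k+1)})}\theta_v=0\}$. That each $W_k$ supports a facet of $\overline{C}$ is checked by exhibiting a parameter in the relative interior of $\overline{C}\cap W_k$ whose moduli space is the partial resolution contracting precisely the toric curve lying between $u_{\permII(k)}$ and $u_{\permII(k+1)}$, so that $W_k$ is genuinely of codimension one and not interior to $C$. The crux is that there are no further walls: one must show that the $n-1$ sign inequalities already define a full-dimensional simplicial cone with exactly $n-1$ facets, and that the pair $(\calM_\theta,\ \text{tautological bundle})$ degenerates along each facet — equivalently, that this cone is maximal among cones on which that pair is constant. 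The input here is that the $n-1$ regions $\calR(u_{\permII(k)},u_{\permII(k+1)})$ give a basis of the exceptional curve classes, together with the toric computation above transported along wall-crossings. I expect this to be the main obstacle, as it requires control of the assignment $\theta\mapsto(\calM_\theta,\ \text{tautological bundle})$ rather than only of the isomorphism class of $\calM_\theta$.

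Finally, granting the wall description, assertion (2) follows from the trichotomy of wall types recalled in Subsection~\ref{subsec_wall_chamber}: if $[u_{\permII(k)}]=-[u_{\permII(k+1)}]$ then crossing $W_k$ reverses the relative order of an ``up'' and a ``down'' zigzag path, hence changes the sign sequence of the triangulation and produces a flopped, non-isomorphic crepant resolution, so $W_k$ is of type $\mathrm{I}$; if $[u_{\permII(k)}]=[u_{\permII(k+1)}]$ the triangulation and hence the toric variety are unchanged, so $W_k$ is of type $0$ or $\typeIII$, and consistency of $\Gamma_{a,b}$ together with the non-triviality of the region $\calR(u_{\permII(k)},u_{\permII(k+1)})$ rules out type $0$. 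For (4), a local analysis of the crossing at the level of perfect matchings and tautological bundles — a genuine flop when the homology classes are opposite, a type $\typeIII$ mutation when they agree — shows that the chamber on the far side of $W_k$ is $C_{\permII s_k}$; combined with the previous paragraph, the adjacency graph of chambers is the Cayley graph of $(\fkS_n,\{s_1,\dots,s_{n-1}\})$, so the arrangement is the Coxeter arrangement of type $A_{n-1}$, the correspondence (a)$\leftrightarrow$(b) is a bijection, and (1) holds. Assertion (3) then follows by induction on the Coxeter length of $\permII$ starting from $C_{\mathrm{id}}$, and (5) follows in the same way, the flop across each type $\mathrm{I}$ wall realizing exactly the bistellar flip of triangulations recorded by the change of sign of $\calZ_\permII$.
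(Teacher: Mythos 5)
Your overall architecture matches the paper's: identify the candidate walls $W_k$ from the regions $\calR(u_{\permII(k)},u_{\permII(k+1)})$, verify they are genuine facets, and propagate the correspondence by adjacent transpositions, with (2)--(5) following as bookkeeping. However, you have left the central technical step unproved and merely flagged it as ``the main obstacle.'' The step that actually needs to be carried out is the computation, for each exceptional curve $\ell_k$ of $\calM_\theta$, of the degrees $\deg(\calL_v|_{\ell_k})$ of the tautological line bundles: by Proposition~\ref{prop_equation_wall} the wall supported on $\ell_k$ is $\sum_v \deg(\calL_v|_{\ell_k})\theta_v=0$, and the whole theorem hinges on showing that $\deg(\calL_v|_{\ell_k})=\pm1$ precisely for $v\in\calR_k$ and $0$ otherwise. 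Your substitute -- that the regions $\calR_k$ ``give a basis of the exceptional curve classes'' -- is not the right statement (it concerns curve classes rather than the line-bundle degrees, and for a type $\typeIII$ wall $\ell_k$ lies in a contracted divisor, so the parallelogram and large-triangle configurations must be treated separately). The paper carries out this computation via the fundamental hexagons $\Hex(\sigma_\pm)$ and jigsaw pieces: one shows that the cutting edges $\fkc_\pm$ are single edges lying in a single perfect matching (Lemma~\ref{lem_property_c}), that they are zigs or zags of the relevant zigzag paths (Lemma~\ref{lem_zigzag_containPM}), and that a weak path from $0$ to $v$ picks up the coordinate $t_{\rho_0}/t_{\rho_3}$ exactly when $v$ lies in the jigsaw piece identified with $\calR_k$. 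Without this, neither (1) nor the sign statement (3) is established.

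A second, smaller gap: having obtained the $n-1$ inequalities $L_k>0$, you still must show none of them is implied by the others, so that each hyperplane really supports a facet of $C$ rather than missing $\overline{C}$ entirely; your proposal to ``exhibit a parameter in the relative interior of $\overline{C}\cap W_k$'' presupposes exactly what is to be proved. The paper does this by an explicit argument: any inequality implying $L_k>0$ would have to come from a different adjacent pair in $\calZ_\permII$ sharing an endpoint with $(u_{\permII(k)},u_{\permII(k+1)})$, which forces $s=k$. Also, your dismissal of type $0$ walls via ``consistency of $\Gamma_{a,b}$'' is not the correct reason; type $0$ walls are excluded here because $\Delta(a,b)$ has no interior lattice points (see Subsection~\ref{subsec_wall_chamber}). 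Your route of propagating from the distinguished chamber $C_{\mathrm{id}}$ by induction on Coxeter length is a legitimate variant of the paper's direct treatment of an arbitrary chamber, but it does not remove the need for the degree computation, since that computation is what produces the wall equations at every step of the induction.
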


In addition, for the case $cD_4$, we have similar results as shown in Theorem~\ref{thm_main_cD4_1}, although some modifications are required. 
Note that the homological minimal model program \cite{Wem_MMP} also detects the wall-and-chamber structure of $\Theta(Q)_\RR$ (see also Remark~\ref{rem_homMMP}), whereas our method provides a more combinatorial way to observe it. 
The sequences of zigzag paths appeared in Theorem~\ref{thm_summary_main} give descriptions of $\theta$-stable representations 
associated to each chamber in $\Theta(Q)_\RR$. 
Thus, as an application of Theorem~\ref{thm_summary_main}, we can track the variations of stable representations under wall-crossings 
as shown in Section~\ref{sec_variation_representation}.

\subsection{The structure of the paper}
In Section~\ref{sec_pre_dimer}, we prepare some notation concerning dimer models, 
and discuss toric rings (singularities) arising from dimer models. 
In Section~\ref{sec_quiver_rep}, we consider representations of the quiver obtained as the dual of a dimer model, 
and review some basic facts concerning moduli spaces of stable representations obtained as GIT quotients, 
which are projective crepant resolutions of a three-dimensional Gorenstein toric singularity. 
Since the moduli space is a smooth toric variety, it can also be understood by using a toric fan. 
Thus, we review a correspondence among stable representations, cones in a toric fan, and torus orbits in a toric variety. 
In particular, perfect matchings of a dimer model explain this correspondence in terms of dimer models. 
In Section~\ref{section_boundaryPM}, we observe some properties of ``boundary" perfect matchings which we will use in later sections. 
In Section~\ref{sec_dimer_ab}, we focus our attention on toric cDV singularities of type $cA_{a+b-1}$, 
and explain how to construct a dimer model giving rise to a projective crepant resolution of this singularity. 
Section~\ref{sec_type1and3_wallcrossing} is dedicated to show our main theorems. 
First, for the dimer model constructed in Section~\ref{sec_dimer_ab}, we prepare some notions such as 
sequences of zigzag paths, fundamental hexagons, and jigsaw pieces, which are the main ingredients of our proof. 
Then we show our main results concerning the wall-and-chamber structure of the space of stability parameters and 
the variations of projective crepant resolutions, 
see Theorems~\ref{thm_main_wall}, \ref{thm_adjacentchamber_reflection} and Corollary~\ref{cor_chamber_zigzag_correspondence}. 
These results enable us to observe variations of stable representations and torus orbits under wall-crossings. 
Thus, we study such variations in Section~\ref{sec_variation_representation}. 
In Section~\ref{sec_D4}, we focus on the toric cDV singularity of type $cD_4$, 
and show some results similar to the ones for type $cA_{a+b-1}$.

\section{\bf Preliminaries on dimer models and associated toric rings} 
\label{sec_pre_dimer}

\subsection{Dimer models} 
\label{subsec_dimermodel}

In this subsection, we introduce dimer models and related notions which are originally derived from theoretical physics (e.g., \cite{FHK,HV}). 

A \emph{dimer model} $\Gamma$ on the real two-torus $\TT\coloneqq\RR^2/\ZZ^2$ is a finite bipartite graph on $\TT$ 
inducing a polygonal cell decomposition of $\TT$. 
Since $\Gamma$ is a bipartite graph, the set $\Gamma_0$ of nodes of $\Gamma$ is divided into two subsets $\Gamma_0^+, \Gamma_0^-$, 
and edges of $\Gamma$ connect nodes in $\Gamma_0^+$ with those in $\Gamma_0^-$. We denote by $\Gamma_1$ the set of edges. 
We color the nodes in $\Gamma_0^+$ white, and those in $\Gamma_0^-$ black throughout this paper. 
A \emph{face} of $\Gamma$ is a connected component of $\TT{\setminus}\Gamma_1$. We denote by $\Gamma_2$ the set of faces. 
For example, Figure~\ref{ex_dimer1_basic} is a dimer model on $\TT$, where the outer frame is a fundamental domain of $\TT$.

\newcommand{\basicdimerA}{
\foreach \n/\a/\b in {1/1.5/3, 2/3.5/3, 3/5.5/3, 4/7.5/3} {\coordinate (B\n) at (\a,\b);} 
\foreach \n/\a/\b in {1/0.5/1,2/2.5/1, 3/4.5/1, 4/6.5/1} {\coordinate (W\n) at (\a,\b);} 
\draw[line width=\edgewidth]  (0,0) rectangle (8,4);
\foreach \w/\b in {1/1,2/2,3/3,4/3} {\draw[line width=\edgewidth] (W\w)--(B\b); };
\foreach \w/\s/\t in {1/0/2,1/0/0,1/1/0,2/2/0,2/3/0,3/4/0,3/5/0,4/6/0,4/7/0} {\draw[line width=\edgewidth] (W\w)--(\s,\t); };
\foreach \b/\s/\t in {1/1/4,1/2/4,2/3/4,2/4/4,3/5/4,3/6/4,4/7/4,4/8/4,4/8/2} {\draw[line width=\edgewidth] (B\b)--(\s,\t); };
\foreach \x in {1,2,3,4} {\filldraw [fill=black, line width=\nodewidthb] (B\x) circle [radius=\noderad] ;}; 
\foreach \x in {1,2,3,4} {\filldraw [fill=white, line width=\nodewidthw] (W\x) circle [radius=\noderad] ;}; 
}

\begin{figure}[H]
\begin{center}
\scalebox{0.6}{
\begin{tikzpicture}
\newcommand{\noderad}{0.18cm} 
\newcommand{\edgewidth}{0.05cm} 
\newcommand{\nodewidthw}{0.05cm} 
\newcommand{\nodewidthb}{0.04cm} 

\node at (0,0){
\begin{tikzpicture}
\basicdimerA \end{tikzpicture}}; 
\end{tikzpicture}}
\end{center}
\caption{An example of a dimer model}
\label{ex_dimer1_basic}
\end{figure}
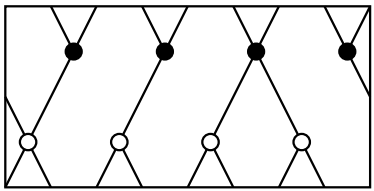

We then consider a special class of paths on a dimer model. 

\begin{definition}
\label{def_zigzag}
We say that a path on a dimer model is a \emph{zigzag path} if it makes a maximum turn to the right on a black node 
and a maximum turn to the left on a white node. 
An edge in a zigzag path $z$ is called a \emph{zig} (resp. \emph{zag}) of $z$ if it is directed from white to black (resp. black to white) along $z$, 
We denote by $\Zig(z)$ (resp. $\Zag(z)$) the set of zigs (resp. zags) appearing in a zigzag path $z$. 
\end{definition}

For example, the paths in Figure~\ref{ex_dimer1_zigzag} are all zigzag paths on the dimer model given in Figure~\ref{ex_dimer1_basic}. 

\begin{figure}[H]
\begin{center}
\scalebox{0.5}{
\begin{tikzpicture}
\newcommand{\noderad}{0.18cm} 
\newcommand{\edgewidth}{0.05cm} 
\newcommand{\nodewidthw}{0.05cm} 
\newcommand{\nodewidthb}{0.04cm} 
\newcommand{\zigzagwidth}{0.15cm} 
\newcommand{\zigzagcolor}{red} 

\node at (0,0){
\begin{tikzpicture}
\basicdimerA
\draw[->, line width=\zigzagwidth, rounded corners, color=\zigzagcolor] (0,2)--(W1)--(2,4); 
\draw[->, line width=\zigzagwidth, rounded corners, color=\zigzagcolor] (2,0)--(4,4); 
\draw[->, line width=\zigzagwidth, rounded corners, color=\zigzagcolor] (4,0)--(B3)--(7,0); 
\draw[->, line width=\zigzagwidth, rounded corners, color=\zigzagcolor] (7,4)--(8,2); 
\end{tikzpicture}};

\node at (11,0){
\begin{tikzpicture}
\basicdimerA
\draw[->, line width=\zigzagwidth, rounded corners, color=\zigzagcolor] (0,0)--(W1)--(0,2); 
\draw[->, line width=\zigzagwidth, rounded corners, color=\zigzagcolor] (6,0)--(W4)--(B3)--(6,4); 
\draw[->, line width=\zigzagwidth, rounded corners, color=\zigzagcolor] (8,2)--(B4)--(8,4); 
\end{tikzpicture}};

\node at (-0.15,-5.5){
\begin{tikzpicture}
\basicdimerA
\draw[->, line width=\zigzagwidth, rounded corners, color=\zigzagcolor] (8,4)--(B4)--(7,4); 
\draw[->, line width=\zigzagwidth, rounded corners, color=\zigzagcolor] (6,4)--(B3)--(5,4); 
\draw[->, line width=\zigzagwidth, rounded corners, color=\zigzagcolor] (4,4)--(B2)--(3,4); 
\draw[->, line width=\zigzagwidth, rounded corners, color=\zigzagcolor] (2,4)--(B1)--(1,4); 
\draw[->, line width=\zigzagwidth, rounded corners, color=\zigzagcolor] (7,0)--(W4)--(6,0); 
\draw[->, line width=\zigzagwidth, rounded corners, color=\zigzagcolor] (5,0)--(W3)--(4,0); 
\draw[->, line width=\zigzagwidth, rounded corners, color=\zigzagcolor] (3,0)--(W2)--(2,0); 
\draw[->, line width=\zigzagwidth, rounded corners, color=\zigzagcolor] (1,0)--(W1)--(0,0); 
\end{tikzpicture}};

\node at (11,-5.5){
\begin{tikzpicture}
\basicdimerA
\draw[->, line width=\zigzagwidth, rounded corners, color=\zigzagcolor] (1,4)--(B1)--(W1)--(1,0); 
\draw[->, line width=\zigzagwidth, rounded corners, color=\zigzagcolor] (3,4)--(B2)--(W2)--(3,0); 
\draw[->, line width=\zigzagwidth, rounded corners, color=\zigzagcolor] (5,4)--(B3)--(W3)--(5,0); 
\end{tikzpicture}};

\end{tikzpicture}}
\end{center}
\caption{Zigzag paths on the dimer model given in Figure~\ref{ex_dimer1_basic}}
\label{ex_dimer1_zigzag}
\end{figure}

Then we fix two $1$-cycles on $\TT$ generating the homology group $\rmH_1(\TT)$, and take a fundamental domain of $\TT$ along such two cycles. 
Since we can consider a zigzag path $z$ on $\Gamma$ as a $1$-cycle on $\TT$, we have the homology class $[z]\in\rmH_1(\TT)\cong\ZZ^2$, 
which is called the \emph{slope} of $z$.

Also, taking the universal cover $\RR^2\rightarrow\TT$, the preimage of a dimer model $\Gamma$ determines the bipartite graph $\widetilde{\Gamma}$ on $\RR^2$, which induces a $\ZZ^2$-periodic polygonal cell decomposition of $\RR^2$. 
We call $\widetilde{\Gamma}$ the universal cover of $\Gamma$. 
For a zigzag path $z$ on a dimer model $\Gamma$, we also consider the lift of $z$ to the universal cover $\widetilde{\Gamma}$, 
that is, for $\alpha\in\ZZ$, let $\widetilde{z}{(\alpha)}$ denote a zigzag path on $\widetilde{\Gamma}$ whose projection on $\Gamma$ coincides with $z$.  
When we do not need to specify these paths, we simply denote each of them by $\widetilde{z}$. 
Then, we see that a zigzag path on $\widetilde{\Gamma}$ is either periodic or infinite in both directions. 

\medskip

In the rest of this paper, we assume that any dimer model satisfies the \emph{consistency condition}, see Definition~\ref{def_consistent} below. 
In the literature, there are several consistency conditions equivalent to the one given in Definition~\ref{def_consistent} (see e.g., \cite{Boc_consistent,IU_consistent}).

\begin{definition}[{see \cite[Definition~3.5]{IU_consistent}}]
\label{def_consistent}
A dimer model is said to be (\emph{zigzag}) \emph{consistent} if it satisfies the following conditions: 
\begin{itemize}
\setlength{\parskip}{0pt} 
\setlength{\itemsep}{3pt}
\item [(1)] there is no homologically trivial zigzag path, that is, $[z]\neq (0,0)$, 
\item [(2)] no zigzag path on the universal cover has a self-intersection, 
\item [(3)] any pair of zigzag paths on the universal cover does not intersect each other in the same direction more than once. 
That is, if a pair of zigzag paths $(\widetilde{z},\widetilde{w})$ on the universal cover has two intersections 
$a_1$, $a_2$ and $\widetilde{z}$ points from $a_1$ to $a_2$, then $\widetilde{w}$ points from $a_2$ to $a_1$. 
\end{itemize}
Here, two zigzag paths are said to \emph{intersect} if they share an edge (not a node). 
\end{definition} 

Note that any edge of a dimer model is contained in at most two zigzag paths. 
By the condition (2) in Definition~\ref{def_consistent}, 
if a dimer model is consistent, then any edge is contained in exactly two zigzag paths and any slope is a primitive element. 
For example, by observing the zigzag paths in Figure~\ref{ex_dimer1_zigzag}, we see that the dimer model given in Figure~\ref{ex_dimer1_basic} is consistent. 

\medskip

Then, for a consistent dimer model $\Gamma$, we assign the lattice polygon called the zigzag polygon (cf. \cite[Section~12]{IU_special}). 
Let $[z]$ be the slope of a zigzag path $z$ on $\Gamma$, which is not homologically trivial. 
By normalizing $[z]\coloneqq (a,b)\in\ZZ^2$, we consider it as an element of the unit circle $S^1$: 
\[
\frac{(a,b)}{\sqrt{a^2+b^2}}\in S^1. 
\]
Then, the set of slopes has a natural cyclic order along $S^1$. 
We consider the sequence $\big([z_i]\big)_{i=1}^k$ of slopes of zigzag paths on $\Gamma$ such that 
they are cyclically ordered starting from $[z_1]$, where $k$ is the number of zigzag paths. 
We note that in general, some slopes may coincide. 
We then set another sequence $(w_i)_{i=1}^{k}$ in $\ZZ^2$ defined as $w_0=(0,0)$ and 
\[
w_{i+1}=w_i+[z_{i+1}]^\prime \quad (i=0,1,\dots,k-1). 
\]
Here, $[z_{i+1}]^\prime\in\ZZ^2$ is the element obtained from $[z_{i+1}]$ by rotating 90 degrees in the anti-clockwise direction. 
One can see that $w_k=(0,0)$ since the sum of all slopes is equal to zero. 
We call the convex hull of $\{w_i\}_{i=1}^{k}$ the \emph{zigzag polygon} and denote it by $\Delta_\Gamma$. 
Note that there are several choices of an initial zigzag path $z_1$, but the zigzag polygon is determined up to unimodular transformations,  
and it does not affect our problem, see Subsection~\ref{subsec_toric}. 

By definition, we see that the slope of a zigzag path is an outer normal vector of some side of $\Delta_\Gamma$, 
and the number of zigzag paths having the same slope $v\in\ZZ^2$ coincides with the number of primitive segments of 
the side of $\Delta_\Gamma$ whose outer normal vector is $v$. 
Here, a \emph{primitive side segment} of $\Delta_\Gamma$ means a line segment on the boundary of $\Delta_\Gamma$ 
divided by a pair of lattice points not containing any lattice point in its interior.

\begin{example}
We consider the dimer model in Figure~\ref{ex_dimer1_basic} and its zigzag paths as in Figure~\ref{ex_dimer1_zigzag}. 
Then, we have the cyclically ordered sequence of slopes 
\[
\big(\,(0,-1),\, (0,-1),\, (0,-1),\, (1,1),\, (0,1),\, (0,1),\, (-1,0)\,\big), 
\]
where we take a $\ZZ$-basis of $\rmH_1(\TT)\cong\ZZ^2$ along the vertical and horizontal lines of the fundamental domain of $\TT$. 
Thus, the zigzag polygon is $\Delta(3,2)$ as in Figure~\ref{ex_zigzagpolygon1}. 

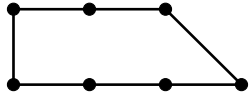
\begin{figure}[H]
\begin{center}
\scalebox{1}{
\begin{tikzpicture}
\newcommand{\edgewidth}{0.035cm} 
\newcommand{\noderad}{0.08} 
\foreach \n/\a/\b in {00/0/0,10/1/0,01/0/1,11/1/1,20/2/0,21/2/1,30/3/0} {
\coordinate (V\n) at (\a,\b); };
\draw [line width=\edgewidth, fill=white] (V00)--(V30)--(V21)--(V01)--(V00) ;
\foreach \n in {00,10,01,11,20,21,30}{
\draw [fill=black] (V\n) circle [radius=\noderad] ; };
\end{tikzpicture}}
\end{center}
\caption{The zigzag polygon $\Delta(3,2)$ of the dimer model given in Figure~\ref{ex_dimer1_basic}}
\label{ex_zigzagpolygon1}
\end{figure}
\end{example}

As we have seen in this section, we have a lattice polygon from a dimer model. 
On the other hand, any lattice polygon can be described as the zigzag polygon of a consistent dimer model as follows. 
However, we note that such a consistent dimer model is not unique in general. 

\begin{theorem}[{see e.g., \cite{Gul,IU_special}}] 
\label{exist_dimer}
For any lattice polygon $\Delta$, there exists a consistent dimer model $\Gamma$ such that $\Delta=\Delta_\Gamma$. 
\end{theorem}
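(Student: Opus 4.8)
The plan is to prove Theorem~\ref{exist_dimer} by an explicit combinatorial construction that builds a consistent dimer model from the prescribed edge data of $\Delta$, following the approach of \cite{Gul,IU_special}. The starting point is the observation, already recorded in the excerpt, that a consistent dimer model determines (via slopes of zigzag paths) the outer normal fan of its zigzag polygon, with multiplicities matching the number of primitive side segments. So the task is to reverse this: given $\Delta$, list its primitive side segments $v_1, \dots, v_k \in \ZZ^2$ (with multiplicity and in cyclic order, normalized to be primitive outer normals), and produce a bipartite graph on $\TT$ whose zigzag paths realize exactly these homology classes and which satisfies the three consistency conditions of Definition~\ref{def_consistent}.

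The key steps, in order, would be: First, reduce to the case where $\Delta$ has no two sides with the same outer normal, or more precisely set up the combinatorial bookkeeping of the cyclic sequence $(v_1, \dots, v_k)$; this is where one builds the ``hexagonal tiling'' skeleton. The standard construction (the one in \cite{IU_special}, sometimes phrased via the ``zigzag fan'' or via a ``triangulation'' of $\Delta$) is to take a unimodular triangulation of $\Delta$ and dualize: place a node of $\Gamma$ for each triangle, suitably two-colored, and edges across shared edges of triangles — but one must be careful, since a crude dual of a triangulation does not directly give the bipartite graph on the torus; instead one uses the dual of the triangulation together with the combinatorics of the fan to lay out a periodic tiling of $\RR^2$ by ``zig-zag'' hexagons (or, for degenerate side directions, squares/bigons), then quotient by an appropriate $\ZZ^2$ to land on $\TT$. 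Second, one reads off the zigzag paths of this tiling and checks directly that their homology classes are exactly $v_1, \dots, v_k$ and hence that $\Delta_\Gamma = \Delta$ up to unimodular transformation. Third — and this is the real content — one verifies the consistency conditions: no homologically trivial zigzag path (immediate from the construction, since each zigzag path carries a prescribed nonzero slope), no self-intersection on the universal cover, and no pair intersecting twice in the same direction. The last two follow from a convexity/monotonicity argument: because the slopes are arranged in genuine cyclic order around $S^1$, the lifts of two distinct zigzag paths are ``straight enough'' that they cross at most once in a consistent orientation; this is where the cyclic ordering of the $(v_i)$ and the primitivity of each slope are used essentially.

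I expect the main obstacle to be precisely the verification of consistency conditions (2) and (3) — showing that the explicitly laid-out tiling has the required non-crossing behaviour on the universal cover. The construction of the graph itself is a fairly mechanical matter of drawing hexagons and handling degenerate directions, but proving that no zigzag path self-intersects and that no two cross twice the same way requires a genuine geometric argument controlling how the lifted zigzag paths propagate across $\RR^2$. One natural way to organize this is to show that each lifted zigzag path stays within a bounded distance of a straight line of the corresponding slope (a ``fellow-traveller'' estimate), from which both conditions follow: two lines of distinct slopes meet once, and a line does not meet itself. Since Theorem~\ref{exist_dimer} is quoted from \cite{Gul,IU_special} and used as a black box in this paper, I would in practice present only this construction-plus-consistency-check outline and cite those sources for the detailed verification rather than reproving it in full; the honest statement is that the existence is due to Gulotta and to Ishii–Ueda, and what I would supply is the shape of their argument: build the tiling from the normal fan, match slopes to side segments, and confirm consistency via the cyclic order of slopes.
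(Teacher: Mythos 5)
The paper does not prove Theorem~\ref{exist_dimer}; it imports the result from \cite{Gul,IU_special} and uses it as a black box, so your decision to cite those sources and supply only an outline matches the paper's treatment exactly. One caution, though: the construction you sketch (dualizing a unimodular triangulation of $\Delta$ and laying out a periodic tiling from the normal fan) is not the argument in either reference. Gulotta's proof is algorithmic and inductive: one starts from a known consistent (``properly ordered'') dimer model for a large triangle or rectangle containing $\Delta$ and repeatedly applies local operations that delete or merge zigzag paths, verifying at each step that consistency is preserved; Ishii--Ueda's construction is in a similar spirit. So if you ever needed to expand your outline into an actual proof, the ``triangulation-dual'' skeleton would have to be replaced rather than merely fleshed out, and the consistency verification would proceed by induction on the operations rather than by a fellow-traveller estimate for a tiling built in one shot.
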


\subsection{Toric rings associated to dimer models} 
\label{subsec_toric}

Let $\Gamma$ be a consistent dimer model. 
We next consider the cone $\sigma_\Gamma$ over the zigzag polygon $\Delta_\Gamma$, 
that is, $\sigma_\Gamma$ is the cone whose section on the hyperplane at height one is $\Delta_\Gamma$. 

Let $\sfN\coloneqq \ZZ^3$ be a lattice and $\sfM\coloneqq\Hom_\ZZ(\sfN,\ZZ)$ be the dual lattice of $\sfN$. 
We set $\sfN_\RR\coloneqq\sfN\otimes_\ZZ \RR$ and $\sfM_\RR\coloneqq\sfM\otimes_\ZZ \RR$. 
We denote the standard inner product by $\lan\,\, , \,\,\ran: \sfM_\RR\times \sfN_\RR\rightarrow \RR$. 
For the vertices $\widetilde{v}_1,\dots, \widetilde{v}_n\in\ZZ^2$ of $\Delta_\Gamma$, 
we let $v_i\coloneqq (\widetilde{v_i}, 1)\in\sfN$ $(i=1, \dots, n)$. The cone $\sigma_\Gamma$ over $\Delta_\Gamma$ is defined as 
\[
\sigma_\Gamma\coloneqq\RR_{\ge0}v_1+\cdots +\RR_{\ge0}v_n \subset\sfN_\RR.
\]
Then, we consider the dual cone 
\[
\sigma_\Gamma^\vee\coloneqq\{x\in\sfM_\RR \mid \langle x, v_i\rangle\ge0 \text{ for any } i=1,\dots,n \}, 
\]
where $\langle - ,- \rangle$ is the natural inner product. 
Using this cone we can define the \emph{toric ring $($toric singularity$)$ $R$ associated to $\Gamma$} as 
\[
R_\Gamma\coloneqq \CC[\sigma_\Gamma^\vee\cap\sfM]=\CC[t_1^{a_1}t_2^{a_2} t_3^{a_3}\mid (a_1,a_2,a_3)\in\sigma_\Gamma^\vee\cap\sfM]. 
\] 
By construction, $R_\Gamma$ is Gorenstein in dimension three. 
We note that any three-dimensional Gorenstein toric ring can be described with this form. 
Precisely, 
let $\sigma$ be a strongly convex rational polyhedral cone in $\sfN_\RR$ which defines a three-dimensional Gorenstein toric ring $R$. 
Then, it is known that, after applying an appropriate unimodular transformation (which does not change the associated toric ring up to isomorphism) to $\sigma$, 
the cone $\sigma$ can be described as the cone over a certain lattice polygon $\Delta_R$. 
We call the lattice polygon $\Delta_R$ the \emph{toric diagram} of $R$. 
By Theorem~\ref{exist_dimer}, there exists a consistent dimer model $\Gamma$ such that $\Delta_\Gamma=\Delta_R$ for any three-dimensional Gorenstein toric ring $R$, in which case we have $R=R_\Gamma$. 
We note that unimodular transformations and parallel translations of $\Delta_R$ do not change the associated toric ring in the following sense. 
Let $\Delta^\prime\subset\RR^2$ be a lattice polygon obtained by applying a unimodular transformation or a parallel translation to $\Delta_R$ 
and let $\sigma^\prime\subset\sfN_\RR$ be the cone over $\Delta^\prime$. 
Then, we see that $\sigma$ and $\sigma^\prime$ are unimodularly equivalent, and hence the associated toric rings are isomorphic. 

\begin{example}
\label{ex_toric1}
Let $\Delta(a,b)$ be the trapezoid shown in the left of Figure~\ref{fig_hypersurf_lattice}, where $a, b$ are integers with $a\ge 1$ and $a\ge b\ge0$. 
By Theorem~\ref{exist_dimer}, there exists a consistent dimer model whose zigzag polygon is $\Delta(a,b)$, which will be constructed in Section~\ref{sec_dimer_ab}. 
For simplicity, we fix the lower left vertex of $\Delta(a,b)$ as the origin and consider the cone $\sigma_{a,b}\coloneqq \sum_{i=1}^4\RR_{\ge0}\,v_i$ over $\Delta(a,b)$, where 
\[
v_1\coloneqq (0,0,1), \,\, v_2\coloneqq (a,0,1), \,\, v_3\coloneqq (b,1,1), \,\, v_4\coloneqq (0,1,1). 
\]
Then we see that 
\[
\CC[\sigma_{a,b}^\vee\cap\sfM]=\CC[t_1, \, t_2, \, t_2^{-1}t_3, \, t_1^{-1}t_2^{b-a}t_3^a]. 
\]
by computing the Hilbert basis (see \cite[Proposition~1.2.23]{CLS}). 
We easily show that $\CC[\sigma_{a,b}^\vee\cap\sfM]$ is isomorphic to the toric cDV singularity 
$R_{a,b}\coloneqq \CC[x,y,z,w]/(xy-z^aw^b)$ of type $cA_{a+b-1}$ given in Subsection~\ref{subsec_cDVtoric}. 
Note that $R_{a,b}$ is not an isolated singularity except the case $a=b=1$. 
\end{example}

\begin{example}
\label{ex_toric2}
Let $\Delta$ be the triangle shown in the right of Figure~\ref{fig_hypersurf_lattice}. 
A consistent dimer model giving rise to $\Delta$ as the zigzag polygon will be given in Section~\ref{sec_D4}. 
As in the previous example, we consider the cone $\sigma\coloneqq \sum_{i=1}^3\RR_{\ge0}\,v_i$ over $\Delta$, where 
\[
v_1\coloneqq (0,0,1), \,\, v_2\coloneqq (2,0,1), \,\, v_3\coloneqq (0,2,1), 
\]
and we have 
\[
\CC[\sigma^\vee\cap\sfM]=\CC[t_1, \, t_2, \, t_3, \, t_1^{-1}t_2^{-1}t_3^2]. 
\]
We easily show that $\CC[\sigma^\vee\cap\sfM]$ is isomorphic to the toric cDV singularity 
$\CC[x,y,z,w]/(xyz-w^2)$ of type $cD_4$ given in Subsection~\ref{subsec_cDVtoric}. 
\end{example}

\section{\bf Preliminaries on moduli spaces of quiver representations} 
\label{sec_quiver_rep}

In this section, we review moduli spaces of quiver representations arising from consistent dimer models. 
For this purpose, we introduce quivers associated to dimer models and their representations. 

\subsection{Quivers associated to dimer models}
\label{subsec_quiver}

Let $\Gamma$ be a dimer model. 
As the dual of $\Gamma$, we obtain the quiver $Q_\Gamma$ associated to $\Gamma$, which is embedded in $\TT$, as follows. 
We assign a vertex dual to each face in $\Gamma_2$ and an arrow dual to each edge in $\Gamma_1$.  
We fix the orientation of any arrow so that the white node is on the right of the arrow. 
For example, Figure~\ref{ex_quiver1} is the quiver associated to the dimer model given in Figure~\ref{ex_dimer1_basic}. 
We note that $Q_\Gamma$ is a \emph{connected quiver}, that is, the underlying graph of $Q_\Gamma$ is a connected graph. 
We simply denote the quiver $Q_\Gamma$ by $Q$ unless it causes any confusion. 
Let $Q=(Q_0, Q_1)$ be the quiver associated to a dimer model, where $Q_0$ is the set of vertices and $Q_1$ is the set of arrows. 
Let $\hd(a), \tl(a)\in Q_0$ be respectively the head and tail of an arrow $a\in Q_1$. 
A \emph{path} of length $r\ge 1$ is a finite sequence of arrows $\pth=a_1\cdots a_r$ with $\hd(a_i)=\tl(a_{i+1})$ for $i=1, \dots, r-1$. 
We define $\tl(a)=\tl(a_1), \hd(a)=\hd(a_r)$ for a path $\pth=a_1\cdots a_r$. 
We consider each vertex $v\in Q_0$ as a path of length zero, which is called the \emph{trivial path} at $v$.  
We say that a path $\pth$ is a \emph{cycle} if $\hd(\pth)=\tl(\pth)$. A cycle of length one is called a \emph{loop}. 
A \emph{relation} in $Q$ is a $\CC$-linear combination of paths of length at least two having the same head and tail. 
We consider relations in $Q$ defined as follows. 
For each arrow $a\in Q_1$, there exist two paths $\pth_a^+, \, \pth_a^-$ such that $\hd(\pth_a^\pm)=\tl(a)$, $\tl(\pth_a^\pm)=\hd(a)$ and 
$\pth_a^+$ (resp. $\pth_a^-$) goes around the white (resp. black) node incident to the edge dual to $a$ clockwise (resp. counterclockwise) as shown in Figure~\ref{fig_relation_quiver}. 
We define the set of relations $\calJ_Q\coloneqq \{\pth_a^+-\pth_a^- \mid a\in Q_1\}$ and 
call the pair $(Q,\calJ_Q)$ the \emph{quiver with relations} associated to $\Gamma$. 
Note that a dimer model also carries a \emph{potential} (or \emph{superpotential}) $W_Q$ which is a linear combination of some cycles in $Q$. 
The relations $\calJ_Q$ can be obtained as the partial derivative of $W_Q$ 
(see e.g., \cite[Section~2.1]{Bro_dimer}, \cite[Subsection~2.1]{Nak_2rep} for more details). 

\begin{figure}[H]
\begin{center}
\scalebox{0.6}{
\begin{tikzpicture}[myarrow/.style={black, -latex}]
\newcommand{\noderad}{0.18cm} 
\newcommand{\edgewidth}{0.05cm} 
\newcommand{\nodewidthw}{0.05cm} 
\newcommand{\nodewidthb}{0.04cm} 
\newcommand{\arrowwidth}{0.06cm} 

\foreach \n/\a/\b in {1/1.5/3, 2/3.5/3, 3/5.5/3, 4/7.5/3} {\coordinate (B\n) at (\a,\b);} 
\foreach \n/\a/\b in {1/0.5/1,2/2.5/1, 3/4.5/1, 4/6.5/1} {\coordinate (W\n) at (\a,\b);} 
\draw[line width=\edgewidth]  (0,0) rectangle (8,4);
\foreach \w/\b in {1/1,2/2,3/3,4/3} {\draw[line width=\edgewidth] (W\w)--(B\b); };
\foreach \w/\s/\t in {1/0/2,1/0/0,1/1/0,2/2/0,2/3/0,3/4/0,3/5/0,4/6/0,4/7/0} {\draw[line width=\edgewidth] (W\w)--(\s,\t); };
\foreach \b/\s/\t in {1/1/4,1/2/4,2/3/4,2/4/4,3/5/4,3/6/4,4/7/4,4/8/4,4/8/2} {\draw[line width=\edgewidth] (B\b)--(\s,\t); };
\foreach \x in {1,2,3,4} {\filldraw [fill=black, line width=\nodewidthb] (B\x) circle [radius=\noderad] ;}; 
\foreach \x in {1,2,3,4} {\filldraw [fill=white, line width=\nodewidthw] (W\x) circle [radius=\noderad] ;}; 

\node[blue] (V0) at (0.5,3) {\LARGE$0$}; \node[blue] (V1) at (2,2) {\LARGE$1$}; 
\node[blue] (V2) at (4,2) {\LARGE$2$}; \node[blue] (V3) at (5.5,1) {\LARGE$3$}; \node[blue] (V4) at (7,2) {\LARGE$4$}; 

\foreach \s/\t in {V0/V1, V1/V2, V2/V3, V3/V4} {
\draw[shorten >=0.0cm, shorten <=0.0cm, myarrow, blue, line width=\arrowwidth] (\s)--(\t); };
\draw[shorten >=0.0cm, shorten <=0.0cm, myarrow, blue, line width=\arrowwidth] (V0)--(0,4); 
\draw[shorten >=0.0cm, shorten <=0.0cm, myarrow, blue, line width=\arrowwidth] (1,4)--(V0); 
\draw[shorten >=-0.1cm, shorten <=0.0cm, myarrow, blue, line width=\arrowwidth] (0,2.666)--(V0); 
\draw[shorten >=0.0cm, shorten <=0.0cm, myarrow, blue, line width=\arrowwidth] (V1)--(2,4); 
\draw[shorten >=0.0cm, shorten <=0.0cm, myarrow, blue, line width=\arrowwidth] (2,0)--(V1); 
\draw[shorten >=0.0cm, shorten <=0.0cm, myarrow, blue, line width=\arrowwidth] (V1)--(1,0); 
\draw[shorten >=0.0cm, shorten <=0.0cm, myarrow, blue, line width=\arrowwidth] (3,4)--(V1); 
\draw[shorten >=0.0cm, shorten <=0.0cm, myarrow, blue, line width=\arrowwidth] (V2)--(4,4); 
\draw[shorten >=0.0cm, shorten <=0.0cm, myarrow, blue, line width=\arrowwidth] (4,0)--(V2); 
\draw[shorten >=0.0cm, shorten <=0.0cm, myarrow, blue, line width=\arrowwidth] (V2)--(3,0); 
\draw[shorten >=0.0cm, shorten <=0.0cm, myarrow, blue, line width=\arrowwidth] (V3)--(5,0); 
\draw[shorten >=0.0cm, shorten <=0.0cm, myarrow, blue, line width=\arrowwidth] (5,4)--(V2); 
\draw[shorten >=0.0cm, shorten <=0.0cm, myarrow, blue, line width=\arrowwidth] (V4)--(6,4); 
\draw[shorten >=0.0cm, shorten <=0.0cm, myarrow, blue, line width=\arrowwidth] (6,0)--(V3); 
\draw[shorten >=0.0cm, shorten <=0.0cm, myarrow, blue, line width=\arrowwidth] (V4)--(7,0); 
\draw[shorten >=0.0cm, shorten <=0.0cm, myarrow, blue, line width=\arrowwidth] (7,4)--(V4); 
\draw[shorten >=0.0cm, shorten <=0.0cm, myarrow, blue, line width=\arrowwidth] (8,0)--(V4); 
\draw[shorten >=0.0cm, shorten <=0.0cm, myarrow, blue, line width=\arrowwidth] (V4)--(8,2.666); 
\end{tikzpicture}}
\end{center}
\caption{The quiver associated to the dimer model given in Figure~\ref{ex_dimer1_basic}}
\label{ex_quiver1}
\end{figure}
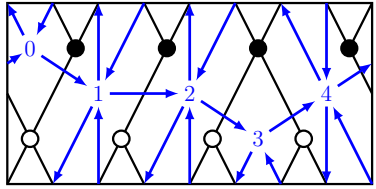

\begin{figure}[H]
\begin{center}
\scalebox{0.7}{
\begin{tikzpicture}[myarrow/.style={black, -latex}]
\newcommand{\noderad}{0.2cm} 
\newcommand{\edgewidth}{0.05cm} 
\newcommand{\arrowwidth}{0.06cm} 
\newcommand{\nodewidthw}{0.06cm} 
\newcommand{\nodewidthb}{0.04cm} 
\coordinate (B1) at (-1,0); \coordinate (W1) at (1,0); 
\foreach \r/\n in {45/a,15/b,-45/c} {\path (W1) ++(\r:2) coordinate (W1\n); }
\foreach \r/\n in {135/a,165/b,225/c} {\path (B1) ++(\r:2) coordinate (B1\n); }
\foreach \n in {a,b,c} {\draw[line width=\edgewidth] (W1)--(W1\n); \draw[line width=\edgewidth] (B1)--(B1\n); }
\draw[line width=\edgewidth] (B1)--(W1); 
\draw[line width=\edgewidth, loosely dotted] ([shift={(1,0)}]-35:1) arc(-30:10:1); 
\draw[line width=\edgewidth, loosely dotted] ([shift={(-1,0)}]215:1) arc(210:170:1); 
\filldraw [fill=black, line width=\nodewidthb] (B1) circle [radius=\noderad] ;
\filldraw [fill=white, line width=\nodewidthw] (W1) circle [radius=\noderad] ;
\foreach \n/\r/\m in {1/-90/1.5,2/90/1.5,3/15/2.5,4/-3/2.8,5/-20/2.5} {\coordinate (RH\n) at (\r:\m); }
\foreach \n/\r/\m in {1/-90/1.5,2/90/1.5,3/165/2.5,4/183/2.8,5/200/2.5} {\coordinate (LH\n) at (\r:\m); }
\foreach \s/\t in {1/2,2/3,3/4,5/1} {\draw[->, line width=\arrowwidth, shorten >=0.08cm, shorten <=0.08cm, myarrow, blue] (RH\s)--(RH\t); 
\draw[->, line width=\arrowwidth, shorten >=0.08cm, shorten <=0.05cm, myarrow, blue] (LH\s)--(LH\t); }
\draw[line width=\arrowwidth, shorten >=0.15cm, shorten <=0.15cm, blue, loosely dotted] (RH4)--(RH5); 
\draw[line width=\arrowwidth, shorten >=0.15cm, shorten <=0.15cm, blue, loosely dotted] (LH4)--(LH5); 
\node[blue] at (0.3,0.4) {\Large$a$}; \node[blue] at (3.3,0) {\Large$\pth_a^+$}; \node[blue] at (-3.3,0) {\Large$\pth_a^-$}; 
\end{tikzpicture}}
\end{center}
\caption{An example of $\pth_a^+$ and $\pth_a^-$}
\label{fig_relation_quiver}
\end{figure}

We then introduce representations of quivers. 
A \emph{representation} of $(Q,\calJ_Q)$ consists of a set of $\CC$-vector spaces $\{M_v \mid v\in Q_0\}$ together with 
$\CC$-linear maps $\varphi_a:M_{\tl(a)}\rightarrow M_{\hd(a)}$ satisfying the relations $\calJ_Q$, 
that is, $\varphi_{\pth_a^+}=\varphi_{\pth_a^-}$ for any $a\in Q_1$. 
Here, for a path $\pth=a_1\cdots a_r$, the map $\varphi_\pth$ is defined as the composite $\varphi_{a_1}\cdots\varphi_{a_r}$ of $\CC$-linear maps. 
(Note that in this paper, a composition of morphism, $fg$ means we first apply $f$ then $g$.) 
We assume that a representation $M=((M_v)_{v\in Q_0}, (\varphi_a)_{a\in Q_1})$ of $(Q,\calJ_Q)$ is finite dimensional 
(i.e., $\dim_\CC M_v$ is finite for all $v\in Q_0$) and call ${\dd}\coloneqq(\dim_\CC M_v)_{v\in Q_0}$ the \emph{dimension vector} of $M$. 
Let $M, M^\prime$ be representations of $(Q,\calJ_Q)$. 
A morphism from $M$ to $M^\prime$ is a family of $\CC$-linear maps $\{f_v: M_v\rightarrow M_v^\prime\}_{v\in Q_0}$ such that 
$\varphi_af_{\hd(a)}=f_{\tl(a)}\varphi_a^\prime$ for any arrow $a\in Q_1$, that is, the following diagram commutes: 
\begin{center}
\begin{tikzcd}
  M_{\tl(a)} \arrow[r, "\varphi_a"] \arrow[d, "f_{\tl(a)}" '] & M_{\hd(a)} \arrow[d, "f_{\hd(a)}"] \\
  M_{\tl(a)}^\prime \arrow[r,  "\varphi_a^\prime" '] &M_{\hd(a)}^\prime
\end{tikzcd}
\end{center}
We say that representations $M$ and $M^\prime$ are \emph{isomorphic}, if $f_v$ is an isomorphism of vector spaces for all $v\in Q_0$. 
A representation $N$ of $(Q,\calJ_Q)$ is called a \emph{subrepresentation} of $M$ if there is an injective morphism $N\rightarrow M$. 

The \emph{path algebra} $\CC Q$ of $Q$ is the $\CC$-algebra whose $\CC$-basis consists of paths in $Q$. 
For paths $\pth, \pth^\prime$, we define the multiplication of $\CC Q$ as the concatenation of paths, that is, 
$\pth\cdot \pth^\prime=\pth\pth^\prime$ (resp. $\pth\cdot \pth^\prime=0$) if $\hd(\pth)=\tl(\pth^\prime)$ (resp. $\hd(\pth)\neq \tl(\pth^\prime)$), 
and we extend this multiplication linearly. 
We define the quotient algebra $A_Q\coloneqq \CC Q/\langle \calJ_Q\rangle$ of $\CC Q$ by the two-sided ideal generated by 
the set of relations $\calJ_Q$. 
We call this algebra $A_Q$ the \emph{Jacobian algebra} associated to the dimer model $\Gamma$. 
It is known that the abelian category of representations of $(Q,\calJ_Q)$ is equivalent to the category $\mc A_Q$ of finite dimensional left $A_Q$-modules. 
Thus, we sometimes use $A_Q$-modules instead of representations of $(Q,\calJ_Q)$. 

In the rest of this paper, we consider representations of the quiver with relations $(Q,\calJ_Q)$ associated to a consistent dimer model $\Gamma$ 
and assume that the dimension vector of any representation is $\barone\coloneqq(1,\dots,1)$. 

\subsection{Stability parameters and crepant resolutions}
\label{subsec_crepant_resolution}

In this subsection, we introduce moduli spaces parametrizing quiver representations satisfying a certain stability condition. 

First, we consider the weight space 
\[\Theta(Q)\coloneqq\Big\{\theta=(\theta_v)_{v\in Q_0}\in\ZZ^{Q_0} \mid \sum_{v\in Q_0}\theta_v=0\Big\}\]
and let $\Theta(Q)_\RR\coloneqq\Theta(Q)\otimes_\ZZ\RR$. 
We call an element $\theta\in\Theta(Q)_\RR$ a \emph{stability parameter}. 

Let $M$ be a representation of $(Q,\calJ_Q)$ of dimension vector $\barone$. 
For a subrepresentation $N$ of $M$, we define $\theta(N)\coloneqq\sum_{v\in Q_0}\theta_v(\dim_\CC N_v)$, and hence $\theta(M)=0$ in particular. 
For a stability parameter $\theta\in\Theta(Q)_\RR$, we introduce $\theta$-stable representations as follows. 

\begin{definition}[{see \cite{King_moduli}}]
Let $\theta\in\Theta(Q)_\RR$. 
We say that a representation $M$ is \emph{$\theta$-semistable} if $\theta(N)\ge0$ for any subrepresentation $N$ of $M$, and 
$M$ is \emph{$\theta$-stable} if $\theta(N)>0$ for any non-zero proper subrepresentation $N$ of $M$. 
Then, we say that $\theta$ is \emph{generic} if every $\theta$-semistable representation is $\theta$-stable. 
\end{definition}

By \cite[Proposition~5.2]{King_moduli}, for any $\theta\in\Theta(Q)_\RR$, one can construct  the coarse moduli space $\overline{\calM}_\theta(Q,\calJ_Q, \barone)$ of $S$-equivalence classes of $\theta$-semistable representations of dimension vector $\barone$ 
(i.e., $\theta$-semistable representations whose Jordan--H\"{o}lder filtrations have the same composition factors). 
By \cite[Proposition~5.3]{King_moduli}, for a generic parameter $\theta\in\Theta(Q)_\RR$, 
one can construct the fine module space $\calM_\theta(Q,\calJ_Q, \barone)$ parametrizing isomorphism classes of $\theta$-stable representations of dimension vector $\barone$ as the GIT (geometric invariant theory) quotient. 
Moreover, this moduli space gives a crepant resolution as follows. 

\begin{theorem}[{see \cite[Theorem~6.3 and 6.4]{IU_moduli}, \cite[Corollary~1.2]{IU_anycrepant}}] 
\label{thm_crepant_dimer}
Let $\Gamma$ be a consistent dimer model, and $Q$ be the associated quiver.
Let $R$ be the three-dimensional Gorenstein toric ring associated to $\Gamma$. 
Then, for a generic parameter $\theta\in\Theta(Q)_\RR$, the moduli space $\calM_\theta(Q,\calJ_Q, \barone)$ is a smooth toric Calabi-Yau threefold 
and a projective crepant resolution of $\Spec R$. 

Moreover, any projective crepant resolution of $\Spec R$ can be obtained as the moduli space $\calM_\theta(Q,\calJ_Q, \barone)$ for some generic parameter $\theta\in\Theta(Q)_\RR$. 
\end{theorem}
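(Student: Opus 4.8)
The plan is to treat the two assertions separately: the first by combining the homological structure of the Jacobian algebra $A_Q$ with the toric geometry of the GIT quotient, and the second by variation of the GIT quotient. The organising principle is the following consequence of consistency, due to Bocklandt, Broomhead and Ishii--Ueda (see \cite{Boc_consistent, Bro_dimer, IU_consistent}): $A_Q$ is a module-finite $R$-algebra with centre $R := R_\Gamma$, has global dimension $3$, is maximal Cohen--Macaulay as an $R$-module, and is bimodule $3$-Calabi--Yau. Equivalently, $A_Q$ is a non-commutative crepant resolution of $R$, and one may write $A_Q \cong \End_R(T)$ with $T = \bigoplus_{v\in Q_0} T_v$ a sum of rank-one reflexive $R$-modules, so that $e_v A_Q e_v \cong R$ for every $v \in Q_0$.

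Since the dimension vector is $\barone$, the representation space $\mathrm{Rep} := \{\varphi \in \CC^{Q_1} \mid \varphi_{\pth_a^+} = \varphi_{\pth_a^-}\text{ for all }a \in Q_1\}$ is an affine toric variety, cut out of $\CC^{Q_1}$ by binomials, on which the torus $T := (\CC^*)^{Q_0}/\CC^*$ acts with $\varphi_a$ of weight $e_{\hd(a)} - e_{\tl(a)}$; hence $\calM_\theta := \calM_\theta(Q,\calJ_Q,\barone)$ is a toric variety and the GIT construction provides a projective morphism $\pi_\theta \colon \calM_\theta \to \calM_0$. The $T$-invariant functions on $\CC^{Q_1}$ are the monomials supported on unions of oriented cycles of $Q$; evaluating such cycles on representations identifies the invariant ring $\CC[\mathrm{Rep}]^T$ with the cycle algebra of $\Gamma$, which for a consistent dimer model is isomorphic to $R$ — precisely the Hilbert-basis computation of $\sigma_\Gamma^\vee \cap \sfM$ carried out in Examples~\ref{ex_toric1} and \ref{ex_toric2} — so $\calM_0 \cong \Spec R$. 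Moreover, the analysis of $\theta$-stable representations via perfect matchings (reviewed in Section~\ref{sec_quiver_rep}) realises $\calM_\theta$ as the toric variety of a fan $\Sigma_\theta$ subdividing the cone $\sigma_\Gamma$, so $\pi_\theta$ is proper and birational, and the primitive ray generators of $\Sigma_\theta$ all lie among the lattice points of $\Delta_R$ at height one.

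To complete the first assertion I would show that for generic $\theta$ the fan $\Sigma_\theta$ is a \emph{unimodular} triangulation of $\Delta_R$ — this is exactly where the combinatorics of $\theta$-stable perfect matchings, hence the consistency of $\Gamma$, is genuinely needed — whence $\calM_\theta$ is smooth of dimension three. Because every maximal cone of $\Sigma_\theta$ is spanned by lattice points of $\Delta_R$ at height one, the height function furnishes an integral support function on $\Sigma_\theta$ taking value $1$ on every primitive ray generator, so $\omega_{\calM_\theta} \cong \calO_{\calM_\theta}$ and $\calM_\theta$ is a toric Calabi--Yau threefold; being proper and birational from a smooth variety with trivial canonical sheaf onto the Gorenstein variety $\Spec R$, it is a projective crepant resolution.

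For the converse, recall that the projective crepant resolutions of the Gorenstein toric threefold $\Spec R$ are exactly the toric varieties attached to coherent unimodular triangulations of $\Delta_R$, any two being connected by flops (see e.g. \cite{CLS}). One must then show that as $\theta$ ranges over the chambers of $\Theta(Q)_\RR$ the fans $\Sigma_\theta$ sweep out all such triangulations. Following \cite{IU_anycrepant}, given a projective crepant resolution $Y \to \Spec R$ one produces a tilting bundle inducing a derived equivalence $D^b(\coh Y) \simeq D^b(\mc A_Q)$ under which the structure sheaves of closed points of $Y$ become $\theta$-stable $A_Q$-modules of dimension vector $\barone$ for $\theta$ in a suitable GIT chamber, so $Y \cong \calM_\theta$; exhaustiveness then amounts to the statement that the natural map from GIT chambers onto the movable cone of $\Spec R$ refines the Mori-chamber decomposition. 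This last point is the main obstacle: the first three steps are essentially formal once the NCCR property of $A_Q$ and the perfect-matching dictionary are available, whereas controlling the GIT chamber structure precisely enough to capture \emph{every} projective crepant resolution requires either Van den Bergh-type tilting theory together with variation of GIT or a direct appeal to the dimer combinatorics — and it is for this that \cite{IU_anycrepant} is cited.
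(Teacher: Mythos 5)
The paper offers no proof of this statement: Theorem~\ref{thm_crepant_dimer} is imported verbatim from Ishii--Ueda (\cite[Theorems~6.3 and 6.4]{IU_moduli} for the first assertion, \cite[Corollary~1.2]{IU_anycrepant} for the second), so there is no internal argument to compare yours against. Judged as a reconstruction of the cited proofs, your outline follows the same strategy: GIT presentation of $\calM_\theta$ as a toric variety over $\calM_0\cong\Spec R$, identification of $\Sigma_\theta$ with a regular unimodular triangulation of $\Delta_R$ via stable perfect matchings, the height-one support function for triviality of $\omega_{\calM_\theta}$ and crepancy, and variation of GIT plus flops for exhaustiveness. Two caveats. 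First, the two steps you explicitly defer --- that generic stability forces the triangulation to be unimodular, and that wall-crossing sweeps out every coherent triangulation --- are not technicalities but the entire mathematical content of the theorem; as written, your text is a roadmap to \cite{IU_moduli, IU_anycrepant} rather than a proof, which is exactly the status the theorem has in the paper. Second, a small imprecision: the scheme $\mathrm{Rep}$ cut out by the binomial relations $\varphi_{\pth_a^+}=\varphi_{\pth_a^-}$ is in general reducible, and one must work with the irreducible component containing the representations with all $\varphi_a\in\CC^\times$ (the torus $G$ of Subsection~\ref{subsec_crepant_resolution}) before asserting that the quotient is an irreducible toric variety with $\calM_0\cong\Spec R$; the paper makes this choice explicitly when it defines $X_{\theta_0}$ in Subsection~\ref{subsec_wall_chamber}.
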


In the following, we let $\calM_\theta=\calM_\theta(Q,\calJ_Q, \barone)$ and $\overline{\calM}_\theta=\overline{\calM}_\theta(Q,\calJ_Q, \barone)$ for simplicity. 
Let $G$ be the subset of isomorphism classes of representations of $(Q,\calJ_Q)$ defined as follows: 
\[
G\coloneqq \Big\{\big[((M_v)_{v\in Q_0}, (\varphi_a)_{a\in Q_1})\big] \mid \varphi_a\in \CC^\times \,\, \text{for any $a\in Q_1$} \Big\}. 
\]
This has the structure of an algebraic torus with a multiplication defined as 
\begin{equation}
\label{eq_torus_multi}
\big[((M_v)_{v\in Q_0}, (\varphi_a)_{a\in Q_1})\big]\cdot \big[((M_v)_{v\in Q_0}, (\varphi^\prime_a)_{a\in Q_1})\big]
=\big[((M_v)_{v\in Q_0}, (\varphi_a\varphi^\prime_a)_{a\in Q_1})\big]. 
\end{equation}
Since any representation in $G$ has no proper subrepresentation, it is $\theta$-stable for any $\theta$, 
and hence $G$ is contained in $\overline{\calM}_\theta$ for any $\theta$. 
If $\theta$ is generic, then $G$ is the open dense torus contained in the toric variety $\calM_\theta$ 
and $G$ acts on $\calM_\theta$ by the multiplication \eqref{eq_torus_multi}, see \cite{IU_moduli} for more details.

Since $\calM_\theta$ is a fine moduli space for a generic parameter $\theta\in\Theta(Q)_\RR$, 
it carries a universal family 
\[
\calT_\theta\coloneqq \bigoplus_{v\in Q_0}\calL_v, 
\]
of $\theta$-stable $A_Q$-modules of dimension vector $\barone$, called a \emph{tautological bundle} of $\calM_\theta$, 
where $\calL_v$ is a line bundle on $\calM_\theta$ for any $v\in Q_0$. 
This bundle gives rise to a morphism $\phi:A_Q\rightarrow\End_{\calM_\theta}(\calT_\theta)$ of algebras. 
For each point $y\in\calM_\theta$, the fiber of the bundle map $\calT_\theta\rightarrow\calM_\theta$ at $y$ is 
the $\theta$-stable representation $M_y\coloneqq((M_v)_{v\in Q_0}, (\varphi_a)_{a\in Q_1})$ of $(Q,\calJ_Q)$ of dimension vector $\barone$, 
where $\varphi_a: M_{\tl(a)}\rightarrow M_{\hd(a)}$ is obtained by restricting the map $\phi(a):\calL_{\tl(a)}\rightarrow \calL_{\hd(a)}$ 
to the fiber at $y$. 
In our situation, $\calT_\theta$ is a tilting bundle and $\phi$ is an isomorphism (see \cite[Theorem~1.4]{IU_special}). 

In general, there is an ambiguity of a choice of $\calT_\theta$, that is, by tensoring a line bundle to $\calT_\theta$, 
we have a vector bundle having the same properties as $\calT_\theta$. 
Thus, we fix a vertex of $Q$, which we denote by $0\in Q_0$, as a specific one, and normalize the tautological bundle so that $\calL_0\cong\calO_{\calM_\theta}$. 
On the other hand, since $\calT_\theta$ is a tilting bundle, it induces an equivalence $\calD^\rmb(\coh\,\calM_\theta)\cong\calD^\rmb(\mc\,\End_{\calM_\theta}(\calT_\theta))$, see \cite{Bondal_SEC,Rickard_morita}. 
By \cite[Corollary~4.15]{IW_singular}, we see that $\End_{\calM_\theta}(\calT_\theta)\cong A_Q$ 
is a \emph{non-commutative crepant resolution} (\emph{NCCR}) in the sense of \cite{VdB_NCCR}. 
This was also proved in \cite{Bro_dimer} using another method. 

\medskip

Since $\calM_\theta$ is a smooth toric variety for a generic parameter $\theta$, it can be described by using a smooth toric fan. 
Namely, there is a certain smooth subdivision $\Sigma_\theta$ of the cone $\sigma_\Gamma$ 
such that the toric variety $X_{\Sigma_\theta}$ associated to $\Sigma_\theta$ is isomorphic to $\calM_\theta$ (see e.g., \cite[Chapter~11]{CLS}). 
We denote the set of $r$-dimensional cones in $\Sigma_\theta$ by $\Sigma_\theta(r)$ where $r=1, 2, 3$. 
By the Orbit-Cone correspondence (see e.g., \cite[Chapter~3]{CLS}), a cone $\sigma\in\Sigma_\theta(r)$ corresponds to a $(3-r)$-dimensional 
torus orbit in $X_{\Sigma_\theta}\cong\calM_\theta$, which we will denote by $\calO_\sigma$. 
The intersection of cones in $\Sigma_\theta$ and the hyperplane at height one induces the triangulation of $\Delta_\Gamma$ into elementary triangles, 
and hence we can identify 
\begin{itemize}
\setlength{\parskip}{0pt} 
\setlength{\itemsep}{3pt}
\item one-dimensional cones (= \emph{rays}) in $\Sigma_\theta$ with lattice points in the triangulation of $\Delta_\Gamma$, 
\item two-dimensional cones in $\Sigma_\theta$ with line segments in the triangulation of $\Delta_\Gamma$, 
\item three-dimensional cones in $\Sigma_\theta$ with triangles in the triangulation of $\Delta_\Gamma$. 
\end{itemize}
We denote the triangulation of $\Delta_\Gamma$ induced from $\Sigma_\theta$ by $\Delta_{\Gamma, \theta}$ (or $\Delta_\theta$ for simplicity). 
It is known that a crepant resolution of $\Spec R$ is projective if and only if the associated triangulation of $\Delta_\Gamma$ is 
\emph{regular} (or \emph{coherent}), see \cite[Proposition~2.4]{dais2001all}.  
Since $\calM_\theta$ is a projective crepant resolution of $\Spec R$, the triangulation $\Delta_\theta$ is regular. 

On the other hand, since each point $y\in\calM_\theta\cong X_{\Sigma_\theta}$ corresponds to the $\theta$-stable representation $M_y$, 
we can assign $\theta$-stable representations to cones in $\Sigma_\theta$ (and hence to torus orbits). 
As we will see in Proposition~\ref{corresp_pm}, a $\theta$-stable representation corresponding to a ray (and hence a lattice point) in $\Sigma_\theta$ 
can be obtained from a perfect matching.

\subsection{Perfect matchings corresponding to torus orbits}
\label{subsec_cone_pm}

We then introduce another ingredient in dimer theory called perfect matchings. 

\begin{definition}
\label{def_pm}
A \emph{perfect matching} (or \emph{dimer configuration}) of a dimer model $\Gamma$ is a subset $\sfP$ of $\Gamma_1$ such that 
for any node $n\in\Gamma_0$ there exists a unique edge in $\sfP$ containing $n$ as an endpoint. 
We denote the set of perfect matchings of $\Gamma$ by $\PM(\Gamma)$. 
\end{definition}

Note that any dimer model does not necessarily have a perfect matching, but any consistent dimer model has a perfect matching 
(see \cite[Proposition~8.1]{IU_special}). 

\begin{example}
\label{ex_PM_corner}
We consider the dimer model $\Gamma$ in Figure~\ref{ex_dimer1_basic}. 
The following figures show some perfect matchings of $\Gamma$, where the edges contained in perfect matchings are colored red. 

\begin{center}
\scalebox{0.5}{
\begin{tikzpicture}
\newcommand{\noderad}{0.18cm} 
\newcommand{\edgewidth}{0.05cm} 
\newcommand{\nodewidthw}{0.05cm} 
\newcommand{\nodewidthb}{0.04cm} 
\newcommand{\pmwidth}{0.35cm} 
\newcommand{\pmcolor}{red!40} 

\node at (0, -2.7) {\Huge $\sfP_0$}; \node at (11, -2.7) {\Huge $\sfP_1$}; 
\node at (0, -8.7) {\Huge $\sfP_2$}; \node at (11, -8.7) {\Huge $\sfP_3$}; 

\foreach \n/\a/\b in {1/1.5/3, 2/3.5/3, 3/5.5/3, 4/7.5/3} {\coordinate (B\n) at (\a,\b);} 
\foreach \n/\a/\b in {1/0.5/1,2/2.5/1, 3/4.5/1, 4/6.5/1} {\coordinate (W\n) at (\a,\b);} 

\node at (0,0){
\begin{tikzpicture}
\draw[line width=\pmwidth, \pmcolor] (W1)--(1,0); \draw[line width=\pmwidth, \pmcolor] (W2)--(3,0); 
\draw[line width=\pmwidth, \pmcolor] (W3)--(5,0); \draw[line width=\pmwidth, \pmcolor] (W4)--(7,0); 
\draw[line width=\pmwidth, \pmcolor] (1,4)--(B1); \draw[line width=\pmwidth, \pmcolor] (3,4)--(B2); 
\draw[line width=\pmwidth, \pmcolor] (5,4)--(B3); \draw[line width=\pmwidth, \pmcolor] (7,4)--(B4); 
\basicdimerA
\end{tikzpicture}};

\node at (11,0){
\begin{tikzpicture}
\foreach \w/\b in {1/1,2/2, 3/3} {\draw[line width=\pmwidth, \pmcolor] (W\w)--(B\b); }; 
\draw[line width=\pmwidth, \pmcolor] (W4)--(7,0); \draw[line width=\pmwidth, \pmcolor] (7,4)--(B4); 
\basicdimerA
\end{tikzpicture}};

\node at (0,-6){
\begin{tikzpicture}
\draw[line width=\pmwidth, \pmcolor] (W1)--(0,2); \draw[line width=\pmwidth, \pmcolor] (2,4)--(B1); 
\draw[line width=\pmwidth, \pmcolor] (W2)--(2,0); \draw[line width=\pmwidth, \pmcolor] (4,4)--(B2); 
\draw[line width=\pmwidth, \pmcolor] (W3)--(4,0); \draw[line width=\pmwidth, \pmcolor] (W4)--(B3); 
\draw[line width=\pmwidth, \pmcolor] (8,2)--(B4); 
\basicdimerA
\end{tikzpicture}};

\node at (11,-6){
\begin{tikzpicture}
\draw[line width=\pmwidth, \pmcolor] (W1)--(0,0); \draw[line width=\pmwidth, \pmcolor] (W2)--(2,0); 
\draw[line width=\pmwidth, \pmcolor] (W3)--(4,0); \draw[line width=\pmwidth, \pmcolor] (W4)--(6,0); 
\draw[line width=\pmwidth, \pmcolor] (2,4)--(B1); \draw[line width=\pmwidth, \pmcolor] (4,4)--(B2); 
\draw[line width=\pmwidth, \pmcolor] (6,4)--(B3); \draw[line width=\pmwidth, \pmcolor] (8,4)--(B4); 
\basicdimerA
\end{tikzpicture}};

\end{tikzpicture}}
\end{center}
\end{example}

Let $M=((M_v)_{v\in Q_0}, (\varphi_a)_{a\in Q_1})$ be a representation of $(Q,\calJ_Q)$. 
We define the \emph{support} of $M$, denoted as $\Supp M$, as the set of arrows whose corresponding linear maps are not zero, that is, 
\[
\Supp M\coloneqq\{a\in Q_1 \mid \varphi_a\neq0\}. 
\]
We also define the \emph{cosupport} of $M$ as the complement of $\Supp M$. 
For $\theta\in\Theta(Q)_\RR$, we say that a perfect matching $\sfP$ is \emph{$\theta$-stable} if the set of arrows dual to edges contained in $\sfP$ 
is the cosupport of a $\theta$-stable representation. 
Any perfect matching of $\Gamma$ can be obtained from a certain $\theta$-stable representation as follows. 

\begin{proposition}[{see \cite[Section~6]{IU_moduli},\cite[Proposition~4.15]{Moz}}]
\label{corresp_pm}
Let $\Gamma$ be a consistent dimer model and $Q$ be the associated quiver. 
\begin{enumerate}[\rm (1)]
\setlength{\parskip}{0pt} 
\setlength{\itemsep}{3pt}
\item For a generic parameter $\theta\in\Theta(Q)_\RR$, 
let $Z$ be a two-dimensional torus orbit of $\calM_\theta$, which is denoted by $Z=\calO_\rho$ for some ray $\rho\in\Sigma_\theta(1)$. 
For any $y\in Z$, the cosupport of the $\theta$-stable representation $M_y$ is the set of arrows dual to edges in 
a certain perfect matching $\sfP$ of $\Gamma$. 
This perfect matching $\sfP$ does not depend on a choice of $y\in Z=\calO_\rho$, thus we denote it by $\sfP_\rho$. 
\item 
For any perfect matching $\sfP$ of $\Gamma$, there exists a generic parameter $\theta\in\Theta(Q)_\RR$ such that $\sfP$ is $\theta$-stable. 
\end{enumerate}
\end{proposition}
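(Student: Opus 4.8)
The plan is to pass to the toric model of the moduli space. By Theorem~\ref{thm_crepant_dimer}, $\calM_\theta\cong X_{\Sigma_\theta}$ is a smooth toric variety carrying the tautological bundle $\calT_\theta=\bigoplus_{v\in Q_0}\calL_v$ and the algebra isomorphism $\phi\colon A_Q\xrightarrow{\ \sim\ }\End_{\calM_\theta}(\calT_\theta)$, where each arrow $a\in Q_1$ is sent to a section $\phi(a)$ of $\calL_{\hd(a)}\otimes\calL_{\tl(a)}^{-1}$. Since the $\calL_v$ are equivariant line bundles on $X_{\Sigma_\theta}$, this section is a torus eigensection, so its vanishing locus is an effective torus-invariant divisor $\divisor\phi(a)=\sum_{\rho\in\Sigma_\theta(1)}m_{a,\rho}\,D_\rho$ with $m_{a,\rho}\in\ZZ_{\ge0}$, where $D_\rho=\overline{\calO_\rho}$. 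For $y$ in the codimension-one orbit $\calO_\rho$, the map $\varphi_a$ of $M_y$ is the restriction of $\phi(a)$ to the fibre over $y$, so $\varphi_a=0$ exactly when $y$ lies in the zero locus of $\phi(a)$, i.e. exactly when $m_{a,\rho}\ge1$ (as $y$ lies on $D_\rho$ but on no other torus-invariant prime divisor). Hence the cosupport of $M_y$ equals $\{a\in Q_1\mid m_{a,\rho}\ge1\}$, which visibly does not depend on $y\in\calO_\rho$; this gives the independence statement in (1), and it remains to see that the edges dual to this cosupport form a perfect matching.

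For the latter I would count node by node. Around any node $n$ of $\Gamma$ the arrows dual to the edges incident to $n$ form a cycle in $Q$, and in $A_Q$ every such cycle represents the distinguished central element $\ell\in Z(A_Q)=R$ (the cycle around a node, which is the same element for all nodes — a standard consequence of consistency). Thus for each node $n$,
\[
\sum_{e_a\ni n}\divisor\phi(a)=\divisor\phi(\ell).
\]
Now $\phi(\ell)\in H^0(\calM_\theta,\calO_{\calM_\theta})=R$ is, up to a nonzero scalar, the degree-one character $t_3=\chi^{(0,0,1)}$ (in the notation of Subsection~\ref{subsec_toric}), and for a toric divisor $D_\rho$ one has $\operatorname{ord}_{D_\rho}(t_3)=\langle(0,0,1),v_\rho\rangle$, which is the lattice height of the primitive generator $v_\rho$ and hence equals $1$ since $v_\rho$ lifts a lattice point of $\Delta_\Gamma$; therefore $\divisor\phi(\ell)=\sum_{\rho\in\Sigma_\theta(1)}D_\rho$. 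Comparing the coefficient of $D_\rho$ on the two sides gives $\sum_{e_a\ni n}m_{a,\rho}=1$ for every node $n$; hence exactly one edge incident to $n$ has $m_{a,\rho}=1$ and all others have $m_{a,\rho}=0$. Therefore $\sfP_\rho\coloneqq\{e_a\mid m_{a,\rho}\ge1\}$ meets every node of $\Gamma$ in precisely one edge, i.e. $\sfP_\rho\in\PM(\Gamma)$; this proves (1).

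For (2), given $\sfP\in\PM(\Gamma)$ I would first exhibit a representation realising it. Let $M^{\sfP}$ be the representation of $(Q,\calJ_Q)$ of dimension vector $\barone$ with $\varphi_a=0$ if $e_a\in\sfP$ and $\varphi_a=1$ otherwise. It satisfies $\calJ_Q$: if $e_a\in\sfP$, then $e_a$ is the unique $\sfP$-edge at each of its two nodes, so neither $\pth_a^+$ nor $\pth_a^-$ traverses a $\sfP$-edge and $\varphi_{\pth_a^+}=\varphi_{\pth_a^-}=1$; if $e_a\notin\sfP$, then the unique $\sfP$-edge at the white (resp. black) node of $e_a$ is some edge $\ne e_a$, which $\pth_a^+$ (resp. $\pth_a^-$) traverses, so $\varphi_{\pth_a^+}=\varphi_{\pth_a^-}=0$. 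Thus $M^{\sfP}$ is an $A_Q$-module with cosupport $\{a\mid e_a\in\sfP\}$, and it remains to find a generic $\theta$ making it $\theta$-stable. Subrepresentations of a dimension-$\barone$ module correspond to nonempty proper subsets $T\subsetneq Q_0$ closed under the arrows of the support quiver $Q^{\mathrm{supp}}\coloneqq(Q_0,\{a\mid e_a\notin\sfP\})$, so the admissible parameters form the open cone $\bigcap_{T}\{\theta\in\Theta(Q)_\RR\mid\theta(T)>0\}$. This cone is nonempty: the underlying undirected graph of $Q^{\mathrm{supp}}$ is connected because $\TT\setminus\sfP$ is connected (deleting the pairwise disjoint arcs of the matching $\sfP$ cannot disconnect the torus), and a theorem-of-the-alternative argument then shows that no positive combination of the indicator vectors $\mathbf 1_T$ is a positive multiple of $\mathbf 1_{Q_0}$, so the half-spaces $\{\theta(T)>0\}$ have a common point. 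Being open and nonempty, the cone contains generic parameters, and any such $\theta$ witnesses that $\sfP$ is $\theta$-stable.

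The step I expect to be the main obstacle is the normalization underlying (1): checking carefully that all node cycles coincide with one central element $\ell$ and that $\phi(\ell)$ is (up to scalar) the degree-one monomial $t_3$, so that $\divisor\phi(\ell)=\sum_\rho D_\rho$ and the node count returns exactly $1$. Granting this, both parts follow from the divisorial bookkeeping above together with the elementary fact that removing a perfect matching leaves the torus connected; the remaining choices in (2) are then routine.
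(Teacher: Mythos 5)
The paper does not prove this proposition at all: it is quoted from \cite[Section~6]{IU_moduli} and \cite[Proposition~4.15]{Moz}, so there is no internal argument to compare against. Judged on its own terms, your proof is correct and is essentially a reconstruction of the Ishii--Ueda/Mozgovoy arguments. For (1), the divisorial bookkeeping is sound: $\phi(a)$ is a $G$-eigensection (it transforms by the character $g\mapsto g_a$), so its divisor is torus-invariant, and the identification of the cosupport of $M_y$ with $\{a\mid m_{a,\rho}\ge 1\}$ for $y\in\calO_\rho$ is exactly right. The one genuinely load-bearing input is the step you yourself flag: that all small cycles around nodes coincide with a single central element $\ell$ (this follows from the relations $a\pth_a^+=a\pth_a^-$ by walking around the boundary of a face) and that $\phi(\ell)$ is the height-one character, so $\operatorname{ord}_{D_\rho}\phi(\ell)=1$ for every ray. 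Note you cannot get this from Proposition~\ref{prop_compute_tautological}, which presupposes the ray--perfect-matching correspondence you are proving; you must import it from the standard $\sfM$-grading of $A_Q$ (homology class plus degree against a reference perfect matching) and $Z(A_Q)\cong R$, which are consistency results of Broomhead and Ishii--Ueda that the paper also relies on elsewhere. With that granted, the node-by-node count $\sum_{e_a\ni n}m_{a,\rho}=1$ does force $\sfP_\rho$ to be a perfect matching. For (2), the verification that $M^{\sfP}$ satisfies $\calJ_Q$ is correct, and the Motzkin/Gordan argument for nonemptiness of the cone $\bigcap_T\{\theta(T)>0\}$ works: a nonnegative relation $\sum_T\lambda_T\mathbf 1_T=c\,\mathbf 1_{Q_0}$ forces each $T$ with $\lambda_T>0$ to be a union of connected components of the support quiver, hence trivial. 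For the connectivity itself there is a purely local alternative to your topological argument: around each node exactly one dual arrow is deleted, so the faces incident to that node remain connected in the support quiver, and connectivity of $Q$ then propagates; this avoids the (true but slightly delicate) claim that disjoint closed arcs never separate $\TT$.
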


By Proposition~\ref{corresp_pm} (1), for a generic parameter $\theta$, 
we can assign a unique $\theta$-stable perfect matching to each lattice point of $\Delta_\Gamma$. 
Thus, we have a bijection between lattice points of $\Delta_\Gamma$ and $\theta$-stable perfect matchings. 
We say that a perfect matching $\sfP$ corresponds to a lattice point $q\in\Delta_\Gamma$ 
if for some generic parameter $\theta$ there exists a ray $\rho\in\Sigma_\theta(1)$ such that $\sfP=\sfP_\rho$ and $q=\rho\cap \Delta_\Gamma$. 
We denote by $\PM_\theta(\Gamma)$ the set of $\theta$-stable perfect matchings. 
By Proposition~\ref{corresp_pm} (2), we see that any perfect matching is contained in $\PM_\theta(\Gamma)$ for some generic parameter $\theta$. 

\begin{definition}
Let $\Delta_\Gamma$ be the zigzag polygon of a consistent dimer model $\Gamma$. 
We say that $\sfP$ is 
\begin{itemize}
\setlength{\parskip}{0pt} 
\setlength{\itemsep}{3pt}
\item a \emph{corner} (or \emph{extremal}) \emph{perfect matching} if $\sfP$ corresponds to a vertex of $\Delta_\Gamma$, 
\item a \emph{boundary} (or \emph{external}) \emph{perfect matching} if $\sfP$ corresponds to a lattice point on the boundary of $\Delta_\Gamma$, 
and hence a corner perfect matching is a boundary perfect matching in particular. 
\item an \emph{internal} \emph{perfect matching} if $\sfP$ corresponds to an interior lattice point of $\Delta_\Gamma$. 
\end{itemize}
\end{definition}

We here note that corner perfect matchings have typical properties as follows. 

\begin{proposition}[{\cite[Corollary~4.27]{Bro_dimer}, \cite[Proposition~9.2]{IU_special}}]
\label{prop_cPM_unique}
Let $\Gamma$ be a consistent dimer model. 
Then there is a unique corner perfect matching corresponding to each vertex of $\Delta_\Gamma$, 
and hence any corner perfect matching is $\theta$-stable for any generic parameter $\theta\in\Theta(Q)_\RR$. 
Moreover, any corner perfect matching can be obtained from zigzag paths as in \cite[Section~8]{IU_special}. 
\end{proposition}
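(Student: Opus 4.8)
The plan is to prove the three assertions in turn — existence of a corner perfect matching over each vertex of $\Delta_\Gamma$, its uniqueness, and then the ``hence/moreover'' clauses, which become formal once uniqueness is in hand.

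\emph{Existence and the zigzag description.} Fix a vertex $\widetilde v_0$ of $\Delta_\Gamma$; it is the common endpoint of two sides whose primitive outer normals $n,n'$ are consecutive in the cyclic order of zigzag slopes. Following \cite[Section~8]{IU_special}, I would attach to $\widetilde v_0$ an explicit subset $\sfP_{\widetilde v_0}\subseteq\Gamma_1$ built from the zigzag paths: choose a primitive $\xi\in\sfM$ in the interior of the cone $\RR_{\ge 0}n+\RR_{\ge 0}n'$, so that $\widetilde v_0$ is the unique lattice point of $\Delta_\Gamma$ maximizing $\langle\xi,-\rangle$ and $\xi$ is parallel to no zigzag slope, and let the positions of the zigzag slopes relative to $\xi$ decide, for each edge, whether it lies in $\sfP_{\widetilde v_0}$. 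This can be carried out edge-locally because, by consistency (Definition~\ref{def_consistent}(2),(3)), every edge of $\Gamma$ lies in exactly two zigzag paths and is a zig of one and a zag of the other. One then checks that $\sfP_{\widetilde v_0}$ covers each node exactly once and, using the perfect-matching/lattice-point correspondence of Proposition~\ref{corresp_pm} together with the pairing between perfect matchings and zigzag slopes, that $\sfP_{\widetilde v_0}$ corresponds to $\widetilde v_0$; so it is a corner perfect matching. (Existence alone is in any case immediate from Proposition~\ref{corresp_pm}: fixing any generic $\theta$, the $\theta$-stable perfect matchings biject with the lattice points of $\Delta_\Gamma$.)

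\emph{Uniqueness.} Suppose $\sfP,\sfP'$ both correspond to $\widetilde v_0$. Their symmetric difference $\sfP\triangle\sfP'$ is a disjoint union of simple cycles $C_1,\dots,C_m$ on $\TT$ along which edges alternate between $\sfP$ and $\sfP'$, and for any $S\subseteq\{1,\dots,m\}$ flipping along $\bigcup_{i\in S}C_i$ yields a perfect matching whose associated lattice point is $\widetilde v_0+\Phi\big(\sum_{i\in S}[C_i]\big)$, where $\Phi\colon\rmH_1(\TT)\to\ZZ^2$ is the linear isomorphism recording how the lattice point changes under such reconfigurations; in particular $\Phi\big(\sum_i[C_i]\big)=0$. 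Now $\widetilde v_0$ is a vertex, so the set of functionals $\phi$ for which $\widetilde v_0$ is the \emph{unique} maximizer of $\phi|_{\Delta_\Gamma}$ is a $2$-dimensional open cone. Fix such a $\phi$; then for every $S$ we get $\sum_{i\in S}\langle\phi,\Phi([C_i])\rangle\le 0$, with strict inequality unless $\sum_{i\in S}[C_i]=0$. Taking $S=\{i:\langle\phi,\Phi([C_i])\rangle>0\}$ forces this set to be empty (a sum of strictly positive reals cannot be $\le 0$); hence all the numbers $\langle\phi,\Phi([C_i])\rangle$ are $\le 0$ while summing to $\langle\phi,\Phi(\sum_i[C_i])\rangle=0$, so each vanishes. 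Thus $\Phi([C_i])\perp\phi$ for every $\phi$ in a $2$-dimensional cone, which forces $\Phi([C_i])=0$, i.e.\ every $C_i$ is null-homotopic. It remains — and this is the crux — to rule out a null-homotopic alternating cycle between two \emph{corner} perfect matchings: here the full strength of the consistency conditions on zigzag paths is used, and the argument is exactly that of \cite[Corollary~4.27]{Bro_dimer} and \cite[Proposition~9.2]{IU_special} (heuristically, such a cycle bounds a disk whose interior dimer data, read through the zigzag paths crossing its edges, is incompatible with Definition~\ref{def_consistent}). Granting this, $\sfP\triangle\sfP'=\emptyset$, so $\sfP=\sfP'$.

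\emph{The remaining clauses.} Given uniqueness, fix any generic $\theta\in\Theta(Q)_\RR$; by Proposition~\ref{corresp_pm}(1) exactly one $\theta$-stable perfect matching corresponds to the vertex $\widetilde v_0$, and since it is a corner perfect matching over $\widetilde v_0$ it must be \emph{the} one, so that corner perfect matching is $\theta$-stable for every generic $\theta$. Finally, since the perfect matching $\sfP_{\widetilde v_0}$ produced in the existence step is a corner perfect matching over $\widetilde v_0$, uniqueness shows that every corner perfect matching is of this form, which is the last assertion. The one genuinely difficult point in this scheme is excluding contractible alternating cycles between two corner perfect matchings; everything else is either formal or a direct computation with the correspondences recalled above.
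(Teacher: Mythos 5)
The paper does not prove this proposition at all: it is quoted verbatim from \cite[Corollary~4.27]{Bro_dimer} and \cite[Proposition~9.2]{IU_special}, so there is no in-paper argument to compare yours against. Judged on its own terms, your reduction is sound up to a point: the observation that flipping along any subcollection of the alternating cycles of $\sfP\triangle\sfP'$ produces a perfect matching whose lattice point stays in $\Delta_\Gamma$, combined with a functional uniquely maximized at the vertex, correctly forces every cycle to be null-homologous (this is exactly where the hypothesis ``vertex'' rather than ``boundary point'' is used, and your argument would rightly fail for non-corner boundary points, where the cycles are genuinely non-trivial zigzag classes summing to zero). The derivation of the $\theta$-stability clause and of the ``moreover'' clause from uniqueness is also correct.

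The genuine gap is the step you yourself flag as the crux: excluding null-homotopic alternating cycles. You defer it entirely to the cited references, and the heuristic you offer for it is not right as stated. Consistency of the dimer model alone cannot rule out a contractible alternating cycle between two perfect matchings over the same lattice point --- such cycles occur routinely at interior lattice points of consistent dimer models (that is precisely why interior points carry many perfect matchings). So whatever argument closes this step must invoke the corner property a second time, not just Definition~\ref{def_consistent}. The proofs in the references in fact take a more direct route that bypasses the cycle decomposition altogether: for a perfect matching $\sfP$ over a vertex, an intersection-number count shows that $\sfP\cap z$ is forced to be either $\Zig(z)$ or $\Zag(z)$ for \emph{every} zigzag path $z$ (the count $|\sfP\cap\Zig(z)|-|\sfP\cap\Zag(z)|$ is determined by the lattice point and is extremal exactly at the vertices), and since by consistency every edge lies on exactly two zigzag paths, one a zig and one a zag, this data determines membership of every edge in $\sfP$ and hence determines $\sfP$. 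Your scheme could be completed, but as written the decisive step is missing and the justification sketched for it would not survive scrutiny.
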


Thus, we can give a cyclic order to corner perfect matchings along the corresponding vertices of $\Delta_\Gamma$ in the anti-clockwise direction. 
We say that two corner perfect matchings are \emph{adjacent} if they are adjacent with respect to the above cyclic order. 

\medskip

Next, we discuss the relationship between perfect matchings and zigzag paths. 
We define the \emph{symmetric difference} $\sfP\ominus\sfP^\prime$ of perfect matchings $\sfP, \sfP^\prime\in\PM(\Gamma)$ as 
$\sfP\ominus\sfP^\prime\coloneqq\sfP\cup\sfP^\prime{\setminus}\sfP\cap\sfP^\prime$. 
Then, $\sfP\ominus\sfP^\prime$ can be considered as a $1$-cycle on $\TT$. 
We fix the orientation of $\sfP\ominus\sfP^\prime$ so that an edge $e\in \sfP\ominus\sfP^\prime$ is directed 
from a white (resp. black) node to a black (resp. white) node if $e\in\sfP$ (resp. $e\in\sfP^\prime$). 

\begin{proposition} [{see \cite[Corollary~3.8]{Gul},\cite[Step~1 of the proof of Proposition~9.2, Corollary~9.3]{IU_special}}] 
\label{zigzag_sidepolygon}
Let $\Gamma$ be a consistent dimer model and $\Delta_\Gamma$ be the zigzag polygon. 
Let $E$ be a side of $\Delta_\Gamma$. 
Then, all zigzag paths whose slopes coincide with the outer normal vector of $E$ arise as $\sfP\ominus\sfP^\prime$ 
for the adjacent corner perfect matchings $\sfP,\sfP^\prime$ corresponding to the endpoints of $E$. 
\end{proposition}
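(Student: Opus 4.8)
The plan is to reduce the statement to the explicit description of corner perfect matchings in terms of zigzag paths recorded in Proposition~\ref{prop_cPM_unique} (that is, \cite[Section~8]{IU_special}), and then to compute the symmetric difference edge by edge; the identification of the result with a union of $1$-cycles on $\TT$ and the matching of orientations are then formal. Fix notation: let $\nu$ be the outer normal vector of $E$, let $v,v'$ be its two endpoints, and let $z_1,\dots,z_m$ be the zigzag paths of slope $\nu$, so that by the remarks following the definition of the zigzag polygon $m$ equals the number of primitive side segments of $E$; in the cyclic order of slopes these $m$ zigzag paths occupy one maximal block, and $E$ joins the vertex immediately before this block to the one immediately after it. I would also record at the outset the elementary fact that, since every edge of a consistent dimer model lies in exactly two zigzag paths, for each edge $e$ one of its two zigzag paths has $e$ as a zig and the other has $e$ as a zag (only one zigzag path can leave the white endpoint of $e$ along $e$).

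The heart of the proof is the following comparison of the corner perfect matchings $\sfP,\sfP'$ attached to $v,v'$: passing from $v$ to $v'$ along $E$ crosses, in the cyclic order of slopes, precisely the block $z_1,\dots,z_m$, and correspondingly $\sfP$ and $\sfP'$ restrict identically to every zigzag path of slope $\neq\nu$, while for each $z_i$ one of them restricts to $\Zig(z_i)$ on the edges of $z_i$ and the other restricts to $\Zag(z_i)$, and no edge outside $z_1\cup\dots\cup z_m$ distinguishes $\sfP$ from $\sfP'$. I expect this to be the main obstacle. It is not enough to know merely that corner perfect matchings are produced from zigzag paths in some generic way; one must use the specific recipe of \cite[Section~8]{IU_special} (equivalently \cite[Corollary~3.8]{Gul}) to see that, for the datum distinguished by the vertex, every zig of each $z_i$ is genuinely selected and that no further edge changes, and the bookkeeping for edges lying simultaneously on some $z_i$ and on a zigzag path of a different slope is the delicate point. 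Since this comparison is essentially \cite[Step~1 of the proof of Proposition~9.2]{IU_special}, I would extract it from there.

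Granting the comparison, the rest is quick. On the edge set of each $z_i$ we obtain $(\sfP\ominus\sfP')\cap z_i=\Zig(z_i)\sqcup\Zag(z_i)$, which is all of $z_i$, while an edge on no $z_i$ has both of its zigzag paths of slope $\neq\nu$ and is therefore treated identically by $\sfP$ and $\sfP'$, hence not in $\sfP\ominus\sfP'$. Thus $\sfP\ominus\sfP'$ is the union of the edge sets of $z_1,\dots,z_m$. As the symmetric difference of two perfect matchings meets every node in $0$ or $2$ edges, this union has maximal degree $2$, which forces $z_1,\dots,z_m$ to be pairwise disjoint; and by the consistency condition each $z_i$ is a non-self-intersecting closed path with $[z_i]=\nu\neq(0,0)$. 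Hence $\sfP\ominus\sfP'=z_1\sqcup\dots\sqcup z_m$ as a disjoint union of zigzag paths, regarded as $1$-cycles on $\TT$. Finally, labelling $\sfP,\sfP'$ so that $\sfP$ supplies the zigs and $\sfP'$ the zags of the $z_i$, the orientation convention for $\sfP\ominus\sfP'$ — an edge of $\sfP$ directed white-to-black and an edge of $\sfP'$ directed black-to-white — sends each zig and each zag of $z_i$ along its zigzag direction, so the orientations agree and the proof is complete.
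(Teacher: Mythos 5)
The paper offers no proof of this proposition at all—it is quoted directly from \cite[Corollary~3.8]{Gul} and \cite[Proposition~9.2, Corollary~9.3]{IU_special}—so the only point of comparison is the citation itself. Your reduction to the single key claim (that the adjacent corner perfect matchings agree off the block of zigzag paths of slope $\nu$ and swap zigs for zags on each of them) is sound, and since you explicitly defer that one nontrivial step to the same reference the paper relies on, your write-up is consistent with, and no less complete than, the paper's treatment.
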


For example, the perfect matchings $\sfP_0, \sfP_1, \sfP_2, \sfP_3$ in Example~\ref{ex_PM_corner} are corner perfect matchings and 
the zigzag paths shown in Figure~\ref{ex_dimer1_zigzag} can be obtained as the symmetric differences $\sfP_i\ominus\sfP_j$ for some $i, j=0, 1, 2, 3$, 
see also Example~\ref{ex_boundaryPM}. 

\begin{observation}[{see \cite[Proposition~4.15 and the last part of Section~4]{Moz}}]
\label{method_make_triangulation}
For a generic parameter $\theta\in\Theta(Q)_\RR$, 
there is a certain method to detect a smooth toric fan $\Sigma_\theta$ such that $X_{\Sigma_\theta}\cong\calM_\theta$ when we know perfect matchings in $\PM_\theta(\Gamma)$. 
To do so, we detect a triangulation $\Delta_\theta$ of $\Delta_\Gamma$ which is identical to $\Sigma_\theta$. 
First, we assign each perfect matching in $\PM_\theta(\Gamma)$ to the corresponding lattice point of $\Delta_\Gamma$. 
Then, for any pair of perfect matchings $(\sfP,\sfP^\prime)$ in $\PM_\theta(\Gamma)$, we check whether the set of arrows dual to $\sfP\cup \sfP^\prime$ 
is the cosupport of a $\theta$-stable representation or not. 
If so, then we draw a line segment that connects lattice points corresponding to $\sfP$ and $\sfP^\prime$. 
Repeating these arguments, we have a desired triangulation $\Delta_\theta$. 
\end{observation}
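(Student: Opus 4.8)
The plan is to reconstruct the fan $\Sigma_\theta$ — equivalently the triangulation $\Delta_\theta$ of $\Delta_\Gamma$ — one torus orbit at a time, by translating the combinatorics of $\Sigma_\theta$ into properties of the $\theta$-stable representations $M_y$ through the perfect matching description of rays in Proposition~\ref{corresp_pm} (1). Since $\calM_\theta\cong X_{\Sigma_\theta}$ is a smooth toric variety (Theorem~\ref{thm_crepant_dimer}), it suffices to determine which segments between lattice points of $\Delta_\Gamma$ occur in $\Delta_\theta$ and then to observe that a regular triangulation of a lattice polygon into elementary triangles is recovered from its $1$-skeleton: a triple of lattice points bounds a triangle of $\Delta_\theta$ exactly when its three sides are segments of $\Delta_\theta$ and its interior is free of lattice points. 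So the real task is the segment-detection criterion stated in the Observation.

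The crucial input I would establish first is the following refinement of Proposition~\ref{corresp_pm} (1): for a cone $\sigma\in\Sigma_\theta$ with rays $\rho_1,\dots,\rho_k$ and for any $y\in\calO_\sigma$, the cosupport of $M_y$ equals the set of arrows dual to the edges of $\sfP_{\rho_1}\cup\cdots\cup\sfP_{\rho_k}$; in particular the cosupport is constant along each orbit. This is precisely what is recorded in \cite[Proposition~4.15 and the last part of Section~4]{Moz}, so I would cite it, but it can also be argued directly: the torus $G$ of~\eqref{eq_torus_multi} rescales each $\varphi_a$ by a nowhere-vanishing character, so the vanishing of $\varphi_a$ is orbit-invariant; picking $y_0\in\calO_\sigma$, a point $y$ in the open orbit, and a one-parameter subgroup $\lambda$ of $G$ with $\lim_{t\to0}\lambda(t)\cdot y=y_0$, one has $\varphi_a(M_{y_0})=\lim_{t\to0}\lambda(t)\cdot\varphi_a(M_y)$, which vanishes exactly when the weight of $\lambda$ on the arrow $a$ is positive, and that positivity is the union, over the rays $\rho_i$ of $\sigma$, of the half-space conditions defining the $\rho_i$; by Proposition~\ref{corresp_pm} (1) applied ray by ray this means exactly $e_a\in\sfP_{\rho_i}$ for some $i$.

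Next I would specialize to $k=2$ to obtain the criterion. If lattice points $q,q'$ are joined by a segment of $\Delta_\theta$, i.e. their rays $\rho,\rho'$ span a two-dimensional cone $\tau\in\Sigma_\theta(2)$, then any $y\in\calO_\tau$ gives a $\theta$-stable $M_y$ whose cosupport is exactly the arrows dual to $\sfP_\rho\cup\sfP_{\rho'}$, so the union of the two matchings is realized as a cosupport. Conversely, suppose the arrows dual to $\sfP\cup\sfP'$ form the cosupport of a $\theta$-stable representation $M$, where $\sfP,\sfP'\in\PM_\theta(\Gamma)$ correspond to $q,q'$. Since $\calM_\theta$ is a fine moduli space, $M\cong M_y$ for a unique $y$, say $y\in\calO_\sigma$, whence $\bigcup_{\rho\in\sigma(1)}\sfP_\rho=\sfP\cup\sfP'$ as subsets of $\Gamma_1$. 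Here I would use the auxiliary fact (extractable from the perfect matching combinatorics, cf.\ Proposition~\ref{zigzag_sidepolygon} and \cite{Moz}) that a perfect matching contained in $\sfP\cup\sfP'$ can correspond only to a lattice point of the segment $\overline{qq'}$; then every ray of $\sigma$ sits over $\overline{qq'}$, so $\sigma$ cannot be three-dimensional, and, being strongly convex with $\sfP\neq\sfP'$ forcing at least two extremal rays, $\sigma$ must equal $\rho+\rho'$, which lies in $\Sigma_\theta$ only when $\overline{qq'}$ is primitive. Hence $q,q'$ are joined by a segment of $\Delta_\theta$, and together with the $1$-skeleton observation from the first paragraph this yields the asserted method.

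The hard part is the cosupport formula of the second paragraph — pinning down, for a point on an arbitrary orbit, exactly which maps $\varphi_a$ of $M_y$ vanish — which is the substantive input borrowed from \cite{Moz}. A secondary obstacle is the converse half of the criterion: one must exclude a $\theta$-stable representation whose cosupport equals $\sfP\cup\sfP'$ but which lies on an orbit $\calO_\sigma$ with $\sigma\neq\rho+\rho'$, and this rests on the auxiliary fact that the perfect matchings contained in a union $\sfP\cup\sfP'$ only ``see'' the segment $\overline{qq'}$.
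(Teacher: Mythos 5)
First, note that the paper does not prove this statement at all: it is an Observation recalled verbatim from \cite[Proposition~4.15 and the last part of Section~4]{Moz}, and the surrounding text (Proposition~\ref{prop_stablerep_cosupport}) is *derived from* it rather than used to prove it. So your proposal is being measured against Mozgovoy's argument, not against anything in this paper. Your architecture is the right one and matches what is actually needed: (i) the cosupport of $M_y$ for $y\in\calO_\sigma$ is the union of the arrows dual to the matchings of the rays of $\sigma$; (ii) specialize to two-dimensional cones to get the segment criterion; (iii) recover the elementary triangulation from its $1$-skeleton. Steps (i) and (iii) are fine (your one-parameter-subgroup sketch for (i) is essentially the statement that $\deg_{\sfP_\rho}(a)$ is the order of vanishing of $\varphi_a$ along $D_\rho$).

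The genuine gap is in the converse half of (ii), and specifically in the ``auxiliary fact'' you invoke: that a perfect matching contained in $\sfP\cup\sfP'$ can only correspond to a lattice point of the segment $\overline{qq'}$. As literally stated this is false. If $\sfP''\subseteq\sfP\cup\sfP'$, then $\sfP''\ominus\sfP$ is a disjoint union of a \emph{subset} of the cycles of $\sfP\ominus\sfP'$, so the lattice point of $\sfP''$ differs from $q$ by a partial sum of the homology classes of those cycles. When $q$ and $q'$ are not adjacent these cycles need not be parallel, and partial sums land at lattice points off the segment --- indeed this is exactly how Proposition~\ref{char_bound} manufactures matchings at intermediate points from two corner matchings whose symmetric difference is a family of parallel zigzag paths; for non-parallel cycles the same mechanism produces points off $\overline{qq'}$. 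Consequently your argument does not yet exclude the scenario in which the arrows dual to $\sfP\cup\sfP'$ form the cosupport of a stable representation sitting on an orbit $\calO_\sigma$ whose rays are \emph{not} $\rho$ and $\rho'$ but whose matchings happen to union to $\sfP\cup\sfP'$. Closing this requires either the finer combinatorial input of \cite{Moz} (restricting to $\theta$-stable matchings and analyzing which sub-sums of cycles are again $\theta$-stable), or the toric-geometric route via nonemptiness of intersections of the divisors $D_\rho$; simply citing Proposition~\ref{zigzag_sidepolygon} does not supply it. Since the whole content of the Observation is that the test detects exactly the segments of $\Delta_\theta$, this converse is the substantive point, and it remains unproven in your write-up.
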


For a generic parameter $\theta\in\Theta(Q)_\RR$ and an $r$-dimensional cone $\sigma\in\Sigma_\theta(r)$, 
we have a $(3-r)$-dimensional torus orbit $\calO_\sigma$ in $\calM_\theta$.  
For $y\in\calO_\sigma$, we have the corresponding $\theta$-stable representation $M_y$ of $(Q, \calJ_Q)$. 
Since the action of the open dense torus $G$ on $\calM_\theta$ is defined as in \eqref{eq_torus_multi}, 
we see that the support of all $\theta$-stable representations corresponding to points in $\calO_\sigma$ are the same, 
and hence we denote a representative of such $\theta$-stable representations by $M_\sigma$. 
In particular, we have the following proposition by Observation~\ref{method_make_triangulation}. 

\begin{proposition}
\label{prop_stablerep_cosupport}
Let $\theta\in\Theta(Q)_\RR$ be a generic parameter and $\sigma\in\Sigma_\theta(r)$ be an $r$-dimensional cone where $r=1,2,3$. 
The cosupport of the $\theta$-stable representation $M_\sigma$  
consists of the arrows dual to $\bigcup_{i=1}^r\sfP_i$, where $\sfP_1, \dots, \sfP_r$ are $\theta$-stable perfect matchings corresponding to the rays of $\sigma$. 
\end{proposition}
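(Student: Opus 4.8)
The plan is to reduce the $r$-dimensional statement to the ray case ($r=1$), which is essentially Proposition~\ref{corresp_pm}~(1), and then propagate it up the faces of $\sigma$ using the torus action and Observation~\ref{method_make_triangulation}. First I would fix a point $y\in\calO_\sigma$ and write $M_y=((M_v)_{v\in Q_0},(\varphi_a)_{a\in Q_1})$. Since $\sigma$ is an $r$-dimensional cone in the smooth fan $\Sigma_\theta$, it has exactly $r$ rays $\rho_1,\dots,\rho_r$, and by Proposition~\ref{corresp_pm}~(1) each $\rho_i$ determines a $\theta$-stable perfect matching $\sfP_i\coloneqq\sfP_{\rho_i}$ whose dual arrows form the cosupport of the $\theta$-stable representation attached to any point of the two-dimensional orbit $\calO_{\rho_i}$. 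The claim to prove is that the cosupport of $M_\sigma$ is precisely the set of arrows dual to $\bigcup_{i=1}^r\sfP_i$; equivalently $\Supp M_\sigma = Q_1\setminus\{a \mid e_a\in\bigcup_i\sfP_i\}$, where $e_a$ is the edge dual to $a$.

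The containment $\mathrm{cosupp}\,M_\sigma\supseteq\{a\mid e_a\in\bigcup_i\sfP_i\}$ is the easy direction: $\calO_\sigma$ lies in the closure of each $\calO_{\rho_i}$ (since $\rho_i$ is a face of $\sigma$), and the support of a $\theta$-stable representation can only \emph{drop} under specialization to a smaller orbit — an arrow whose map is zero on $\calO_{\rho_i}$ stays zero on the boundary orbit $\calO_\sigma$ by continuity/semicontinuity of the rank. Hence every arrow in the cosupport of $M_{\rho_i}$ is in the cosupport of $M_\sigma$, for every $i$, giving the inclusion. For the reverse inclusion I would invoke Observation~\ref{method_make_triangulation}: the cone $\sigma$ corresponds to a cell (line segment if $r=2$, elementary triangle if $r=3$) of the triangulation $\Delta_\theta$ spanned by the lattice points of $\sfP_1,\dots,\sfP_r$, and the very criterion by which that cell is drawn is that the arrows dual to $\bigcup_i\sfP_i$ form the cosupport of a $\theta$-stable representation — which, by the Orbit–Cone correspondence and genericity of $\theta$, is exactly $M_\sigma$ (the $\theta$-stable representation is unique once its support is fixed, up to the torus $G$-action \eqref{eq_torus_multi}, and all points of $\calO_\sigma$ share the same support). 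Thus the cosupport of $M_\sigma$ is contained in $\{a\mid e_a\in\bigcup_i\sfP_i\}$, and combining the two inclusions finishes the proof.

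The main obstacle is making the reverse inclusion genuinely rigorous rather than a restatement of Observation~\ref{method_make_triangulation}: one must argue that the $\theta$-stable representation produced by Mozgovoy's criterion for the cell $\sigma$ really is $M_\sigma$ and not merely \emph{some} $\theta$-stable representation with that cosupport. This is where uniqueness enters — for dimension vector $\barone$ and a generic $\theta$, the support of a $\theta$-stable representation determines it up to the $G$-action, so the isomorphism class of $M_y$ is constant on $\calO_\sigma=G\cdot y$ and is pinned down by its cosupport; combined with the fact that the cell attached to $\calO_\sigma$ under the Orbit–Cone correspondence is exactly the one spanned by $\sfP_1,\dots,\sfP_r$, this identifies the cosupport. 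I would also need the small combinatorial remark that the rays of an $r$-dimensional cone of the \emph{smooth} subdivision $\Sigma_\theta$ are exactly $r$ in number, so that "the $\theta$-stable perfect matchings corresponding to the rays of $\sigma$" is an unambiguous list $\sfP_1,\dots,\sfP_r$ — this is immediate from smoothness of $\calM_\theta$ (Theorem~\ref{thm_crepant_dimer}) but worth stating.
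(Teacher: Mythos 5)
Your proposal is correct and follows essentially the same route as the paper, which states this proposition with no proof beyond the phrase ``we have the following proposition by Observation~\ref{method_make_triangulation}'' (i.e., Mozgovoy's criterion). Your two inclusions --- closedness of the vanishing locus of $\phi(a)$ along orbit closures for one direction, and Mozgovoy's criterion combined with the constancy of the support along the $G$-orbit $\calO_\sigma$ for the other --- are a faithful and somewhat more detailed expansion of that one-line justification.
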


For a generic parameter $\theta\in\Theta(Q)_\RR$, the precise description of the tautological bundle $\calT_\theta=\bigoplus_{v\in Q_0}\calL_v$ 
can be obtained by using perfect matchings in $\PM_\theta(\Gamma)$. 
Here, we note how to compute $\calT_\theta$ from $\Gamma$ following \cite[Subsection~2.5]{BCQV_recipe}. 
Let $\overline{Q}$ be the \emph{double quiver} of $Q$, that is, $\overline{Q}$ can be obtained by adding an extra arrow $a^\ast\in (Q^{\rm op})_1$ 
in the opposite direction to $Q$ for any arrow $a\in Q_1$. We call a path in $\overline{Q}$ a \emph{weak path}. 
For a perfect matching $\sfP$ of $\Gamma$, we define the degree function $\deg_\sfP$ on $Q_1$ associated to $\sfP$ as 
\begin{equation}
\label{eq_degree_PM}
\deg_\sfP(a)=
\begin{cases}
1 & \text{the edge dual to $a\in Q_1$ is in $\sfP$}\\
0 & \text{otherwise}, 
\end{cases}
\end{equation}
for any $a\in Q_1$. 
We extend this degree function to the arrows in the double quiver $\overline{Q}$ as $\deg_\sfP(a^\ast)=-\deg_\sfP(a)$. 
Then for a weak path $\pth=a_1a_2\cdots a_r$ in $\overline{Q}$, we define 
\[\deg_\sfP(\pth)=\sum_{i=1}^r\deg_\sfP(a_i). \]

For $\rho\in\Sigma_\theta(1)$, there is a unique $\theta$-stable perfect matching corresponding to $\rho$ (see Proposition~\ref{corresp_pm}), 
which we denote by $\sfP_\rho$. 
Let $D_\rho$ be the torus-invariant prime divisor of $\calM_\theta$ corresponding to $\rho\in\Sigma_\theta(1)$. 
For a weak path $\pth$ in $\overline{Q}$, we define the divisor $D_\pth$ as follows:  
\[
D_\pth=\sum_{\rho\in \Sigma_\theta(1)}(\deg_{\sfP_\rho} \pth)D_\rho. 
\]
Note that for weak paths $\pth$, $\pth^\prime$ such that $\hd(\pth)=\hd(\pth^\prime)$ and $\tl(\pth)=\tl(\pth^\prime)$, 
we have $D_\pth=D_{\pth^\prime}$ in $\Pic\calM_\theta$. 

\begin{proposition}[{cf. \cite[Theorem~4.2]{BM_crepant}, \cite[Lemma~2.10]{BCQV_recipe}}]
\label{prop_compute_tautological}
Consider the moduli space $\calM_\theta$ for a generic parameter $\theta\in\Theta(Q)_\RR$. 
For the tautological bundle $\calT_\theta= \bigoplus_{v\in Q_0}\calL_v$ $($see {\rm Subsection~\ref{subsec_crepant_resolution}}$)$, 
we see that $\calL_v \cong \calO_{\calM_\theta}(D_{\pth_v})$ for any $v\in Q_0$, where $\pth_v$ is a weak path in $\overline{Q}$  from a vertex $0$ to a vertex $v$. 
\end{proposition}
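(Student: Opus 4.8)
The plan is to identify $\calL_v$ with a line bundle on the toric variety $\calM_\theta \cong X_{\Sigma_\theta}$ by computing its class in $\Pic \calM_\theta$, and then to recover that class from the combinatorics of perfect matchings via the degree functions $\deg_{\sfP}$. Since $\calM_\theta$ is a smooth toric variety, $\Pic\calM_\theta$ is generated by the torus-invariant divisors $D_\rho$, $\rho\in\Sigma_\theta(1)$, modulo principal divisors, so it suffices to pin down the intersection numbers of $\calL_v$ with the torus-invariant curves, equivalently the transition behaviour of $\calL_v$ across the two-dimensional torus orbits. The key geometric input is that $\calL_v$ is a summand of the tautological bundle $\calT_\theta$, which by the fine moduli property restricts on each point $y$ to the $\theta$-stable representation $M_y$; the map $\phi(a)\colon\calL_{\tl(a)}\to\calL_{\hd(a)}$ induced by an arrow $a$ is a nonzero section whose vanishing divisor is supported on exactly those prime divisors $D_\rho$ for which the edge dual to $a$ lies in the $\theta$-stable perfect matching $\sfP_\rho$ — this is precisely Proposition~\ref{corresp_pm}(1) together with Proposition~\ref{prop_stablerep_cosupport}, since $\varphi_a$ vanishes at $y\in\calO_\rho$ iff $a$ is in the cosupport of $M_\rho$ iff the edge dual to $a$ is in $\sfP_\rho$.

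First I would fix the weak path $\pth_v$ in $\overline{Q}$ from the distinguished vertex $0$ to $v$ and observe that, because $\calL_0\cong\calO_{\calM_\theta}$ by the normalization, the composite of the maps $\phi(a_i^{\pm})$ along $\pth_v$ gives a rational section of $\calL_v$ (an honest map $\calL_{\tl}\to\calL_{\hd}$ for a genuine arrow, and the inverse of such a map for a reversed arrow $a^\ast$). Next I would compute the divisor of this rational section: by the vanishing description above, each genuine arrow $a_i$ in $\pth_v$ contributes $\sum_\rho \deg_{\sfP_\rho}(a_i)\,D_\rho$ to the zero locus, and each reversed arrow $a_i^\ast$ contributes the negative of the corresponding divisor, i.e. $\sum_\rho \deg_{\sfP_\rho}(a_i^\ast)\,D_\rho$ with $\deg_{\sfP_\rho}(a^\ast)=-\deg_{\sfP_\rho}(a)$. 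Summing over $i$ yields $\divisor(\text{section})=\sum_\rho(\deg_{\sfP_\rho}\pth_v)D_\rho = D_{\pth_v}$, hence $\calL_v\cong\calO_{\calM_\theta}(D_{\pth_v})$. Finally I would check the statement is well-posed: for two weak paths $\pth,\pth'$ with the same head and tail, $\pth(\pth')^{-1}$ is a weak \emph{cycle}, and the corresponding rational function on $\calM_\theta$ witnesses $D_\pth\sim D_{\pth'}$; this is the remark recorded just before the proposition, and it also reconciles the fact that $A_Q$ has relations (the relations $\calJ_Q$ map to equalities of composites under $\phi$, hence to the trivial linear equivalence).

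The main obstacle is the bookkeeping at the reversed arrows: one must know that $\phi(a)\colon\calL_{\tl(a)}\to\calL_{\hd(a)}$ is not merely nonzero but, as a section of $\calL_{\hd(a)}\otimes\calL_{\tl(a)}^{-1}$, has vanishing divisor \emph{exactly} $\sum_\rho \deg_{\sfP_\rho}(a)D_\rho$ with the correct multiplicities (all equal to $1$), and in particular that it vanishes to order exactly one along $D_\rho$ when the dual edge lies in $\sfP_\rho$ and is nowhere zero in codimension one otherwise. This multiplicity-one statement is where consistency of the dimer model and the structure of $\theta$-stable representations of dimension vector $\barone$ enter — the line bundles $\calL_v$ have rank one, the torus $G$ acts with the weights recorded in \eqref{eq_torus_multi}, and the local description of $\calM_\theta$ near $\calO_\rho$ pins the order of vanishing. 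Once this local analysis is in hand (for which I would invoke the toric/Cox-ring description of $\calM_\theta$ and the correspondence of Observation~\ref{method_make_triangulation}, or simply cite \cite[Theorem~4.2]{BM_crepant} and \cite[Lemma~2.10]{BCQV_recipe} as the proposition's statement already does), the rest is the straightforward divisor computation sketched above.
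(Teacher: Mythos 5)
The paper does not prove this proposition; it is quoted from \cite[Theorem~4.2]{BM_crepant} and \cite[Lemma~2.10]{BCQV_recipe}, so there is no in-paper argument to compare against. Your sketch reconstructs the standard proof from those references and is essentially correct: the maps $\phi(a)$ supply rational sections whose zero divisors are read off from Proposition~\ref{corresp_pm}(1) and Proposition~\ref{prop_stablerep_cosupport}, composition along the weak path from $0$ to $v$ (using $\calL_0\cong\calO_{\calM_\theta}$) yields $D_{\pth_v}$, and independence of the path is the linear equivalence recorded before the statement. The one point you rightly isolate --- that $\phi(a)$ vanishes along $D_\rho$ with multiplicity exactly one --- is the genuine content; it follows from the explicit description of $\phi(a)$ on each toric chart as the monomial $\prod_{\rho}t_\rho^{\deg_{\sfP_\rho}(a)}$ with exponents in $\{0,1\}$, which is precisely the description the paper itself invokes later in the proof of Theorem~\ref{thm_main_wall} via the generating sections $t^{\deg(\pth_v^{\pm})}$. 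With that local statement supplied (or cited), your argument is complete.
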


\subsection{Wall-and-chamber structures}
\label{subsec_wall_chamber}

It is known that the space $\Theta(Q)_\RR$ of stability parameters has a \emph{wall-and-chamber structure}. 
Namely, we define an equivalence relation on the set of generic parameters so that $\theta\sim \theta^\prime$
if and only if any $\theta$-stable representation of $(Q,\calJ_Q)$ is also $\theta^\prime$-stable and vice versa, 
and this relation gives rise to the decomposition of stability parameters into finitely many chambers which are separated by walls (cf. \cite{DH_GIT, thadd_GIT}). 
Here, a \emph{chamber} is an open cone in $\Theta(Q)_\RR$ consisting of equivalent generic parameters 
and a \emph{wall} is a codimension one face of the closure of a chamber. 
Note that any generic parameter lies on some chamber (see \cite[Lemma~6.1]{IU_anycrepant}). 
The moduli space $\calM_\theta$ is unchanged unless a parameter $\theta$ moves in a chamber $C$ of $\Theta(Q)_\RR$ by definition, 
thus we sometimes use the notation $\calM_C$ instead of $\calM_\theta$ for $\theta\in C$. 

\medskip

Let $C,C^\prime$ be adjacent chambers of $\Theta(Q)_\RR$ separated by a wall $W$, that is, $W=\overline{C}\cap \overline{C^\prime}$. 
We choose generic parameters $\theta\in C$ and $\theta^\prime\in C^\prime$. 
We also choose a stability parameter $\theta_0\in W$ such that $\theta_0$ does not lie on any other walls. 
Note that $\theta_0$ is not generic since it is not contained in any chamber. 
Let $X_{\theta_0}$ be the normalization of an irreducible component of $\overline{\calM}_{\theta_0}$ containing 
the algebraic torus $G\subset \overline{\calM}_{\theta_0}$. 
Then, there exists a projective morphism from $\calM_\theta$ to $\overline{\calM}_{\theta_0}$ factoring through $X_{\theta_0}$: 
\[
\calM_\theta\xrightarrow{f}X_{\theta_0}\rightarrow\overline{\calM}_{\theta_0}, 
\]
see \cite[Section~6]{IU_anycrepant}, \cite[Subsection~4.2]{BCQV_recipe}. 
Similarly, we have a projective morphism $\calM_{\theta^\prime}\xrightarrow{f^\prime}X_{\theta_0}\rightarrow\overline{\calM}_{\theta_0}$, 
and we obtain a \emph{wall-crossing diagram}: 

\medskip

\begin{equation}
\label{eq_wall_crossing_diagram}
\begin{tikzpicture}

\node(moduli1) at (0,0)
{\scalebox{1}{\begin{tikzpicture}$\calM_\theta$\end{tikzpicture}}};

\node(moduli2) at (3,0)
{\scalebox{1}{\begin{tikzpicture}$\calM_{\theta^\prime}$\end{tikzpicture}}};

\node(cont) at (1.5,-1.3)
{\scalebox{1}{\begin{tikzpicture}$X_{\theta_0}$\end{tikzpicture}}};
\path (moduli1) ++(-35:0.33cm) coordinate (moduli1+); \path (moduli2) ++(-35:0.33cm) coordinate (moduli2+); 
\path (cont) ++(120:0.26cm) coordinate (cont-l); \path (cont) ++(25:0.55cm) coordinate (cont-r); 

\draw[->] (moduli1+) to node[midway,xshift=-0.1cm,yshift=-0.25cm] {\footnotesize $f$} (cont-l) ; 
\draw[->] (moduli2+) to node[midway,xshift=0.1cm,yshift=-0.25cm] {\footnotesize $f^\prime$} (cont-r) ; 
\end{tikzpicture}
\end{equation}

The morphism $f$ is a \emph{primitive birational contraction} which can be classified into several types as in \cite{Wil_walltypes}. 
In our situation, by \cite[Section~11]{IU_anycrepant}, it is one of the following types : 
\begin{itemize}
\setlength{\parskip}{0pt} 
\setlength{\itemsep}{3pt}
\item Type $0$ : $f: \calM_\theta\rightarrow X_{\theta_0}$ is an isomorphism. 
\item Type I : $f: \calM_\theta\rightarrow X_{\theta_0}$ contracts a torus-invariant curve to a point. 
\item Type $\typeIII$ : $f: \calM_\theta\rightarrow X_{\theta_0}$ contracts a torus-invariant surface to a torus-invariant curve. 
\end{itemize}
Note that a morphism contracting a surface to a point, which is called type \typeII, does not appear in our situation (see \cite[Lemma~10.5]{IU_anycrepant}). 
The wall-crossing diagram \eqref{eq_wall_crossing_diagram} is the Atiyah flop if $f$ is of type I (see \cite[Lemma~11.26]{IU_anycrepant}), 
in which case the contracted curve corresponds to a diagonal of a parallelogram appearing in the triangulation $\Delta_\theta$ and 
the Atiyah flop corresponds to the \emph{flip} of the diagonal. 
If $f:\calM_\theta\rightarrow X_{\theta_0}$ is of type $\typeIII$, then it contracts a toric divisor $D\subset\calM_\theta$ to a torus-invariant curve $\ell_0\subset X_{\theta_0}$ and $f^\prime$ also contracts a toric divisor $D^\prime\subset\calM_{\theta^\prime}$ to $\ell_0$. 
Moreover, we have an isomorphism $\calM_\theta\cong\calM_{\theta^\prime}$ (see \cite[Lemma~11.29]{IU_anycrepant}). 
The case of type $0$ appears if the polygon $\Delta_\Gamma$ contains an interior lattice point (see \cite[Subsection~11.1]{IU_anycrepant}), 
but in Section~\ref{sec_type1and3_wallcrossing}--\ref{sec_D4} which are the main parts of this paper, we do not encounter such a situation. 
Thus, we focus on the cases of type I and type $\typeIII$. 

For each wall $W$, we have a primitive birational contraction $f$ (and a wall-crossing diagram) as above, thus we also classify walls in $\Theta(Q)_\RR$ 
according to the corresponding type of primitive birational contractions. 
The precise description of a wall is determined by the degree of a contracted curve on the tautological bundle $\calT_\theta=\bigoplus_{v\in Q_0}\calL_v$ 
by the argument in \cite{IU_anycrepant} which was originally discussed in \cite{CrawIshii}.

\begin{proposition}[{\cite[Lemmas~11.21, 11.30 and (9.6)]{IU_anycrepant}}]
\label{prop_equation_wall}
Let the notation be as above. We suppose that a wall $W$ is either 
\begin{itemize}
\setlength{\parskip}{0pt} 
\setlength{\itemsep}{3pt}
\item of type {\rm I} corresponding to $f: \calM_\theta\rightarrow X_{\theta_0}$ that contracts a torus-invariant curve $\ell\subset\calM_\theta$ to a point, or
\item of type $\typeIII$ corresponding to $f: \calM_\theta\rightarrow X_{\theta_0}$ that contracts a toric divisor $D\subset\calM_\theta$ to 
a torus-invariant curve $\ell_0\subset X_{\theta_0}$, 
and let $\ell$ be a torus-invariant curve in $D$ which is contracted to a point in $\ell_0$ via $f$. 
\end{itemize}
Then, we have that 
\begin{align}
&\sum_{v\in Q_0}\deg(\calL_v |_\ell)(\theta_0)_v=0 \quad \text{for any $\theta_0\in W$.}
\label{eq_wall_equation}
\end{align}
\end{proposition}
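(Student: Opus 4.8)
The plan is to reduce the statement to a standard fact about the relationship between the degree of a line bundle along a torus-invariant curve and the GIT wall equation, following the line of argument already established in \cite{CrawIshii} and adapted in \cite{IU_anycrepant}. First I would fix a generic parameter $\theta\in C$ and the tautological bundle $\calT_\theta=\bigoplus_{v\in Q_0}\calL_v$ normalized so that $\calL_0\cong\calO_{\calM_\theta}$. Recall that $\calT_\theta$ is a tilting bundle whose endomorphism ring is the Jacobian algebra $A_Q$, and that crossing the wall $W$ corresponds to passing to the moduli space $\calM_{\theta^\prime}$ via the wall-crossing diagram \eqref{eq_wall_crossing_diagram}. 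The key point is that a representation $M_\sigma$ (for $\sigma$ a cone meeting the contracted curve $\ell$) that is $\theta$-stable for $\theta\in C$ lies on the wall $W$ precisely when its "stability value" $\theta_0(M_\sigma)$, computed on the relevant subrepresentation, vanishes.

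The heart of the argument is the identification $\theta_0(N)=\sum_{v\in Q_0}\deg(\calL_v|_\ell)\,(\theta_0)_v$ for the subrepresentation $N$ of the generic representation along $\ell$ that destabilizes it exactly at $\theta_0\in W$. Concretely: when $f$ contracts $\ell$ (type I) or contracts the divisor $D\supset\ell$ to a curve (type $\typeIII$), the generic $\theta$-stable representation $M_\ell$ parametrized along $\ell$ acquires, on the wall, a proper nonzero subrepresentation $N$ with $\theta_0(N)=0$; this is the defining mechanism by which $\theta_0$ fails to be generic. The dimension vector of $N$ at $v$ is determined by whether the tautological line $\calL_v$ "sees" the contraction, and a Riemann–Roch / degree count on the rational curve $\ell\cong\mathbb{P}^1$ shows $\dim_\CC N_v$ equals a normalized version of $\deg(\calL_v|_\ell)$. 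Summing $\theta_v$ against these multiplicities and using $\theta_0(N)=0$ gives exactly \eqref{eq_wall_equation}. I would carry this out by: (1) describing $\ell$ torically as the orbit closure of a codimension-one cone in $\Sigma_\theta$ (a line segment in the triangulation $\Delta_\theta$), shared by two maximal cones; (2) computing $\deg(\calL_v|_\ell)=D_{\pth_v}\cdot\ell$ using Proposition~\ref{prop_compute_tautological} and the toric intersection formula; (3) exhibiting the destabilizing subrepresentation $N$ explicitly from the supports/cosupports of the two adjacent $\theta$-stable representations $M_\sigma,M_{\sigma^\prime}$ described via Proposition~\ref{prop_stablerep_cosupport}; and (4) verifying $\sum_v\deg(\calL_v|_\ell)(\theta_0)_v=\theta_0(N)$ and that this vanishes for $\theta_0\in W$ by the definition of the wall.

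The main obstacle I expect is step (3)–(4): pinning down that the subrepresentation $N$ whose stability value vanishes on $W$ has dimension vector precisely $(\deg(\calL_v|_\ell))_v$ up to the overall normalization, and that this holds uniformly in both the type I and type $\typeIII$ cases (where in the latter one must restrict attention to a curve $\ell\subset D$ contracted to a point, not the curve $\ell_0$ itself). This requires care about orientations and about the fact that $\deg(\calL_v|_\ell)$ may be negative; one handles this by working with the short exact sequence of the contraction and noting that $\theta_0(N)=0$ is equivalent to $\theta_0$ lying on the hyperplane cut out by the vector $(\deg(\calL_v|_\ell))_v$. Since much of this is already packaged in \cite[Lemmas~11.21, 11.30, (9.6)]{IU_anycrepant}, the proof will largely consist of invoking that machinery and checking that our dimer-theoretic setup (via the explicit tautological bundle recipe of \cite{BCQV_recipe} and Proposition~\ref{prop_compute_tautological}) matches the hypotheses there; the genuinely new content is the translation into the language of perfect matchings, which is routine once the correspondence of Proposition~\ref{prop_stablerep_cosupport} is in hand.
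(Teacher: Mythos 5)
The paper offers no proof of this proposition: it is imported verbatim from \cite[Lemmas~11.21, 11.30 and (9.6)]{IU_anycrepant}, and your sketch correctly reconstructs the mechanism used there (and originally in \cite{CrawIshii}), namely that the wall equation is the vanishing of $\theta_0$ on the destabilizing subrepresentation of the generic representation along the contracted curve $\ell$, whose dimension vector matches $(\deg(\calL_v|_\ell))_v$ up to sign. Since you ultimately invoke the same cited lemmas, this is essentially the same (zero-length) proof as the paper's, with a correct outline of what those lemmas contain.
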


\begin{remark}
As we will see in Theorem~\ref{thm_main_wall} if $\calM_\theta$ is a projective crepant resolution of a toric cDV singularity, 
then any equation with the form \eqref{eq_wall_equation} certainly 
determines a wall of some chambers, which is a typical property for a toric cDV singularity. 
For a certain three-dimensional Gorenstein toric singularity whose toric diagram contains an interior lattice point, 
we encounter the situation that the equation \eqref{eq_wall_equation} derived from a torus-invariant curve in $\calM_\theta$ with $\theta\in C$ 
does not determine a wall intersected with $\overline{C}$, see \cite[Example~9.13]{CrawIshii}, \cite[Example~12.6]{IU_anycrepant}. 
\end{remark}

\section{\bf Observations of boundary perfect matchings} 
\label{section_boundaryPM}

In the latter half of this paper, we mainly use boundary perfect matchings, thus we show some properties of boundary perfect matchings in this section. 
Concerning properties of internal perfect matchings, see e.g., \cite[Sections~3 and 5]{Nak_2rep}. 

\subsection{Descriptions of boundary perfect matchings} 
\label{subsec_boundaryPM}

Let $\cPM, \cPM^\prime$ be adjacent corner perfect matchings of a consistent dimer model $\Gamma$. 
By Proposition~\ref{zigzag_sidepolygon}, zigzag paths $z_1,\dots, z_r$ having the same slope (i.e., $[z_1]=\cdots=[z_r]$) 
arise as $\cPM\ominus\cPM^\prime$, in which case we denote $\cPM\ominus\cPM^\prime=\{z_1,\dots, z_r\}$. 
We suppose that $\cPM\cap z_i=\Zig(z_i)$ and $\cPM^\prime\cap z_i=\Zag(z_i)$ for any $i=1,\dots,r$. 
The slope $[z_i]$ is the outer normal vector of the side of $\Delta_\Gamma$ whose endpoints are the vertices of $\Delta_\Gamma$ 
corresponding to $\cPM, \cPM^\prime$. We denote such a side by $E(\cPM, \cPM^\prime)$. 
Then, we observe the description of boundary perfect matchings using the corner ones. 

\begin{proposition}[{e.g., \cite[Proposition~4.35]{Bro_dimer}, \cite[Corollary~3.8]{Gul}}]
\label{char_bound}
Let $\cPM, \cPM^\prime$ be adjacent corner perfect matchings of a consistent dimer model $\Gamma$ such that $\cPM\ominus\cPM^\prime=\{z_1,\dots, z_r\}$. 
Let $E=E(\cPM, \cPM^\prime)$. 
Let $q$ be a lattice point on $E$ and $m$ be the number of primitive side segments of $E$ between $q$ and the lattice point corresponding to $\cPM$. 
Then, any perfect matching of the following form corresponds to $q$, and hence it is a boundary perfect matching: 
\begin{align*}
\sfP_I\coloneqq\Big(\cPM{\setminus}\bigcup_{i\in I}\Zig(z_i)\Big)\cup\bigcup_{i\in I}\Zag(z_i)
&=\Big(\cPM^\prime{\setminus}\bigcup_{i\in I^\rmc}\Zag(z_i)\Big)\cup\bigcup_{i\in I^\rmc}\Zig(z_i)\\
&=\bigcup_{i\in I^\rmc}\Zig(z_i)\cup\bigcup_{i\in I}\Zag(z_i)\cup\big(\cPM\cap\cPM^\prime\big)
\end{align*}
where $I$ is a subset of $[r]\coloneqq\{1, \dots, r\}$ with $m=|I|$ and $I^\rmc=[r]{\setminus}I$. 
Note that $\sfP_\varnothing=\cPM$ and $\sfP_{[r]}=\cPM^\prime$. 

Moreover, any boundary perfect matching takes this form. 
Thus, the number of boundary perfect matchings corresponding to the lattice point $q$ is $\dbinom{r}{m}$ in particular. 
\end{proposition}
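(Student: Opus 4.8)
The plan is to prove the three assertions of the proposition in order: that every $\sfP_I$ with $|I|=m$ is a perfect matching corresponding to $q$ and satisfies the displayed equalities; that conversely every boundary perfect matching corresponding to $q$ has this form; and that $I\mapsto\sfP_I$ is then a bijection from the $m$-element subsets of $[r]$ onto these perfect matchings, so that their number is $\dbinom{r}{m}$. Injectivity of $I\mapsto\sfP_I$ is clear, since the restriction of $\sfP_I$ to the edge set of $z_i$ is $\Zig(z_i)$ when $i\notin I$ and $\Zag(z_i)$ when $i\in I$.

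First I would record the decomposition $\cPM=(\cPM\cap\cPM^\prime)\sqcup\bigsqcup_{i=1}^r\Zig(z_i)$ and $\cPM^\prime=(\cPM\cap\cPM^\prime)\sqcup\bigsqcup_{i=1}^r\Zag(z_i)$, which is immediate from $\cPM\ominus\cPM^\prime=\{z_1,\dots,z_r\}$ together with the normalizations $\cPM\cap z_i=\Zig(z_i)$ and $\cPM^\prime\cap z_i=\Zag(z_i)$; granting this, the three displayed identities for $\sfP_I$ become pure set algebra. Next I would verify that $z_1,\dots,z_r$ are pairwise node-disjoint: if $z_i$ and $z_j$ met at a node $n$, then the unique edge of $\cPM$ covering $n$ would be both the zig of $z_i$ and the zig of $z_j$ at $n$, forcing a shared edge and hence $z_i=z_j$ by the consistency condition of Definition~\ref{def_consistent}. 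Since each $z_i$ is an alternating cycle for $\cPM$ — its edges alternate between $\Zig(z_i)\subseteq\cPM$ and $\Zag(z_i)$, which is disjoint from $\cPM$ — simultaneously rotating $\cPM$ along the node-disjoint cycles $\{z_i\}_{i\in I}$ again produces a perfect matching, and this perfect matching is $\sfP_I$.

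To pin down the lattice point attached to $\sfP_I$, compute $\sfP_I\ominus\cPM=\bigsqcup_{i\in I}z_i$, so that its class in $\rmH_1(\TT)$ is $|I|$ times the common slope $v_E$ of the $z_i$, i.e.\ $|I|$ times the outer normal vector of $E$. Under the correspondence between perfect matchings and lattice points of $\Delta_\Gamma$ (Proposition~\ref{corresp_pm}), the lattice point of a perfect matching depends affinely on this $\ominus$-class relative to a fixed corner matching, and $\cPM,\cPM^\prime$ are sent to the two endpoints of $E$; since $E$ comprises exactly $r$ primitive side segments (Proposition~\ref{zigzag_sidepolygon}), the lattice point of $\sfP_I$ is the one lying $|I|$ primitive segments from the vertex of $\cPM$ along $E$. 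Therefore $\sfP_I$ corresponds to $q$ exactly when $|I|=m$, and in particular it is a boundary perfect matching.

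The converse is the main point. Given a boundary perfect matching $\sfP$ corresponding to $q$, write $\sfP\ominus\cPM$ as its (necessarily) disjoint union of simple cycles alternating between $\sfP$-edges and $\cPM$-edges; it suffices to show that each of these cycles is one of $z_1,\dots,z_r$, for then $\sfP=\sfP_I$ with $I=\{i\mid z_i\subseteq\sfP\ominus\cPM\}$, and $|I|=m$ follows by matching homology classes as above. The key input is the extremality of the corner matching $\cPM$: rotating $\cPM$ along an alternating cycle moves the attached lattice point away from the vertex of $\cPM$ into $\Delta_\Gamma$, so the homology class of any alternating cycle for $\cPM$ lies in the two-dimensional cone spanned by the outer normals of the two sides of $\Delta_\Gamma$ at that vertex; a family of such cycles whose classes sum to the extreme-ray multiple $m\,v_E$ can therefore only consist of cycles of class $v_E$, and an alternating cycle for a corner matching of primitive slope $v_E$ is a zigzag path, hence one of the $z_i$. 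I expect this last step — controlling alternating cycles through a corner matching via the structure theory of corner (extremal) perfect matchings, as developed in \cite[\S 8--9]{IU_special} and \cite[\S 4]{Bro_dimer}, compare Proposition~\ref{prop_cPM_unique} — to be the technical heart of the argument, the rest being bookkeeping. Combining the direct and converse parts gives the bijection $I\mapsto\sfP_I$ and hence the count $\dbinom{r}{m}$.
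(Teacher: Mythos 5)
The paper itself does not prove this proposition; it is imported from \cite[Proposition~4.35]{Bro_dimer} and \cite[Corollary~3.8]{Gul}, so there is no internal argument to compare yours against, and your outline does follow the route taken in those references. The forward direction is essentially complete: the decomposition of $\cPM$ and $\cPM'$, the three set-theoretic identities, the node-disjointness of $z_1,\dots,z_r$ (every node of $z_i$ is covered by $\cPM$ through the zig of $z_i$ at that node, and a shared zig forces $z_i=z_j$ because the zigzag path through a given edge in a given direction is unique — this is really the determinacy of zigzag paths rather than the consistency condition, but the conclusion stands), the rotation of $\cPM$ along node-disjoint alternating cycles, and the computation $\sfP_I\ominus\cPM=\bigsqcup_{i\in I}z_i$ are all correct. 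You do lean on the fact that the lattice point attached to a perfect matching is an affine function of the homology class of its symmetric difference with a fixed corner matching; in this paper the correspondence is set up via Proposition~\ref{corresp_pm} (rays of $\Sigma_\theta$), so that compatibility is itself an appeal to \cite{Moz} and \cite{IU_moduli}, though it is standard and is essentially the definition used in \cite{Gul} and \cite{Bro_dimer}.

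The genuine gap is in the converse, and you flag it yourself. Two repairable omissions first: as stated, your cone argument does not exclude a component of $\sfP\ominus\cPM$ of class $0$ (it lies in the cone and contributes nothing to the sum $m\,v_E$), nor one of class $k\,v_E$ with $k\ge 2$. Both can be closed — the components are simple closed curves on $\TT$ and hence have zero or primitive homology class, and a null-homologous alternating cycle for $\cPM$ would yield a second perfect matching at the same vertex of $\Delta_\Gamma$, contradicting Proposition~\ref{prop_cPM_unique} — but you need to say this. The step that is actually missing is the final one: that an alternating cycle for $\cPM$ of primitive class $v_E$ must be one of the zigzag paths $z_1,\dots,z_r$. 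This is precisely the nontrivial content of the cited results; it requires the structural description of corner perfect matchings in terms of zigzag paths (e.g., that $\cPM$ meets every zigzag path with slope in the relevant half-plane exactly in its zigs), and you offer no argument beyond pointing at \cite{IU_special} and \cite{Bro_dimer}. As a self-contained proof, the proposal therefore stops exactly at the technical heart it correctly identifies.
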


\begin{example}
\label{ex_boundaryPM}
We consider the perfect matchings $\sfP_0$ and $\sfP_1$ in Example~\ref{ex_PM_corner}. 
These are corner perfect matchings and the symmetric difference $\sfP_0\ominus\sfP_1$ is the zigzag paths shown in the following figure. 

\medskip

\begin{center}
\scalebox{0.5}{
\begin{tikzpicture}
\newcommand{\noderad}{0.18cm} 
\newcommand{\edgewidth}{0.05cm} 
\newcommand{\nodewidthw}{0.05cm} 
\newcommand{\nodewidthb}{0.04cm} 
\newcommand{\zigzagwidth}{0.15cm} 
\newcommand{\zigzagcolor}{red} 

\basicdimerA
\draw[->, line width=\zigzagwidth, rounded corners, color=\zigzagcolor] (1,4)--(B1)--(W1)--(1,0); 
\draw[->, line width=\zigzagwidth, rounded corners, color=\zigzagcolor] (3,4)--(B2)--(W2)--(3,0); 
\draw[->, line width=\zigzagwidth, rounded corners, color=\zigzagcolor] (5,4)--(B3)--(W3)--(5,0); 

\node[red] at (1,-0.5) {\huge$z_1$}; \node[red] at (3,-0.5) {\huge$z_2$}; \node[red] at (5,-0.5) {\huge$z_3$}; 
\end{tikzpicture}}
\end{center}

The slopes of these zigzag paths are the outer normal vectors of the lower base of the zigzag polygon $\Delta(3,2)$ shown in Figure~\ref{ex_zigzagpolygon1}. 
In particular, $\sfP_0$ (resp. $\sfP_1$) corresponds to the lower left (resp. right) vertex of $\Delta(3,2)$. 
We fix the lower left vertex as the origin $(0, 0)$. 

For the above zigzag paths $z_1, z_2, z_3$, we have $\Zig(z_i)=\sfP_0\cap z_i$ and $\Zag(z_i)=\sfP_1\cap z_i$. 
Applying Proposition~\ref{char_bound} to subsets $\{1\}$ and $\{1, 3\}$, we have the perfect matchings as in Figure~\ref{fig_ex_PMfromzigzag}, 
which respectively corresponds to the lattice points $(1,0)$ and $(2,0)$ in $\Delta(3,2)$. 

\begin{figure}[H]
\begin{center}
\begin{tikzpicture}

\node at (0,0) {\scalebox{0.5}{
\begin{tikzpicture}
\newcommand{\noderad}{0.18cm} 
\newcommand{\edgewidth}{0.05cm} 
\newcommand{\nodewidthw}{0.05cm} 
\newcommand{\nodewidthb}{0.04cm} 
\newcommand{\pmwidth}{0.35cm} 
\newcommand{\pmcolor}{red!40} 

\draw[line width=\pmwidth, \pmcolor] (W1)--(B1); 
\draw[line width=\pmwidth, \pmcolor] (W2)--(3,0); 
\draw[line width=\pmwidth, \pmcolor] (W3)--(5,0); \draw[line width=\pmwidth, \pmcolor] (W4)--(7,0); 
\draw[line width=\pmwidth, \pmcolor] (3,4)--(B2); 
\draw[line width=\pmwidth, \pmcolor] (5,4)--(B3); \draw[line width=\pmwidth, \pmcolor] (7,4)--(B4); 
\basicdimerA
\end{tikzpicture}}}; 

\node at (0,-1.6){\small
$\sfP_{\{1\}}=\Big(\sfP_0{\setminus}\Zig(z_1)\Big)\cup\Zag(z_1)$}; 

\node at (7,0) {\scalebox{0.5}{
\begin{tikzpicture}
\newcommand{\noderad}{0.18cm} 
\newcommand{\edgewidth}{0.05cm} 
\newcommand{\nodewidthw}{0.05cm} 
\newcommand{\nodewidthb}{0.04cm} 
\newcommand{\pmwidth}{0.35cm} 
\newcommand{\pmcolor}{red!40} 

\draw[line width=\pmwidth, \pmcolor] (W1)--(B1); \draw[line width=\pmwidth, \pmcolor] (W3)--(B3); 
\draw[line width=\pmwidth, \pmcolor] (W2)--(3,0); \draw[line width=\pmwidth, \pmcolor] (W4)--(7,0); 
\draw[line width=\pmwidth, \pmcolor] (3,4)--(B2); \draw[line width=\pmwidth, \pmcolor] (7,4)--(B4); 
\basicdimerA
\end{tikzpicture}}}; 

\node at (7,-1.6){\small
$\sfP_{\{1,3\}}=\Big(\sfP_0{\setminus}\bigcup_{i\in \{1,3\}}\Zig(z_i)\Big)\cup\bigcup_{i\in \{1,3\}}\Zag(z_i)$}; 
\end{tikzpicture}
\end{center}
\caption{Examples of boundary perfect matchings determined by the zigzag paths $z_1, z_2, z_3$}
\label{fig_ex_PMfromzigzag}
\end{figure}
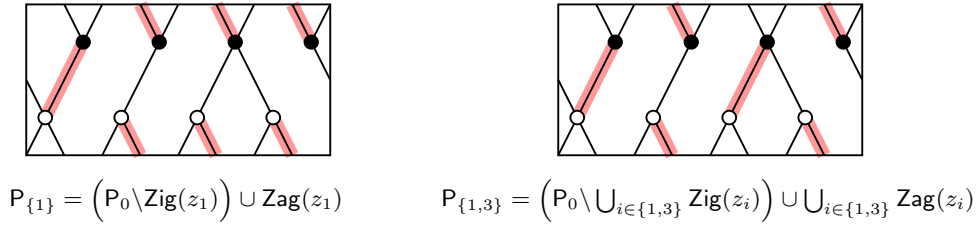
\end{example}

\subsection{Zigzag switchings} 
\label{subsec_switchings}

In order to handle boundary perfect matchings, we introduce a new operation, which we will use in Section~\ref{sec_variation_representation}. 

\begin{definition}
\label{def_switching}
Suppose that $\sfP_I$ is a boundary perfect matching as in Proposition~\ref{char_bound} for a subset $I$ of $[r]$.
Let $j\in [r]$, and hence $j\in I$ or $j\in I^\rmc$. 
We define the \emph{zigzag switching} of $\sfP_I$ with respect to $z_j$ (or $j$), denoted by $\BS_{z_j}(\sfP_I)$, as follows: 
\[
\BS_{z_j}(\sfP_I)=
\begin{cases}
\displaystyle\bigcup_{i\in I^\rmc\cup\{j\}}\Zig(z_i)\cup\bigcup_{i\in I{\setminus}\{j\}}\Zag(z_j)\cup\big(\cPM\cap\cPM^\prime\big) 
& (\text{if $j\in I$, equivalently $\Zag(z_i)\subset\sfP_I$}) \vspace{8pt} \\
\displaystyle\bigcup_{i\in I^\rmc{\setminus}\{j\}}\Zig(z_i)\cup\bigcup_{i\in I\cup\{j\}}\Zag(z_j)\cup\big(\cPM\cap\cPM^\prime\big) 
& (\text{if $j\in I^\rmc$, equivalently $\Zag(z_i)\subset\sfP_I$}). 
\end{cases}
\]
\end{definition}

\medskip

By definition and Proposition~\ref{char_bound}, we easily see that this operation satisfies the following properties. 

\begin{lemma}
Let $\sfP_I$ be a boundary perfect matching as in {\rm Proposition~\ref{char_bound}}. 
For $j, k\in [r]$, we have $\BS_{z_j}\BS_{z_j}(\sfP_I)=\sfP_I$ and $\BS_{z_j}\BS_{z_k}(\sfP_I)=\BS_{z_k}\BS_{z_j}(\sfP_I)$. 
\end{lemma}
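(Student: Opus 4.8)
The plan is to reduce both identities to elementary properties of symmetric differences of subsets of $[r]$. The starting point is Proposition~\ref{char_bound}: for the fixed adjacent corner perfect matchings $\cPM,\cPM^\prime$ with $\cPM\ominus\cPM^\prime=\{z_1,\dots,z_r\}$, every boundary perfect matching corresponding to a lattice point of the side $E=E(\cPM,\cPM^\prime)$ is of the form $\sfP_I$ for a subset $I\subseteq[r]$, and it can be written in the normal form
\[
\sfP_I=\bigcup_{i\in I^\rmc}\Zig(z_i)\cup\bigcup_{i\in I}\Zag(z_i)\cup(\cPM\cap\cPM^\prime),
\]
where the blocks $\Zig(z_i),\Zag(z_i)$ ($i\in[r]$) and $\cPM\cap\cPM^\prime$ are pairwise disjoint. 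Since each $z_i$ is a nontrivial cycle, $\Zig(z_i)$ and $\Zag(z_i)$ are disjoint and nonempty, so $I$ is recovered from $\sfP_I$ as $I=\{i\in[r]\mid\Zag(z_i)\subseteq\sfP_I\}$; in particular the zigzag switching $\BS_{z_j}$ is a well-defined operation on this family of perfect matchings.

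Next I would unwind Definition~\ref{def_switching} against this normal form. The effect of $\BS_{z_j}$ is to replace the single block $\Zig(z_j)$ by $\Zag(z_j)$ when $j\in I^\rmc$, and the block $\Zag(z_j)$ by $\Zig(z_j)$ when $j\in I$, leaving every other block and $\cPM\cap\cPM^\prime$ untouched. In terms of index sets this says $\BS_{z_j}(\sfP_I)=\sfP_{I\triangle\{j\}}$, where $\triangle$ denotes symmetric difference. Since $\sfP_{I\triangle\{j\}}$ is again a boundary perfect matching in the form of Proposition~\ref{char_bound}, for the same corner perfect matchings $\cPM,\cPM^\prime$ and the same zigzag paths $z_1,\dots,z_r$, the operation can be iterated.

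With this translation both assertions follow at once. Applying $\BS_{z_j}$ twice corresponds to $I\mapsto(I\triangle\{j\})\triangle\{j\}=I$, hence $\BS_{z_j}\BS_{z_j}(\sfP_I)=\sfP_I$. For $j,k\in[r]$, both $\BS_{z_j}\BS_{z_k}$ and $\BS_{z_k}\BS_{z_j}$ correspond to $I\mapsto I\triangle\{j\}\triangle\{k\}$ by commutativity of $\triangle$, so $\BS_{z_j}\BS_{z_k}(\sfP_I)=\BS_{z_k}\BS_{z_j}(\sfP_I)$. The only genuine content is the bookkeeping that $\BS_{z_j}$ toggles exactly one block of the normal form, so the main obstacle — a mild one — is to check that the three presentations of $\sfP_I$ in Proposition~\ref{char_bound} are consistent and that the two cases in Definition~\ref{def_switching} match up; this rests entirely on the pairwise disjointness of the edge sets of $z_1,\dots,z_r$ already used there. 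Once that is in place, nothing further is needed.
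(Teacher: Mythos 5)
Your proof is correct and matches the paper's treatment: the paper gives no explicit argument, asserting the lemma follows "by definition and Proposition~\ref{char_bound}," and your reduction of $\BS_{z_j}$ to the toggle $I\mapsto I\triangle\{j\}$ on index sets is exactly the bookkeeping that assertion leaves implicit. Nothing further is needed.
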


\begin{lemma}
Let $\Gamma$ be a consistent dimer model, and $\sfP_I$ be a boundary perfect matching as in {\rm Proposition~\ref{char_bound}}. 
Let $p,p^\prime$ be vertices of $\Delta_\Gamma$ corresponding to $\cPM,\cPM^\prime$, respectively. 
Let $q$ be a lattice point of $\Delta_\Gamma$ corresponding to $\sfP_I$, and $q^+, q^-$ be the lattice points on $E(\cPM,\cPM^\prime)$ 
next to $q$. 
We assume that $q^+$ $($resp. $q^-$$)$ is located between $p$ and $q$ $($resp. $p^\prime$ and $q$$)$. 

\begin{center}
{\scalebox{0.6}{
\begin{tikzpicture}
\newcommand{\edgewidth}{0.08cm} 
\newcommand{\nodewidth}{0.07cm} 
\newcommand{\noderad}{0.1} 

\node at (-2,0) {\LARGE$E(\cPM,\cPM^\prime)\,\,:$}; 

\foreach \n/\a in {1/0,2/2,3/4,4/6,5/8,6/10,7/12} {
\coordinate (V\n) at (\a,0); 
};

\foreach \s/\t in {1/2,3/4,4/5,6/7} {
\draw[line width=\edgewidth] (V\s)--(V\t); };
\draw[line width=\edgewidth, loosely dotted] (2.3,0)--(3.7,0); 
\draw[line width=\edgewidth, loosely dotted] (8.3,0)--(9.7,0); 

\foreach \n in {1,2,3,4,5,6,7} {
\draw [line width=\nodewidth, fill=black] (V\n) circle [radius=\noderad] ; 
};
\newcommand{\nodeshift}{0.7cm} 
\node[xshift=0cm,yshift=-\nodeshift] at (V1) {\LARGE$\cPM$}; 
\node[xshift=0cm,yshift=-\nodeshift] at (V4) {\LARGE$\sfP$}; 
\node[xshift=0cm,yshift=-\nodeshift]  at (V7) {\LARGE$\cPM^\prime$};
\node[xshift=0cm,yshift=\nodeshift] at (V1) {\LARGE$p$}; 
\node[xshift=0cm,yshift=\nodeshift] at (V3) {\LARGE$q^-$}; 
\node[xshift=0cm,yshift=\nodeshift-0.1cm] at (V4) {\LARGE$q$}; 
\node[xshift=0cm,yshift=\nodeshift] at (V5) {\LARGE$q^+$}; 
\node[xshift=0cm,yshift=\nodeshift] at (V7) {\LARGE$p^\prime$}; 
\end{tikzpicture}
} }
\end{center}

\noindent
Then we have the following. 
\begin{itemize}
\setlength{\parskip}{0pt} 
\setlength{\itemsep}{3pt}
\item[(1)] If $j\in I$, then $\BS_{z_j}(\sfP_I)$ corresponds to $q^-$. 
\item[(2)] If $j\in I^\rmc$, then $\BS_{z_j}(\sfP_I)$ corresponds to $q^+$. 
\item[(3)] If $j\in I$ and $k\in I^\rmc\cup\{j\}$, then $\BS_{z_k}\BS_{z_j}(\sfP_I)$ corresponds to $q$. 
\item[(4)] If $j\in I^\rmc$ and $k\in I\cup\{j\}$, then $\BS_{z_k}\BS_{z_j}(\sfP_I)$ corresponds to $q$. 
\end{itemize}
\end{lemma}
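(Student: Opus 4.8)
The plan is to reduce the whole statement to the single identity
\[
\BS_{z_j}(\sfP_I)=
\begin{cases}
\sfP_{I{\setminus}\{j\}} & \text{if } j\in I,\\
\sfP_{I\cup\{j\}} & \text{if } j\in I^\rmc,
\end{cases}
\]
and then to read off the corresponding lattice point from Proposition~\ref{char_bound}. To establish this identity I would just compare the defining expression for $\BS_{z_j}(\sfP_I)$ in Definition~\ref{def_switching} with the expression for $\sfP_{I{\setminus}\{j\}}$ (resp. $\sfP_{I\cup\{j\}}$) furnished by Proposition~\ref{char_bound}; the two coincide term by term once one uses the set identities $(I{\setminus}\{j\})^\rmc=I^\rmc\cup\{j\}$, valid when $j\in I$, and $(I\cup\{j\})^\rmc=I^\rmc{\setminus}\{j\}$, valid when $j\in I^\rmc$. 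In particular $\BS_{z_j}(\sfP_I)$ is again one of the boundary perfect matchings $\sfP_J$ produced by Proposition~\ref{char_bound} for the same data $\cPM,\cPM^\prime,z_1,\dots,z_r$, so the iterated switchings $\BS_{z_k}\BS_{z_j}(\sfP_I)$ appearing in (3) and (4) are meaningful.

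Granting the identity, parts (1) and (2) are immediate. By Proposition~\ref{char_bound}, for any $J\subseteq[r]$ the perfect matching $\sfP_J$ corresponds to the unique lattice point of $E(\cPM,\cPM^\prime)$ having exactly $|J|$ primitive side segments between it and $p$. Since $q$ corresponds to $\sfP_I$ with $|I|=m$, if $j\in I$ then $\BS_{z_j}(\sfP_I)=\sfP_{I{\setminus}\{j\}}$ has $|I{\setminus}\{j\}|=m-1$ and therefore corresponds to the neighbour of $q$ lying towards $p$, which is $q^-$ in the displayed picture; and if $j\in I^\rmc$ then $\BS_{z_j}(\sfP_I)=\sfP_{I\cup\{j\}}$ has $|I\cup\{j\}|=m+1$ and corresponds to the neighbour lying towards $p^\prime$, which is $q^+$.

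For parts (3) and (4) one applies the identity twice and counts cardinalities. In case (3), $j\in I$ gives $\BS_{z_j}(\sfP_I)=\sfP_{I{\setminus}\{j\}}$, and the hypothesis $k\in I^\rmc\cup\{j\}$ says precisely that $k\notin I{\setminus}\{j\}$, so the identity applies a second time to give $\BS_{z_k}\BS_{z_j}(\sfP_I)=\sfP_{(I{\setminus}\{j\})\cup\{k\}}$, which equals $\sfP_I$ when $k=j$. In case (4), $j\in I^\rmc$ gives $\BS_{z_j}(\sfP_I)=\sfP_{I\cup\{j\}}$, and $k\in I\cup\{j\}$ lets the identity yield $\BS_{z_k}\BS_{z_j}(\sfP_I)=\sfP_{(I\cup\{j\}){\setminus}\{k\}}$, again equal to $\sfP_I$ when $k=j$. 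In either case the resulting index set has cardinality $m$, so by Proposition~\ref{char_bound} the matching corresponds to the lattice point $q$.

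There is no real obstacle here: once the displayed identity for $\BS_{z_j}(\sfP_I)$ is checked, the remainder is elementary bookkeeping with subsets of $[r]$. The only points demanding a little care are the degenerate case $k=j$ in (3) and (4), which is absorbed uniformly into the formulas above, and keeping straight the orientation convention of the displayed picture, in which decreasing $|I|$ moves towards the vertex $p$ (which corresponds to $\cPM=\sfP_\varnothing$) and increasing $|I|$ moves towards $p^\prime$, so that $q^-$ and $q^+$ are the neighbours of $q$ as claimed.
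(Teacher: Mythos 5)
Your proof is correct and is essentially the argument the paper intends: the lemma is stated there without proof, as an immediate consequence of Definition~\ref{def_switching} and Proposition~\ref{char_bound}, and your identity $\BS_{z_j}(\sfP_I)=\sfP_{I\setminus\{j\}}$ for $j\in I$ (resp.\ $\sfP_{I\cup\{j\}}$ for $j\in I^\rmc$) together with the cardinality count is exactly the missing bookkeeping. You were also right to take the positions of $q^{+}$ and $q^{-}$ from the figure rather than from the sentence preceding it (the two disagree in the paper); the figure's convention is the one consistent with parts (1) and (2).
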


\begin{example}
Let the notation be as in Example~\ref{ex_boundaryPM}. 
For the perfect matching 
\[
\sfP_{\{1,3\}}=\Big(\sfP_0{\setminus}\bigcup_{i\in \{1,3\}}\Zig(z_i)\Big)\cup\bigcup_{i\in \{1,3\}}\Zag(z_i)
=\Zig(z_2)\cup\bigcup_{i\in \{1,3\}}\Zag(z_i)\cup\big(\sfP_0\cap\sfP_1\big), 
\]
we see that 
\[
\BS_{z_3}(\sfP_{\{1,3\}})=\sfP_{\{1\}}, \quad \BS_{z_2}(\sfP_{\{1,3\}})=\sfP_{\{1,2,3\}}=\sfP_1, \quad 
\BS_{z_1}\BS_{z_3}(\sfP_{\{1,3\}})=\BS_{z_1}(\sfP_{\{1\}})=\sfP_{\varnothing}=\sfP_0. 
\]
\end{example}

\medskip

\subsection{\bf Stable boundary perfect matchings} 

As we saw in Subsection~\ref{subsec_cone_pm}, for a given generic parameter $\theta\in\Theta(Q)_\RR$, 
we have the collection of $\theta$-stable perfect matchings $\PM_\theta(\Gamma)$ whose elements correspond bijectively to lattice points on $\Delta_\Gamma$. 
In what follows, we will identify $\theta$-stable perfect matchings with corresponding lattice points on $\Delta_\Gamma$. 

\begin{setting}
\label{set_stable_boundaryPM}
Let $\Gamma$, $\cPM$, $\cPM^{\prime}$ be the same as Subsection~\ref{subsec_boundaryPM}. 
For a generic parameter $\theta\in\Theta(Q)_\RR$, let $\sfP_1,\dots,\sfP_{r-1}\in\PM_\theta(\Gamma)$ be $\theta$-stable boudary perfect matchings 
that correspond bijectively to the $r-1$ strict interior lattice points on $E=E(\cPM, \cPM^\prime)$. 
In this setting, we can choose $\sfP_j$ so that the lattice length from $\cPM$ is $j$ (and hence the lattice length from $\cPM^\prime$ is $r-j$), 
which means that $\sfP_j$ takes the form 
\begin{equation}
\label{eq_stable_boundaryPM}
\sfP_j=\bigcup_{i\in I^\rmc_{j,\theta}}\Zig(z_i)\cup\bigcup_{i\in I_{j,\theta}}\Zag(z_i)\cup\big(\cPM\cap\cPM^\prime\big) 
\end{equation}
by Proposition~\ref{char_bound}, where $I_{j,\theta}$ is a subset of $[r]$ with $|I_{j,\theta}|=j$ and $I^\rmc_{j,\theta}=[r]{\setminus}I_{j,\theta}$. 

\medskip
\begin{center}
{\scalebox{0.6}{
\begin{tikzpicture}
\newcommand{\edgewidth}{0.08cm} 
\newcommand{\nodewidth}{0.07cm} 
\newcommand{\noderad}{0.1} 

\node at (-2,0) {\LARGE$E(\cPM,\cPM^\prime)\,\,:$}; 

\foreach \n/\a in {1/0,2/2,3/4,4/6,5/8,6/10,7/12} {
\coordinate (V\n) at (\a,0); 
};

\foreach \s/\t in {1/2,3/4,4/5,6/7} {
\draw[line width=\edgewidth] (V\s)--(V\t); };
\draw[line width=\edgewidth, loosely dotted] (2.3,0)--(3.7,0); 
\draw[line width=\edgewidth, loosely dotted] (8.3,0)--(9.7,0); 

\foreach \n in {1,2,3,4,5,6,7} {
\draw [line width=\nodewidth, fill=black] (V\n) circle [radius=\noderad] ; 
};
\newcommand{\nodeshift}{0.7cm} 
\node[xshift=0cm,yshift=-\nodeshift] at (V1) {\LARGE$\cPM$}; 
\node[xshift=0cm,yshift=-\nodeshift] at (V2) {\LARGE$\sfP_1$}; 
\node[xshift=0cm,yshift=-\nodeshift] at (V3) {\LARGE$\sfP_{j-1}$}; 
\node[xshift=0cm,yshift=-\nodeshift] at (V4) {\LARGE$\sfP_j$}; 
\node[xshift=0cm,yshift=-\nodeshift] at (V5) {\LARGE$\sfP_{j+1}$}; 
\node[xshift=0cm,yshift=-\nodeshift] at (V6) {\LARGE$\sfP_{r-1}$}; 
\node[xshift=0cm,yshift=-\nodeshift]  at (V7) {\LARGE$\cPM^\prime$};
\end{tikzpicture}
} }
\end{center}
Note that for any $j=1,\dots,r-1$ a subset $I_{j,\theta}$ is determined uniquely for a given $\theta$. 
\end{setting}

\begin{lemma}
\label{lem_condition_stable_boundaryPM}
Let the notation be as in {\rm Setting~\ref{set_stable_boundaryPM}}. 
For any $i=1, \dots, r$ and $j=1,\dots,r-2$, we see that if $\sfP_j\cap z_i=\Zag(z_i)$, then $\sfP_{j+1}\cap z_i=\Zag(z_i)$. 
\end{lemma}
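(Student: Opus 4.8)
The plan is to realize the segment joining the lattice points of $\sfP_j$ and $\sfP_{j+1}$ as a two-dimensional cone of $\Sigma_\theta$, to reinterpret the assertion as an inclusion $I_{j,\theta}\subseteq I_{j+1,\theta}$ of the index sets from Setting~\ref{set_stable_boundaryPM}, and to contradict the $\theta$-stability of the representation attached to that cone if the inclusion fails.

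First I would rephrase the statement: since $\sfP_j\cap z_i=\Zag(z_i)$ is equivalent to $i\in I_{j,\theta}$, the conclusion ``$\sfP_j\cap z_i=\Zag(z_i)\Rightarrow\sfP_{j+1}\cap z_i=\Zag(z_i)$ for all $i$'' is exactly $I_{j,\theta}\subseteq I_{j+1,\theta}$. Assume it fails. Since $|I_{j+1,\theta}|=|I_{j,\theta}|+1$ while $I_{j,\theta}\not\subseteq I_{j+1,\theta}$, a short size count gives $|I_{j+1,\theta}\setminus I_{j,\theta}|\ge 2$; fix distinct $i_1,i_2\in I_{j+1,\theta}\setminus I_{j,\theta}$. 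Then for $k=1,2$ equation~\eqref{eq_stable_boundaryPM} gives $\Zig(z_{i_k})\subseteq\sfP_j$ and $\Zag(z_{i_k})\subseteq\sfP_{j+1}$, so every edge of $z_{i_k}$ lies in $\sfP_j\cup\sfP_{j+1}$.

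Next I would invoke that $\calM_\theta$ is a smooth projective crepant resolution (Theorem~\ref{thm_crepant_dimer}): then $\Delta_\theta$ is a unimodular triangulation of $\Delta_\Gamma$, hence has every lattice point of $\Delta_\Gamma$ as a vertex, and therefore subdivides the side $E$ into its primitive side segments. In particular the segment joining the lattice points of $\sfP_j$ and $\sfP_{j+1}$ is a cone $\sigma\in\Sigma_\theta(2)$, and by Proposition~\ref{prop_stablerep_cosupport} the cosupport of the $\theta$-stable representation $M_\sigma$ consists of the arrows dual to $\sfP_j\cup\sfP_{j+1}$. By the previous paragraph, in the bad case every arrow dual to an edge of $z_{i_1}$ or of $z_{i_2}$ lies in this cosupport, i.e.\ is a zero map in $M_\sigma$.

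The core step, which I expect to be the main obstacle, is to upgrade this into a destabilization of $M_\sigma$. The key input is that $z_1,\dots,z_r$ are pairwise vertex- and edge-disjoint --- this is what the notation $\cPM\ominus\cPM'=\{z_1,\dots,z_r\}$ encodes --- so $z_{i_1}$ and $z_{i_2}$ are two disjoint simple closed curves on $\TT$ of the same primitive slope, and cutting $\TT$ along them produces two annuli $U_1,U_2$. Since $z_{i_1}\cup z_{i_2}$ is a subcomplex of the cell decomposition $\Gamma$, each face of $\Gamma$ lies in exactly one $U_m$, which partitions $Q_0$ as $A\sqcup B$ with $A,B\ne\varnothing$; and any arrow of $Q$ from $A$ to $B$ is dual to an edge of $z_{i_1}\cup z_{i_2}$ and so vanishes in $M_\sigma$. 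Hence the subrepresentations of $M_\sigma$ supported on $A$ and on $B$ are nonzero and proper, so $\theta$-stability forces $\sum_{v\in A}\theta_v>0$ and $\sum_{v\in B}\theta_v>0$, which is impossible since $\sum_{v\in A}\theta_v+\sum_{v\in B}\theta_v=\theta(M_\sigma)=0$. This contradiction proves $I_{j,\theta}\subseteq I_{j+1,\theta}$. The only subtle points are that each $U_m$ contains at least one face (so $A,B\ne\varnothing$) and that no face straddles the two annuli (so that $A,B$ are well defined), both of which follow from the disjointness of $z_{i_1}$ and $z_{i_2}$ within $\Gamma$.
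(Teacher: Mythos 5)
Your proof is correct and follows essentially the same route as the paper's: find two distinct zigzag paths among $z_1,\dots,z_r$ all of whose edges lie in $\sfP_j\cup\sfP_{j+1}$, note that two disjoint cycles of the same slope split $\TT$ (hence the support quiver of the stable representation attached to the $2$-cone on $\sfP_j,\sfP_{j+1}$) into two parts, and destabilize. The only cosmetic difference is that the paper takes one path from $I_{j,\theta}\setminus I_{j+1,\theta}$ and one from $I_{j+1,\theta}\setminus I_{j,\theta}$, whereas you take two from $I_{j+1,\theta}\setminus I_{j,\theta}$ via the cardinality count; both choices work identically.
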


\begin{proof}
We assume that $\sfP_j\cap z_i=\Zag(z_i)$ and $\sfP_{j+1}\cap z_i=\Zig(z_i)$. 
Since $|I_{j,\theta}|+1=|I_{j+1,\theta}|$, there exists a zigzag path $z_k$ such that 
$\sfP_j\cap z_k=\Zig(z_k)$ and $\sfP_{j+1}\cap z_k=\Zag(z_k)$, and hence $z_i\neq z_k$. 
Since $\sfP_j$, $\sfP_{j+1}$ are $\theta$-stable, there is a $\theta$-stable representation $M=((M_v)_{s\in Q_0}, (\varphi_a)_{a\in Q_1})$ 
of dimension vector $\barone$ 
such that the cosupport of $M$ contains all arrows dual to edges composing the zigzag paths $z_i$ or $z_k$ (see Propositions~\ref{prop_stablerep_cosupport} and \ref{char_bound}). 
Since the slopes of $z_i$ and $z_k$ are the same, these zigzag paths divide the two-torus $\TT$ into two parts. 
Thus the quiver supporting $M$ is divided into two connected parts which we will denote by $Q_-$, $Q_+$. 
We note that $\sum_{v\in (Q_-)_0}\theta_v+\sum_{v\in (Q_+)_0}\theta_v=0$, and may assume that 
$\sum_{v\in (Q_-)_0}\theta_v=-\sum_{v\in (Q_+)_0}\theta_v<0$. 
Then, a subrepresentation $N=((N_v)_{v\in Q_0}, (\varphi_a)_{a\in Q_1})$ of $M$ such that 
\[\begin{cases}
\dim N_v=1& (\text{for any $v\in (Q_-)_0$}) \\
\dim N_v=0& (\text{for any $v\in (Q_+)_0$})
\end{cases}\]
satisfies $\theta(N)<0$, which is a contradiction. 
\end{proof}

\begin{proposition}
\label{prop_description_boundaryPM}
Let the notation be as in {\rm Setting~\ref{set_stable_boundaryPM}}. 
For any $\theta$-stable non-corner boundary perfect matching $\sfP_j$ $(j=1, \dots, r-1)$, 
there exists a unique sequence $(z_{i_1},\dots,z_{i_r})$ of zigzag paths such that 
$\{i_1,\dots,i_r\}=\{1,\dots,r\}$ and 
\begin{equation}
\label{eq_bPM_description}
\sfP_j=\bigcup_{k=j+1}^r\Zig(z_{i_k})\cup\bigcup_{k=1}^j\Zag(z_{i_k})\cup\big(\cPM\cap\cPM^\prime\big). 
\end{equation}
\end{proposition}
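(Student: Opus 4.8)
The plan is to show that the $\theta$-stable boundary perfect matchings $\sfP_1, \dots, \sfP_{r-1}$ sitting on the interior lattice points of $E$ are totally ordered in a way compatible with their positions on $E$, and to extract the desired sequence $(z_{i_1}, \dots, z_{i_r})$ from this ordering. Concretely, by Proposition~\ref{char_bound} each $\sfP_j$ is determined by a subset $I_{j,\theta} \subset [r]$ with $|I_{j,\theta}| = j$, and the set of zigzag paths $z_i$ on which $\sfP_j$ restricts to $\Zag(z_i)$ is exactly $I_{j,\theta}$; the claim to establish is that these subsets form a chain
\[
\varnothing = I_{0,\theta} \subsetneq I_{1,\theta} \subsetneq \cdots \subsetneq I_{r-1,\theta} \subsetneq I_{r,\theta} = [r],
\]
where we set $\sfP_0 \coloneqq \cPM$ and $\sfP_r \coloneqq \cPM^\prime$. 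Once this is in place, since consecutive sets in the chain differ by a single element, there is a unique enumeration $i_1, \dots, i_r$ of $[r]$ with $I_{j,\theta} = \{i_1, \dots, i_j\}$ for every $j$, and then rewriting \eqref{eq_stable_boundaryPM} with $I_{j,\theta}^\rmc = \{i_{j+1}, \dots, i_r\}$ gives exactly \eqref{eq_bPM_description}. Uniqueness of the sequence is immediate from uniqueness of each $I_{j,\theta}$ (noted at the end of Setting~\ref{set_stable_boundaryPM}) together with the chain property.

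The heart of the argument is the chain property, and this is where Lemma~\ref{lem_condition_stable_boundaryPM} does the work. That lemma states precisely that if $\sfP_j \cap z_i = \Zag(z_i)$ then $\sfP_{j+1} \cap z_i = \Zag(z_i)$, i.e. $i \in I_{j,\theta} \implies i \in I_{j+1,\theta}$, which is exactly $I_{j,\theta} \subseteq I_{j+1,\theta}$ for $j = 1, \dots, r-2$. Since $|I_{j+1,\theta}| = |I_{j,\theta}| + 1$, the inclusion is strict. I would then need to handle the two boundary steps $j = 0$ and $j = r-1$ separately: for $j=0$, $\sfP_0 = \cPM$ has $\cPM \cap z_i = \Zig(z_i)$ for all $i$ by the convention fixed in Subsection~\ref{subsec_boundaryPM}, so $I_{0,\theta} = \varnothing \subseteq I_{1,\theta}$ trivially; for $j = r-1$, I would apply Lemma~\ref{lem_condition_stable_boundaryPM} in the form adapted to the top — or, more cleanly, run the symmetric version of that lemma's proof with the roles of $\cPM$ and $\cPM^\prime$ (equivalently $\Zig$ and $\Zag$) interchanged, which shows $I_{r-1,\theta}^\rmc \supseteq I_{r,\theta}^\rmc = \varnothing$ is consistent and that $\sfP_{r-1} \cap z_i = \Zig(z_i) \implies \sfP_r \cap z_i = \Zig(z_i)$, i.e. $I_{r-1,\theta} \subseteq I_{r,\theta} = [r]$, again strict by cardinality. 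Alternatively one observes directly that $\sfP_r = \cPM^\prime$ has $I_{r,\theta} = [r] \supseteq I_{r-1,\theta}$ with no work at all.

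The main obstacle I anticipate is not the combinatorial bookkeeping — which is routine once the chain is known — but making sure the two endpoint steps are genuinely covered, since Lemma~\ref{lem_condition_stable_boundaryPM} is only stated for $j = 1, \dots, r-2$ and only compares strict interior matchings. The cleanest route is to note that $\sfP_0 = \cPM$ and $\sfP_r = \cPM^\prime$ automatically satisfy $I_{0,\theta} = \varnothing$ and $I_{r,\theta} = [r]$ by Proposition~\ref{char_bound} (with $I = \varnothing$ and $I = [r]$ respectively), so the only inclusions that require the substance of Lemma~\ref{lem_condition_stable_boundaryPM} are the middle ones $I_{j,\theta} \subseteq I_{j+1,\theta}$ for $1 \le j \le r-2$, and the endpoint inclusions $\varnothing \subseteq I_{1,\theta}$ and $I_{r-1,\theta} \subseteq [r]$ are vacuous. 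Thus the whole proof reduces to: invoke Proposition~\ref{char_bound} to get the subset description, invoke Lemma~\ref{lem_condition_stable_boundaryPM} to get the chain, read off the enumeration, and substitute. I would write it in about a paragraph.
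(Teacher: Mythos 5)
Your proposal is correct and follows essentially the same route as the paper: the paper's proof also builds the enumeration $i_1,\dots,i_r$ inductively by using Lemma~\ref{lem_condition_stable_boundaryPM} to force $I_{j,\theta}\subseteq I_{j+1,\theta}$ and the cardinality count $|I_{j,\theta}|=j$ to conclude that each step adds exactly one new index. Your extra care about the endpoint cases $j=0$ and $j=r-1$ is harmless but unnecessary, since (as you yourself note) those inclusions are vacuous and the paper simply starts the induction at $\sfP_1$.
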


\begin{proof}
By \eqref{eq_stable_boundaryPM}, any boundary perfect matching $\sfP_j$ is determined by the set $I_{j,\theta}$. 
Thus, we will detect zigzag paths whose intersections with $\sfP_j$ are their zags for identifying $\sfP_j$. 

For the perfect matching $\sfP_1$, since $|I_{1,\theta}|=1$, we have a zigzag path $z_{i_1}\in\{z_1,\dots,z_r\}$ such that $\sfP_1\cap z_{i_1}=\Zag(z_{i_1})$, 
that is, $I_{1,\theta}=\{i_1\}$. 
Next, for the perfect matching $\sfP_2$, 
since $|I_{2,\theta}|=2$ and $\sfP_2\cap z_{i_1}=\Zag(z_{i_1})$ by Lemma~\ref{lem_condition_stable_boundaryPM}, 
we have a zigzag path $z_{i_2}\in\{z_1,\dots,z_r\}{\setminus}\{z_{i_1}\}$ such that $\sfP_2\cap z_{i_2}=\Zag(z_{i_2})$, 
thus $I_{2,\theta}=\{i_1,i_2\}$. 
Repeating these arguments, we have the assertion. 
\end{proof}

\section{\bf Dimer models associated to $R_{a,b}$}
\label{sec_dimer_ab}

Let $a, b$ be integers with $a\ge 1$ and $a\ge b\ge0$. 
In what follows, we consider toric cDV singularities discussed in Subsection~\ref{subsec_cDVtoric}. 
In particular, we focus on a toric $cA_{a+b-1}$ singularity: 
\[
R_{a,b}\coloneqq \CC[x,y,z,w]/(xy-z^aw^b). 
\]
Recall that the toric diagram of $R_{a,b}$ is the trapezoid $\Delta(a,b)$ shown in Figure~\ref{fig_hypersurf_lattice}, 
see also Example~\ref{ex_toric1}. 

\medskip

By Theorem~\ref{exist_dimer}, there exists a consistent dimer model $\Gamma$ whose zigzag polygon is the trapezoid $\Delta(a,b)$, 
although it is not unique in general. 
By the arguments in \cite[Subsection~1.2]{Nag}, such a consistent dimer model takes the form of a tiling of the real two-torus $\TT=\RR^2/\ZZ^2$ 
by rhombi and hexagons. 
We here recall the precise construction. 
First, we place an infinite number of rhombi (resp. hexagons) in a line as shown in Figure~\ref{fig_basictile_SH}, 
and we denote the union of such rhombi (resp. hexagons) by $S$ (resp. $H$). We assume that all sides of rhombi and hexagons have the same length. 

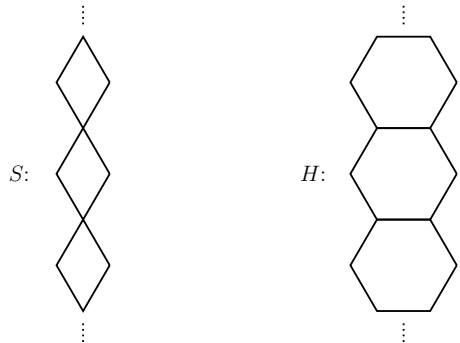
\begin{figure}[H]
\begin{center}
\scalebox{0.7}{
\begin{tikzpicture}
\newcommand{\edgewidth}{0.035cm} 
\newcommand{\boundaryrad}{1cm} 
\node at (0,0) {
\begin{tikzpicture}
\foreach \n/\a/\d in {1/0/cos(60), 2/90/cos(30), 3/180/cos(60), 4/270/cos(30)} 
{\coordinate (S1\n) at (\a:{\d}); \path (S1\n) ++(0,{2*cos(30)}) coordinate (S2\n); \path (S2\n) ++(0,{2*cos(30)}) coordinate (S3\n);} 
\foreach \m in {1,2,3} {\draw[line width=\edgewidth] (S\m1)--(S\m2)--(S\m3)--(S\m4)--(S\m1);}

\node at (-1.2,{2*cos(30)}) {\Large$S$:}; 
\draw[line width=\edgewidth, dotted] (0,-1.1)--(0,-1.5); \draw[line width=\edgewidth, dotted] (0,{4*cos(30)+1.1})--(0,{4*cos(30)+1.5}); 
\end{tikzpicture}}; 

\node at (6,0) {
\begin{tikzpicture}
\foreach \n/\a in {1/0, 2/60, 3/120, 4/180, 5/240, 6/300} 
{\coordinate (H1\n) at (\a:\boundaryrad); \path (H1\n) ++(0,{2*cos(30)}) coordinate (H2\n); \path (H2\n) ++(0,{2*cos(30)}) coordinate (H3\n); } 
\foreach \m in {1,2,3} {\draw[line width=\edgewidth] (H\m1)--(H\m2)--(H\m3)--(H\m4)--(H\m5)--(H\m6)--(H\m1);}

\node at (-1.7,{2*cos(30)}) {\Large$H$:}; 
\draw[line width=\edgewidth, dotted] (0,-1.1)--(0,-1.5); \draw[line width=\edgewidth, dotted] (0,{4*cos(30)+1.1})--(0,{4*cos(30)+1.5}); 
\end{tikzpicture}}; 
\end{tikzpicture}}
\end{center}
\caption{Infinite number of rhombi and hexagons lied in a line}
\label{fig_basictile_SH}
\end{figure}

Let $n\coloneqq a+b$. We consider the tuple $(i_1, \dots, i_n)$ defined as 
\begin{equation}
\label{eq_initial_sequence}
i_k\coloneqq\begin{cases}
-1& (k=1, 2, \dots, a) \\
+1& (k=a+1, a+2, \dots, n). 
\end{cases}
\end{equation}
Then, for $\perm\in\fkS_n$ and the tuple $(i_1, \dots, i_n)$, we define the map $t_\perm : [n]=\{1,2,\dots,n\}\rightarrow \{S, H\}$ as 
\[
t_\perm(k)= 
\begin{cases}
S & (\text{if $i_{\perm(k)}\neq i_{\perm(k+1)}$})\\
H & (\text{if $i_{\perm(k)}= i_{\perm(k+1)}$})
\end{cases}
\]
for any $k=1,\dots, n$. 
We extend the map $t_\perm$ to $\widetilde{t}_\perm:\ZZ\rightarrow  \{S, H\}$ by setting 
$\widetilde{t}_\perm(l)=t_\perm(k)$ for any $l\in \ZZ$ with $l \equiv k \pmod n$. 
We label all rhombi in $S$ with $l \pmod n$ if $\widetilde{t}_\perm(l)=S$, 
and label all hexagons in $H$ with $l \pmod n$ if $\widetilde{t}_\perm(l)=H$. 
Then, we arrange $S$ and $H$ labeled with $l \pmod n$ along the cyclic order determined by $l \pmod n$ so that they tile the plane $\RR^2$.  
By taking a minimum-area parallelogram such that each vertex lies on the center of rhombi or hexagon labeled by $0$, 
we can cut out a fundamental domain of $\TT$ from the tiling of $\RR^2$. 
This induces a cell decomposition of $\TT$ by rhombi and hexagons, which can be considered as a graph on $\TT$. 
We color the vertices of this graph with either black or white so that the resulting graph is bipartite, see Figure~\ref{fig_dimer_initial}. 
Note that there are several choices of a fundamental domain of $\TT$ and there are two choices of a coloring of the graph, but in any case 
the graph is a consistent dimer model and its zigzag polygon is unimodular equivalent to $\Delta(a,b)$. 
For simplicity, we always choose a fundamental domain of $\TT$ (and a $\ZZ$-basis of $\rmH_1(\TT)$) 
such that the resulting dimer model, which we will denote by $\Gamma_\perm$, satisfies $\Delta_{\Gamma_\perm}=\Delta(a,b)$. 
Also, in the following, we reuse the labels of faces of $\Gamma_\perm$ as the labels of vertices of the associated quiver $Q_\perm$. 

\begin{example}
\label{ex_tiling_SH}
Let $a=3$, $b=2$. We consider the tuple $(i_1, i_2, i_3, i_4, i_5)=(-1,-1,-1,+1,+1)$ and the identity element $\id\in\fkS_5$. 
Then we have 
\[
\big(t_{\id}(1), t_{\id}(2), t_{\id}(3), t_{\id}(4), t_{\id}(5)\big)=(H,H,S,H,S). 
\]
Then we consider the tiling of $\RR^2$ by labeled rhombi and hexagons determined by $(\widetilde{t}_{\id}(l))_{l\in\ZZ}$, 
and take a fundamental domain of $\TT$. 
We color the vertices with either black or white, and obtain the dimer model $\Gamma_{\id}$ whose zigzag polygon is $\Delta(3,2)$ as shown in the right of Figure~\ref{fig_dimer_initial}. 
This dimer model coincides with the dimer model given in Figure~\ref{ex_dimer1_basic} up to homotopy equivalence. 

\begin{figure}[H]
\begin{center}
\scalebox{0.7}{
\begin{tikzpicture}
\newcommand{\edgewidth}{0.035cm} 
\newcommand{\boundaryrad}{1cm} 
\newcommand{\noderad}{0.1cm} 
\newcommand{\nodewidthw}{0.035cm} 
\newcommand{\nodewidthb}{0.025cm} 

\node at (0,0) {
\begin{tikzpicture}
\node at (0,0) {
\begin{tikzpicture}
\foreach \n/\a/\d in {1/0/cos(60), 2/90/cos(30), 3/180/cos(60), 4/270/cos(30)} 
{\coordinate (S1\n) at (\a:{\d}); \path (S1\n) ++(0,{2*cos(30)}) coordinate (S2\n); \path (S2\n) ++(0,{2*cos(30)}) coordinate (S3\n); \path (S3\n) ++(0,{2*cos(30)}) coordinate (S4\n);} 
\foreach \m in {1,2,3,4} {\draw[line width=\edgewidth] (S\m1)--(S\m2)--(S\m3)--(S\m4)--(S\m1);}
\foreach \x/\y in {0/0, 0/{2*cos(30)}, 0/{4*cos(30)}, 0/{6*cos(30)}} {\node[blue] at (\x,{\y}) {\large$0$};} 
\end{tikzpicture}}; 

\node at (1,0) {
\begin{tikzpicture}
\foreach \n/\a in {1/0, 2/60, 3/120, 4/180, 5/240, 6/300} 
{\coordinate (H1\n) at (\a:\boundaryrad); \path (H1\n) ++(0,{2*cos(30)}) coordinate (H2\n); \path (H2\n) ++(0,{2*cos(30)}) coordinate (H3\n); } 
\foreach \m in {1,2,3} {\draw[line width=\edgewidth] (H\m1)--(H\m2)--(H\m3)--(H\m4)--(H\m5)--(H\m6)--(H\m1);}
\foreach \x/\y in {0/0, 0/{2*cos(30)}, 0/{4*cos(30)}} {\node[blue] at (\x,{\y}) {\large$1$};} 
\end{tikzpicture}}; 

\node at (2.5,0) {
\begin{tikzpicture}
\foreach \n/\a in {1/0, 2/60, 3/120, 4/180, 5/240, 6/300} 
{\coordinate (H1\n) at (\a:\boundaryrad); \path (H1\n) ++(0,{2*cos(30)}) coordinate (H2\n); \path (H2\n) ++(0,{2*cos(30)}) coordinate (H3\n); \path (H3\n) ++(0,{2*cos(30)}) coordinate (H4\n);} 
\foreach \m in {1,2,3,4} {\draw[line width=\edgewidth] (H\m1)--(H\m2)--(H\m3)--(H\m4)--(H\m5)--(H\m6)--(H\m1);}
\foreach \x/\y in {0/0, 0/{2*cos(30)}, 0/{4*cos(30)}, 0/{6*cos(30)}} {\node[blue] at (\x,{\y}) {\large$2$};} 
\end{tikzpicture}}; 

\node at (3.5,0) {
\begin{tikzpicture}
\foreach \n/\a/\d in {1/0/cos(60), 2/90/cos(30), 3/180/cos(60), 4/270/cos(30)} 
{\coordinate (S1\n) at (\a:{\d}); \path (S1\n) ++(0,{2*cos(30)}) coordinate (S2\n); \path (S2\n) ++(0,{2*cos(30)}) coordinate (S3\n);} 
\foreach \m in {1,2,3} {\draw[line width=\edgewidth] (S\m1)--(S\m2)--(S\m3)--(S\m4)--(S\m1);}
\foreach \x/\y in {0/0, 0/{2*cos(30)}, 0/{4*cos(30)}} {\node[blue] at (\x,{\y}) {\large$3$};} 
\end{tikzpicture}}; 

\node at (4.5,0) {
\begin{tikzpicture}
\foreach \n/\a in {1/0, 2/60, 3/120, 4/180, 5/240, 6/300} 
{\coordinate (H1\n) at (\a:\boundaryrad); \path (H1\n) ++(0,{2*cos(30)}) coordinate (H2\n); \path (H2\n) ++(0,{2*cos(30)}) coordinate (H3\n); \path (H3\n) ++(0,{2*cos(30)}) coordinate (H4\n);} 
\foreach \m in {1,2,3,4} {\draw[line width=\edgewidth] (H\m1)--(H\m2)--(H\m3)--(H\m4)--(H\m5)--(H\m6)--(H\m1);}
\foreach \x/\y in {0/0, 0/{2*cos(30)}, 0/{4*cos(30)}, 0/{6*cos(30)}} {\node[blue] at (\x,{\y}) {\large$4$};} 
\end{tikzpicture}}; 

\node at (5.5,0) {
\begin{tikzpicture}
\foreach \n/\a/\d in {1/0/cos(60), 2/90/cos(30), 3/180/cos(60), 4/270/cos(30)} 
{\coordinate (S1\n) at (\a:{\d}); \path (S1\n) ++(0,{2*cos(30)}) coordinate (S2\n); \path (S2\n) ++(0,{2*cos(30)}) coordinate (S3\n);} 
\foreach \m in {1,2,3} {\draw[line width=\edgewidth] (S\m1)--(S\m2)--(S\m3)--(S\m4)--(S\m1);}
\foreach \x/\y in {0/0, 0/{2*cos(30)}, 0/{4*cos(30)}} {\node[blue] at (\x,{\y}) {\large$0$};} 
\end{tikzpicture}}; 
\end{tikzpicture}};

\node at (10,0){
\begin{tikzpicture}
\draw[gray, line width=\edgewidth, dashed] (0,{-cos(30)})--(0,{cos(30)})--(5.5,0)--(5.5,{-2*cos(30)})--(0,{-cos(30)}); 

\node at (0,0) {
\begin{tikzpicture}
\foreach \n/\a/\d in {1/0/cos(60), 2/90/cos(30), 3/180/cos(60), 4/270/cos(30)} 
{\coordinate (S1\n) at (\a:{\d}); \path (S1\n) ++(0,{2*cos(30)}) coordinate (S2\n); \path (S2\n) ++(0,{2*cos(30)}) coordinate (S3\n); \path (S3\n) ++(0,{2*cos(30)}) coordinate (S4\n);} 
\foreach \m in {1,2,3,4} {\draw[line width=\edgewidth] (S\m1)--(S\m2)--(S\m3)--(S\m4)--(S\m1);}
\foreach \m/\n in {1/1,1/3, 2/1, 2/3, 3/1,3/3, 4/1, 4/3} {\filldraw [fill=black, line width=\nodewidthb] (S\m\n) circle [radius=\noderad] ;}; 
\foreach \m/\n in {1/2,1/4, 2/2, 2/4, 3/2,3/4, 4/2, 4/4} {\filldraw [fill=white, line width=\nodewidthw] (S\m\n) circle [radius=\noderad] ;}; 
\foreach \x/\y in {0/0, 0/{2*cos(30)}, 0/{4*cos(30)}, 0/{6*cos(30)}} {\node[blue] at (\x,{\y}) {\large$0$};} 
\end{tikzpicture}}; 

\node at (1,0) {
\begin{tikzpicture}
\foreach \n/\a in {1/0, 2/60, 3/120, 4/180, 5/240, 6/300} 
{\coordinate (H1\n) at (\a:\boundaryrad); \path (H1\n) ++(0,{2*cos(30)}) coordinate (H2\n); \path (H2\n) ++(0,{2*cos(30)}) coordinate (H3\n); } 
\foreach \m in {1,2,3} {\draw[line width=\edgewidth] (H\m1)--(H\m2)--(H\m3)--(H\m4)--(H\m5)--(H\m6)--(H\m1);}
\foreach \m/\n in {1/1,1/3, 1/5, 2/1, 2/3, 2/5, 3/1, 3/3, 3/5} {\filldraw [fill=black, line width=\nodewidthb] (H\m\n) circle [radius=\noderad] ;}; 
\foreach \m/\n in {1/2,1/4, 1/6, 2/2, 2/4, 2/6, 3/2, 3/4, 3/6} {\filldraw [fill=white, line width=\nodewidthw] (H\m\n) circle [radius=\noderad] ;}; 
\foreach \x/\y in {0/0, 0/{2*cos(30)}, 0/{4*cos(30)}} {\node[blue] at (\x,{\y}) {\large$1$};} 
\end{tikzpicture}}; 

\node at (2.5,0) {
\begin{tikzpicture}
\foreach \n/\a in {1/0, 2/60, 3/120, 4/180, 5/240, 6/300} 
{\coordinate (H1\n) at (\a:\boundaryrad); \path (H1\n) ++(0,{2*cos(30)}) coordinate (H2\n); \path (H2\n) ++(0,{2*cos(30)}) coordinate (H3\n); \path (H3\n) ++(0,{2*cos(30)}) coordinate (H4\n);} 
\foreach \m in {1,2,3,4} {\draw[line width=\edgewidth] (H\m1)--(H\m2)--(H\m3)--(H\m4)--(H\m5)--(H\m6)--(H\m1);}
\foreach \m/\n in {1/1,1/3, 1/5, 2/1, 2/3, 2/5, 3/1, 3/3, 3/5, 4/1, 4/3, 4/5} {\filldraw [fill=black, line width=\nodewidthb] (H\m\n) circle [radius=\noderad] ;}; 
\foreach \m/\n in {1/2,1/4, 1/6, 2/2, 2/4, 2/6, 3/2, 3/4, 3/6, 4/2, 4/4, 4/6} {\filldraw [fill=white, line width=\nodewidthw] (H\m\n) circle [radius=\noderad] ;}; 
\foreach \x/\y in {0/0, 0/{2*cos(30)}, 0/{4*cos(30)}, 0/{6*cos(30)}} {\node[blue] at (\x,{\y}) {\large$2$};} 
\end{tikzpicture}}; 

\node at (3.5,0) {
\begin{tikzpicture}
\foreach \n/\a/\d in {1/0/cos(60), 2/90/cos(30), 3/180/cos(60), 4/270/cos(30)} 
{\coordinate (S1\n) at (\a:{\d}); \path (S1\n) ++(0,{2*cos(30)}) coordinate (S2\n); \path (S2\n) ++(0,{2*cos(30)}) coordinate (S3\n);} 
\foreach \m in {1,2,3} {\draw[line width=\edgewidth] (S\m1)--(S\m2)--(S\m3)--(S\m4)--(S\m1);}
\foreach \m/\n in {1/2,1/4, 2/2, 2/4, 3/2,3/4} {\filldraw [fill=black, line width=\nodewidthb] (S\m\n) circle [radius=\noderad] ;}; 
\foreach \m/\n in {1/1,1/3, 2/1, 2/3, 3/1,3/3} {\filldraw [fill=white, line width=\nodewidthw] (S\m\n) circle [radius=\noderad] ;}; 
\foreach \x/\y in {0/0, 0/{2*cos(30)}, 0/{4*cos(30)}} {\node[blue] at (\x,{\y}) {\large$3$};} 
\end{tikzpicture}}; 

\node at (4.5,0) {
\begin{tikzpicture}
\foreach \n/\a in {1/0, 2/60, 3/120, 4/180, 5/240, 6/300} 
{\coordinate (H1\n) at (\a:\boundaryrad); \path (H1\n) ++(0,{2*cos(30)}) coordinate (H2\n); \path (H2\n) ++(0,{2*cos(30)}) coordinate (H3\n); \path (H3\n) ++(0,{2*cos(30)}) coordinate (H4\n);} 
\foreach \m in {1,2,3,4} {\draw[line width=\edgewidth] (H\m1)--(H\m2)--(H\m3)--(H\m4)--(H\m5)--(H\m6)--(H\m1);}
\foreach \m/\n in {1/2,1/4, 1/6, 2/2, 2/4, 2/6, 3/2, 3/4, 3/6, 4/2, 4/4, 4/6} {\filldraw [fill=black, line width=\nodewidthb] (H\m\n) circle [radius=\noderad] ;}; 
\foreach \m/\n in {1/1,1/3, 1/5, 2/1, 2/3, 2/5, 3/1, 3/3, 3/5, 4/1, 4/3, 4/5} {\filldraw [fill=white, line width=\nodewidthw] (H\m\n) circle [radius=\noderad] ;}; 
\foreach \x/\y in {0/0, 0/{2*cos(30)}, 0/{4*cos(30)}, 0/{6*cos(30)}} {\node[blue] at (\x,{\y}) {\large$4$};} 
\end{tikzpicture}}; 

\node at (5.5,0) {
\begin{tikzpicture}
\foreach \n/\a/\d in {1/0/cos(60), 2/90/cos(30), 3/180/cos(60), 4/270/cos(30)} 
{\coordinate (S1\n) at (\a:{\d}); \path (S1\n) ++(0,{2*cos(30)}) coordinate (S2\n); \path (S2\n) ++(0,{2*cos(30)}) coordinate (S3\n);} 
\foreach \m in {1,2,3} {\draw[line width=\edgewidth] (S\m1)--(S\m2)--(S\m3)--(S\m4)--(S\m1);}
\foreach \m/\n in {1/1,1/3, 2/1, 2/3, 3/1,3/3} {\filldraw [fill=black, line width=\nodewidthb] (S\m\n) circle [radius=\noderad] ;}; 
\foreach \m/\n in {1/2,1/4, 2/2, 2/4, 3/2,3/4} {\filldraw [fill=white, line width=\nodewidthw] (S\m\n) circle [radius=\noderad] ;}; 
\foreach \x/\y in {0/0, 0/{2*cos(30)}, 0/{4*cos(30)}} {\node[blue] at (\x,{\y}) {\large$0$};} 
\end{tikzpicture}}; 
\end{tikzpicture}}; 

\end{tikzpicture}}
\end{center}
\caption{}
\label{fig_dimer_initial}
\end{figure}
\end{example}

The properties below follow from the construction of the dimer model $\Gamma_\perm$

\begin{lemma}
\label{lem_zigzag_nointersection}
Let $\Gamma_\perm$ be the consistent dimer model for some $\perm\in\fkS_n$. 
Let $z_1, \dots, z_n$ be zigzag paths on $\Gamma_\perm$ such that for any $i=1, \dots, n$ the slope $[z_i]$ is either $(0,-1)$ or $(0,1)$. 
Then we have the following. 
\begin{itemize}
\setlength{\parskip}{0pt} 
\setlength{\itemsep}{3pt}
\item[(1)] Any pair of zigzag paths in $\{z_1, \dots, z_n\}$ do not intersect each other, that is, they do not share any edge.
\item[(2)] For any $i=1, \dots, n$, the zigzag path $z_i$ consists of the edges shared by faces labeled $k-1$ and $k$$\pmod n$ for some $k=1, \dots, n$. 
\item[(3)] For any $i=1, \dots, n$, the zigzag path $z_i$ consists of two edges, and hence both $\Zig(z_i)$ and $\Zag(z_i)$ consist of a single edge. 
\item[(4)] An edge dual to a loop arrow in $Q_\perm$, which appears on a joint of hexagons in $H$, is not contained in any zigzag path $z_i$. 
\end{itemize}
\end{lemma}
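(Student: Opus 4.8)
The plan is to derive all four statements by unwinding the explicit construction of $\Gamma_\perm$ from the strips $S$ and $H$ (Figures~\ref{fig_basictile_SH} and~\ref{fig_dimer_initial}); the only substantive work is a short local case analysis along a common boundary of two columns.

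\emph{First, I would record the column structure.} By construction the faces of $\Gamma_\perm$ fall into $n$ columns carrying the labels $0,1,\dots,n-1$ in cyclic order, each column being a stack of translates of a single rhombus (if $\widetilde t_\perm=S$) or of a single hexagon (if $\widetilde t_\perm=H$). Hence two faces that share an edge carry labels that are either equal or consecutive modulo $n$. The equal case occurs precisely when the edge is the horizontal joint of two vertically stacked hexagons of one $H$-column; by the definition of $Q_\perm$ such an edge is dual to a loop at the corresponding vertex, and every loop of $Q_\perm$ arises in this way. In the consecutive case the edge lies on the common boundary of columns $k-1$ and $k$ for a \emph{unique} $k\in[n]$; write $G_k$ for the set of edges on that boundary. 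Thus the edge set of $\Gamma_\perm$ is partitioned into the loop‑dual edges together with the sets $G_1,\dots,G_n$, and $G_k\cap G_{k'}=\varnothing$ for $k\neq k'$. Since the fundamental domain contains exactly one tile per column, each $G_k$ consists of exactly two edges: the two right-hand edges of the unique tile of column $k-1$, which are glued to the two left-hand edges of the unique tile of column $k$.

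\emph{The main step is to show that a zigzag path of slope $(0,\pm1)$ is exactly some $G_k$.} Here I would run through the (at most four) possibilities for the pair of types of the adjacent columns $k-1$ and $k$; in each case the local picture of $\Gamma_\perm$ along $G_k$ is rigidly determined, since all tiles have unit side length and the strips are glued edge to edge. One then checks, directly from the turning rule of Definition~\ref{def_zigzag} (maximal left turn at a white node, maximal right turn at a black node), that applying it to one edge of $G_k$ returns the other edge of $G_k$ and then the first again; so $G_k$ is a closed zigzag path with two edges, and as a $1$-cycle it is homologous to $(0,\pm1)$ in the basis of $\rmH_1(\TT)$ for which $\Delta_{\Gamma_\perm}=\Delta(a,b)$. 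Conversely, a zigzag path that uses an edge outside a single $G_k$ — in particular one crossing a hexagon joint or entering the interior of a column — accumulates nonzero horizontal homology over a period, hence cannot have slope $(0,\pm1)$. (This is consistent with counting: by Proposition~\ref{zigzag_sidepolygon} applied to the bottom and top sides of $\Delta(a,b)$, of lattice lengths $a$ and $b$, there are exactly $a+b=n$ zigzag paths of slope $(0,\pm1)$, matching the $n$ column boundaries.) Therefore $z_1,\dots,z_n$ are, after reindexing, precisely $G_1,\dots,G_n$, and the four assertions now follow: part~(4) because a loop-dual edge has two equally labelled adjacent faces, so it lies in no $G_k$; part~(2) because $z_i=G_{k_i}$ consists of edges separating the faces labelled $k_i-1$ and $k_i$; part~(3) because $z_i=G_{k_i}$ has two edges and a zigzag path alternates zigs and zags, so $\Zig(z_i)$ and $\Zag(z_i)$ are singletons; and part~(1) because for $i\neq j$ we have $z_i=G_{k_i}$, $z_j=G_{k_j}$ with $k_i\neq k_j$ and $G_{k_i}\cap G_{k_j}=\varnothing$.

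The hard part will be the case analysis in the main step: drawing each configuration of adjacent column types and verifying both that the turning rule sends $G_k$ to itself and that any step off $G_k$ — especially across a hexagon joint — produces nonzero horizontal winding. Everything else is bookkeeping with the column labels. One should also be a little careful to work throughout with the fundamental domain and $\ZZ$-basis of $\rmH_1(\TT)$ normalized so that $\Delta_{\Gamma_\perm}=\Delta(a,b)$, so that "slope $(0,\pm1)$" genuinely refers to the top and bottom sides of the trapezoid.
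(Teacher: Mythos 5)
The paper gives no proof of this lemma at all: it is introduced with the sentence ``The properties below follow from the construction of the dimer model $\Gamma_\perm$'' and then simply stated. So you are supplying an argument the author left implicit, and your argument is correct in substance. The structural bookkeeping is right: one tile per column in the fundamental domain, so the edge set of $\Gamma_\perm$ is partitioned into the loop-dual joint edges of the $H$-columns together with the two-element inter-column boundaries $G_1,\dots,G_n$, and once each $G_k$ is known to be a closed zigzag path wrapping once vertically, parts (1)--(4) are immediate. Two comments on the middle step. First, your ``converse'' via horizontal winding is the least precise part of the write-up, but it is also dispensable: the count of vertical zigzag paths ($a$ of slope $(0,-1)$ and $b$ of slope $(0,1)$, hence exactly $n$ in total) already forces $\{z_1,\dots,z_n\}=\{G_1,\dots,G_n\}$; note that this count comes from the definition of the zigzag polygon (the paragraph preceding Theorem~\ref{exist_dimer}) rather than from Proposition~\ref{zigzag_sidepolygon} itself. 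Second, the entire mathematical content of the lemma then sits in the deferred local verification that the turning rule of Definition~\ref{def_zigzag} cycles the two edges of $G_k$ into each other; this is routine but should genuinely be drawn out, taking care with how adjacent columns are glued (the tiles of neighbouring columns are matched with a vertical offset, so the two nodes of $G_k$ are the extremal right vertex of the tile in column $k-1$ and the identified top/bottom vertex, not a na\"ive left-to-right vertex matching). The paper's own Figures~\ref{ex_dimer1_zigzag} and~\ref{fig_naming_zigzag} confirm the outcome of that check in the running example.
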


By the above construction, we can obtain the consistent dimer model $\Gamma_\perm$ for any $\perm\in\fkS_n$. 
For the adjacent transposition $s_k$ $(k=1, \dots, n-1)$, we have the consistent dimer model $\Gamma_{\perm s_k}$. 
Then we see that the dimer models $\Gamma_\perm$ and $\Gamma_{\perm s_k}$ are transformed into each other by ``mutations”. 
First, as we mentioned in Section~\ref{sec_quiver_rep}, we have the quiver with potential associated to a dimer model. 
In many cases, a \emph{mutation of a dimer model}, which produces a new dimer model from a given one, 
can be defined as the dual of the mutation of a quiver with potential in the sense of \cite{DWZ1} at a vertex corresponding to a quadrilateral face (see \cite[Subsection~7.2]{Boc_toricNCCR}, \cite[Section~4]{Nak_reflexive}, \cite[Subsection~6.2]{Nak_2rep}). 
However, this mutation can be applied only to a vertex not lying on $2$-cycles and not having loops. 
Since any vertex of the quiver associated to the dimer model $\Gamma_\perm$ lies on $2$-cycles and may have a loop, 
we can not apply this mutation to our dimer model. 
On the other hand, there is a certain way to relate the associated Jacobian algebras $A_\perm$ and $A_{\perm s_k}$ 
as shown in  \cite[Subsection~3.1]{Nag}, and it can be considered as the \emph{mutation of tilting modules} in the sense of \cite[Section~5]{IR_FZ}. 
Since any two elements in $\fkS_n$ can be transformed into each other by the action of adjacent transpositions, 
the associated dimer models can also be related by the mutations of the associated Jacobian algebras. 

\section{\bf Wall-and-chamber structures and zigzag paths}
\label{sec_type1and3_wallcrossing}

\begin{setting}
\label{setting_zigzag_for_a+b}
For $\perm\in\fkS_n$, let $\Gamma_\perm$ be the consistent dimer model whose zigzag polygon is the trapezoid $\Delta(a,b)$ 
and $Q_\perm$ be the associated quiver as in Section~\ref{sec_dimer_ab}. 
Let $z_1,\dots, z_a$, $w_1,\dots, w_b$ be zigzag paths satisfying $[z_1]=\cdots=[z_a]=(0,-1)$ and $[w_1]=\cdots=[w_b]=(0,1)$. 
Note that these zigzag paths satisfy the properties as in Lemma~\ref{lem_zigzag_nointersection}. 
We fix the lower left vertex of $\Delta(a,b)$ as the origin. 

For the space $\Theta(Q_\perm)_\RR$ of stability parameters, any $\theta\in \Theta(Q_\perm)_\RR$ satisfies $\theta_0=-\sum_{v\neq 0}\theta_v$. 
Thus, in what follows, when we consider $\Theta(Q_\perm)_\RR$, we employ the coordinates $\theta_v$ with $v\neq 0$. 
For a generic parameter $\theta\in C$ in a chamber $C\subset\Theta(Q_\perm)_\RR$, 
let $\sfP_{(i,j)}^\theta$ be the $\theta$-stable boundary perfect matching corresponding to the lattice point $(i,j)$ on $\Delta(a,b)$. 
\begin{center}
\begin{tikzpicture}
\newcommand{\edgewidth}{0.035cm} 
\newcommand{\noderad}{0.08} 

\node at (-5.5,0) {$\Delta(a,b)$ :}; 

\node at (0,0)
{\scalebox{0.95}{
\begin{tikzpicture}
\foreach \n/\a/\b in {00/0/0,10/1.5/0,20/3/0, 30/4.5/0, 40/6/0, 50/7.5/0, 01/0/1.5, 11/1.5/1.5, 21/3/1.5, 31/4.5/1.5} {
\coordinate (V\n) at (\a,\b); 
};

\foreach \s/\t in {00/10,00/01,01/11,20/30,21/31,40/50,50/31} {
\draw [line width=\edgewidth] (V\s)--(V\t) ;}; 
\draw [line width=\edgewidth, dotted] (1.8,0)-- ++(0.9,0);  
\draw [line width=\edgewidth, dotted] (4.8,0)-- ++(0.9,0);  
\draw [line width=\edgewidth, dotted] (1.8,1.5)-- ++(0.9,0);  

\foreach \n in {00,10,20,30,40,50,01,11,21,31}{
\draw [fill=black] (V\n) circle [radius=\noderad] ; };

\newcommand{\nodeshift}{0.45cm} 
\node[xshift=0cm,yshift=-\nodeshift] at (V00) {\footnotesize$\sfP_{(0,0)}^\theta$}; 
\node[xshift=0cm,yshift=-\nodeshift] at (V10) {\footnotesize$\sfP_{(1,0)}^\theta$}; 
\node[xshift=0cm,yshift=-\nodeshift] at (V20) {\footnotesize$\sfP_{(i,0)}^\theta$}; 
\node[xshift=0cm,yshift=-\nodeshift] at (V30) {\footnotesize$\sfP_{(i+1,0)}^\theta$}; 
\node[xshift=0cm,yshift=-\nodeshift] at (V40) {\footnotesize$\sfP_{(a-1,0)}^\theta$}; 
\node[xshift=0cm,yshift=-\nodeshift] at (V50) {\footnotesize$\sfP_{(a,0)}^\theta$}; 
\node[xshift=0cm,yshift=\nodeshift] at (V01) {\footnotesize$\sfP_{(0,1)}^\theta$}; 
\node[xshift=0cm,yshift=\nodeshift] at (V11) {\footnotesize$\sfP_{(1,1)}^\theta$}; 
\node[xshift=0cm,yshift=\nodeshift] at (V21) {\footnotesize$\sfP_{(b-1,1)}^\theta$}; 
\node[xshift=0cm,yshift=\nodeshift] at (V31) {\footnotesize$\sfP_{(b,1)}^\theta$}; 
\end{tikzpicture}
}};
\end{tikzpicture}
\end{center}
Thus, 
\[
\PM_\theta(\Gamma_\perm)=\{\sfP_{(i,0)}^\theta \mid 0\le i\le a\} \cup \{\sfP_{(j,1)}^\theta \mid 0\le j\le b\}. 
\]
We recall that corner perfect matchings $\sfP^\theta_{(0,0)}, \sfP^\theta_{(a,0)}, \sfP^\theta_{(0,1)}, \sfP^\theta_{(b,1)}$ are the same 
for any generic parameter $\theta\in\Theta(Q_\perm)_\RR$ (see Propositions~\ref{prop_cPM_unique}). 
Thus, we simply denote them by $\sfP_{(0,0)}, \sfP_{(a,0)}, \sfP_{(0,1)}, \sfP_{(b,1)}$, respectively. 
By Proposition~\ref{char_bound}, corner perfect matchings satisfy 
\begin{align}
\label{allzigzag_in_cornerPM}
&\bigcup_{k=1}^a\Zig(z_k)\subset\sfP_{(0,0)}^\theta=\sfP_{(0,0)}, \quad \bigcup_{k=1}^a\Zag(z_k)\subset\sfP_{(a,0)}^\theta=\sfP_{(a,0)}, \\ \nonumber
&\bigcup_{k=1}^b\Zag(w_k)\subset\sfP_{(0,1)}^\theta=\sfP_{(0,1)}, \quad \bigcup_{k=1}^b\Zig(w_k)\subset\sfP_{(b,1)}^\theta=\sfP_{(b,1)}. 
\end{align}
\end{setting}

Applying Proposition~\ref{prop_description_boundaryPM} to our situation as in Setting~\ref{setting_zigzag_for_a+b}, 
we have the following. 

\begin{proposition}
\label{prop_bPM_zzsequence}
Let the notation be as in {\rm Setting~\ref{setting_zigzag_for_a+b}}. 
For any generic parameter $\theta\in C$ in a chamber $C\subset\Theta(Q_\perm)_\RR$, 
there exist unique sequences $(z_{k_1},\dots, z_{k_a})$, $(w_{k^\prime_1}, \dots, w_{k^\prime_b})$ of zigzag paths 
with $\{k_1,\dots, k_a\}=\{1,\dots,a\}$ and $\{k^\prime_1,\dots, k^\prime_b\}=\{1,\dots, b\}$ such that 
\begin{equation}
\label{eq_bPM_from_zz1}
\sfP^\theta_{(i,0)}=\bigcup_{k=k_1}^{k_i}\Zag(z_k)\cup\bigcup_{k=k_{i+1}}^{k_a}\Zig(z_k)\cup\big(\sfP_{(0,0)}\cap\sfP_{(a,0)}\big) \quad \text{for any $i=1,\dots, a-1$}, 
\end{equation}
\begin{equation}
\label{eq_bPM_from_zz2}
\sfP^\theta_{(j,1)}=\bigcup_{k=k^\prime_1}^{k^\prime_j}\Zig(w_k)\cup\bigcup_{k=k^\prime_{j+1}}^{k^\prime_b}\Zag(w_k)\cup\big(\sfP_{(0,1)}\cap\sfP_{(b,1)}\big) \quad \text{for any $j=1,\dots, b-1$}. 
\end{equation}
\end{proposition}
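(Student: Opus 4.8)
The plan is to reduce the statement directly to Proposition~\ref{prop_description_boundaryPM}, applied to the two non-vertical sides of the trapezoid $\Delta(a,b)$: the lower side $E_{\mathrm{low}}=E(\sfP_{(0,0)},\sfP_{(a,0)})$, whose outer normal is $(0,-1)$, and the upper side $E_{\mathrm{up}}=E(\sfP_{(b,1)},\sfP_{(0,1)})$, whose outer normal is $(0,1)$. First I would check the hypotheses of Setting~\ref{set_stable_boundaryPM} for $E_{\mathrm{low}}$. The segment from $(0,0)$ to $(a,0)$ consists of $a$ primitive side segments, so by Proposition~\ref{zigzag_sidepolygon} the zigzag paths arising as $\sfP_{(0,0)}\ominus\sfP_{(a,0)}$ are precisely $z_1,\dots,z_a$; and putting $\cPM=\sfP_{(0,0)}$, $\cPM^\prime=\sfP_{(a,0)}$, the inclusions in \eqref{allzigzag_in_cornerPM} for the $z_k$ together with Lemma~\ref{lem_zigzag_nointersection}(3) (each of $\Zig(z_k)$, $\Zag(z_k)$ is a single edge, and a perfect matching cannot contain both edges of $z_k$ since they share a node) give $\cPM\cap z_k=\Zig(z_k)$ and $\cPM^\prime\cap z_k=\Zag(z_k)$. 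The strict interior lattice points of $E_{\mathrm{low}}$ are $(1,0),\dots,(a-1,0)$, and $\sfP^\theta_{(i,0)}$ sits at lattice distance $i$ from $\cPM$, so it is exactly the perfect matching ``$\sfP_i$'' of Setting~\ref{set_stable_boundaryPM}. Proposition~\ref{prop_description_boundaryPM}, whose proof reads off $\sfP^\theta_{(1,0)},\sfP^\theta_{(2,0)},\dots$ successively to build a single nested sequence valid for every index, then yields a unique sequence $(z_{k_1},\dots,z_{k_a})$ with $\{k_1,\dots,k_a\}=\{1,\dots,a\}$ satisfying \eqref{eq_bPM_from_zz1}.

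For $E_{\mathrm{up}}$ the argument is the same, with one twist: the intersection convention of Subsection~\ref{subsec_boundaryPM} forces the choice $\cPM=\sfP_{(b,1)}$ and $\cPM^\prime=\sfP_{(0,1)}$, since among the two corner matchings it is $\sfP_{(b,1)}$ that contains all the zigs $\bigcup_k\Zig(w_k)$ by \eqref{allzigzag_in_cornerPM}. Relative to this $\cPM$, the matching $\sfP^\theta_{(j,1)}$ sits at lattice distance $b-j$, so it plays the role of ``$\sfP_{b-j}$'' in Setting~\ref{set_stable_boundaryPM}; Proposition~\ref{prop_description_boundaryPM} gives a unique sequence $(w_{l_1},\dots,w_{l_b})$ describing all of them, and after the substitution $j\mapsto b-j$ and the reversal $k^\prime_m\coloneqq l_{b+1-m}$ the resulting identity becomes exactly \eqref{eq_bPM_from_zz2}. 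Uniqueness of both sequences is inherited from Proposition~\ref{prop_description_boundaryPM}, since reversal is a bijection on sequences.

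In effect, the mathematical substance --- in particular the monotonicity along each side (Lemma~\ref{lem_condition_stable_boundaryPM}) that makes a single ordered sequence suffice --- is already contained in the results of Section~\ref{section_boundaryPM}, so this proof is essentially a translation into the present setup. The one place that needs genuine care, and the step I would double-check, is the orientation bookkeeping on the upper side: identifying via \eqref{allzigzag_in_cornerPM} which corner perfect matching is $\cPM$, and then reversing the sequence produced by Proposition~\ref{prop_description_boundaryPM} so that \eqref{eq_bPM_from_zz2} comes out with the first $j$ zigzag paths contributing their zigs rather than the last $j$. I do not expect any other difficulty.
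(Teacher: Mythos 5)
Your proposal is correct and follows the same route as the paper, which derives Proposition~\ref{prop_bPM_zzsequence} directly by applying Proposition~\ref{prop_description_boundaryPM} to the two horizontal sides of $\Delta(a,b)$ in the framework of Setting~\ref{set_stable_boundaryPM}; your verification of the hypotheses via \eqref{allzigzag_in_cornerPM} and Lemma~\ref{lem_zigzag_nointersection}, and the index reversal on the upper side, are exactly the bookkeeping the paper leaves implicit.
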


\subsection{Zigzag paths associated to chambers}
\label{subsec_zigzag_ass_chamber}

Rearranging the zigzag paths $z_1, \dots, z_a$, $w_1, \dots, w_b$ as in Setting~\ref{setting_zigzag_for_a+b}, 
we consider the sequence $(u_1, \dots, u_n)$ of zigzag paths such that 
\begin{align}
\label{eq_label_z_w}
&\bullet\, \text{$\{u_1, \dots, u_n\}=\{z_1,\dots, z_a, w_1,\dots, w_b\}$ as sets,} \\
\label{eq_label_z_w2}
&\bullet\, \text{$u_k$ consists of the edges shared by the faces labeled by $k-1$ and $k$ \hspace{-8pt}$\pmod n$.}
\end{align}
In the following, we consider the set $$\big\{\calZ_\permII\coloneqq (u_{\permII(1)}, \dots, u_{\permII(n)}) \mid \permII\in\fkS_n\big\}$$
of sequences of zigzag paths obtained as permutations of $(u_1, \dots, u_n)$. 
Then, we assign such a sequence to each chamber in $\Theta(Q_\perm)_\RR$. 

First, for a chamber $C$ in $\Theta(Q_\perm)_\RR$, we have the fine moduli space $\calM_C$ 
and the triangulation of $\Delta(a,b)$ as explained in Subsection~\ref{subsec_crepant_resolution}. We will denote such a triangulation by $\Delta_C$. 
Note that the argument in \cite[Section~3]{dais2001all} asserts that any triangulation of $\Delta(a,b)$ into elementary triangles is regular, 
thus any crepant resolution of $\Spec R_{a,b}$ induced by a triangulation of $\Delta(a,b)$ is projective. 
We draw the line $L$ from $(0,\frac{1}{2})$ to $(a,\frac{1}{2})$ which passes through $\Delta_C$. 
Let $\{\Delta_{C,k}\}_{k=1}^n$ be the set of elementary triangles in $\Delta_C$, 
and we fix the index $k$ so that the line $L$ passes through $\Delta_{C,k}$ first, then it passes through $\Delta_{C,k+1}$ for any $k=1, \dots, n-1$. 
Then, we define the \emph{sign} of $\Delta_{C,k}$ as 
\[
\sgn(\Delta_{C,k})=
\begin{cases}
+1 & (\text{if $\Delta_{C,k}$ shares a side with the upper base of $\Delta(a,b)$}),\\
-1 & (\text{if $\Delta_{C,k}$ shares a side with the lower base of $\Delta(a,b)$}), 
\end{cases}
\]
and let $\sgn(\Delta_C)\coloneqq \big(\sgn(\Delta_{C,1}), \dots, \sgn(\Delta_{C,n})\big)$. 
For example, if a triangulation $\Delta_C$ of $\Delta(3,2)$ take the form as in Figure~\ref{fig_sgn_triangulation}, 
then we have $\sgn(\Delta_C)=(+1, -1, -1, +1, -1)$. 

\begin{figure}[H]
\begin{center}
\scalebox{1}{
\begin{tikzpicture}
\newcommand{\edgewidth}{0.035cm} 
\newcommand{\noderad}{0.08} 
\foreach \n/\a/\b in {00/0/0,10/1/0,01/0/1,11/1/1,20/2/0,21/2/1,30/3/0} {
\coordinate (V\n) at (\a,\b); };
\draw [line width=\edgewidth] (V00)--(V30)--(V21)--(V01)--(V00) ; 
\foreach \s/\t in {00/11,10/11, 20/11, 20/21} {\draw [line width=\edgewidth] (V\s)--(V\t) ;}; 
\foreach \n in {00,10,01,11,20,21,30}{
\draw [fill=black] (V\n) circle [radius=\noderad] ; };
\draw [line width=0.05cm, red, ->] (0,0.5)--(3,0.5); 
\node[red] at (3.2,0.5) {\footnotesize $L$}; 
\node at (0.4,0.8) {\scriptsize $\Delta_{C,1}$}; \node at (0.6,0.2) {\scriptsize $\Delta_{C,2}$}; 
\node at (1.4,0.2) {\scriptsize $\Delta_{C,3}$}; \node at (1.6,0.8) {\scriptsize $\Delta_{C,4}$}; \node at (2.4,0.2) {\scriptsize $\Delta_{C,5}$}; 
\end{tikzpicture}}
\end{center}
\caption{A triangulation of $\Delta(3,2)$ and the labeling of elementary triangles}
\label{fig_sgn_triangulation}
\end{figure}
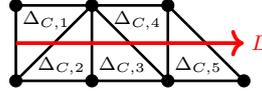

Then we define the \emph{sign} of a zigzag path in $\{u_1, \dots, u_n\}$ as 
\[
\sgn(u_k)=
\begin{cases}
+1 & (\text{if $[u_k]=(0,1)$}),\\
-1 & (\text{if $[u_k]=(0,-1)$}). 
\end{cases}
\]
Note that the number of elementary triangles satisfying $\sgn(\Delta_{C,k})=+1$ (resp. $\sgn(\Delta_{C,k})=-1$) coincides with 
that of zigzag paths satisfying $\sgn(u_k)=+1$ (resp. $\sgn(u_k)=-1$) by the definition of the zigzag polygon. 
For example, the zigzag paths $u_1, \dots, u_5$ as in Figure~\ref{fig_naming_zigzag} satisfy 
$\sgn(u_1)=\sgn(u_2)=\sgn(u_3)=-1$ and $\sgn(u_4)=\sgn(u_5)=+1$. 
For a sequence $\calZ_\permII\coloneqq (u_{\permII(1)}, \dots, u_{\permII(n)})$ with $\permII\in\fkS_n$, 
we let $\sgn(\calZ_\permII)\coloneqq (\sgn(u_{\permII(1)}), \dots, \sgn(u_{\permII(n)}))$.  

\begin{figure}[H]
\begin{center}
\scalebox{0.5}{
\begin{tikzpicture}
\newcommand{\noderad}{0.18cm} 
\newcommand{\edgewidth}{0.05cm} 
\newcommand{\nodewidthw}{0.05cm} 
\newcommand{\nodewidthb}{0.04cm} 
\newcommand{\zigzagwidth}{0.15cm} 
\newcommand{\zigzagcolor}{red} 
\newcommand{\zigzagcolortwo}{blue} 

\basicdimerA
\draw[->, line width=\zigzagwidth, rounded corners, color=\zigzagcolor] (1,4)--(B1)--(W1)--(1,0); 
\draw[->, line width=\zigzagwidth, rounded corners, color=\zigzagcolor] (3,4)--(B2)--(W2)--(3,0); 
\draw[->, line width=\zigzagwidth, rounded corners, color=\zigzagcolor] (5,4)--(B3)--(W3)--(5,0); 
\draw[->, line width=\zigzagwidth, rounded corners, color=\zigzagcolortwo] (6,0)--(W4)--(B3)--(6,4); 
\draw[->, line width=\zigzagwidth, rounded corners, color=\zigzagcolortwo] (0,0)--(W1)--(0,2); 
\draw[->, line width=\zigzagwidth, rounded corners, color=\zigzagcolortwo] (8,2)--(B4)--(8,4); 

\foreach \n/\x/\y in {0/0.5/3, 1/2/2, 2/4/2, 3/5.5/1, 4/7/2}{ 
\node[blue] (V\n) at (\x,\y) {\LARGE$\n$}; }; 

\node[red] at (1,-0.5) {\huge$u_1$}; \node[red] at (3,-0.5) {\huge$u_2$}; \node[red] at (5,-0.5) {\huge$u_3$}; 
\node[blue] at (6,4.5) {\huge$u_4$}; \node[blue] at (8,4.5) {\huge$u_5$}; 
\end{tikzpicture}}
\end{center}
\caption{The zigzag paths $u_1, \dots, u_5$ whose slopes are either $(0,1)$ or $(0,-1)$}
\label{fig_naming_zigzag}
\end{figure}
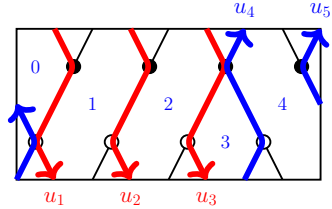

By definition, combining the sequences $(z_{k_1}, \dots, z_{k_a})$ and $(w_{k^\prime_1}, \dots, w_{k^\prime_b})$, 
we can obtain the sequence $\calZ_\permII$ satisfying $\sgn(\Delta_C)=\sgn(\calZ_\permII)$. 
We record these facts as follows. 

\begin{proposition}
\label{prop_chamber_correspond_zigzag}
Let the notation be as in {\rm Setting~\ref{setting_zigzag_for_a+b}}. 
Let $u_1, \dots, u_n$ be the zigzag paths satisfying the conditions \eqref{eq_label_z_w} and \eqref{eq_label_z_w2}, 
especially the slope $[u_i]$ is either $(0,1)$ or $(0,-1)$. 
Then, for any chamber $C\subset\Theta(Q_\perm)_\RR$, there exists a unique sequence 
$\calZ_\permII=(u_{\permII(1)}, \dots, u_{\permII(n)})$ with $\permII\in\fkS_n$ such that 
\begin{itemize}
\setlength{\parskip}{0pt} 
\setlength{\itemsep}{3pt}
\item[\rm (a)] $\sgn(\Delta_C)=\sgn(\calZ_\permII)$, 
\item[\rm (b)] the subsequence of $(u_{\permII(1)}, \dots, u_{\permII(n)})$ consisting of zigzag paths with $\sgn(u_{\permII(i)})=-1$
coincides with $(z_{k_1},\dots, z_{k_a})$, 
\item[\rm (c)] the subsequence of $(u_{\permII(1)}, \dots, u_{\permII(n)})$ consisting of zigzag paths with $\sgn(u_{\permII(i)})=+1$
coincides with $(w_{k^\prime_1}, \dots, w_{k^\prime_b})$, 
\end{itemize}
where $(z_{k_1},\dots, z_{k_a})$ and $(w_{k^\prime_1}, \dots, w_{k^\prime_b})$ are sequences of zigzag paths respectively 
associated to the upper base and the lower base of $\Delta(a,b)$ as in {\rm Proposition~\ref{prop_bPM_zzsequence}}. 
\end{proposition}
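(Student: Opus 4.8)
The plan is to build the sequence $\calZ_\permII$ by hand, as the interleaving (``shuffle'') of the two sequences of zigzag paths furnished by Proposition~\ref{prop_bPM_zzsequence} along the sign pattern $\sgn(\Delta_C)$, and then to observe that $\calZ_\permII$ is forced to be exactly this interleaving. So existence will be a direct construction and uniqueness will follow because conditions (a)--(c) rigidly determine every entry of the sequence.

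First I would fix a generic $\theta\in C$ and apply Proposition~\ref{prop_bPM_zzsequence} to obtain the sequences $(z_{k_1},\dots,z_{k_a})$ and $(w_{k^\prime_1},\dots,w_{k^\prime_b})$ with $\{k_1,\dots,k_a\}=\{1,\dots,a\}$ and $\{k^\prime_1,\dots,k^\prime_b\}=\{1,\dots,b\}$; these depend only on $C$, not on the choice of $\theta\in C$, since $\PM_\theta(\Gamma_\perm)$ is constant on a chamber. Next I would pin down the combinatorics of $\sgn(\Delta_C)=(\epsilon_1,\dots,\epsilon_n)$, namely that $P^-\coloneqq\{i\mid\epsilon_i=-1\}$ has cardinality $a$ and $P^+\coloneqq\{i\mid\epsilon_i=+1\}$ has cardinality $b$. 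This holds because every elementary triangle of $\Delta_C$ has all three vertices on the two bases of $\Delta(a,b)$ (the polygon has no interior lattice points and its two slanted sides are primitive), hence two vertices on one base forming a primitive segment of that base and one vertex on the other base; each triangle therefore shares a horizontal edge with exactly one of the two bases, and since the lower base has $a$ primitive segments and the upper base $b$, each appearing as such an edge of a unique triangle, there are exactly $a$ triangles of sign $-1$ and $b$ of sign $+1$, accounting for all $n=a+b$ of them. These numbers agree with $\#\{k\mid[u_k]=(0,-1)\}=a$ and $\#\{k\mid[u_k]=(0,1)\}=b$ from Setting~\ref{setting_zigzag_for_a+b}.

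Writing $P^-=\{p_1<\cdots<p_a\}$ and $P^+=\{q_1<\cdots<q_b\}$, I would then define the sequence $\calZ=(v_1,\dots,v_n)$ by $v_{p_l}=z_{k_l}$ for $l=1,\dots,a$ and $v_{q_l}=w_{k^\prime_l}$ for $l=1,\dots,b$. Since $\{z_{k_1},\dots,z_{k_a}\}=\{z_1,\dots,z_a\}$, $\{w_{k^\prime_1},\dots,w_{k^\prime_b}\}=\{w_1,\dots,w_b\}$, and $\{u_1,\dots,u_n\}$ is the disjoint union of these two sets, $\calZ$ is a permutation of $(u_1,\dots,u_n)$; as $u_1,\dots,u_n$ are pairwise distinct zigzag paths, $\calZ=\calZ_\permII$ for a unique $\permII\in\fkS_n$. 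Property (a) is then immediate, since $\sgn(v_{p_l})=\sgn(z_{k_l})=-1=\epsilon_{p_l}$ and $\sgn(v_{q_l})=\sgn(w_{k^\prime_l})=+1=\epsilon_{q_l}$; and properties (b), (c) hold because the subsequence of $\calZ$ read off at the positions $p_1<\cdots<p_a$ (resp. $q_1<\cdots<q_b$) is precisely $(z_{k_1},\dots,z_{k_a})$ (resp. $(w_{k^\prime_1},\dots,w_{k^\prime_b})$). For uniqueness, if $\permII^\prime\in\fkS_n$ also satisfies (a)--(c), then by (a) the $(-1)$-positions of $\calZ_{\permII^\prime}$ are exactly $P^-$ and the $(+1)$-positions are exactly $P^+$, and (b), (c) force the entries at those positions to be $(z_{k_1},\dots,z_{k_a})$ and $(w_{k^\prime_1},\dots,w_{k^\prime_b})$; hence $\calZ_{\permII^\prime}=\calZ_\permII$ and $\permII^\prime=\permII$.

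The only step carrying any real content is the counting identity $|P^-|=a$, $|P^+|=b$ in the second paragraph, i.e.\ the statement that, as $L$ crosses $\Delta_C$, the elementary triangles touching the lower and upper bases occur with the correct total multiplicities; once that is established, the rest is routine bookkeeping about interleaving two fixed sequences along a prescribed $0$--$1$ pattern, and I do not expect a genuine obstacle.
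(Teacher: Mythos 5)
Your proposal is correct and follows essentially the same route as the paper, which obtains $\calZ_\permII$ by interleaving the two sequences from Proposition~\ref{prop_bPM_zzsequence} along the sign pattern $\sgn(\Delta_C)$ and notes that the counts of $\pm1$ signs match ($a$ triangles meeting the lower base, $b$ meeting the upper, matching the numbers of zigzag paths of each slope). Your write-up merely supplies the routine details (each elementary triangle of $\Delta(a,b)$ has exactly two vertices at one height, hence a unique horizontal base edge) that the paper leaves implicit, and the uniqueness argument is the obvious one in both cases.
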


We will show that the sequence $\calZ_\permII$ assigned to a chamber $C$ as in Proposition~\ref{prop_chamber_correspond_zigzag} 
determines the walls of $C$ and reveal the wall-and-chamber structure of $\Theta(Q)_\RR$ in Subsection~\ref{subsec_main_wallcrossing}. 

\subsection{Combinatorics of dimer models associated to $\Delta(a,b)$}
\label{subsec_jigsaw}

We keep Setting~\ref{setting_zigzag_for_a+b}, but we write $\Gamma=\Gamma_\perm$, $Q=Q_\perm$ for simplicity. 
We recall that each wall in $\Theta(Q)_\RR$ is determined by the equation \eqref{eq_wall_equation} in Proposition~\ref{prop_equation_wall}, 
and the tautological bundle $\calT_\theta=\bigoplus_{v\in Q_0}\calL_v$ used in \eqref{eq_wall_equation} can be obtained by using 
perfect matchings in $\PM_\theta(\Gamma)$ as shown in Proposition~\ref{prop_compute_tautological}.
The combinatorial descriptions of $\theta$-stable representations corresponding to three-dimensional cones in $\Sigma_\theta$ 
are important ingredients to detect the wall-and-chamber structure of $\Theta(Q)_\RR$. 
Thus, we review some materials in \cite[Section~4]{IU_moduli}, \cite[Section~4]{Moz}, \cite[Section~3]{CHT} which discuss 
$\theta$-stable representations in terms of perfect matchings. 

Let $\Sigma_\theta$ be the toric fan of $\calM_\theta$ for a generic parameter $\theta\in\Theta(Q)_\RR$. 
For a three-dimensional cone $\sigma\in\Sigma_\theta(3)$, let $\rho_0, \rho_1, \rho_2\in\Sigma_\theta(1)$ be the rays in $\sigma$. 
We denote the $\theta$-stable perfect matchings corresponding to $\rho_0, \rho_1, \rho_2$ by $\sfP_0, \sfP_1, \sfP_2$, respectively. 
Let $Q^\sigma$ be the subquiver of $Q$ such that the set of vertices coincides with $Q_0$ and 
the set of arrows consists of arrows dual to edges not contained in $\bigcup_{0\le i\le 2}\sfP_i$. 
Note that the arrow set of $Q^\sigma$ coincides with the cosupport of the $\theta$-stable representation $M_\sigma$, 
and we see that $Q^\sigma$ is connected. 
Let $\widetilde{Q}$ be the quiver defined as the dual of the bipartite graph $\widetilde{\Gamma}$ on $\RR^2$. 
The inverse image of $Q^\sigma$ under the universal cover $\RR^2\rightarrow\TT$ defines the subquiver of $\widetilde{Q}$ 
whose any connected component is identical to $Q^\sigma$. 
We choose one of such connected components, and denote it by $\widetilde{Q^\sigma}$. 
We consider the subset of $\RR^2$ covered by the faces of $\widetilde{\Gamma}$ dual to the vertices of $\widetilde{Q^\sigma}$. 
This subset has properties as in Proposition~\ref{prop_hex_property} below, 
thus we call it the \emph{fundamental hexagon} associated to $\sigma$, and denote it by $\Hex(\sigma)$. 
We denote the graph obtained as the union of all $\ZZ^2$-translates of the boundary of $\Hex(\sigma)$ by $\Graph(\sigma)$. 

\begin{proposition}[{cf. \cite[Proposition~3.4 and its proof]{CHT}}]
\label{prop_hex_property}
Let the notation be as above. 
For $\sigma\in\Sigma_\theta(3)$, the boundary of $\Hex(\sigma)$ contains precisely six $3$-valent nodes of $\Graph(\sigma)$. 
Each chain of edges linking adjacent $3$-valent nodes on the boundary of $\Hex(\sigma)$ comprises an odd number of edges, in which 
the edges belong alternately to either a single perfect matching $\sfP_i$ or to the intersection $\sfP_{i-1}\cap\sfP_{i+1}$ of perfect matchings, 
where the indices of perfect matchings are taken modulo $3$ $($cf. \cite[Figure~4]{CHT}$)$. 
\end{proposition}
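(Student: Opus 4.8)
The plan is to prove this structural result about fundamental hexagons by combining the combinatorics of the specific dimer model $\Gamma_\perm$ (built from rhombi and hexagons as in Section~\ref{sec_dimer_ab}) with the general perfect-matching description of $\theta$-stable representations recalled from \cite{IU_moduli,Moz,CHT}. The statement is essentially the content of \cite[Proposition~3.4]{CHT} transported to our setting, so the work is to verify that the hypotheses of that result hold for $\Hex(\sigma)$ and then to extract the refinement about the parity and the alternating membership in $\sfP_i$ versus $\sfP_{i-1}\cap\sfP_{i+1}$.

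First I would recall the setup: for $\sigma\in\Sigma_\theta(3)$ with rays giving $\theta$-stable perfect matchings $\sfP_0,\sfP_1,\sfP_2$, the subquiver $Q^\sigma$ has arrow set equal to the cosupport of $M_\sigma$, i.e.\ the arrows dual to edges \emph{not} in $\sfP_0\cup\sfP_1\cup\sfP_2$, and $Q^\sigma$ is connected (Proposition~\ref{prop_stablerep_cosupport} and the discussion before it). Lifting to $\widetilde{\Gamma}$ and choosing a connected component $\widetilde{Q^\sigma}$, the union of the dual faces is $\Hex(\sigma)$. The key observation is that the boundary edges of $\Hex(\sigma)$ are precisely the edges dual to arrows \emph{outside} $\widetilde{Q^\sigma}$, hence edges lying in $\sfP_0\cup\sfP_1\cup\sfP_2$; interior edges of $\Hex(\sigma)$ are dual to arrows of $\widetilde{Q^\sigma}$ and so lie in none of the $\sfP_i$. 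Since $\sfP_0,\sfP_1,\sfP_2$ correspond to the three vertices of an elementary triangle in $\Delta_\Gamma$ (a unimodular triangle), the three perfect matchings are pairwise distinct and their pairwise symmetric differences $\sfP_i\ominus\sfP_j$ are zigzag-like cycles (Proposition~\ref{zigzag_sidepolygon} and the surrounding theory); in particular no edge lies in all three, and an edge on the boundary lies in exactly one or exactly two of the $\sfP_i$. I would then identify the $3$-valent nodes of $\Graph(\sigma)$ on the boundary as exactly the nodes where an edge of $\widetilde{Q^\sigma}$'s dual graph meets the boundary, i.e.\ where the ``inside'' and ``outside'' of $\Hex(\sigma)$ both contribute edges; a node-degree count at each node of $\widetilde{\Gamma}$ (every node has some fixed valence, and the perfect-matching condition forces exactly one incident edge per node to lie in each $\sfP_i$) pins down that there are precisely six such nodes, corresponding to the six ``corners'' coming from the hexagonal shape dictated by the three pairs of parallel zigzag-type boundary arcs $\sfP_i\ominus\sfP_j$.

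Next I would analyze a single boundary arc between two consecutive $3$-valent nodes. Along such an arc, all dual arrows are non-cosupport arrows, so every edge lies in $\sfP_0\cup\sfP_1\cup\sfP_2$. Using that each of $\sfP_0,\sfP_1,\sfP_2$ is a perfect matching — so at each internal node of the arc exactly one incident edge of that arc belongs to $\sfP_i$ for each $i$, but an edge of the arc belongs to at most two of the three — one shows by a local propagation argument (walk along the arc, tracking which $\sfP_i$ the current edge is in) that the edges alternate between membership in a \emph{single} $\sfP_i$ and membership in $\sfP_{i-1}\cap\sfP_{i+1}$: at an internal node, the edge in $\sfP_i$ alone forces the next edge (to satisfy the matching condition for $\sfP_{i-1}$ and $\sfP_{i+1}$ simultaneously while avoiding $\sfP_i$) to be the one in $\sfP_{i-1}\cap\sfP_{i+1}$, and vice versa. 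Since the two endpoints of the arc are $3$-valent nodes of $\Graph(\sigma)$, a short argument about which perfect matchings the ``outgoing'' interior edges at those nodes belong to forces the arc to begin and end with an edge of the \emph{same} membership type (this is where the parity enters), whence the number of edges on the arc is odd. Finally, the cyclic consistency around the six arcs — the index $i$ increments by $1$ modulo $3$ as one passes each $3$-valent node, which is dictated by the way $\sfP_0\ominus\sfP_1$, $\sfP_1\ominus\sfP_2$, $\sfP_2\ominus\sfP_0$ frame the hexagon — matches the picture in \cite[Figure~4]{CHT}.

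\textbf{The main obstacle} I anticipate is the bookkeeping at the $3$-valent nodes: establishing rigorously that exactly six such nodes occur on $\partial\Hex(\sigma)$ and that the perfect-matching index advances by exactly one modulo $3$ across each of them. This requires combining the general ``McKay-type'' description of $M_\sigma$ from \cite{IU_moduli,Moz} with the concrete geometry of rhombi and hexagons in $\Gamma_\perm$ (Lemma~\ref{lem_zigzag_nointersection}), and in particular ruling out degenerate configurations — e.g.\ an arc of length one, or a would-be $3$-valent node that is actually $2$-valent because two of the three perfect matchings coincide along it. I would handle this by invoking consistency (Definition~\ref{def_consistent}) to guarantee $\sfP_0\ominus\sfP_1$, $\sfP_1\ominus\sfP_2$, $\sfP_2\ominus\sfP_0$ are genuine non-trivial cycles with the expected intersection pattern, and then citing \cite[Proposition~3.4 and its proof]{CHT} for the structural skeleton, supplying only the verification that our $\Hex(\sigma)$ satisfies their hypotheses. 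The remaining parity and alternation claims then follow from the elementary node-by-node propagation described above.
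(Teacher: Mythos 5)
The paper offers no proof of this proposition at all: both the statement and its proof are imported wholesale from \cite[Proposition~3.4]{CHT}, so your closing move of ``citing \cite{CHT} for the structural skeleton'' is, in effect, exactly what the paper does, and to that extent your plan is consistent with it. The issues lie in the portions you try to reconstruct yourself.

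The main gap is a circularity in your treatment of interior edges. You assert that an edge in the strict interior of $\Hex(\sigma)$ ``is dual to an arrow of $\widetilde{Q^\sigma}$ and so lies in none of the $\sfP_i$.'' But by definition $\widetilde{Q^\sigma}$ contains only the arrows dual to edges \emph{not} in $\sfP_0\cup\sfP_1\cup\sfP_2$; an interior edge merely has both endpoints of its dual arrow among the vertices of $\widetilde{Q^\sigma}$, and whether that arrow belongs to $\widetilde{Q^\sigma}$ is precisely the question of whether the edge avoids $\bigcup_i\sfP_i$. So your premise is equivalent to your conclusion. In the paper this fact is Lemma~\ref{lem_property_interior}, proved \emph{after} the proposition via \cite[Corollary~4.18]{Moz} together with $\sfP_1\cap\sfP_2=\varnothing$, and it is explicitly \emph{false} for the $cD_4$ model of Section~\ref{sec_D4}, where the proposition is nonetheless invoked; hence the proposition cannot rest on it. Your node-by-node propagation can be repaired by arguing only at boundary nodes (an interior edge incident to a boundary node cannot lie in all three $\sfP_i$, since the two boundary edges there must absorb at least one matching each), but as written the step does not stand. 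Separately, the claim that there are ``precisely six'' $3$-valent nodes is asserted via a vague degree count; the honest argument is an Euler-characteristic computation for the one-face cell decomposition $\Graph(\sigma)/\ZZ^2$ of $\TT$, together with ruling out nodes of valency $\ge 4$, and your sketch supplies neither. The parity claim (each arc begins and ends with an edge of the same type) is likewise stated but not derived. None of these points is fatal if you genuinely defer to \cite{CHT}, but each is a real hole in the self-contained argument you outline.
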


Let $\sigma_+$, $\sigma_-\in\Sigma_\theta(3)$ be three-dimensional adjacent cones in $\Sigma_\theta$ and let $\tau\coloneqq\sigma_+\cap \sigma_-\in\Sigma_\theta(2)$. 
Let $\rho_0, \rho_1, \rho_2$ and $\rho_1, \rho_2, \rho_3$ be the rays in $\sigma_+$ and $\sigma_-$, respectively. 
On the hyperplane at height one, the pair of cones $\sigma_+$ and $\sigma_-$ takes one of the forms as in Figure~\ref{fig_triangulation_notation} 
up to unimodular transformations. Thus we will discuss using these figures. 

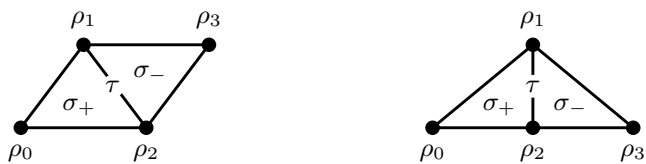
\begin{figure}[H]
\begin{center}
{\scalebox{1.1}{
\begin{tikzpicture}
\newcommand{\edgewidth}{0.035cm} 
\newcommand{\noderad}{0.08} 

\node at (0,0) {
\begin{tikzpicture}
\foreach \n/\a/\b in {00/0/0,20/1.5/0, 11/0.75/1, 31/2.25/1} {
\coordinate (V\n) at (\a,\b); 
};
\foreach \s/\t in {00/20,20/31,31/11,11/00} {\draw [line width=\edgewidth] (V\s)--(V\t) ;}; 
\draw [line width=\edgewidth] (V20)--(V11) ;
\foreach \n in {00,20,11,31}{\draw [fill=black] (V\n) circle [radius=\noderad] ; };

\newcommand{\nodeshift}{0.3cm} 
\node[xshift=0cm,yshift=-\nodeshift] at (V00) {\small$\rho_0$}; 
\node[xshift=0cm,yshift=\nodeshift] at (V11) {\small$\rho_1$}; 
\node[xshift=0cm,yshift=-\nodeshift] at (V20) {\small$\rho_2$}; 
\node[xshift=0cm,yshift=\nodeshift] at (V31) {\small$\rho_3$}; 
\node at (0.7,0.25) {\small$\sigma_+$}; \node at (1.55,0.65) {\small$\sigma_-$}; 
\node[fill=white,inner sep=0.5pt, circle] at (1.1, 0.5) {\small$\tau$}; 
\end{tikzpicture}}; 

\node at (5,0) {
\begin{tikzpicture}
\foreach \n/\a/\b in {00/-0.2/0,10/1/0, 11/1/1, 20/2.2/0} {
\coordinate (V\n) at (\a,\b); 
};
\foreach \s/\t in {00/10,10/20,20/11,11/00} {\draw [line width=\edgewidth] (V\s)--(V\t) ;}; 
\draw [line width=\edgewidth] (V10)--(V11) ;
\foreach \n in {00,10,11,20}{\draw [fill=black] (V\n) circle [radius=\noderad] ; };

\newcommand{\nodeshift}{0.3cm} 
\node[xshift=0cm,yshift=-\nodeshift] at (V00) {\small$\rho_0$}; 
\node[xshift=0cm,yshift=\nodeshift] at (V11) {\small$\rho_1$}; 
\node[xshift=0cm,yshift=-\nodeshift] at (V10) {\small$\rho_2$}; 
\node[xshift=0cm,yshift=-\nodeshift] at (V20) {\small$\rho_3$}; 
\node at (0.6,0.25) {\small$\sigma_+$}; \node at (1.45,0.25) {\small$\sigma_-$}; 
\node[fill=white,inner sep=0.5pt, circle] at (1, 0.5) {\small$\tau$}; 
\end{tikzpicture}}; 

\end{tikzpicture}
}}
\end{center}
\caption{The intersection of cones in $\Sigma_\theta(3)$ with the hyperplane at height one}
\label{fig_triangulation_notation}
\end{figure}

Let $\sfP_0, \sfP_1, \sfP_2, \sfP_3$ be $\theta$-stable perfect matchings corresponding to $\rho_0, \rho_1, \rho_2, \rho_3$, respectively. 
Then, we call the closure of a connected component of $\TT{\setminus}\bigcup_{i=0}^3\sfP_i$ a \emph{jigsaw piece} for $\tau$ (cf. \cite[Definition~3.10]{CHT}). 
By Proposition~\ref{prop_hex_property}, $\fkc_-\coloneqq(\sfP_1\ominus\sfP_3)\cap(\sfP_2\ominus\sfP_3)$ is a subset of the edges in the boundary of $\Hex(\sigma_-)$. 
By \cite[Lemma~3.11]{CHT}, $\Hex(\sigma_+)$ is divided into two pieces along edges of $\fkc_-$ and these pieces are precisely the jigsaw pieces for $\tau$. 
Similarly, $\fkc_+\coloneqq(\sfP_0\ominus\sfP_1)\cap(\sfP_0\ominus\sfP_2)$ is a subset of the edges in the boundary of $\Hex(\sigma_+)$, 
and  $\Hex(\sigma_-)$ is divided into two pieces, which are precisely the jigsaw pieces for $\tau$, along edges of $\fkc_+$. 
In particular, the following statement holds. 

\begin{proposition}[{\cite[Theorem~3.12]{CHT}, see also \cite{Nak_GHilb}}]
\label{prop_jigsaw}
Let $J, J^\prime$ be the jigsaw pieces for $\tau$ obtained by cutting $\Hex(\sigma_+)$ in $\RR^2$ along the edges of $\fkc_-$. 
Then, we have the fundamental hexagon $\Hex(\sigma_-)$ by certain $\ZZ^2$-translations of $J$ and $J^\prime$. 
\end{proposition}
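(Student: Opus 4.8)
Proposition~\ref{prop_jigsaw} asserts a very concrete combinatorial statement: once the fundamental hexagon $\Hex(\sigma_+)$ has been cut into two jigsaw pieces $J,J'$ for $\tau$ along the edges of $\fkc_-$, these two tiles can be reassembled — via independent $\ZZ^2$-translations — into the adjacent fundamental hexagon $\Hex(\sigma_-)$. The plan is to track carefully which edges belong to which of the four perfect matchings $\sfP_0,\sfP_1,\sfP_2,\sfP_3$ along the boundaries of the two hexagons, and to use Proposition~\ref{prop_hex_property} together with \cite[Lemma~3.11]{CHT} to identify the boundary edges of $\Hex(\sigma_-)$ as a rearrangement of the boundary edges of $J$ and $J'$.

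First I would set up notation following Figure~\ref{fig_triangulation_notation}: $\sigma_+$ has rays $\rho_0,\rho_1,\rho_2$ and $\sigma_-$ has rays $\rho_1,\rho_2,\rho_3$, with $\theta$-stable perfect matchings $\sfP_0,\sfP_1,\sfP_2,\sfP_3$ respectively. By Proposition~\ref{prop_hex_property} applied to $\sigma_+$, the boundary of $\Hex(\sigma_+)$ decomposes into six chains of edges meeting at the six $3$-valent nodes, where consecutive chains alternately lie in a single $\sfP_i$ ($i\in\{0,1,2\}$) or in $\sfP_{i-1}\cap\sfP_{i+1}$. The set $\fkc_-=(\sfP_1\ominus\sfP_3)\cap(\sfP_2\ominus\sfP_3)$ is, by \cite[Lemma~3.11]{CHT}, a chain of edges running across the interior of $\Hex(\sigma_+)$ between two of its $3$-valent boundary nodes, cutting it into $J$ and $J'$; I would record which boundary chains of $\Hex(\sigma_+)$ end up on $J$ and which on $J'$, and note that the new cut $\fkc_-$ appears on the boundary of each piece. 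Symmetrically, running Proposition~\ref{prop_hex_property} for $\sigma_-$ identifies the boundary of $\Hex(\sigma_-)$ as six chains alternating between single $\sfP_i$ ($i\in\{1,2,3\}$) and $\sfP_{i-1}\cap\sfP_{i+1}$ (indices mod the triple $\{1,2,3\}$), and $\fkc_+=(\sfP_0\ominus\sfP_1)\cap(\sfP_0\ominus\sfP_2)$ is a chain across $\Hex(\sigma_-)$ splitting it into two pieces that \cite[Lemma~3.11]{CHT} identifies with the jigsaw pieces for $\tau$.

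The core of the argument is then a matching of the two descriptions of the jigsaw pieces: the pieces obtained by cutting $\Hex(\sigma_+)$ along $\fkc_-$ and the pieces obtained by cutting $\Hex(\sigma_-)$ along $\fkc_+$ are the \emph{same} pair of tiles (up to $\ZZ^2$-translation), because in both cases a jigsaw piece is by definition the closure of a connected component of $\TT\setminus\bigcup_{i=0}^3\sfP_i$, and there are exactly two such components relevant to $\tau$. Concretely, I would argue that each connected component of $\TT\setminus\bigcup_{i=0}^3\sfP_i$ meeting $\Hex(\sigma_+)$ lifts to a tile of $\RR^2$; $\Hex(\sigma_+)$ is the union of two of these tiles glued along $\fkc_-$, and $\Hex(\sigma_-)$ is the union of (translates of) the same two tiles glued along $\fkc_+$ instead. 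Since $\fkc_-$ lies in the interior of $\Hex(\sigma_+)$ but on the boundary of $\Hex(\sigma_-)$, and $\fkc_+$ vice versa, regluing $J$ and $J'$ along $\fkc_+$ (rather than $\fkc_-$) and translating appropriately produces exactly $\Hex(\sigma_-)$. This uses the invariance of the support of $\theta$-stable representations on a torus orbit (so $Q^\sigma$ and hence $\Hex(\sigma)$ is well defined), the connectedness of $Q^\sigma$, and the combinatorial structure of Proposition~\ref{prop_hex_property}.

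The main obstacle will be the bookkeeping in the regluing step: one must verify that the $\ZZ^2$-translates of $J$ and $J'$ fit together along $\fkc_+$ without overlap and with no gaps, i.e. that the boundary chains which were \emph{not} part of either cut $\fkc_-$ or $\fkc_+$ match up correctly after translation, and that the edges of $\fkc_+$ — which must appear somewhere inside the union $J\cup J'$ reassembled as $\Hex(\sigma_+)$ — are indeed the gluing locus for $\Hex(\sigma_-)$. This is precisely where I would lean hardest on \cite[Lemma~3.11]{CHT} and on the explicit ``alternating single-matching / double-intersection'' description of the six boundary chains, checking case by case which of the two local pictures in Figure~\ref{fig_triangulation_notation} one is in; the case where $\sigma_+\cup\sigma_-$ is a parallelogram (left of Figure~\ref{fig_triangulation_notation}) and the case where it is the union of two triangles sharing a full edge with $\rho_1$ in the interior of the edge $\overline{\rho_0\rho_3}$ (right of Figure~\ref{fig_triangulation_notation}) should be handled separately but by the same principle. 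Once the tiles are identified and the gluing edges located, the conclusion of Proposition~\ref{prop_jigsaw} is immediate.
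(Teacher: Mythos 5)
The paper does not actually prove this proposition: it is imported verbatim as \cite[Theorem~3.12]{CHT}, and the only "proof content" in the paper is the sentence immediately preceding the statement, which records (quoting \cite[Lemma~3.11]{CHT}) that cutting $\Hex(\sigma_+)$ along $\fkc_-$ and cutting $\Hex(\sigma_-)$ along $\fkc_+$ each produce precisely the two jigsaw pieces for $\tau$. Your proposal is correct and is essentially a reconstruction of the proof of the cited theorem from that lemma: since a jigsaw piece is a closed subset of $\TT$ and each fundamental hexagon projects bijectively onto $\TT$ (away from boundary identifications), each of $\Hex(\sigma_+)$ and $\Hex(\sigma_-)$ is the union of exactly one lift of each of the two jigsaw pieces, and any two lifts of the same piece differ by a deck transformation, i.e.\ a $\ZZ^2$-translation; the translation statement follows at once. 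Two small remarks. First, the "main obstacle" you describe in your last paragraph — checking that the translates fit along $\fkc_+$ with no gaps or overlaps, with a case split over the two local pictures of Figure~\ref{fig_triangulation_notation} — is not really needed: no gluing has to be verified by hand, because $\Hex(\sigma_-)$ is already known (by Lemma~3.11 applied to $\sigma_-$) to be a union of lifts of the two pieces, so the fitting is automatic. Second, be explicit that the projection $\RR^2\to\TT$ restricted to each jigsaw piece inside a fundamental hexagon is injective (this is what makes "lift" and hence "$\ZZ^2$-translate" meaningful); this is immediate from the fact that the piece sits inside a fundamental domain, but it is the one point where your appeal to "closures of connected components of $\TT\setminus\bigcup_{i=0}^3\sfP_i$" silently passes between the torus and the plane.
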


\begin{example}
We consider the dimer model $\Gamma$ given in Figure~\ref{ex_dimer1_basic}, in which the zigzag polygon $\Delta_\Gamma$ is $\Delta(3,2)$. 
Let $C\subset\Theta(Q)_\RR$ be a chamber, and suppose that the sequence $\calZ_\permII=(u_{\permII(1)}, \dots, u_{\permII(5)})=(u_4,u_1,u_3,u_5,u_2)$ 
with $\permII=(1452)\in\fkS_5$ is the unique sequence corresponding to $C$ as in Proposition~\ref{prop_chamber_correspond_zigzag}, 
where $u_1, \dots, u_5$ are the zigzag paths of $\Gamma$ shown in Figure~\ref{fig_naming_zigzag}. 
By the condition (a) in Proposition~\ref{prop_chamber_correspond_zigzag}, 
we see that the triangulation $\Delta_C$ takes the form as in Figure~\ref{fig_sgn_triangulation} since $\sgn(\calZ_\permII)=(+1,-1,-1,+1,-1)$. 
For $\theta\in C$, let $\sigma_+\in\Sigma_\theta(3)$ (resp. $\sigma_-\in\Sigma_\theta(3)$) be the three-dimensional cone 
whose rays pass through the lattice points $(1,0)$, $(1,1)$, and $(2,0)$ (resp. $(1,1)$, $(2,0)$, and $(2,1)$) in $\Delta_C$. 
The $\theta$-stable perfect matchings corresponding to these lattice points can be obtained from $\calZ_\permII$ as Proposition~\ref{prop_bPM_zzsequence}. 
Note that the $\theta$-stable perfect matchings $\sfP^\theta_{(1,0)}$ and $\sfP^\theta_{(2,0)}$ respectively take the forms 
$\sfP_{\{1\}}$ and $\sfP_{\{1,3\}}$ shown in Figure~\ref{fig_ex_PMfromzigzag}, 
$\sfP^\theta_{(2,1)}$ is the corner perfect matching $\sfP_2$ shown in Example~\ref{ex_PM_corner}, 
and $\sfP^\theta_{(1,1)}=\big(\sfP^\theta_{(2,1)}{\setminus}\Zig(u_5)\big)\cup\Zag(u_5)$. 

Considering the edges in $\sfP^\theta_{(1,0)}\cup\sfP^\theta_{(1,1)}\cup\sfP^\theta_{(2,0)}$ and 
$\sfP^\theta_{(1,1)}\cup\sfP^\theta_{(2,0)}\cup\sfP^\theta_{(2,1)}$, 
we have the fundamental hexagons $\Hex(\sigma_+)$ and $\Hex(\sigma_-)$ associated to 
$\sigma_+$ and $\sigma_-$ as shown in Figure~\ref{fig_hex+} and \ref{fig_hex-}, respectively. 
The edge $\fkc_-=(\sfP^\theta_{(1,1)}\ominus\sfP^\theta_{(2,1)})\cap(\sfP^\theta_{(2,0)}\ominus\sfP^\theta_{(2,1)})$ cuts $\Hex(\sigma_+)$ into two jigsaw pieces. 
Also, the edge $\fkc_+=(\sfP^\theta_{(1,0)}\ominus\sfP^\theta_{(1,1)})\cap(\sfP^\theta_{(1,0)}\ominus\sfP^\theta_{(2,0)})$ cuts $\Hex(\sigma_-)$ into two jigsaw pieces. 
By sliding a jigsaw piece in $\Hex(\sigma_+)$, we can obtain $\Hex(\sigma_-)$, and vice versa. 

\newcommand{\jigsawbasicsetting}{
\foreach \n/\a/\b in {1/1.5/3, 2/3.5/3, 3/5.5/3, 4/7.5/3} {\coordinate (00B\n) at (\a,\b);} 
\foreach \n/\a/\b in {1/0.5/1,2/2.5/1, 3/4.5/1, 4/6.5/1} {\coordinate (00W\n) at (\a,\b);} 
\foreach \n/\a/\b in {1/1.5/3, 2/3.5/3, 3/5.5/3, 4/7.5/3} {\coordinate (10B\n) at (\a+8,\b);} 
\foreach \n/\a/\b in {1/0.5/1,2/2.5/1, 3/4.5/1, 4/6.5/1} {\coordinate (10W\n) at (\a+8,\b);} 
\foreach \n/\a/\b in {1/1.5/3, 2/3.5/3, 3/5.5/3, 4/7.5/3} {\coordinate (01B\n) at (\a,\b+4);} 
\foreach \n/\a/\b in {1/0.5/1,2/2.5/1, 3/4.5/1, 4/6.5/1} {\coordinate (01W\n) at (\a,\b+4);} 
\foreach \n/\a/\b in {1/1.5/3, 2/3.5/3, 3/5.5/3, 4/7.5/3} {\coordinate (11B\n) at (\a+8,\b+4);} 
\foreach \n/\a/\b in {1/0.5/1,2/2.5/1, 3/4.5/1, 4/6.5/1} {\coordinate (11W\n) at (\a+8,\b+4);} 
\foreach \n/\a/\b in {1/1.5/3, 2/3.5/3, 3/5.5/3, 4/7.5/3} {\coordinate (02B\n) at (\a,\b+8);} 
\foreach \n/\a/\b in {1/0.5/1,2/2.5/1, 3/4.5/1, 4/6.5/1} {\coordinate (02W\n) at (\a,\b+8);} 
\foreach \n/\a/\b in {1/1.5/3, 2/3.5/3, 3/5.5/3, 4/7.5/3} {\coordinate (12B\n) at (\a+8,\b+8);} 
\foreach \n/\a/\b in {1/0.5/1,2/2.5/1, 3/4.5/1, 4/6.5/1} {\coordinate (12W\n) at (\a+8,\b+8);} 
\coordinate (exB) at (7.5,-1); \coordinate (exW) at (16.5,5); 
}

\newcommand{\jigsawbasic}{
\foreach \p/\q in {0/0,1/0,0/1,1/1,0/2,1/2}{ 
\foreach \w/\b in {1/1,2/2,3/3,4/3} {\draw[line width=\edgewidth] (\p\q W\w)--(\p\q B\b); };
\foreach \w/\s/\t in {1/0/2,1/0/0,1/1/0,2/2/0,2/3/0,3/4/0,3/5/0,4/6/0,4/7/0} {\draw[line width=\edgewidth] (\p\q W\w)--(\s+8*\p,\t+4*\q); };
\foreach \b/\s/\t in {1/1/4,1/2/4,2/3/4,2/4/4,3/5/4,3/6/4,4/7/4,4/8/4,4/8/2} {\draw[line width=\edgewidth] (\p\q B\b)--(\s+8*\p,\t+4*\q); };
\draw[line width=\edgewidth] (exB)--(7,0); \draw[line width=\edgewidth] (exB)--(8,0);
\draw[line width=\edgewidth] (exW)--(16,4); \draw[line width=\edgewidth] (exW)--(16,6); 
\foreach \x in {1,2,3,4} {\filldraw [fill=black, line width=\nodewidthb] (\p\q B\x) circle [radius=\noderad] ;}; 
\filldraw [fill=black, line width=\nodewidthb] (exB) circle [radius=\noderad]; 
\foreach \x in {1,2,3,4} {\filldraw [fill=white, line width=\nodewidthw] (\p\q W\x) circle [radius=\noderad] ;}; 
\filldraw [fill=white, line width=\nodewidthw] (exW) circle [radius=\noderad] ;
\foreach \n/\x/\y in {0/0.5/3, 1/2/2, 2/4/2, 3/5.5/1, 4/7/2}{ 
\node[blue] at (\x+8*\p,\y+4*\q) {\LARGE$\n$}; }; 
\draw[line width=\edgewidth, gray]  (0+8*\p,0+4*\q) rectangle (8+8*\p,4+4*\q);
};}

\begin{figure}[H]
\begin{center}
\scalebox{0.45}{
\begin{tikzpicture}
\newcommand{\noderad}{0.18cm} 
\newcommand{\edgewidth}{0.05cm} 
\newcommand{\nodewidthw}{0.05cm} 
\newcommand{\nodewidthb}{0.04cm} 
\newcommand{\pmwidth}{0.28cm} 
\newcommand{\pmcolor}{red!40} 
\jigsawbasicsetting

\filldraw[lightgray!50] (exB)--(11W2)--(10B2)--(11B3)--(12W3)--(11B2)--(12W2)--(00B4)--(01B3)--(01W3)--(exB); 
\foreach \p/\q in {0/0,1/0,0/1,1/1,0/2,1/2}{ 
\draw[line width=\pmwidth, \pmcolor] (\p\q W1)--(\p\q B1); 
\foreach \w/\s/\t in {2/3/0,3/5/0,4/7/0} {\draw[line width=\pmwidth, \pmcolor] (\p\q W\w)--(\s+8*\p,\t+4*\q); };
\foreach \b/\s/\t in {2/3/4,3/5/4,4/7/4} {\draw[line width=\pmwidth, \pmcolor] (\p\q B\b)--(\s+8*\p,\t+4*\q); };}; 
\draw[line width=\pmwidth, \pmcolor] (exB)--(7,0); 
\foreach \p/\q in {0/0,1/0,0/1,1/1,0/2,1/2}{ 
\draw[line width=\pmwidth, \pmcolor] (\p\q W1)--(\p\q B1); 
\draw[line width=\pmwidth, \pmcolor] (\p\q W3)--(\p\q B3); 
\foreach \w/\s/\t in {2/3/0,4/7/0} {\draw[line width=\pmwidth, \pmcolor] (\p\q W\w)--(\s+8*\p,\t+4*\q); };
\foreach \b/\s/\t in {2/3/4,4/7/4} {\draw[line width=\pmwidth, \pmcolor] (\p\q B\b)--(\s+8*\p,\t+4*\q); };}; 
\draw[line width=\pmwidth, \pmcolor] (exB)--(7,0); 
\foreach \p/\q in {0/0,1/0,0/1,1/1,0/2,1/2}{ 
\draw[line width=\pmwidth, \pmcolor] (\p\q W4)--(\p\q B3); 
\foreach \w/\s/\t in {1/0/0, 2/2/0, 3/4/0} {\draw[line width=\pmwidth, \pmcolor] (\p\q W\w)--(\s+8*\p,\t+4*\q); };
\foreach \b/\s/\t in {1/2/4, 2/4/4, 4/8/4} {\draw[line width=\pmwidth, \pmcolor] (\p\q B\b)--(\s+8*\p,\t+4*\q); };}; 
\draw[line width=\pmwidth, \pmcolor] (exB)--(8,0); \draw[line width=\pmwidth, \pmcolor] (exW)--(16,4); 
\draw[line width=0.25cm, blue!60] (00B4)--(10W1); 

\jigsawbasic

\end{tikzpicture}}
\end{center}
\caption{A connected component (e.g., the grayed area) is the fundamental hexagon $\Hex(\sigma_+)$, and the blue edge is $\fkc_-$.}
\label{fig_hex+}
\end{figure}

\begin{figure}[H]
\begin{center}
\scalebox{0.45}{
\begin{tikzpicture}
\newcommand{\noderad}{0.18cm} 
\newcommand{\edgewidth}{0.05cm} 
\newcommand{\nodewidthw}{0.05cm} 
\newcommand{\nodewidthb}{0.04cm} 
\newcommand{\pmwidth}{0.28cm} 
\newcommand{\pmcolor}{red!40} 
\jigsawbasicsetting

\filldraw[lightgray!50] (00B4)--(10W1)--(11W2)--(10B2)--(11B3)--(10B4)--(exW)--(12B3)--(11B2)--(12W2)--(00B4); 
\foreach \p/\q in {0/0,1/0,0/1,1/1,0/2,1/2}{ 
\draw[line width=\pmwidth, \pmcolor] (\p\q W1)--(\p\q B1); 
\draw[line width=\pmwidth, \pmcolor] (\p\q W3)--(\p\q B3); 
\foreach \w/\s/\t in {2/3/0,4/7/0} {\draw[line width=\pmwidth, \pmcolor] (\p\q W\w)--(\s+8*\p,\t+4*\q); };
\foreach \b/\s/\t in {2/3/4,4/7/4} {\draw[line width=\pmwidth, \pmcolor] (\p\q B\b)--(\s+8*\p,\t+4*\q); };}; 
\draw[line width=\pmwidth, \pmcolor] (exB)--(7,0); 
\foreach \p/\q in {0/0,1/0,0/1,1/1,0/2,1/2}{ 
\draw[line width=\pmwidth, \pmcolor] (\p\q W4)--(\p\q B3); 
\foreach \w/\s/\t in {1/0/0, 2/2/0, 3/4/0} {\draw[line width=\pmwidth, \pmcolor] (\p\q W\w)--(\s+8*\p,\t+4*\q); };
\foreach \b/\s/\t in {1/2/4, 2/4/4, 4/8/4} {\draw[line width=\pmwidth, \pmcolor] (\p\q B\b)--(\s+8*\p,\t+4*\q); };}; 
\draw[line width=\pmwidth, \pmcolor] (exB)--(8,0); \draw[line width=\pmwidth, \pmcolor] (exW)--(16,4); 
\foreach \p/\q in {0/0,1/0,0/1,1/1,0/2,1/2}{ 
\draw[line width=\pmwidth, \pmcolor] (\p\q W4)--(\p\q B3); 
\foreach \w/\s/\t in {1/0/2, 2/2/0, 3/4/0} {\draw[line width=\pmwidth, \pmcolor] (\p\q W\w)--(\s+8*\p,\t+4*\q); };
\foreach \b/\s/\t in {1/2/4, 2/4/4, 4/8/2} {\draw[line width=\pmwidth, \pmcolor] (\p\q B\b)--(\s+8*\p,\t+4*\q); };}; 
\draw[line width=\pmwidth, \pmcolor] (exW)--(16,6); 
\draw[line width=0.25cm, blue!60] (11B3)--(12W3); 

\jigsawbasic
\end{tikzpicture}}
\end{center}
\caption{A connected component (e.g., the grayed area) is the fundamental hexagon $\Hex(\sigma_-)$, and the blue edge is $\fkc_+$.}
\label{fig_hex-}
\end{figure}
\end{example}

We show some combinatorial statements concerning our consistent dimer model $\Gamma=\Gamma_\perm$. 

\begin{lemma}
\label{lem_property_c}
Let the notation be as above. 
We see that $\fkc_+$ $($resp. $\fkc_-$$)$ is a single edge contained in $\sfP_0$ $($resp. $\sfP_3$$)$. 
\end{lemma}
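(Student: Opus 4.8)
The plan is to work entirely inside a single fundamental hexagon $\Hex(\sigma_+)$ (the case of $\fkc_-$ and $\Hex(\sigma_-)$ being symmetric) and to use the very explicit combinatorics of the dimer model $\Gamma_\perm$, namely Lemma~\ref{lem_zigzag_nointersection} together with the description of boundary perfect matchings from Proposition~\ref{char_bound} and Proposition~\ref{prop_bPM_zzsequence}. Recall $\fkc_+=(\sfP_0\ominus\sfP_1)\cap(\sfP_0\ominus\sfP_2)$, where $\sfP_0,\sfP_1,\sfP_2$ are the $\theta$-stable perfect matchings at the rays of $\sigma_+$, situated at three lattice points of $\Delta(a,b)$ forming an elementary triangle. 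Since $\Delta(a,b)$ has height one, any such triangle has exactly one vertex on the upper base and two on the lower, or two on the upper and one on the lower; by relabelling we may assume $\rho_0$ is the ``apex'' and $\rho_1,\rho_2$ are the two base points sharing an edge of the triangulation, and that the triangle sits as in one of the two pictures of Figure~\ref{fig_triangulation_notation}. Then $\sfP_1$ and $\sfP_2$ are boundary perfect matchings on the same side of $\Delta(a,b)$, adjacent lattice points, so by Proposition~\ref{prop_bPM_zzsequence} they differ by exactly one zigzag path: $\sfP_1\ominus\sfP_2=\{u_k\}$ for a single $u_k$ with $[u_k]$ the outer normal of that side. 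By Lemma~\ref{lem_zigzag_nointersection}(3), $u_k$ consists of exactly two edges, one zig and one zag, so $\sfP_1\ominus\sfP_2$ has exactly two edges.

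The key step is then to compare $\sfP_0$ against $\sfP_1$ and $\sfP_2$. First I would show $\sfP_0\ominus\sfP_1$ and $\sfP_0\ominus\sfP_2$ each decompose as a disjoint union of the zigzag paths obtained by walking along the boundary of $\Delta(a,b)$ from the lattice point of $\sfP_0$ to that of $\sfP_i$: this follows by iterating Proposition~\ref{zigzag_sidepolygon} and Proposition~\ref{char_bound} along the polygon's boundary (one uses that $\sfP_0\cap\sfP_i$ contains the ``untouched'' parts, and the symmetric difference collects the $\Zig$'s and $\Zag$'s of the intermediate zigzag paths), together with Lemma~\ref{lem_zigzag_nointersection}(1), which guarantees these zigzag paths are pairwise edge-disjoint so no cancellation occurs. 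Because $\sfP_1$ and $\sfP_2$ sit at adjacent lattice points, the two walks from $\sfP_0$ differ only by the single extra zigzag path $u_k$; hence the collection of zigzag paths comprising $\sfP_0\ominus\sfP_1$ and that comprising $\sfP_0\ominus\sfP_2$ coincide \emph{except} for $u_k$, which appears in exactly one of them (say $\sfP_0\ominus\sfP_2$, after possibly swapping $\sfP_1,\sfP_2$), and moreover on the shared zigzag paths $\sfP_0$ intersects them in the same ($\Zig$ or $\Zag$) edge in both. Consequently $(\sfP_0\ominus\sfP_1)\cap(\sfP_0\ominus\sfP_2)$ equals the set of edges of $u_k$ that lie in $\sfP_0\ominus\sfP_1$, which is precisely one of $\Zig(u_k)$ or $\Zag(u_k)$ — a single edge by Lemma~\ref{lem_zigzag_nointersection}(3). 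Finally, to see $\fkc_+\subset\sfP_0$: this single edge $e$ lies in $\sfP_0\ominus\sfP_1$, so $e\in\sfP_0$ or $e\in\sfP_1$; but $e$ is an edge of $u_k$ with $u_k\subset\sfP_0\ominus\sfP_2$, and $u_k\cap\sfP_2\neq\varnothing$ is carried entirely by $\sfP_2$'s side of the walk — examining which of $\Zig(u_k),\Zag(u_k)$ lies in $\sfP_0$ using \eqref{allzigzag_in_cornerPM} and the walk description shows $e\in\sfP_0$ (and not $\sfP_1$). The argument for $\fkc_-$ is identical with the roles of $\{\sfP_0\}$ and $\{\sfP_3\}$ swapped and the walks run toward $\rho_3$.

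The main obstacle I anticipate is the bookkeeping in the comparison of the two walks: one must carefully handle the two combinatorial types in Figure~\ref{fig_triangulation_notation} (the ``parallelogram'' case and the ``triangle with a base edge subdivided'' case), track on which base of $\Delta(a,b)$ each of $\sfP_0,\sfP_1,\sfP_2$ lies, and verify that on every common zigzag path $\sfP_0$ picks out the same edge in $\sfP_0\ominus\sfP_1$ and in $\sfP_0\ominus\sfP_2$ so that the intersection is exactly $u_k$'s single edge and nothing more. A clean way to organize this is to fix the walk along $\partial\Delta(a,b)$ from the vertex of $\sfP_{(0,0)}$ (resp.\ $\sfP_{(b,1)}$), use \eqref{eq_bPM_from_zz1}--\eqref{eq_bPM_from_zz2} to write each $\sfP_i$ explicitly in terms of $\Zig$'s, $\Zag$'s of the $z_k$'s and $w_k$'s and the fixed set $\sfP_{(0,0)}\cap\sfP_{(a,0)}$ (resp.\ the analogous set on the other side), and then the symmetric differences and their intersection become a purely set-theoretic computation with these three ingredient families, which collapses to a single edge by Lemma~\ref{lem_zigzag_nointersection}. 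I do not expect any genuinely hard point beyond this casework; the consistency of $\Gamma_\perm$ and the non-intersection property of same-slope zigzag paths do all the real work.
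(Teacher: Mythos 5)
There is a genuine gap, and it occurs at the very first step. In the paper's setup the labels of the rays are not free: by definition $\rho_1,\rho_2$ are the two rays of $\tau=\sigma_+\cap\sigma_-$ and $\rho_0$ is the remaining ray of $\sigma_+$. Since $\Delta(a,b)$ has lattice height one, the line segment dual to $\tau$ always joins a lattice point on the lower base to one on the upper base, so $\rho_1$ lies on the upper base and $\rho_2$ on the lower base in both pictures of Figure~\ref{fig_triangulation_notation} (the paper's proof uses exactly this). Your ``relabelling'', which places $\rho_1,\rho_2$ at adjacent lattice points of the \emph{same} base with $\rho_0$ as the apex, is not a harmless permutation: $\fkc_+=(\sfP_0\ominus\sfP_1)\cap(\sfP_0\ominus\sfP_2)$ is not symmetric under exchanging $\sfP_0$ with $\sfP_1$, so after relabelling you are computing a different set (one containing all of $\sfP_{0}\setminus(\sfP_1\cup\sfP_2)$ for the apex matching, which is far from a single edge). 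Consequently your key claim that $\sfP_1\ominus\sfP_2$ is a single two-edge zigzag path is false for the actual $\sfP_1,\sfP_2$: they sit on opposite bases, and the crux of the lemma is precisely that $\sfP_1\cap\sfP_2=\varnothing$, i.e.\ $\sfP_1\ominus\sfP_2$ is the disjoint union of two entire perfect matchings. That disjointness --- which the paper obtains by showing an edge of $\sfP_1\cap\sfP_2$ would lie in all four corner perfect matchings and hence, by Proposition~\ref{zigzag_sidepolygon}, on no zigzag path, contradicting consistency --- is nowhere addressed in your proposal. Once one has it (together with the analogous statement for $\sfP_0$ and whichever of $\sfP_1,\sfP_2$ lies on the opposite base), the set-theoretic identity $\fkc_+=\bigl(\sfP_0\setminus(\sfP_1\cup\sfP_2)\bigr)\cup\bigl((\sfP_1\cap\sfP_2)\setminus\sfP_0\bigr)$ collapses to $\sfP_0\setminus\sfP_j$ for the unique $j$ with $\rho_j$ on the same base as $\rho_0$, and only then do Proposition~\ref{prop_bPM_zzsequence} and Lemma~\ref{lem_zigzag_nointersection}(3) give a single edge of $\sfP_0$. (The paper instead deduces both conclusions from Proposition~\ref{prop_hex_property}: the chain $\fkc_+$ alternates between edges of $\sfP_0$ and of $\sfP_1\cap\sfP_2$ and has odd length, so $\sfP_1\cap\sfP_2=\varnothing$ forces it to be one edge of $\sfP_0$.)

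A secondary problem: your decomposition of $\sfP_0\ominus\sfP_i$ as an edge-disjoint union of the zigzag paths collected along a walk on $\partial\Delta(a,b)$ is unjustified once the walk turns a corner of the polygon. Lemma~\ref{lem_zigzag_nointersection}(1) only asserts that the zigzag paths of slope $(0,\pm1)$ are pairwise edge-disjoint; in a consistent dimer model every edge lies on exactly two zigzag paths, and zigzag paths whose slopes are normals of different sides do share edges, so cancellation in the iterated symmetric difference does occur. Both issues would have to be repaired before your argument could go through.
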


\begin{proof}
Since $\fkc_+$ is a subset of the edges in the boundary of $\Hex(\sigma_+)$, the edges consisting of $\fkc_+$ are contained in 
either $\sfP_0$ or $\sfP_{1}\cap\sfP_{2}$ by Proposition~\ref{prop_hex_property}. 
In our situation, we claim that $\sfP_{1}\cap\sfP_{2}=\varnothing$. 
Indeed, since $\rho_1$ (resp. $\rho_2$) corresponds to a lattice point on the upper (resp. lower) base of $\Delta(a,b)$, 
if there exists an edge $e$ such that $e\in\sfP_1\cap\sfP_2$, 
then we have $e\in\sfP_{(0,0)}\cap\sfP_{(a,0)}\cap\sfP_{(0,1)}\cap\sfP_{(b,1)}$ 
by the description of perfect matchings as in Proposition~\ref{prop_bPM_zzsequence} and Lemma~\ref{lem_zigzag_nointersection}.
By Proposition~\ref{zigzag_sidepolygon}, this implies that any zigzag path does not pass through the edge $e$, which is a contradiction. 
The assertion for $\fkc_+$ follows from this claim. We have the assertion for $\fkc_-$ by a similar argument. 
\end{proof}

\begin{lemma}
\label{lem_property_interior}
Let the notation be as above. 
Any edge contained in the strict interior of $\Hex(\sigma_+)$ $($resp. $\Hex(\sigma_-)$$)$ does not belong to $\sfP_0\cup \sfP_1\cup \sfP_2$ 
$($resp. $\sfP_1\cup \sfP_2\cup \sfP_3$$)$. 
Thus, the subquiver of $\widetilde{Q}$ obtained by restricting $\widetilde{Q}$ to the strict interior of $\Hex(\sigma_+)$ $($resp. $\Hex(\sigma_-)$$)$ 
coincides with the quiver $\widetilde{Q^{\sigma_+}}$ $($resp. $\widetilde{Q^{\sigma_-}}$$)$. 
\end{lemma}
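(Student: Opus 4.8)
The statement to be proved is Lemma~\ref{lem_property_interior}: every edge in the strict interior of $\Hex(\sigma_+)$ avoids $\sfP_0\cup\sfP_1\cup\sfP_2$, and consequently the restriction of $\widetilde Q$ to that interior equals $\widetilde{Q^{\sigma_+}}$ (and the symmetric statement for $\sigma_-$). Recall that $\Hex(\sigma_+)$ was defined precisely as the union of the faces of $\widetilde\Gamma$ dual to the vertices of $\widetilde{Q^{\sigma_+}}$, where $Q^{\sigma_+}$ is the subquiver whose arrows are those dual to edges \emph{not} contained in $\sfP_0\cup\sfP_1\cup\sfP_2$ (equivalently, the cosupport of $M_{\sigma_+}$, by Proposition~\ref{prop_stablerep_cosupport}). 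So the content is essentially that an edge dual to an arrow of $Q^{\sigma_+}$ which lies in the interior of the hexagon is exactly an interior edge, and no edge of $\bigcup_{i=0}^2\sfP_i$ sneaks into the interior.

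\textbf{First step: classify edges by the two zigzag paths containing them.} By the consistency condition (Definition~\ref{def_consistent}) and the remark after it, every edge of $\Gamma$ lies in exactly two zigzag paths. By Lemma~\ref{lem_zigzag_nointersection}(4), an edge dual to a loop (a ``joint of hexagons'') lies in no $z_i$ or $w_j$; such edges are never in any $\sfP_{(i,0)}^\theta$ or $\sfP_{(j,1)}^\theta$ — indeed any $\theta$-stable boundary perfect matching is built from zigs and zags of the $z$'s and $w$'s together with $\cPM\cap\cPM^\prime$ (Proposition~\ref{prop_bPM_zzsequence}, Setting~\ref{setting_zigzag_for_a+b}), and by Proposition~\ref{zigzag_sidepolygon} an edge lying in $\cPM\cap\cPM'$ would have to be missed by all zigzag paths, impossible. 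Hence a loop-dual edge belongs to \emph{no} perfect matching at all, in particular not to $\sfP_0,\sfP_1,\sfP_2$; this handles one type of interior edge automatically. The remaining edges are each shared by two faces labelled $k-1,k\pmod n$ for a single $k$ (Lemma~\ref{lem_zigzag_nointersection}(2)), and lie in the two zigzag paths $u_k$, and possibly a ``diagonal'' zigzag path of slope $(\pm1,\pm1)$ or $(\mp1,0)$.

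\textbf{Second step: locate $\sfP_0,\sfP_1,\sfP_2$ relative to the hexagon boundary.} The rays $\rho_0,\rho_1,\rho_2$ of $\sigma_+$ correspond to three lattice points of $\Delta(a,b)$ forming an elementary triangle; by the shape of the triangulation (two consecutive triangles in Figure~\ref{fig_triangulation_notation}), $\rho_0$ and $\rho_1$ sit on opposite bases of the trapezoid and $\rho_2$ is the third vertex. I would invoke Proposition~\ref{prop_hex_property}: the boundary of $\Hex(\sigma_+)$ is a cyclic concatenation of six chains of edges, and on each chain the edges alternate between a single $\sfP_i$ and the intersection $\sfP_{i-1}\cap\sfP_{i+1}$ (indices mod $3$). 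In particular every edge of $\bigcup_{i=0}^2\sfP_i$ that meets $\overline{\Hex(\sigma_+)}$ and is ``relevant'' to the cosupport decomposition lies on the \emph{boundary} — that is exactly what Proposition~\ref{prop_hex_property} asserts, because the edges interior to the hexagon are dual to the internal arrows of $\widetilde{Q^{\sigma_+}}$, and those arrows are by construction in the cosupport, i.e. dual to edges outside $\sfP_0\cup\sfP_1\cup\sfP_2$. So really the lemma is almost a restatement of the definition of $\Hex(\sigma_+)$ together with Proposition~\ref{prop_hex_property}; the work is to argue that there is no interior edge which, while not dual to an arrow of $\widetilde{Q^{\sigma_+}}$, still lies inside the hexagon and happens to belong to some $\sfP_i$.

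\textbf{Third step: the connectedness/filling argument.** This is the step I expect to be the genuine obstacle. I would argue as follows. The faces dual to the vertices of $\widetilde{Q^{\sigma_+}}$ tile $\Hex(\sigma_+)$ with no holes — this follows because $Q^{\sigma_+}$ is connected (stated after its definition in Subsection~\ref{subsec_jigsaw}) and because the universal-cover lift $\widetilde{Q^{\sigma_+}}$ is a single connected component whose dual faces are simply connected with simply connected union (a standard feature of consistent dimer models; cf. the analysis of the fundamental hexagon in \cite{CHT}, Proposition~\ref{prop_hex_property}). An edge $e$ in the strict interior of $\Hex(\sigma_+)$ is then shared by two faces both of which are dual to vertices of $\widetilde{Q^{\sigma_+}}$; hence $e$ is dual to an arrow of $\widetilde{Q^{\sigma_+}}$, i.e. to an arrow of $Q^{\sigma_+}$, i.e. to an arrow in the cosupport of $M_{\sigma_+}$, which by Proposition~\ref{prop_stablerep_cosupport} means $e\notin\sfP_0\cup\sfP_1\cup\sfP_2$. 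This proves the first assertion. The second assertion is then immediate: the subquiver of $\widetilde Q$ obtained by restricting to the strict interior of $\Hex(\sigma_+)$ has vertex set the faces dual to interior vertices of $\widetilde{Q^{\sigma_+}}$ — which is all of them, since by Proposition~\ref{prop_hex_property} every vertex of $\widetilde{Q^{\sigma_+}}$ is dual to a face in the interior (the boundary of $\Hex(\sigma_+)$ contains only edges and the six $3$-valent nodes, no face centres) — and arrow set exactly the interior edges, which we just identified with the arrows of $\widetilde{Q^{\sigma_+}}$. The case of $\sigma_-$ with $\sfP_1\cup\sfP_2\cup\sfP_3$ is verbatim the same argument, reading $\rho_1,\rho_2,\rho_3$ for $\rho_0,\rho_1,\rho_2$.

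The single real subtlety — and where I would spend the most care — is justifying that $\bigcup\{\text{faces dual to vertices of }\widetilde{Q^{\sigma_+}}\}$ is precisely $\Hex(\sigma_+)$ \emph{with its interior faithfully tiled and no extra incident edges*, so that ``interior edge of the region'' and ``edge dual to an internal arrow of $\widetilde{Q^{\sigma_+}}$'' are synonymous. This is built into the construction of $\Hex(\sigma_+)$ and is the content of Proposition~\ref{prop_hex_property} (and its source \cite[Proposition~3.4]{CHT}), so the honest proof is just a careful invocation of that proposition together with connectedness of $Q^{\sigma_+}$; I would phrase it that way rather than re-deriving the hexagon's topology.
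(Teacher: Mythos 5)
Your central step is circular. You argue: an edge $e$ in the strict interior of $\Hex(\sigma_+)$ has both adjacent faces dual to vertices of $\widetilde{Q^{\sigma_+}}$, ``hence $e$ is dual to an arrow of $\widetilde{Q^{\sigma_+}}$,'' hence $e\notin\sfP_0\cup\sfP_1\cup\sfP_2$. But $Q^{\sigma_+}$ is by definition the subquiver with \emph{full} vertex set $Q_0$ and with only those arrows dual to edges outside $\sfP_0\cup\sfP_1\cup\sfP_2$; knowing that the two faces adjacent to $e$ both lie in $\Hex(\sigma_+)$ tells you that the head and tail of the dual arrow are vertices of $\widetilde{Q^{\sigma_+}}$, not that the arrow itself survives in the subquiver. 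Whether it survives is exactly the question of whether $e\in\sfP_0\cup\sfP_1\cup\sfP_2$ — i.e.\ the statement to be proved. Nothing prevents, a priori, an edge of some $\sfP_i$ from being enclosed by the faces of $\Hex(\sigma_+)$ without lying on its boundary; deleting its dual arrow need not disconnect $Q^{\sigma_+}$ or puncture the hexagon. A useful sanity check: your argument uses nothing specific to the $cA_{a+b-1}$ dimer models, yet the lemma is \emph{false} in general — the paper itself records, in the proof of Theorem~\ref{thm_main_cD4_1}, that for the $cD_4$ model $\Hex(\sigma_\pm)$ can contain an edge of $\sfP_0\cap\sfP_1\cap\sfP_2$ in its strict interior. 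Any correct proof must therefore invoke a special feature of the present setting.

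The paper's proof supplies the two missing ingredients: first, \cite[Corollary~4.18]{Moz}, which says that an edge of $\sfP_0\cup\sfP_1\cup\sfP_2$ lying in the strict interior of $\Hex(\sigma_+)$ necessarily lies in the triple intersection $\sfP_0\cap\sfP_1\cap\sfP_2$; second, the observation from the proof of Lemma~\ref{lem_property_c} that $\sfP_1\cap\sfP_2=\varnothing$ here, because $\rho_1$ and $\rho_2$ correspond to lattice points on opposite bases of $\Delta(a,b)$, so a common edge would lie in all four corner perfect matchings and hence be avoided by every zigzag path, contradicting consistency. Your proposal contains neither ingredient. Two smaller errors: the claim that a loop-dual edge ``belongs to no perfect matching at all'' is false (in Example~\ref{ex_PM_corner} the corner perfect matching $\sfP_2$ contains the edge dual to the loop at vertex $4$); and Proposition~\ref{zigzag_sidepolygon} does not imply that edges of $\cPM\cap\cPM'$ are missed by all zigzag paths — only by those of the single slope normal to $E(\cPM,\cPM')$.
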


\begin{proof}
By \cite[Corollary~4.18]{Moz}, we see that if there is an edge $e\in \sfP_0\cup \sfP_1\cup \sfP_2$ contained in the strict interior of $\Hex(\sigma_+)$, 
then it satisfies $e\in \sfP_0\cap \sfP_1\cap \sfP_2$. 
Since $\sfP_1\cap\sfP_2=\varnothing$ (see the proof of Lemma~\ref{lem_property_c}),  we have the assertion for $\Hex(\sigma_+)$. 
The assertion for $\Hex(\sigma_-)$ can be shown by a similar argument. 
\end{proof}

Here, we note that by Proposition~\ref{prop_chamber_correspond_zigzag}, 
for any triangulation $\Delta_C$ associated to a chamber $C\subset \Theta(Q)_\RR$, 
we can assign the zigzag path $z_{k_i}$ to the line segment between $(i-1, 0)$ and $(i,0)$ for all $i=1, \dots, a$, 
and assign the zigzag path $w_{k^\prime_j}$ to the line segment between $(j-1, 1)$ and $(j,1)$ for all $j=1, \dots, b$. 
Concerning such zigzag paths, we have the following lemma. 

\begin{lemma}
\label{lem_zigzag_containPM}
Let the notation be as above. 
\begin{itemize}
\setlength{\parskip}{0pt} 
\setlength{\itemsep}{5pt}
\item[(1)] We consider the cones in $\Sigma_\theta$ as shown in the left of {\rm Figure~\ref{fig_triangulation_notation}}. 
Let $z_{k_i}$ $($resp. $w_{k^\prime_j}$$)$ be a zigzag path with $[z_{k_i}]=(0,-1)$ $($resp. $[w_{k^\prime_j}]=(0,1)$$)$ assigned to the line segment comprised lattice points 
corresponding to $\rho_0$ and $\rho_2$ $($resp. $\rho_1$ and $\rho_3$$)$. 
Then we see that $\fkc_+\in\Zig(z_{k_i})$ and $\fkc_-\in \Zig(w_{k^\prime_j})$. 
\item[(2)] We consider the cones in $\Sigma_\theta$ as shown in the right of {\rm Figure~\ref{fig_triangulation_notation}}. 
Let $z_{k_i}$ $($resp. $z_{k_{i+1}}$$)$ be a zigzag path with $[z_{k_i}]=(0,-1)$ $($resp. $[z_{k_{i+1}}]=(0,-1)$$)$ assigned to the line segment comprised  lattice points 
corresponding to $\rho_0$ and $\rho_2$ $($resp. $\rho_2$ and $\rho_3$$)$. 
Then we see that $\fkc_+\in\Zig(z_{k_i})$ and $\fkc_-\in \Zag(z_{k_{i+1}})$. 
\end{itemize}
\end{lemma}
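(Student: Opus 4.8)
The plan is to analyze the local picture around the wall $\tau$ using the jigsaw‐piece description from Proposition~\ref{prop_jigsaw} together with the explicit combinatorics of $\Gamma_\perm$ recorded in Lemma~\ref{lem_zigzag_nointersection}. By Lemma~\ref{lem_property_c}, $\fkc_+$ is a single edge in $\sfP_0$ and $\fkc_-$ is a single edge in $\sfP_3$, so the task reduces to deciding, for the relevant zigzag path $z$, whether that single edge lies in $\Zig(z)$ or $\Zag(z)$. Recall from Lemma~\ref{lem_zigzag_nointersection}(3) that each of our zigzag paths consists of exactly two edges, one forming $\Zig$ and one forming $\Zag$, and from part (2) that the zigzag path assigned to the segment between $(i-1,0)$ and $(i,0)$ traverses the edges shared by faces labeled $k-1$ and $k \pmod n$ for the appropriate $k$; thus each such zigzag path has a well-defined ``direction'' across the dimer. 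The strategy is to pin down $\fkc_\pm$ inside the fundamental hexagons $\Hex(\sigma_+)$, $\Hex(\sigma_-)$ and then match that edge against the two-edge zigzag path, reading off the zig/zag label from the white/black coloring.

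First I would set up, for case (1), the local configuration: $\sigma_+$ has rays $\rho_0,\rho_1,\rho_2$ and $\sigma_-$ has rays $\rho_1,\rho_2,\rho_3$, where $\rho_0,\rho_2$ sit on the lower base (slope $(0,-1)$) and $\rho_1,\rho_3$ on the upper base (slope $(0,1)$), as in the left of Figure~\ref{fig_triangulation_notation}. By definition $\fkc_+ = (\sfP_0\ominus\sfP_1)\cap(\sfP_0\ominus\sfP_2)$, and since $\sfP_1\cap\sfP_2=\varnothing$ (established in the proof of Lemma~\ref{lem_property_c}), together with the description \eqref{eq_bPM_from_zz1}–\eqref{eq_bPM_from_zz2} of the boundary perfect matchings via zigzag paths, the edge $\fkc_+$ must be an edge of the zigzag path $z_{k_i}$ realizing $\sfP_{(i-1,0)}\ominus\sfP_{(i,0)}$ up to the relevant corner identification — by Proposition~\ref{zigzag_sidepolygon} and Proposition~\ref{char_bound}, the difference of adjacent boundary perfect matchings on the lower base is governed precisely by such a zigzag path. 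Then I would argue that $\fkc_+\in\sfP_0$ forces $\fkc_+$ to be the edge of $z_{k_i}$ lying in $\sfP_{(0,0)}\supseteq\bigcup\Zig(z_k)$ (cf.\ \eqref{allzigzag_in_cornerPM}), hence $\fkc_+\in\Zig(z_{k_i})$. The dual statement $\fkc_-\in\Zig(w_{k'_j})$ follows by the symmetric argument with the roles of upper/lower base and $\sigma_+/\sigma_-$ interchanged, using $\fkc_-\in\sfP_3$ and $\bigcup\Zig(w_k)\subset\sfP_{(b,1)}$.

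For case (2), the configuration (right of Figure~\ref{fig_triangulation_notation}) has all of $\rho_0,\rho_2,\rho_3$ on the lower base, so $z_{k_i}$ is assigned to the segment from $\rho_0$ to $\rho_2$ and $z_{k_{i+1}}$ to the segment from $\rho_2$ to $\rho_3$. The same reasoning as above gives $\fkc_+\in\Zig(z_{k_i})$, since $\fkc_+\in\sfP_0=\sfP_{(0,0)}$-type corner matching and that corner contains all zigs. The new point is the assertion $\fkc_-\in\Zag(z_{k_{i+1}})$: here $\fkc_- = (\sfP_1\ominus\sfP_3)\cap(\sfP_2\ominus\sfP_3)$ with $\sfP_3$ corresponding to the lattice point $(i+1,0)$-end, and now $\fkc_-\in\sfP_3$ must be the edge of $z_{k_{i+1}}$ that lies in $\sfP_{(i+1,0)}$; but by \eqref{eq_bPM_from_zz1}, the edge of $z_{k_{i+1}}$ contained in $\sfP_{(i+1,0)}$ (as opposed to $\sfP_{(i,0)}$) is its \emph{zag}, precisely because crossing from $\sfP_{(i,0)}$ to $\sfP_{(i+1,0)}$ replaces $\Zig(z_{k_{i+1}})$ by $\Zag(z_{k_{i+1}})$. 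So $\fkc_-\in\Zag(z_{k_{i+1}})$.

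The main obstacle I anticipate is the first identification — rigorously showing that $\fkc_+$ (resp.\ $\fkc_-$), a priori only known to be a single edge of $\sfP_0$ (resp.\ $\sfP_3$) on the boundary of a fundamental hexagon, is in fact an edge of the specific zigzag path $z_{k_i}$ (resp.\ $z_{k_{i+1}}$ or $w_{k'_j}$) attached to the relevant edge of the triangulation. This requires carefully combining Proposition~\ref{prop_hex_property} (the alternating structure of hexagon boundary edges), Proposition~\ref{prop_jigsaw} (the jigsaw decomposition realizing the wall-crossing), and Lemma~\ref{lem_zigzag_nointersection} (so that the relevant zigzag paths are short and non-intersecting, pinning the edge uniquely), most naturally by inspecting the two shapes in Figure~\ref{fig_triangulation_notation} directly and tracking which faces $k-1,k\pmod n$ border the contracted edge. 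Once that geometric bookkeeping is done, the zig-versus-zag determination is immediate from the white/black coloring convention and \eqref{allzigzag_in_cornerPM}.
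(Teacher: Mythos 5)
Your proposal is correct and follows essentially the same route as the paper: by Lemma~\ref{lem_property_c} the edge $\fkc_+$ (resp.\ $\fkc_-$) lies in the single perfect matching $\sfP_0$ (resp.\ $\sfP_3$), it lies in the symmetric difference of the two stable boundary perfect matchings on the relevant base by the very definition of $\fkc_\pm$, and Proposition~\ref{prop_bPM_zzsequence} identifies that symmetric difference with the assigned zigzag path, with zig in one matching and zag in the other. The ``main obstacle'' you flag is not actually an obstacle — the inclusion $\fkc_+\subset\sfP_0\ominus\sfP_2$ is immediate from the definition, and this combined with Proposition~\ref{prop_bPM_zzsequence} is exactly how the paper pins the edge to the specific zigzag path and reads off zig versus zag.
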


\begin{proof}
\, 
\begin{itemize}
\setlength{\parskip}{0pt} 
\setlength{\itemsep}{5pt}
\item[(1)] 
By Proposition~\ref{prop_hex_property} and Lemma~\ref{lem_property_c}, the edge $\fkc_+$ belongs to a single perfect matching $\sfP_0$, 
and hence $\fkc_+\not\in\sfP_2$ in particular. 
By Proposition~\ref{prop_bPM_zzsequence}, we know that $\sfP_0$ and $\sfP_2$ are the same except the edges contained in $z_{k_i}$, 
and $\Zig(z_{k_i})\in\sfP_0$, $\Zag(z_{k_i})\in\sfP_2$. 
Thus, we see that $\fkc_+\in\Zig(z_{k_i})$. 
Similarly, the edge $\fkc_-$ belongs to a single perfect matching $\sfP_3$, and hence $\fkc_+\not\in\sfP_1$ in particular. 
Since $\sfP_1$ and $\sfP_3$ are the same except the edges contained in $w_{k^\prime_j}$ 
and $\Zig(w_{k^\prime_j})\in\sfP_3$, $\Zag(w_{k^\prime_j})\in\sfP_1$, we see that $\fkc_-\in \Zig(w_{k^\prime_j})$. 

\item[(2)] By the same argument as (1), we see that $\fkc_+\in\Zig(z_{k_i})$. 
Concerning the edge $\fkc_-$, by Proposition~\ref{prop_hex_property} and Lemma~\ref{lem_property_c}, 
$\fkc_-$ belongs to a single perfect matching $\sfP_3$, and hence $\fkc_+\not\in\sfP_2$ in particular. 
By Proposition~\ref{prop_bPM_zzsequence}, we know that $\sfP_2$ and $\sfP_3$ are the same except the edges contained in $z_{k_{i+1}}$, 
and $\Zig(z_{k_{i+1}})\in\sfP_2$, $\Zag(z_{k_{i+1}})\in\sfP_3$.  
Thus, we see that $\fkc_-\in\Zag(z_{k_{i+1}})$. 
\end{itemize}
\end{proof}

\subsection{Wall crossings and zigzag paths}
\label{subsec_main_wallcrossing}

\begin{setting}
\label{setting_zigzag_for_u}
Let $\Gamma=\Gamma_\perm$ be a consistent dimer model for some $\perm\in\fkS_n$. 
Let $u_1, \dots, u_n$ be zigzag paths on $\Gamma$ satisfying \eqref{eq_label_z_w} and \eqref{eq_label_z_w2}. 
In particular, the slope $[u_k]$ is either $(0,-1)$ or $(0,1)$ for $k=1, \dots, n$. 
We define a total order $<$ on $\{u_1, \dots, u_n\}$ as $u_n<u_{n-1}<\cdots< u_2< u_1$. 

Suppose that a chamber $C\subset\Theta(Q)_\RR$ corresponds to a sequence $\calZ_\permII=(u_{\permII(1)}, \dots, u_{\permII(n)})$ with $\permII\in\fkS_n$ 
as in Proposition~\ref{prop_chamber_correspond_zigzag}.  
Let $\Delta_C$ be the triangulation corresponding to $\calM_C$ and $\{\Delta_{C,k}\}_{k=1}^n$ be the set of elementary triangles in $\Delta_C$,  
in which we have $\sgn(\Delta_{C,k})=\sgn(u_{\permII(k)})$ for any $k=1, \dots, n$. In particular, we can assign $u_{\permII(k)}$ to $\Delta_{C,k}$. 
Also, for any $\theta\in C$, we denote the associated toric fan by $\Sigma_C=\Sigma_\theta$. 
\end{setting}

By Lemma~\ref{lem_zigzag_nointersection}, we see that any pair of zigzag paths $(u_i, u_j)$ on $\Gamma$ divide the two-torus $\TT$ into two parts (see Figure~\ref{fig_region_zigzag}). 
We denote the region containing the face dual to the specific vertex $0\in Q_0$ by $\calR^-(u_i,u_j)$, and the other region by $\calR^+(u_i,u_j)$. 
By abuse of notation, we also use the notation $\calR^\pm(u_i,u_j)$ for the set of vertices of $Q$ contained in 
$\calR^\pm(u_i,u_j)$. 
Since we essentially use one of $\calR^\pm(u_i,u_j)$, we let $\calR(u_i,u_j)\coloneqq\calR^+(u_i,u_j)$. 

\medskip

\begin{figure}[H]
\begin{center}
\scalebox{0.85}{
\begin{tikzpicture}
\newcommand{\edgewidth}{0.06cm} 
\foreach \n/\a/\b in {1/2/0, 2/1.25/0.75, 3/2/1.5, 4/1.25/2.25, 5/2/3} {\coordinate (z1\n) at (\a+0.5,\b); \coordinate (z2\n) at (\a+4,\b);} 
\draw[line width=0.03cm]  (0,0) rectangle (8,3);
\draw[line width=\edgewidth, red] (z11)--(z12)--(z13)--(z14)--(z15); \draw[line width=\edgewidth, red] (z21)--(z22)--(z23)--(z24)--(z25); 
\node at (4,1.5) {$\calR^+(u_i,u_j)$}; \node at (1.1,1.5) {$\calR^-(u_i,u_j)$}; \node at (7,1.5) {$\calR^-(u_i,u_j)$}; 
\node[red] at (2.5,-0.35) {\Large $u_i$}; \node[red] at (6,-0.35) {\Large $u_j$}; 
\end{tikzpicture}}
\end{center}
\caption{}
\label{fig_region_zigzag}
\end{figure}

We are now ready to state our theorem.

\begin{theorem}
\label{thm_main_wall}
Let the notation be as in {\rm Setting~\ref{setting_zigzag_for_u}}. 
We suppose that $\ell_k$ is an exceptional curve in $\calM_C$ 
and $\Delta_{C,k}$, $\Delta_{C,k+1}$ are elementary triangles in the triangulation $\Delta_C$ such that $\Delta_{C,k}\cap \Delta_{C,k+1}$ 
is the line segment corresponding to $\ell_k$. 
\begin{itemize}
\setlength{\parskip}{0pt} 
\setlength{\itemsep}{3pt}
\item[(1)] 
For any $k=1, \dots, n-1$, the equation \eqref{eq_wall_equation} derived from $\ell_k$ takes the form as 
$\sum_{v\in \calR_k}\theta_v=0$, where $\calR_k\coloneqq\calR(u_{\permII(k)}, u_{\permII(k+1)})$, and 
\[
W_{k}\coloneqq\Big\{\theta\in\Theta(Q)_\RR \mid \sum_{v\in \calR_k}\theta_v=0\Big\}
\] 
is certainly a wall of $C$. 
\item[(2)] The wall $W_k$ is of type {\rm I} $($resp. type {\typeIII}$)$ if and only if $[u_{\permII(k)}]= -[u_{\permII(k+1)}]$ $($resp. $[u_{\permII(k)}]=[u_{\permII(k+1)}]$$)$. 
\item[(3)] Any parameter $\theta\in C$ satisfies $\sum_{v\in \calR_k}\theta_v>0$ $($resp. $\sum_{v\in \calR_k}\theta_v<0$$)$ 
if $u_{\permII(k)}< u_{\permII(k+1)}$ $($resp. $u_{\permII(k)}> u_{\permII(k+1)}$$)$. 
\end{itemize}
\end{theorem}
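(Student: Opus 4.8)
The plan is to prove all three parts by analyzing the degree of the exceptional curve $\ell_k$ on the tautological line bundles $\calL_v$, using the combinatorial machinery of fundamental hexagons and jigsaw pieces developed in Subsection~\ref{subsec_jigsaw}. First I would recall that by Proposition~\ref{prop_compute_tautological}, $\calL_v\cong\calO_{\calM_C}(D_{\pth_v})$ where $\pth_v$ is a weak path from $0$ to $v$, so that $\deg(\calL_v|_{\ell_k})$ is computed by pairing $D_{\pth_v}$ with the curve class $[\ell_k]$. The curve $\ell_k$ corresponds to the interior line segment $\tau=\Delta_{C,k}\cap\Delta_{C,k+1}$, which is a two-dimensional cone $\tau\in\Sigma_C(2)$; write $\sigma_+,\sigma_-\in\Sigma_C(3)$ for the two maximal cones containing $\tau$, with rays as in Figure~\ref{fig_triangulation_notation}. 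The central computation is to determine, for each vertex $v$, the integer $\deg(\calL_v|_{\ell_k})$ via the standard toric intersection formula: choosing the weak path $\pth_v$ appropriately, $\deg(\calL_v|_{\ell_k})=\sum_{\rho}(\deg_{\sfP_\rho}\pth_v)(D_\rho\cdot\ell_k)$, and only the four rays $\rho_0,\rho_1,\rho_2,\rho_3$ bounding $\sigma_\pm$ contribute nontrivially.

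For part (1), the key geometric input is that $\ell_k$ is contracted by the wall-crossing morphism $f\colon\calM_C\to X_{\theta_0}$, so by Proposition~\ref{prop_equation_wall} the wall $W_k$ is cut out by $\sum_{v\in Q_0}\deg(\calL_v|_{\ell_k})\,(\theta_0)_v=0$. The task is thus to show the coefficient vector $\big(\deg(\calL_v|_{\ell_k})\big)_{v\in Q_0}$ equals the indicator vector of $\calR_k=\calR^+(u_{\permII(k)},u_{\permII(k+1)})$ (up to overall sign, which is absorbed since $\theta_0=-\sum_{v\ne0}\theta_v$ and $0\in\calR^-$). Here is where the fundamental hexagons enter: by Lemma~\ref{lem_property_interior}, $\widetilde{Q^{\sigma_\pm}}$ is exactly the subquiver of $\widetilde Q$ on the interior of $\Hex(\sigma_\pm)$, and by Proposition~\ref{prop_jigsaw} the two hexagons differ by sliding a jigsaw piece across the single edge $\fkc_-$ (equivalently $\fkc_+$). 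I would argue that a weak path $\pth_v$ can be chosen to lie inside $\Hex(\sigma_+)$ for $v$ on the appropriate side, so that $\deg_{\sfP_i}\pth_v$ jumps by exactly $\pm1$ precisely for those $v$ whose dual face lies on one side of the zigzag-path pair $(u_{\permII(k)},u_{\permII(k+1)})$ — and these zigzag paths are exactly the ones Lemma~\ref{lem_zigzag_containPM} identifies as containing $\fkc_+$ and $\fkc_-$. Tracking the degree function across the jigsaw move, using $\deg_\sfP(a^\ast)=-\deg_\sfP(a)$ and that $\fkc_\pm$ is a single edge in a single perfect matching (Lemma~\ref{lem_property_c}), yields that the degree vector is supported exactly on $\calR_k$. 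Finally, I must check $W_k$ is genuinely a codimension-one face of $\overline C$ and not degenerate: this follows because the triangulation $\Delta_C$ is regular, the flip/contraction at $\tau$ is nontrivial, and the Remark after Proposition~\ref{prop_equation_wall} notes that for toric cDV singularities (no interior lattice points) every such equation determines an actual wall.

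For part (2), I would distinguish the two pictures of Figure~\ref{fig_triangulation_notation}: when $[u_{\permII(k)}]=-[u_{\permII(k+1)}]$ the configuration at $\tau$ is a parallelogram (the left picture), $f$ is an Atiyah flop by \cite[Lemma~11.26]{IU_anycrepant}, hence type I; when $[u_{\permII(k)}]=[u_{\permII(k+1)}]$ the two triangles share a common base on the same side of $\Delta(a,b)$ (the right picture), and $f$ contracts a divisor to a curve, hence type $\typeIII$ by \cite[Lemma~11.29]{IU_anycrepant}. This is essentially a direct translation of the sign condition $\sgn(u_{\permII(k)})$ versus $\sgn(u_{\permII(k+1)})$ into the local shape of the triangulation. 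For part (3), I would observe that the sign of $\sum_{v\in\calR_k}\theta_v$ for $\theta\in C$ is governed by which side of the wall $W_k$ the chamber $C$ lies on, and this in turn is read off from the ordering of the zigzag paths: crossing $W_k$ corresponds (by part~(4) of Theorem~\ref{thm_summary_main}, proved via the adjacent transposition $s_k$) to swapping $u_{\permII(k)}$ and $u_{\permII(k+1)}$ in $\calZ_\permII$, and one checks by induction on the length of $\permII$ — starting from a distinguished base chamber where the ordering is $u_n<\cdots<u_1$ and the sign is positive — that $\sum_{v\in\calR_k}\theta_v>0$ exactly when $u_{\permII(k)}<u_{\permII(k+1)}$. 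The main obstacle I anticipate is part (1): precisely controlling how the degree function $\deg_{\sfP_\rho}\pth_v$ changes as $\rho$ ranges over $\rho_0,\rho_1,\rho_2,\rho_3$ and $v$ ranges over all vertices, i.e., pinning down that the jump set is exactly $\calR_k$ rather than some larger or shifted region — this requires a careful bookkeeping of weak paths crossing the edges $\fkc_+,\fkc_-$ inside the fundamental hexagons, which is the technical heart of the argument.
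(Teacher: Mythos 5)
The core of your part (1) --- computing $\deg(\calL_v|_{\ell_k})$ by tracking weak paths across the jigsaw decomposition of the fundamental hexagons, using that $\fkc_\pm$ is a single edge lying in a single perfect matching (Lemma~\ref{lem_property_c}) and in the zig/zag of the relevant zigzag path (Lemma~\ref{lem_zigzag_containPM}) --- is exactly the paper's argument, and your part (2) matches as well. However, there are two genuine gaps.

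First, your justification that $W_k$ is ``certainly a wall'' is circular: the Remark following Proposition~\ref{prop_equation_wall} explicitly defers to Theorem~\ref{thm_main_wall} for that very fact, so it cannot be cited in its proof, and regularity of $\Delta_C$ does not by itself show that the hyperplane $\sum_{v\in\calR_k}\theta_v=0$ supports a codimension-one face of $\overline{C}$. The paper closes this in two steps: (a) it shows every $\theta\in C$ satisfies the strict inequality $L_k>0$ by exhibiting a proper subrepresentation $N$ of the $\theta$-stable representation $M_{\sigma_+}$ whose support is exactly the vertex set of the jigsaw piece $J_+$ (resp.\ $J_0$), so that $\theta$-stability forces $\theta(N)=\pm\sum_{v\in\calR_k}\theta_v>0$; and (b) it verifies that $L_k>0$ cannot be derived from the remaining inequalities $L_s>0$, $s\neq k$, by inspecting the possible forms of the regions $\calR_s$ (consecutive blocks of labels) and showing any such derivation would force $s=k$. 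Without (b) you have only shown that $C$ lies in the intersection of the half-spaces, not that each hyperplane is a facet.

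Second, your part (3) invokes the statement that crossing $W_k$ acts by the adjacent transposition $s_k$ on $\calZ_\permII$; but that is Theorem~\ref{thm_adjacentchamber_reflection}, which is proved \emph{after} and \emph{by means of} Theorem~\ref{thm_main_wall}, so the induction you propose is circular as set up. The paper instead obtains (3) directly from step (a) above: the sign of $\sum_{v\in\calR_k}\theta_v$ on $C$ is read off from which jigsaw piece ($J_+$ or $J_0$) carries the destabilizing subrepresentation, which is determined by whether $\hd(a_{\fkc_-})$ or $\tl(a_{\fkc_-})$ lies in $J_+$, i.e.\ by whether $u_{\permII(k)}<u_{\permII(k+1)}$ or not. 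You should replace both the wall-verification and the sign determination by this stability argument.
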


\begin{proof}
First, we assume that $\ell=\ell_k$ is floppable. 
Then $\Delta_{C,k}$ and $\Delta_{C,k+1}$ form a parallelogram and $\ell$ corresponds to a diagonal of the parallelogram. 
Suppose that the vertices of the parallelogram are $(i-1,0)$, $(i,0)$, $(j-1,1)$ and $(j,1)$. 

For $\theta\in C$, let $\rho_0, \rho_1, \rho_2, \rho_3$ be the rays in $\Sigma_C(1)$ corresponding to the lattice points 
$(i-1,0)$, $(j-1,1)$, $(i,0)$, $(j,1)$ in $\Delta_C$, respectively. 
Let $\sfP_0, \sfP_1, \sfP_2, \sfP_3$ be $\theta$-stable perfect matchings corresponding to $\rho_0, \rho_1, \rho_2, \rho_3$, respectively, 
that is, 
\[
\sfP_0=\sfP_{(i-1,0)}^\theta, \quad \sfP_1=\sfP_{(j-1,1)}^\theta, \quad \sfP_2=\sfP_{(i,0)}^\theta, \quad \sfP_3=\sfP_{(j,1)}^\theta. 
\]
We suppose that the diagonal connecting $(j-1,1)$ and $(i,0)$ corresponds to a cone in $\Sigma_C(2)$. 
(Note that the case where the diagonal connecting $(i-1,0)$ and $(j,1)$ corresponds to a cone in $\Sigma_C(2)$ can be shown by a similar argument.) 
We consider the cones $\sigma_+$, $\sigma_-\in\Sigma_C(3)$ whose rays are respectively $\rho_0, \rho_1, \rho_2$ and $\rho_1, \rho_2, \rho_3$. 
Thus, $\tau=\sigma_+\cap\sigma_-$ is the cone in $\Sigma_C(2)$ corresponding to $\ell$ (see the left of Figure~\ref{fig_triangulation_notation}). 
By these settings, $\Delta_{C,k}$ (resp. $\Delta_{C,k+1}$) is obtained as the intersection of the cone $\sigma_+$ (resp. $\sigma_-$) and the hyperplane at height one, 
and $u_{\permII(k)}=z_{k_i}$, $u_{\permII(k+1)}=w_{k_j^\prime}$ in the terminology of Proposition~\ref{prop_chamber_correspond_zigzag}, 
thus $[u_{\permII(k)}]= -[u_{\permII(k+1)}]$. We divide the arguments into two cases: 
\begin{itemize}
\setlength{\parskip}{0pt} 
\setlength{\itemsep}{3pt}
\setlength{\leftskip}{1cm}
\item[\bf (Case1)] The case where $z_{k_i}=u_{\permII(k)}<u_{\permII(k+1)}=w_{k_j^\prime}$. 
\item[\bf (Case2)] The case where $z_{k_i}=u_{\permII(k)}>u_{\permII(k+1)}=w_{k_j^\prime}$. 
\end{itemize}
For these cases, we show the following. 
\begin{itemize}
\setlength{\parskip}{0pt} 
\setlength{\itemsep}{3pt}
\setlength{\leftskip}{1cm}
\item[\bf (Case1)] We have 
\begin{equation}
\label{eq_degree_linebdl_main}
\begin{cases}
\deg(\calL_v |_\ell)=1 & \text{for any $v\in \calR_k$}\\
\deg(\calL_v |_\ell)=0 & \text{otherwise}, 
\end{cases}
\end{equation}
which means that 
\begin{equation}
\label{eq_degree_linebdl_sum}
\sum_{v\in Q_0}\deg(\calL_v |_\ell)\theta_v=\sum_{v\in \calR_k}\theta_v.
\end{equation}
\item[\bf (Case2)] We have 
\begin{equation}
\label{eq_degree_linebdl_main2}
\begin{cases}
\deg(\calL_v |_\ell)=-1 & \text{for any $v\in \calR_k$}\\
\deg(\calL_v |_\ell)=0 & \text{otherwise}, 
\end{cases}
\end{equation}
which means that 
\begin{equation}
\label{eq_degree_linebdl_sum2}
\sum_{v\in Q_0}\deg(\calL_v |_\ell)\theta_v=-\sum_{v\in \calR_k}\theta_v.
\end{equation}
\end{itemize}

\medskip

To show this, we compute the coordinate function on the toric chart in $\calM_C$ corresponding to $\tau$ 
by the argument similar to \cite[the proof of Proposition~4.9]{CHT}. 
Let $v_{\rho_0}$, $v_{\rho_1}$, $v_{\rho_2}$, $v_{\rho_3}\in\sfN$ be the generators of the rays $\rho_0, \rho_1, \rho_2, \rho_3\in\Sigma_C(1)$, respectively. 
Let $m\in\sfM$ be the primitive vector such that $\lan m, n\ran=0$ for any $n\in\tau$ and $\lan m, n\ran\ge0$ for any $n\in\sigma_+$. 
Thus, we have that 
\[
\lan m, v_{\rho_1}\ran=\lan m, v_{\rho_2}\ran=0, \quad \lan m, v_{\rho_0}\ran>0, \quad \lan m, v_{\rho_3}\ran<0. 
\]
Since $m$ is primitive and $(v_{\rho_0}\, v_{\rho_1}\, v_{\rho_2})$, $(v_{\rho_1}\, v_{\rho_2}\, v_{\rho_3})\in\GL(3,\ZZ)$, 
we have $\lan m, v_{\rho_0}\ran=1$, $\lan m, v_{\rho_3}\ran=-1$. 
We identify $\CC[\sfM]$ with a subring of $\CC[t^{\pm}_\rho \mid \rho\in\Sigma_C(1)]$ via the natural inclusion 
$\sfM\hookrightarrow \ZZ^{\Sigma_C(1)}$. Then, we can write $t^m\in \CC[t^{\pm}_\rho \mid \rho\in\Sigma_C(1)]$ as 
\[
t^m=\frac{t_{\rho_0}}{t_{\rho_3}}\,t^u, 
\]
where $t^u$ is a Laurent monomial not containing $t_{\rho_0}, t_{\rho_1}, t_{\rho_2}, t_{\rho_3}$ as its factor. 

Then we consider the subquivers $\widetilde{Q^{\sigma_+}}$ and $\widetilde{Q^{\sigma_-}}$ of $\widetilde{Q}$ (see Lemma~\ref{lem_property_interior}). 
For each vertex $v\in Q_0$, let $\pth_v^+$ (resp. $\pth_v^-$) be a weak path in the double quiver of $\widetilde{Q^{\sigma_+}}$ (resp. $\widetilde{Q^{\sigma_-}}$) 
from the vertex $0$ to $v$. 
As we saw in Proposition~\ref{prop_compute_tautological}, the line bundle $\calL_v$ depends only on the target vertex $v$, 
thus we may choose $\pth_v^\pm$ so that it passes through the same vertex at most once. 
We let $U_{\sigma_{\pm}}\coloneqq \Spec\CC[\sigma_{\pm}^\vee\cap \sfM]$ for the toric chart in $\calM_C$ corresponding to $\sigma_{\pm}$. 
We consider the generating sections 
\[t^{\deg(\pth^\pm_v)}\coloneqq\prod_{\rho\in\Sigma_C(1)}t_\rho^{\deg_{\sfP_\rho}(\pth^\pm_v)}\]
of $\rmH^0(U_{\sigma_\pm}, \calL_v)$, where $\sfP_\rho$ is the $\theta$-stable perfect matching corresponding to a ray $\rho\in\Sigma_C(1)$. 
Since $\tau$ is the common face of $\sigma_+$ and $\sigma_-$, these sections can be described as either 
\begin{equation}
\label{eq_degree_linebdl}
t^{\deg(\pth^-_v)}=(t^m)^d\cdot t^{\deg(\pth^+_v)}\quad \text{or} \quad t^{\deg(\pth^-_v)}=(t^m)^{-d}\cdot t^{\deg(\pth^+_v)}
\end{equation}
where $d$ is the minimal integer satisfying $(t^m)^{d}\cdot t^{\deg(\pth^+_v)}\in \Spec\CC[\sigma_{-}^\vee\cap \sfM]$ or 
$(t^m)^{-d}\cdot t^{\deg(\pth^+_v)}\in \Spec\CC[\sigma_{-}^\vee\cap \sfM]$, 
in which case $\deg(\calL_v |_\ell)=d$ or $-d$.  

\medskip

To show \eqref{eq_degree_linebdl_main} and \eqref{eq_degree_linebdl_main2}, 
we let $J_+$, $J_+^\prime$ (resp. $J_-$, $J_-^\prime$) be the jigsaw pieces for $\tau$ obtained by cutting 
$\Hex(\sigma_+)$ (resp. $\Hex(\sigma_-)$) in $\RR^2$ along the edge $\fkc_-$ (resp. $\fkc_+$). 
By Proposition~\ref{prop_jigsaw}, considering certain $\ZZ^2$-translations, we may assume that 
$\Hex(\sigma_+)\cap \Hex(\sigma_-)=J_+^\prime=J_-^\prime$ and the face dual to the vertex $0\in Q_0$ is contained in this jigsaw piece. 
Let $J_0\coloneqq J_+^\prime=J_-^\prime$. 
By Lemma~\ref{lem_property_interior}, any edge contained in the strict interior of $J_0$ does not belong to $\sfP_0\cup \sfP_1\cup \sfP_2\cup \sfP_3$.
Furthermore, since the restrictions of $J_+$ and $J_-$ on the two-torus $\TT$ are identical, 
any edge contained in the strict interior of $J_+$ or $J_-$ also does not belong to $\sfP_0\cup \sfP_1\cup \sfP_2\cup \sfP_3$. 

\medskip

\begin{itemize}
\setlength{\parskip}{3pt} 
\setlength{\itemsep}{3pt}
\item If the face dual to $v$ in $\Hex(\sigma_+)$ is contained in $J_0$, then the face dual to $v$ in $\Hex(\sigma_-)$ is contained in $J_0$, 
and vice versa.  
In this situation, both of $\pth_v^+$ and $\pth_v^-$ comprise the arrows dual to edges not belonging to $\sfP_0\cup \sfP_1\cup \sfP_2\cup \sfP_3$, 
and hence neither $t_{\rho_0}$ nor $t_{\rho_3}$ appears in $t^{\deg(\pth_v^\pm)}$. 
Thus we conclude that $d=0$ by \eqref{eq_degree_linebdl}. 
\item If the face dual to $v$ in $\Hex(\sigma_+)$ is contained in $J_+$, then the face dual to $v$ in $\Hex(\sigma_-)$ is contained in $J_-$, 
and vice versa. Then, we claim that 
\begin{itemize}
\setlength{\parskip}{0pt} 
\setlength{\itemsep}{3pt}
\setlength{\leftskip}{1cm}
\item[\bf (Case1)] $t_{\rho_3}$ appears in $t^{\deg(\pth^+_v)}$ with multiplicity one and $t_{\rho_0}$ appears in $t^{\deg(\pth^-_v)}$ with multiplicity one, 
\item[\bf (Case2)] $t_{\rho_3}^{-1}$ appears in $t^{\deg(\pth^+_v)}$ with multiplicity one and $t_{\rho_0}^{-1}$ appears in $t^{\deg(\pth^-_v)}$ with multiplicity one. 
\end{itemize}
In fact, in this situation, the weak path $\pth_v^+$ crosses over $\fkc_-$ in $\Hex(\sigma_+)$ and $\pth_v^-$ crosses over $\fkc_+$ in $\Hex(\sigma_-)$. 
Let $a_{\fkc_-}$, $a_{\fkc_+}$ be the arrows dual to the edges $\fkc_-$, $\fkc_+$, respectively. 
Since $[z_{k_i}]=-[w_{k^\prime_j}]$, we see that 
\begin{itemize}
\setlength{\parskip}{0pt} 
\setlength{\itemsep}{3pt}
\setlength{\leftskip}{1cm}
\item[\bf (Case1)] the vertex $v$ appears on the right of $z_{k_i}$ and appears on the right of $w_{k_j^\prime}$, 
\item[\bf (Case2)] the vertex $v$ appears on the left of $z_{k_i}$ and appears on the left of $w_{k_j^\prime}$. 
\end{itemize}
Since $\fkc_+\in\Zig(z_{k_i})$ and $\fkc_-\in\Zig(w_{k^\prime_j})$ by Lemma~\ref{lem_zigzag_containPM}(1), 
we see that 
\begin{itemize}
\setlength{\parskip}{0pt} 
\setlength{\itemsep}{3pt}
\setlength{\leftskip}{1cm}
\item[\bf (Case1)]  $a_{\fkc_-}$ is contained in $\pth_v^+$ and $a_{\fkc_+}$ is contained in $\pth_v^-$, 
\item[\bf (Case2)]  $a_{\fkc_-}^*$ is contained in $\pth_v^+$ and $a_{\fkc_+}^*$ is contained in $\pth_v^-$. 
\end{itemize}
Since $\fkc_+\in\sfP_0$ and $\fkc_-\in\sfP_3$ by Lemma~\ref{lem_property_c}, this shows the claim. 
It follows from the claim that 
$t^{\deg(\pth^-_v)}=t^m\cdot t^{\deg(\pth^+_v)}$ for (Case1) and $t^{\deg(\pth^-_v)}=(t^m)^{-1}\cdot t^{\deg(\pth^+_v)}$ for (Case2). 
\end{itemize}
Since the restrictions of $J_+$, $J_-$ and $\calR_k$ on $\TT$ are identical, 
we have \eqref{eq_degree_linebdl_main} and \eqref{eq_degree_linebdl_main2}. 

\medskip

Next, we assume that $\ell=\ell_k$ is not floppable. 
Then, $\Delta_{C,k}$ and $\Delta_{C,k+1}$ form a large triangle consisting of two elementary triangles, 
and a torus-invariant curve $\ell$ corresponds to a bisector of the large triangle. 
Suppose that the lattice points of the triangle formed by $\Delta_{C,k}$ and $\Delta_{C,k+1}$ are $(i-1,0)$, $(j,1)$, $(i,0)$, and $(i+1,0)$. 
(Note that the case where such lattice points are $(j-1,1)$, $(i,0)$, $(j,1)$, and $(j+1,1)$ can be shown by a similar argument.) 

For $\theta\in C$, let $\rho_0, \rho_1, \rho_2, \rho_3$ be the rays in $\Sigma_C(1)$ corresponding to the lattice points 
$(i-1,0)$, $(j,1)$, $(i,0)$, $(i+1,0)$ in $\Delta_C$, respectively. 
Let $\sfP_0, \sfP_1, \sfP_2, \sfP_3$ be $\theta$-stable perfect matchings corresponding to $\rho_0, \rho_1, \rho_2, \rho_3$, respectively, 
that is, 
\[
\sfP_0=\sfP_{(i-1,0)}^\theta, \quad \sfP_1=\sfP_{(j,1)}^\theta, \quad \sfP_2=\sfP_{(i,0)}^\theta, \quad \sfP_3=\sfP_{(i+1,0)}^\theta. 
\]
We consider the cones $\sigma_+$, $\sigma_-\in\Sigma_C(3)$ whose rays are respectively $\rho_0, \rho_1, \rho_2$ and $\rho_1, \rho_2, \rho_3$. 
Thus, the cone $\tau=\sigma_+\cap\sigma_-\in\Sigma_C(2)$ corresponds to the line segment obtained by connecting $(i,0)$ and $(j,1)$ 
 (see the right of Figure~\ref{fig_triangulation_notation}). 
By these settings, $\Delta_{C,k}$ (resp. $\Delta_{C,k+1}$) is obtained as the intersection of the cone $\sigma_+$ (resp. $\sigma_-$) and the hyperplane at height one, 
and $u_{\permII(k)}=z_{k_i}$, $u_{\permII(k+1)}=z_{k_{i+1}}$ in the terminology of Proposition~\ref{prop_chamber_correspond_zigzag}, 
thus $[u_{\permII(k)}]= [u_{\permII(k+1)}]$. As before, we divide the arguments into two cases: 
\begin{itemize}
\setlength{\parskip}{0pt} 
\setlength{\itemsep}{3pt}
\setlength{\leftskip}{1cm}
\item[\bf (Case3)] The case where $z_{k_i}=u_{\permII(k)}<u_{\permII(k+1)}=z_{k_{i+1}}$. 
\item[\bf (Case4)] The case where $z_{k_i}=u_{\permII(k)}>u_{\permII(k+1)}=z_{k_{i+1}}$. 
\end{itemize}
We show that we have \eqref{eq_degree_linebdl_main} for (Case3) and \eqref{eq_degree_linebdl_main2} for (Case4). 
We take a weak path $\pth_v^+$ (resp. $\pth_v^-$) in the double quiver of $\widetilde{Q^{\sigma_+}}$ (resp. $\widetilde{Q^{\sigma_-}}$) for any $v\in Q_0$, 
and we have the equation \eqref{eq_degree_linebdl} by the same argument as above. 
We define $\Hex(\sigma_\pm)$, $\fkc_\pm$, $J_\pm$, $J_0$ in the same way as above. 
Then any edge contained in the strict interior of $J_0$, $J_+$ or $J_-$ does not belong to $\sfP_0\cup \sfP_1\cup \sfP_2\cup \sfP_3$ 
by Lemma~\ref{lem_property_interior}. 

\begin{itemize}
\setlength{\parskip}{3pt} 
\setlength{\itemsep}{3pt}
\item If the face dual to $v$ in $\Hex(\sigma_+)$ is contained in $J_0$, then we see that $d=\deg(\calL_v |_\ell)=0$ by the same argument as above. 
\item If the face dual to $v$ in $\Hex(\sigma_+)$ is contained in $J_+$, then 
we see that 
$t^{\deg(\pth^-_v)}=t^m\cdot t^{\deg(\pth^+_v)}$ for (Case3) and $t^{\deg(\pth^-_v)}=(t^m)^{-1}\cdot t^{\deg(\pth^+_v)}$ for (Case4). 
by the argument similar to the above one. The difference is that in this situation 
\begin{itemize}
\setlength{\parskip}{0pt} 
\setlength{\itemsep}{3pt}
\setlength{\leftskip}{1cm}
\item[\bf (Case3)] the vertex $v$ appears on the right of $z_{k_i}$ and appears on the left of $z_{k_{i+1}}$, 
\item[\bf (Case4)] the vertex $v$ appears on the left of $z_{k_i}$ and appears on the right of $z_{k_{i+1}}$, 
\end{itemize}
since $[z_{k_i}]=[z_{k_{i+1}}]$. 
Nevertheless, we have the same conclusion since $\fkc_+\in\Zig(z_{k_i})\subset\sfP_0$ and $\fkc_-\in\Zag(z_{k_{i+1}})\subset\sfP_3$ 
by Lemma~\ref{lem_zigzag_containPM}(2). 
\end{itemize}
Since the restriction of $J_+$, $J_-$ and $\calR_k$ on $\TT$ are identical, 
we have \eqref{eq_degree_linebdl_main} and \eqref{eq_degree_linebdl_main2}. 

\medskip

By the above arguments, the hyperplane $L_k=0$, where $L_k$ is either \eqref{eq_degree_linebdl_sum} or \eqref{eq_degree_linebdl_sum2}, 
would give a wall of $C$. 
We here show that $L_k=0$ certainly determine a wall of $C$  for $k=1, \dots, n-1$. 
Let $M$ be a representative of $\theta$-stable representations corresponding to $\sigma_+$. 
By Proposition~\ref{prop_stablerep_cosupport}, the support of $M$ is identical with the set of arrows of $\widetilde{Q^{\sigma^+}}$ which is the arrows contained in $\Hex(\sigma_+)$. 
By the same argument as above, we see that $\hd(a_{\fkc_-})\in J_+$ for (Case1, 3) and $\tl(a_{\fkc_-})\in J_+$ for (Case2, 4). 
Thus, we see that
\begin{itemize}
\setlength{\parskip}{0pt} 
\setlength{\itemsep}{3pt}
\setlength{\leftskip}{1cm}
\item[\bf (Case1, 3)]  there exists a subrepresentation $N=(N_v)_{v\in Q_0}$ of $M$ 
such that $\{v\in Q_0\mid \dim N_v\neq 0\}$ coincides with the set of vertices contained in $J_+$, 
\item[\bf (Case2, 4)]  there exists a subrepresentation $N=(N_v)_{v\in Q_0}$ of $M$ 
such that $\{v\in Q_0\mid \dim N_v\neq 0\}$ coincides with the set of vertices contained in $J_0$, 
\end{itemize}
since $\Hex(\sigma_+)$ is divided into two parts $J_+$, $J_0$ by the edge $\fkc_-$. 
Identifying $J_+$ with $\calR_k$, we see that such a representation $N$ must satisfy
\begin{itemize}
\setlength{\parskip}{0pt} 
\setlength{\itemsep}{3pt}
\setlength{\leftskip}{1cm}
\item[\bf (Case1, 3)]  $\theta(N)=\sum_{v\in\calR_k}\theta_v>0$, 
\item[\bf (Case2, 4)]  $\theta(N)=\sum_{v\in Q_0{\setminus}\calR_k}\theta_v=-\sum_{v\in\calR_k}\theta_v>0$. 
\end{itemize}

Thus, any $\theta\in C$ satisfies $L_k>0$ for any $k=1, \dots, n-1$. 
Then we claim that the inequation $L_k>0$ can not be derived from other ineqations of the form $L_s>0$ $(s\neq k)$. 
We prove this for the case where $u_{\permII(k)}<u_{\permII(k+1)}$, in which $L_k$ takes the form \eqref{eq_degree_linebdl_sum} and 
$\calR_k=\{\permII(k+1), \permII(k+1)+1, \dots, \permII(k)-1\}$. 
If $L_k=\sum_{v\in\calR_k}\theta_v>0$ is derived from other inequations, then we need at least one of the inequations $L_s>0$ $(s\neq k)$ of the form: 
\begin{itemize}
\setlength{\parskip}{0pt} 
\setlength{\itemsep}{3pt}
\item $L_s=\theta_{\permII(k+1)}+\theta_{\permII(k+1)+1}+\cdots+\theta_\alpha>0$ for some $\permII(k+1)\le \alpha<\permII(k)-1$, 
\item $L_s=-(\theta_\alpha+\cdots+\theta_{\permII(k+1)-1})>0$ for some $1\leq \alpha\leq \permII(k+1)-1$. 
\end{itemize}
The former (resp. latter) one can be obtained from zigzag paths $u_{\permII(s)}, u_{\permII(s+1)}$ adjacent in $\calZ_\permII$ such that 
$u_{\permII(s)}<u_{\permII(s+1)}=u_{\permII(k+1)}$ (resp. $u_{\permII(k+1)}=u_{\permII(s+1)}<u_{\permII(s)}$). 
In both cases, this contradicts the condition $s\neq k$. The proof for the case $u_{\permII(k)}>u_{\permII(k+1)}$ is similar. 
In conclusion, the equation $\sum_{v\in\calR_k}\theta_v=0$ determines a wall of $C$.

The assertions (2) and (3) follow from the above arguments. 
\end{proof}

\begin{theorem}
\label{thm_adjacentchamber_reflection}
Let the notation be as in {\rm Setting~\ref{setting_zigzag_for_u}}. 
Suppose that the sequence of zigzag paths corresponding to $C$ is $\calZ_\permII=(u_{\permII(1)}, \dots, u_{\permII(n)})$. 
Let $C^\prime\subset \Theta(Q)_\RR$ be the chamber adjacent to $C$ and suppose that $C$ and $C^\prime$ are separated by the wall $W_k$ given in {\rm Theorem~\ref{thm_main_wall}(1)}. 
Then, the sequence of zigzag paths corresponding to $C^\prime$ is $\calZ_{\permII s_k}$, where $s_k$ is the adjacent transposition swapping $k$ and $k+1$. 
\end{theorem}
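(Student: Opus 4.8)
The plan is to show that crossing the wall $W_k$ has exactly the combinatorial effect of swapping the $k$-th and $(k+1)$-st entries of the sequence $\calZ_\permII$, and nothing else. First I would recall from Subsection~\ref{subsec_crepant_resolution} and Subsection~\ref{subsec_wall_chamber} that crossing a single wall of type I or type $\typeIII$ alters the triangulation $\Delta_C$ of $\Delta(a,b)$ in a strictly local way: for type I it flips the diagonal of the parallelogram formed by $\Delta_{C,k}$ and $\Delta_{C,k+1}$, and for type $\typeIII$ it replaces the bisector of the large triangle formed by $\Delta_{C,k}\cup\Delta_{C,k+1}$ by the other bisector (see Subsection~\ref{subsec_wall_chamber} and \cite[Lemmas~11.26, 11.29]{IU_anycrepant}). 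In either case, only the two elementary triangles $\Delta_{C,k}$ and $\Delta_{C,k+1}$ are affected, and after the wall-crossing the line $L$ from $(0,\tfrac12)$ to $(a,\tfrac12)$ passes through the resulting two triangles in the opposite order. Since every other triangle $\Delta_{C,m}$ ($m\neq k,k+1$) is unchanged, the sign vector $\sgn(\Delta_{C'})$ is obtained from $\sgn(\Delta_C)$ by transposing its $k$-th and $(k+1)$-st entries; I would make this precise by examining the two cases of Figure~\ref{fig_triangulation_notation} directly.

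Next I would combine this with Proposition~\ref{prop_chamber_correspond_zigzag}. That proposition attaches to $C'$ the unique sequence $\calZ_{\permII'}$ determined by the three conditions (a), (b), (c): its sign vector equals $\sgn(\Delta_{C'})$, and its negative-sign (resp. positive-sign) subsequence is the sequence $(z_{k_1},\dots,z_{k_a})$ (resp. $(w_{k'_1},\dots,w_{k'_b})$) read off from the boundary perfect matchings of $\calM_{C'}$ via Proposition~\ref{prop_bPM_zzsequence}. By the previous paragraph $\sgn(\Delta_{C'}) = s_k\cdot\sgn(\Delta_C) = \sgn(\calZ_{\permII s_k})$, so condition (a) already forces $\calZ_{\permII'}$ and $\calZ_{\permII s_k}$ to have the same sign pattern. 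It then remains to check conditions (b) and (c): that the ordered sequences of zigzag paths along the two bases of $\Delta(a,b)$ for $\calM_{C'}$ are exactly those for $\calM_C$, i.e. that the $\theta$-stable boundary perfect matchings $\sfP^{\theta'}_{(i,0)}$ and $\sfP^{\theta'}_{(j,1)}$ determine the same sequences $(z_{k_1},\dots,z_{k_a})$, $(w_{k'_1},\dots,w_{k'_b})$ as before. In the type $\typeIII$ case (where $u_{\permII(k)}$ and $u_{\permII(k+1)}$ lie on the \emph{same} base) this requires that flipping the bisector does not change which zigzag path is assigned to which of the two adjacent primitive segments on that base; in the type I case (where the two zigzag paths lie on \emph{opposite} bases) there is a priori nothing to check for either base separately, except that the $z$-subsequence and the $w$-subsequence are each individually unchanged.

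The verification of (b) and (c) is where I expect the real work to lie, and I would carry it out using the fundamental-hexagon and jigsaw-piece machinery of Subsection~\ref{subsec_jigsaw} together with Lemma~\ref{lem_zigzag_nointersection} and Proposition~\ref{prop_description_boundaryPM}. Concretely: the $\theta$-stable boundary perfect matchings on a base of $\Delta(a,b)$ are built from the corner matchings by successively replacing $\Zig(z_{k_1}),\Zig(z_{k_2}),\dots$ by the corresponding zags (Proposition~\ref{prop_description_boundaryPM}), and this ordered list is read off from $\calZ_\permII$. I would argue that the wall-crossing diagram \eqref{eq_wall_crossing_diagram} at $W_k$, being an Atiyah flop (type I) or a type $\typeIII$ contraction, changes the $\theta$-stable perfect matching attached to exactly one interior lattice point of exactly one base — the one lying on the segment $\Delta_{C,k}\cap\Delta_{C,k+1}$ when that segment lies on a base — and that the effect on the corresponding subset $I_{j,\theta}\subset[r]$ is precisely the transposition of two consecutive indices in the ordered list, i.e. a zigzag switching $\BS$ in the sense of Definition~\ref{def_switching}. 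This is where Lemma~\ref{lem_zigzag_containPM} and Lemma~\ref{lem_property_c} are used: they pin down which edges $\fkc_\pm$ change and hence which $\Zig/\Zag$ swap occurs. Once this local statement is established, conditions (b) and (c) follow, so $\permII' = \permII s_k$ by uniqueness in Proposition~\ref{prop_chamber_correspond_zigzag}, completing the proof. The main obstacle is precisely this bookkeeping — confirming that \emph{only} two adjacent entries of the zigzag sequence are exchanged and that the exchange is an adjacent transposition rather than some longer permutation — and it is handled by the jigsaw-piece analysis already developed for Theorem~\ref{thm_main_wall}.
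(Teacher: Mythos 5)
Your proposal follows essentially the same route as the paper's proof: invoke the uniqueness of the sequence attached to $C'$ via Propositions~\ref{prop_bPM_zzsequence} and \ref{prop_chamber_correspond_zigzag}, note that $\sgn(\Delta_{C'})$ is obtained from $\sgn(\Delta_C)$ by transposing the $k$-th and $(k+1)$-st entries, and verify that the $\theta$-stable boundary perfect matchings (hence the $z$- and $w$-subsequences) are unchanged under a type I crossing because a flop preserves toric divisors, and change only at the foot of the bisector under a type $\typeIII$ crossing, where the change is exactly one adjacent zigzag switching. One minor correction: in the type $\typeIII$ case the triangulation is literally unchanged ($\Delta_C=\Delta_{C'}$), so there is no ``other bisector'' to flip to --- but this does not affect your argument, since the two transposed signs are equal and the substantive content is precisely the change of the single boundary perfect matching that you go on to analyse.
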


\begin{proof}
Let $\theta\in C$ and $\theta^\prime\in C^\prime$. 
For the chamber $C^\prime$, there exists a unique sequence $\calZ_{\permII^\prime}$ 
satisfying the conditions in Proposition~\ref{prop_bPM_zzsequence} and \ref{prop_chamber_correspond_zigzag} for some $\permII^\prime\in\fkS_n$. 

If $W_k$ is a wall of type I, then $\Delta_{C,k}$ and $\Delta_{C,k+1}$ form a parallelogram and a crossing of the wall $W_k$ induces a flop, which corresponds to a flip of the diagonal. 
Thus, we have $\sgn(\Delta_{C,k})=\sgn(\Delta_{C^\prime,k+1})$, $\sgn(\Delta_{C,k+1})=\sgn(\Delta_{C^\prime,k})$. 
Also, since a flop preserves a toric divisor, we have 
\begin{equation}\label{wallcross_PM}
\sfP^\theta_{(i,0)}=\sfP^{\theta^\prime}_{(i,0)}, \hspace{0.8cm} \sfP^\theta_{(j,1)}=\sfP^{\theta^\prime}_{(j,1)}
\end{equation}
for any $i=0, \dots, a$, $j=0, \dots, b$. 
The sequence $\calZ_{\permII^\prime}$ satisfying the above conditions is $\calZ_{\permII s_k}$. 

If $W_k$ is a wall of type $\typeIII$, then $\Delta_{\permII,k}$ and $\Delta_{\permII,k+1}$ form a large triangle, and $\Delta_C=\Delta_{C^\prime}$. 
Since a crossing of the wall $W_k$ induce a divisor-to-curve contraction, we have \eqref{wallcross_PM} 
except the perfect matching corresponding to the foot of the bisector on the triangle $\Delta_{\permII,k}\cup\Delta_{\permII,k+1}$. 
The sequence $\calZ_{\permII^\prime}$ satisfying the above conditions is $\calZ_{\permII s_k}$. 
\end{proof}

Since the adjacent transpositions $s_k$ $(k=1, \dots, n-1)$ generate the symmetric group $\fkS_n$, 
Theorems~\ref{thm_main_wall} and \ref{thm_adjacentchamber_reflection} show that any sequence $\calZ_\permII$ corresponds to a certain chamber in $\Theta(Q)_\RR$. 
In particular, we have the following. 

\begin{corollary}
\label{cor_chamber_zigzag_correspondence}
Let the notation be as in {\rm Setting~\ref{setting_zigzag_for_u}}. 
There exists a one-to-one correspondence between the following sets: 
\begin{itemize}
\setlength{\parskip}{0pt} 
\setlength{\itemsep}{3pt}
\item[$(a)$] the set of chambers in $\Theta(Q)_\RR$, 
\item[$(b)$] the set $\big\{\calZ_\permII=(u_{\permII(1)}, \dots, u_{\permII(n)}) \mid \permII\in\fkS_n\big\}$ of sequences of zigzag paths. 
\end{itemize}
Under this correspondence, a sequence $\calZ_\permII$ and the corresponding chamber $C$ satisfy $\sgn(\Delta_C)=\sgn(\calZ_\permII)$. 
Furthermore, a wall-crossing in $(a)$ corresponds to the action of an adjacent transposition in $(b)$. 
In particular, the chambers in $\Theta(Q)_\RR$ can be identified with the Weyl chambers of type $A_{n-1}$. 
\end{corollary}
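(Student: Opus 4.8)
The plan is to promote the local statements of Theorems~\ref{thm_main_wall} and \ref{thm_adjacentchamber_reflection} to the global bijection and then read off the Weyl-group picture. I would define $\Phi$ from the set of chambers of $\Theta(Q)_\RR$ to $\{\calZ_\permII\mid\permII\in\fkS_n\}$ by sending a chamber $C$ to the sequence attached to it in Proposition~\ref{prop_chamber_correspond_zigzag}; this is well defined by that proposition, and it automatically satisfies $\sgn(\Delta_C)=\sgn(\calZ_{\Phi(C)})$. For surjectivity I would fix one chamber $C_0$ with $\Phi(C_0)=\calZ_{\permII_0}$ and, given an arbitrary $\permII\in\fkS_n$, write $\permII_0^{-1}\permII=s_{k_1}s_{k_2}\cdots s_{k_m}$ as a product of adjacent transpositions (possible since the $s_k$ generate $\fkS_n$). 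By Theorem~\ref{thm_main_wall}(1) the hyperplane $W_{k_1}$ is a genuine wall of $C_0$, so crossing it yields an adjacent chamber $C_1$, and Theorem~\ref{thm_adjacentchamber_reflection} gives $\Phi(C_1)=\calZ_{\permII_0 s_{k_1}}$; since Theorem~\ref{thm_main_wall}(1) applies verbatim to $C_1$, the wall $W_{k_2}$ computed from $\calZ_{\permII_0 s_{k_1}}$ is a wall of $C_1$, and crossing it gives $\Phi(C_2)=\calZ_{\permII_0 s_{k_1}s_{k_2}}$. Iterating $m$ times produces a chamber $C_m$ with $\Phi(C_m)=\calZ_{\permII_0 s_{k_1}\cdots s_{k_m}}=\calZ_\permII$, so $\Phi$ is surjective.

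For injectivity I would use a dimension count. By the explicit construction of $\Gamma_\perm$ in Section~\ref{sec_dimer_ab}, its faces --- hence the vertex set $Q_0$ of $Q=Q_\perm$ --- are exactly those carrying the labels $0,1,\dots,n-1$, so $\dim\Theta(Q)_\RR=n-1$ and every chamber is an open, pointed, full-dimensional cone, which therefore has at least $n-1$ facets. On the other hand Theorem~\ref{thm_main_wall}(1) already exhibits the $n-1$ pairwise distinct walls $W_1,\dots,W_{n-1}$ of $C$ (distinct because the regions $\calR_1,\dots,\calR_{n-1}$ differ), whose normal functionals $\theta\mapsto\sum_{v\in\calR_k}\theta_v$ are linearly independent --- this I would check directly, for instance by observing that under the bijection $Q_0\leftrightarrow\ZZ/n$ these functionals are, up to sign, the simple roots of $A_{n-1}$. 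Hence the $W_k$ are precisely the facets of the simplicial cone $C$, and by Theorem~\ref{thm_main_wall}(3) the cone $C$ equals $\{\theta\mid\varepsilon_k\sum_{v\in\calR_k}\theta_v>0,\ k=1,\dots,n-1\}$ with $\varepsilon_k=+1$ exactly when $u_{\permII(k)}<u_{\permII(k+1)}$. Thus $C$ is recovered from $\calZ_\permII$, which gives injectivity and completes the bijection $(a)\leftrightarrow(b)$.

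It remains to recognize the Weyl-chamber structure. Theorem~\ref{thm_adjacentchamber_reflection} says that two chambers share a wall exactly when their sequences differ by a single adjacent transposition, so $\Phi$ transports the wall-and-chamber complex of $\Theta(Q)_\RR$ to the chamber system of $\fkS_n$ with Coxeter generators $s_1,\dots,s_{n-1}$; the latter is the Coxeter complex of type $A_{n-1}$, whose chambers are the Weyl chambers, giving the asserted identification. I expect the main obstacle to be the injectivity step --- more precisely, being sure that the $n-1$ walls furnished by Theorem~\ref{thm_main_wall}(1) are \emph{all} the walls of $C$, i.e. that $C$ is simplicial. This is exactly why the dimension identity $|Q_0|=n$ and the linear independence of the functionals $\sum_{v\in\calR_k}\theta_v$ are the key inputs; everything else is a formal consequence of the theorems already proved.
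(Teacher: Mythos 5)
Your overall route is the paper's: well-definedness of the map comes from Proposition~\ref{prop_chamber_correspond_zigzag}, surjectivity from iterated wall-crossing via Theorems~\ref{thm_main_wall}(1) and \ref{thm_adjacentchamber_reflection} together with the fact that adjacent transpositions generate $\fkS_n$, and the Weyl-chamber identification is then read off from the transposition action. Those parts are fine.

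The gap is exactly where you suspected it, in the injectivity step, and your proposed resolution does not close it. From ``a full-dimensional cone in $\RR^{n-1}$ has at least $n-1$ facets'' together with ``Theorem~\ref{thm_main_wall}(1) exhibits $n-1$ facets with linearly independent normals'' you conclude that $W_1,\dots,W_{n-1}$ are \emph{all} the facets of $C$. That implication is false in general: the cone over a square in $\RR^3$ is full-dimensional, has four facets, and three of its facet normals are linearly independent. Linear independence of the $n-1$ functionals $\sum_{v\in\calR_k}\theta_v$ only tells you that the region $D_\permII$ cut out by the inequalities of Theorem~\ref{thm_main_wall}(3) is a simplicial cone containing $C$; it does not rule out $C\subsetneq D_\permII$ with extra facets in the interior of $D_\permII$. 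What actually closes the gap is the classification of walls by contracted curves: by Proposition~\ref{prop_equation_wall} (and the classification of primitive contractions in Subsection~\ref{subsec_wall_chamber}, where type $0$ and type $\typeII$ are excluded for $\Delta(a,b)$), \emph{every} wall of $C$ lies on a hyperplane of the form \eqref{eq_wall_equation} for some torus-invariant curve contracted from $\calM_C$, and the proof of Theorem~\ref{thm_main_wall} computes these hyperplanes for all $n-1$ exceptional curves $\ell_1,\dots,\ell_{n-1}$ to be precisely $W_1,\dots,W_{n-1}$. Hence $\overline{C}$ has no codimension-one face meeting the interior of $D_\permII$, so $C=D_\permII$ is recovered from $\calZ_\permII$ and injectivity follows. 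You should replace the facet count by this argument (or by the equivalent observation that $D_\permII$ is a connected open set disjoint from every wall, hence contained in a single chamber).
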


By Corollary~\ref{cor_chamber_zigzag_correspondence}, we may write a chamber $C$ as $C_\permII$ when $C$ corresponds to $\calZ_\permII$, 
and can define the action of $\fkS_n$ on the set of chambers in $\Theta(Q)_\RR$, which is compatible with the action of $\fkS_n$ on $\{\calZ_\permII \mid \permII\in\fkS_n\}$. 

\begin{remark}
\label{rem_homMMP}
Note that the identification of the chambers in $\Theta(Q)_\RR$ with the Weyl chambers was already shown in \cite[Lemma~6.8]{Wem_MMP} 
for any $cA_{n-1}$ singularity. 
Also, the description of each chamber in $\Theta(Q)_\RR$ given in Theorem~\ref{thm_main_wall} can also be obtained by the tracking argument of 
GIT chambers established in \cite[Section~5]{Wem_MMP}, which uses the mutations of maximal modifying modules. 
An advantage of our method is that we can obtain a chamber description only from a given dimer model, 
but it should be emphasized that the method in \cite{Wem_MMP} is valid for any cDV singularity. 
\end{remark}

\begin{example}
\label{ex_compute_chamber_zigzag}
We consider our running example, that is, let $\Gamma$ be the dimer model as in Figure~\ref{ex_dimer1_basic}. 
Recall that the zigzag polygon of $\Gamma$ is $\Delta(3,2)$. 
Let $u_1, \dots, u_5$ be zigzag paths shown in Figure~\ref{fig_naming_zigzag}, and we fix a total order $u_5<\cdots<u_1$. 

Let $\permII=(1452)\in\fkS_5$, and consider the sequence $\calZ_\permII=(u_{\permII(1)}, \dots, u_{\permII(5)})=(u_4,u_1,u_3,u_5,u_2)$ of zigzag paths, 
which satisfy $\sgn(\calZ_\permII)=(+1,-1,-1,+1,-1)$. 
By Corollary~\ref{cor_chamber_zigzag_correspondence}, 
there exists a chamber $C_\permII$ corresponding to $\calZ_\permII$, and it is given as 
\begin{equation}
\label{eq_inequations_forC}
C_\permII=\{\theta\in\Theta(Q)_\RR \mid \theta_1+\theta_2+\theta_3>0, \quad \theta_1+\theta_2<0, \quad \theta_3+\theta_4<0, \quad \theta_2+\theta_3+\theta_4>0\}
\end{equation}
by Theorem~\ref{thm_main_wall}. 
In fact, since $\calR_1=\calR(u_4,u_1)=\{1,2,3\}$ and $u_4<u_1$, we have the inequation $\theta_1+\theta_2+\theta_3>0$, 
and the other inequations can be obtained from other pairs of zigzag paths. 
Since the triangulation $\Delta_{C_\permII}$ satisfies $\sgn(\Delta_{C_\permII})=\sgn(\calZ_\permII)=(+1,-1,-1,+1,-1)$, it takes the form as Figure~\ref{fig_sgn_triangulation}. 
Thus, the projective crepant resolution $\calM_{C_\permII}$ is the smooth toric variety associated to the toric fan induced by the triangulation $\Delta_{C_\permII}$. 

Next, considering the action of $s_3\in\fkS_5$ on $\calZ_\permII$ which swaps $u_{\permII(3)}=u_3$ and $u_{\permII(4)}=u_5$, 
we have the sequence $\calZ_{\permII s_3}=(u_4,u_1,u_5,u_3,u_2)$. 
By Theorem~\ref{thm_adjacentchamber_reflection}, the chamber $C_{\permII s_3}$ corresponding to $\calZ_{\permII s_3}$ is adjacent to $C_\permII$. 
Since $\calR_3=\calR(u_3,u_5)=\{3,4\}$ and $[u_3]=-[u_5]$, 
the chamber $C_{\permII s_3}$ is separated from $C_\permII$ by the wall $\theta_3+\theta_4=0$, which is of type I (see Theorem~\ref{thm_main_wall}). 
Furthermore, we have 
\[
C_{\permII s_3}=\{\theta\in\Theta(Q)_\RR \mid \theta_1+\theta_2+\theta_3>0, \quad \theta_1+\theta_2+\theta_3+\theta_4<0, \quad 
\theta_3+\theta_4>0, \quad \theta_2>0\}.
\]

We then consider the action of $s_2\in\fkS_5$ on $\calZ_\permII$ which swaps $u_{\permII(2)}=u_1$ and $u_{\permII(3)}=u_3$, 
and have the sequence $\calZ_{\permII s_2}=(u_4,u_3,u_1,u_5,u_2)$. 
By Theorem~\ref{thm_adjacentchamber_reflection}, the chamber $C_{\permII s_2}$ corresponding to $\calZ_{\permII s_2}$ is adjacent to $C_\permII$. 
Since $\calR_2=\calR(u_1,u_3)=\{1,2\}$ and $[u_1]= [u_3]$, 
the chamber $C_{\permII s_2}$ is separated from $C_\permII$ by the wall $\theta_1+\theta_2=0$, which is of type $\typeIII$. 
Furthermore, we have 
\[
C_{\permII s_2}=\{\theta\in\Theta(Q)_\RR \mid \theta_3>0, \quad \theta_1+\theta_2>0, \quad \theta_1+\theta_2+\theta_3+\theta_4<0, \quad 
\theta_2+\theta_3+\theta_4>0\}. 
\]
\end{example}

Then we define the action of $\fkS_a\times \fkS_b\subset \fkS_n$ on $\calZ_\permII$ (and hence on $C_\permII$) so that 
$\fkS_a$ (resp. $\fkS_b$) acts on the subsequence $(z_{k_1},\dots, z_{k_a})$ (resp. $(w_{k^\prime_1}, \dots, w_{k^\prime_b})$) of $\calZ_\permII$ 
discussed in Proposition~\ref{prop_chamber_correspond_zigzag}. 
Since such an action does not change the sign of a sequence of zigzag paths, the sign of the corresponding triangulation is also preserved, 
thus we have the following. 

\begin{corollary}
\label{cor_isomclass_chambers}
Let the notation be as above. Then we see that 
$\calM_{C_\permII}\cong\calM_{C_{\permII^\prime}}$ if and only if $\permII^\prime=\permII\cdot\sigma$ for some $\sigma \in\fkS_a\times\fkS_b$. 
In particular, projective crepant resolutions of $\Spec R_{a,b}$ one-to-one correspond to the cosets of $\fkS_a\times\fkS_b$ in $\fkS_n$. 
\end{corollary}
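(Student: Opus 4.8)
The plan is to deduce Corollary~\ref{cor_isomclass_chambers} from Corollary~\ref{cor_chamber_zigzag_correspondence} together with the classification of wall types in Theorem~\ref{thm_main_wall}. First I would record what the action of $\fkS_a\times\fkS_b$ does combinatorially: by construction it permutes the zigzag paths with slope $(0,-1)$ among themselves and those with slope $(0,1)$ among themselves, so for any $\sigma\in\fkS_a\times\fkS_b$ the sequences $\calZ_\permII$ and $\calZ_{\permII\sigma}$ have the same sign vector, $\sgn(\calZ_\permII)=\sgn(\calZ_{\permII\sigma})$. By Corollary~\ref{cor_chamber_zigzag_correspondence}(5) (equivalently Theorem~\ref{thm_main_wall}(5)) the triangulation $\Delta_{C_\permII}$ is the unique triangulation of $\Delta(a,b)$ whose sign vector equals $\sgn(\calZ_\permII)$, and $\calM_{C_\permII}$ is the toric variety of the fan induced by that triangulation. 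Hence $\sgn(\calZ_\permII)=\sgn(\calZ_{\permII\sigma})$ forces $\Delta_{C_\permII}=\Delta_{C_{\permII\sigma}}$ and therefore $\calM_{C_\permII}\cong\calM_{C_{\permII\sigma}}$ as toric varieties (indeed as crepant resolutions of $\Spec R_{a,b}$). This gives the ``if'' direction.

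For the ``only if'' direction I would argue that distinct sign vectors give non-isomorphic resolutions. Suppose $\permII^\prime$ is \emph{not} of the form $\permII\sigma$ with $\sigma\in\fkS_a\times\fkS_b$; then the subsequence of $\calZ_{\permII^\prime}$ of slope-$(0,-1)$ paths, read in order, differs from that of $\calZ_\permII$, or similarly for slope-$(0,1)$ paths, which is exactly the statement that $\sgn(\calZ_\permII)\neq\sgn(\calZ_{\permII^\prime})$. Since the number of $+1$'s (resp. $-1$'s) is fixed at $b$ (resp. $a$), two such sign vectors differ precisely by a nontrivial rearrangement, so the induced triangulations $\Delta_{C_\permII}$ and $\Delta_{C_{\permII^\prime}}$ are genuinely different triangulations of $\Delta(a,b)$. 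It then remains to see that different triangulations of $\Delta(a,b)$ into elementary triangles yield non-isomorphic projective crepant resolutions of $\Spec R_{a,b}$; this is standard toric geometry — the $\CC^\times$-action and the exceptional configuration distinguish them — but I would phrase it via the associated fans: an isomorphism of crepant resolutions over $\Spec R_{a,b}$ that commutes with the torus action must carry the fan of one to the fan of the other, and a torus-equivariant such isomorphism would have to be the identity on $\sfN$ since it fixes the rays lying on the boundary of $\Delta(a,b)$ (which are common to every triangulation), so the two fans coincide, contradiction. Equivalently one can count exceptional curves or appeal to the fact that the chambers $C_\permII$ and $C_{\permII^\prime}$ are separated by at least one wall of type $\typeIII$ in this situation, across which the resolution changes by a divisor-to-curve contraction rather than a flop.

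Combining the two directions, the map $\permII\mapsto\calM_{C_\permII}$ factors through the coset space $\fkS_n/(\fkS_a\times\fkS_b)$ and is injective on it; surjectivity onto the set of projective crepant resolutions is immediate from Theorem~\ref{thm_crepant_dimer} (every projective crepant resolution of $\Spec R_{a,b}$ arises as some $\calM_\theta$, hence as some $\calM_{C_\permII}$ by Corollary~\ref{cor_chamber_zigzag_correspondence}), together with the remark that every triangulation of $\Delta(a,b)$ is regular. This yields the claimed bijection between projective crepant resolutions of $\Spec R_{a,b}$ and cosets of $\fkS_a\times\fkS_b$ in $\fkS_n$.

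I expect the main obstacle to be the ``only if'' direction — specifically, cleanly justifying that two distinct triangulations of $\Delta(a,b)$ give non-isomorphic resolutions (as schemes over $\Spec R_{a,b}$, not merely as abstract varieties). The cleanest route is probably to note that the resolution determines its fan up to the automorphisms of $\sfN$ fixing $\sigma_{a,b}$, and that $\sigma_{a,b}$ has enough rigidity (its boundary rays, shared by all triangulations, span) that this automorphism group acts trivially on the set of interior subdivisions; alternatively, invoke the wall-crossing picture directly, since Theorem~\ref{thm_main_wall}(2) tells us a type $\typeIII$ wall must be crossed whenever the slope-$(0,-1)$ subsequence changes, and a type $\typeIII$ wall-crossing is not an isomorphism over $\Spec R_{a,b}$ compatible with the structure morphisms. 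Everything else is bookkeeping with the correspondence already established in Corollary~\ref{cor_chamber_zigzag_correspondence}.
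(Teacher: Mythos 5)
Your main line of argument is essentially the paper's: the paper justifies the corollary with exactly your ``if'' direction (the $\fkS_a\times\fkS_b$-action preserves $\sgn(\calZ_\permII)$, hence the triangulation $\Delta_{C_\permII}$, hence $\calM_{C_\permII}$), and it leaves the ``only if'' direction to the standard bijection between regular unimodular triangulations of $\Delta(a,b)$ and projective crepant resolutions of $\Spec R_{a,b}$. You are right that this last point is the only step needing care, and your preferred justification is the correct one: an isomorphism over $\Spec R_{a,b}$ restricts to the identity over the smooth locus, hence is the identity birational map, hence forces the two fans to coincide.

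Two corrections to your ``only if'' discussion, neither of which is load-bearing but both of which reflect a real confusion. First, ``the ordered subsequence of slope-$(0,-1)$ paths differs'' is \emph{not} equivalent to $\sgn(\calZ_\permII)\neq\sgn(\calZ_{\permII'})$: the sequences $(z_1,z_2,w_1)$ and $(z_2,z_1,w_1)$ have different ordered $z$-subsequences but equal sign vectors (and lie in the same coset). What is true, and what you need, is only the implication ``different coset $\Rightarrow$ different sign vector.'' Second, your alternative route has the wall types backwards: crossing a type $\typeIII$ wall yields \emph{isomorphic} moduli spaces ($\calM_\theta\cong\calM_{\theta'}$ and $\Delta_C=\Delta_{C'}$, as in the proof of Theorem~\ref{thm_adjacentchamber_reflection}); such crossings swap two adjacent zigzag paths of equal slope and hence move \emph{within} a coset. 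It is the type I walls, across which the resolution changes by a flop, that separate distinct cosets; cf.\ the suspended pinch point example, where $\theta_1=0$ is the type $\typeIII$ wall and the two chambers it separates carry the same triangulation. So the sentence ``a type $\typeIII$ wall-crossing is not an isomorphism over $\Spec R_{a,b}$'' is false and should be deleted; keep the fan-rigidity argument instead.
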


\begin{remark}
For adjacent chambers $C, C^\prime \subset\Theta(Q)_\RR$, the moduli spaces $\calM_C$, $\calM_{C^\prime}$ are not necessarily isomorphic, 
but for each wall in $\Theta(Q)_\RR$, there exists a functor $\Xi$ giving rise to a derived equivalence $\rmD^b(\coh\calM_C)\simeq \rmD^b(\coh\calM_{C^\prime})$ of the adjacent moduli spaces, see \cite[Theorem~11.1]{IU_anycrepant}. 
Thus, all projective crepant resolutions of $\Spec R_{a,b}$ are derived equivalent. 
We consider a path $\gamma$ in $\Theta(Q)_\RR$ starting from a chamber $C$ and terminating at the same chamber. 
Then the composite of functors $\Xi$ associated to walls passed by $\gamma$, which is also identified with the product of some transpositions 
of $\fkS_n$ via the correspondence in Corollary~\ref{cor_chamber_zigzag_correspondence}, gives an autoequivalence of $\rmD^b(\coh\calM_C)$. 
In particular, the mixed braid group $B_{a,b}$ acts faithfully on $\rmD^b(\coh\calM_C)$, see \cite[Theorem~1]{DS_mixed}. 
Here, the mixed braid group $B_{a,b}$ is the subgroup of the braid group $B_n$ on $n$ strands defined as $\varphi^{-1}(\fkS_a\times\fkS_b)$, 
where $\varphi$ is a natural surjection $B_n\xrightarrow{\varphi} \fkS_n$. 
\end{remark}

For a chamber $C\subset \Theta(Q)_\RR$, if the projective crepant resolution $\calM_C$ contains a floppable curve $\ell$ (equivalently $b\neq 0$), 
then there exists a wall of type I corresponding to $\ell$ (see Theorem~\ref{thm_main_wall}). 
Since all projective crepant resolutions of $\Spec R_{a,b}$ (triangulations of $\Delta(a,b)$) are connected by repetitions of flops, 
collecting all chambers which can be connected to $C$ by crossings of walls of type I, we have all projective crepant resolutions of $\Spec R_{a,b}$ 
as moduli spaces. 
Thus, we can identify these chambers and their walls of type I with the \emph{flop graph} of projective crepant resolutions, 
which is a graph whose vertices are projective crepant resolutions and two vertices are connected by an edge if the corresponding two crepant resolutions are connected by a flop at some curve. 
If $R_{a,b}$ is isolated (equivalently $a=b=1$), then the closures of such chambers cover $\Theta(Q)_\RR$. 
However, if $a\ge 2$, in which case there exists a wall of type $\typeIII$ in $\Theta(Q)_\RR$, then different chambers would give the same 
projective crepant resolution up to isomorphism (cf. Corollary~\ref{cor_isomclass_chambers}). 
To observe this phenomenon in more detail, we consider the notion of a GIT region introduced in \cite{BCS_GIT}. 
First, let $C, C^\prime$ be adjacent chambers in $\Theta(Q)_\RR$. 
If the wall $\overline{C}\cap\overline{C^\prime}$ is of type I, then we delete it from $\Theta(Q)_\RR$. 
After deleting all walls of type I, we have the coarse wall-and-chamber structure of $\Theta(Q)_\RR$. 
Each component of the coarse wall-and-chamber decomposition of $\Theta(Q)_\RR$ is said to be a \emph{GIT region}. 

\begin{proposition}
\label{prop_numberx_gallery}
Let $G$ be a GIT region of $\Theta(Q)_\RR$. Then $G$ contains $\frac{n\,!}{a\,! \, b\,!}$ chambers of $\Theta(Q)_\RR$ and 
any projective crepant resolution of $\Spec R_{a,b}$ can be obtained as the moduli space $\calM_C$ for some $C\subset G$. 
In particular, the number of GIT regions in $\Theta(Q)_\RR$ is $a\,! \, b\,!$. 
\end{proposition}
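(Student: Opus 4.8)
The plan is to carry everything over to the combinatorics of sequences of zigzag paths. By Corollary~\ref{cor_chamber_zigzag_correspondence} the chambers of $\Theta(Q)_\RR$ are identified with the $n!$ sequences $\calZ_\permII=(u_{\permII(1)},\dots,u_{\permII(n)})$, $\permII\in\fkS_n$; by Theorem~\ref{thm_adjacentchamber_reflection} crossing the wall $W_k$ of $C_\permII$ corresponds to applying the adjacent transposition $s_k$; and by Theorem~\ref{thm_main_wall}(2) the wall $W_k$ is of type $\mathrm{I}$ exactly when $[u_{\permII(k)}]=-[u_{\permII(k+1)}]$, i.e.\ when the $k$-th and $(k+1)$-st entries of $\calZ_\permII$ have opposite slopes. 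A GIT region is obtained by deleting all type-$\mathrm{I}$ walls and taking a connected component; equivalently, it is a maximal set of chambers any two of which are joined by a chain of type-$\mathrm{I}$ wall crossings, that is, a maximal set of sequences obtained from one another by transpositions of adjacent entries of opposite slope.

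First I would establish the invariant of a GIT region. A transposition of two adjacent entries of opposite slope interchanges a member of $\{z_1,\dots,z_a\}$ with a member of $\{w_1,\dots,w_b\}$, so it alters neither the relative order in which $z_1,\dots,z_a$ occur in the sequence nor that of $w_1,\dots,w_b$; by Proposition~\ref{prop_chamber_correspond_zigzag} these relative orders are precisely the subsequences $(z_{k_1},\dots,z_{k_a})$ and $(w_{k^\prime_1},\dots,w_{k^\prime_b})$ attached to the chamber. Hence all chambers in one GIT region carry the same pair of subsequences. For the converse I would run a bubble sort: starting from any sequence with a prescribed pair of subsequences, repeatedly locate a position $k$ with $\sgn(u_{\permII(k)})=+1$, $\sgn(u_{\permII(k+1)})=-1$ and apply $s_k$ — each such move is a genuine type-$\mathrm{I}$ wall crossing, since $W_k$ is a wall of the current chamber by Theorem~\ref{thm_main_wall}(1) and the two swapped entries have opposite slopes — until the sequence becomes the canonical one $(z_{k_1},\dots,z_{k_a},w_{k^\prime_1},\dots,w_{k^\prime_b})$. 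As the canonical sequence depends only on the pair of subsequences, any two chambers sharing that pair are type-$\mathrm{I}$-connected. Consequently a GIT region $G$ is exactly the set of chambers whose sequence has a fixed pair of subsequences.

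For the numerical statements I would argue as follows. By Proposition~\ref{prop_chamber_correspond_zigzag} a sequence with prescribed subsequences is determined by, and can be prescribed arbitrarily by, its sign vector $\sgn(\calZ)\in\{+1,-1\}^n$, a word with $a$ entries $-1$ and $b$ entries $+1$; there are $\binom{n}{a}=\tfrac{n!}{a!\,b!}$ such words, so $|G|=\tfrac{n!}{a!\,b!}$. Since the GIT regions partition the $n!$ chambers into pieces of this common size, their number is $a!\,b!$. Finally, for the claim that $G$ realizes every projective crepant resolution, I would combine Corollary~\ref{cor_isomclass_chambers} with the observation (already used to define the $\fkS_a\times\fkS_b$-action preceding it) that $\fkS_a\times\fkS_b$ acts on the sequences by reshuffling the two subsequences, hence preserves the sign vector, and its orbit through any sequence is the whole set of sequences with that sign vector; thus the isomorphism classes of projective crepant resolutions of $\Spec R_{a,b}$ correspond bijectively to sign vectors. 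Since $G$ contains exactly one chamber for each sign vector, every resolution arises as $\calM_C$ for a unique $C\subset G$.

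I expect the main obstacle to be the converse half of the first step — proving that ``same pair of subsequences'' is sufficient, not merely necessary, for two chambers to lie in one GIT region, i.e.\ that every interleaving pattern can be reached using type-$\mathrm{I}$ moves alone. The bubble-sort argument handles this, but it must be checked carefully that at each stage the transposition applied really corresponds to a wall of the chamber one is sitting in (this is where Theorem~\ref{thm_main_wall}(1), which says $W_k$ is a wall of $C$ for every $k$, is essential) and really is of type $\mathrm{I}$ (which is automatic because one always moves a plus-slope entry past a minus-slope entry). A secondary point to verify is the passage between the coset description of resolutions in Corollary~\ref{cor_isomclass_chambers} and the sign-vector description — that the $\fkS_a\times\fkS_b$-orbits on sequences are exactly the sign-vector classes.
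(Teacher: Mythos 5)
Your argument is correct and is essentially the paper's proof recast in the language of sequences: the paper likewise identifies each GIT region with the set of chambers reachable by type-I crossings, appeals to the fact that all triangulations of $\Delta(a,b)$ are connected by flops to conclude that each region realizes every projective crepant resolution and contains $\binom{n}{a}=\frac{n!}{a!\,b!}$ chambers, and then divides $n!$ by this size. The only difference is that you make explicit, via the subsequence invariant and the bubble-sort connectivity argument, two points the paper treats as known (that type-I crossings preserve the pair of subsequences, hence chambers in a region are in bijection with sign vectors, and that every interleaving is reachable by type-I moves); this is a welcome amplification rather than a new route.
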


\begin{proof}
As we observed above, if $\calM_C$ contains a floppable curve $\ell$ for some chamber $C\subset \Theta(Q)_\RR$ (equivalently $b\neq 0$), 
then we can obtain the GIT region of $\Theta(Q)_\RR$ containing $C$ and any projective crepant resolution can be realized as the moduli space 
associated to a chamber in this GIT region. 
If there is a chamber in $\Theta(Q)_\RR$ not contained in the above GIT region, we repeat the same argument to such a chamber. 
Then any chamber of $\Theta(Q)_\RR$ is eventually contained in some GIT region. 
Since the number of triangulations of $\Delta(a,b)$ is ${n \choose a}=\frac{n\,!}{a\,! \, b\,!}$, we have the first assertion. 
Moreover, since the chamber structure of $\Theta(Q)_\RR$ can be identified with the Weyl chambers of type $A_{n-1}$ (see Corollary~\ref{cor_chamber_zigzag_correspondence}), 
the number of chambers is equal to $|\fkS_n|=n\,!$, and hence the number of GIT regions is $a\,! \, b\,!$. 

We note that when $b=0$, a projective crepant resolution is unique up to isomorphism and $n=a$, thus the assertions are trivial. 
\end{proof}

\begin{example}[{The suspended pinch point (cf. \cite[Example~12.5]{IU_anycrepant}, \cite[Section~5]{BM_crepant})}]
Let $a=2$, $b=1$. 
Using the method in Section~\ref{sec_dimer_ab} for $(i_1,i_2,i_3)=(-1,-1,+1)$ and ${\rm id}\in\fkS_3$, 
we have the dimer model $\Gamma=\Gamma_{\rm id}$ shown in the left of Figure~\ref{suspended_pinch_dimer}. 
In particular, the zigzag polygon of $\Gamma$ is $\Delta(2,1)$. 
We consider the zigzag paths $u_1, u_2, u_3$ shown in the right of Figure~\ref{suspended_pinch_dimer}. 
In particular, the slopes of these zigzag paths are $[u_1]=[u_2]=(0,-1)$, $[u_3]=(0,1)$. 
We fix a total order $u_3<u_2<u_1$. 

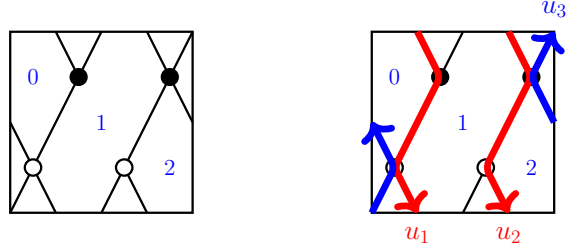
\begin{figure}[H]
\begin{center}
{\scalebox{0.6}{
\begin{tikzpicture}
\newcommand{\noderad}{0.18cm} 
\newcommand{\edgewidth}{0.05cm} 
\newcommand{\nodewidthw}{0.05cm} 
\newcommand{\nodewidthb}{0.04cm} 
\newcommand{\zigzagwidth}{0.15cm} 
\newcommand{\zigzagcolor}{red} 
\newcommand{\zigzagcolortwo}{blue} 

\node at (0,0) {
\begin{tikzpicture}
\foreach \n/\a/\b in {1/1.5/3, 2/3.5/3} {\coordinate (B\n) at (\a,\b);} 
\foreach \n/\a/\b in {1/0.5/1,2/2.5/1} {\coordinate (W\n) at (\a,\b);} 
\draw[line width=\edgewidth]  (0,0) rectangle (4,4);
\foreach \w/\b in {1/1,2/2} {\draw[line width=\edgewidth] (W\w)--(B\b); };
\foreach \w/\s/\t in {1/0/2,1/0/0,1/1/0,2/2/0,2/3/0} {\draw[line width=\edgewidth] (W\w)--(\s,\t); };
\foreach \b/\s/\t in {1/1/4,1/2/4,2/3/4,2/4/4,2/4/2} {\draw[line width=\edgewidth] (B\b)--(\s,\t); };
\foreach \x in {1,2} {\filldraw [fill=black, line width=\nodewidthb] (B\x) circle [radius=\noderad] ;}; 
\foreach \x in {1,2} {\filldraw [fill=white, line width=\nodewidthw] (W\x) circle [radius=\noderad] ;}; 

\foreach \n/\x/\y in {0/0.5/3, 1/2/2, 2/3.5/1}{ 
\node[blue] (V\n) at (\x,\y) {\LARGE$\n$}; }; 
\end{tikzpicture}}; 

\node at (8,0) {
\begin{tikzpicture}
\draw[line width=\edgewidth]  (0,0) rectangle (4,4);
\foreach \w/\b in {1/1,2/2} {\draw[line width=\edgewidth] (W\w)--(B\b); };
\foreach \w/\s/\t in {1/0/2,1/0/0,1/1/0,2/2/0,2/3/0} {\draw[line width=\edgewidth] (W\w)--(\s,\t); };
\foreach \b/\s/\t in {1/1/4,1/2/4,2/3/4,2/4/4,2/4/2} {\draw[line width=\edgewidth] (B\b)--(\s,\t); };
\foreach \x in {1,2} {\filldraw [fill=black, line width=\nodewidthb] (B\x) circle [radius=\noderad] ;}; 
\foreach \x in {1,2} {\filldraw [fill=white, line width=\nodewidthw] (W\x) circle [radius=\noderad] ;}; 

\foreach \n/\x/\y in {0/0.5/3, 1/2/2, 2/3.5/1}{ 
\node[blue] (V\n) at (\x,\y) {\LARGE$\n$}; }; 

\draw[->, line width=\zigzagwidth, rounded corners, color=\zigzagcolor] (1,4)--(B1)--(W1)--(1,0); 
\draw[->, line width=\zigzagwidth, rounded corners, color=\zigzagcolor] (3,4)--(B2)--(W2)--(3,0); 
\draw[->, line width=\zigzagwidth, rounded corners, color=\zigzagcolortwo] (0,0)--(W1)--(0,2); 
\draw[->, line width=\zigzagwidth, rounded corners, color=\zigzagcolortwo] (4,2)--(B2)--(4,4); 

\node[red] at (1,-0.5) {\huge$u_1$}; \node[red] at (3,-0.5) {\huge$u_2$}; 
\node[blue] at (4,4.5) {\huge$u_3$}; 
\end{tikzpicture}}; 

\end{tikzpicture}}
}
\end{center}
\caption{The dimer model $\Gamma$ whose zigzag polygon is $\Delta(2,1)$ (left), 
the zigzag paths $u_1, u_2, u_3$ on $\Gamma$ (right).}
\label{suspended_pinch_dimer}
\end{figure}

Let $Q$ be the quiver associated to $\Gamma$. Then the space of stability parameters is
\[
\Theta(Q)_\RR=\{\theta=(\theta_0, \theta_1, \theta_2) \mid \theta_0+\theta_1+\theta_2=0\}. 
\]
By Theorem~\ref{thm_main_wall} and Corollary~\ref{cor_chamber_zigzag_correspondence}, we have the wall-and-chamber decomposition of $\Theta(Q)_\RR$ as shown in Figure~\ref{suspended_pinch_chamber}. 
For example, the sequence $(u_3, u_2, u_1)$ corresponds to the chamber $C$ described as 
\[C=\{\theta\in\Theta(Q)_\RR \mid \theta_1>0,\,\, \theta_2>0\}, \]
and the crepant resolution $\calM_C$ is isomorphic to the toric variety associated to the triangulation of $\Delta(2,1)$ 
described in the first quadrant of Figure~\ref{suspended_pinch_chamber}. 
A crossing of the wall $\theta_2=0$ of $C$ corresponds to a swapping of $u_3$ and $u_2$. 
Also, a crossing of the wall $\theta_1=0$ of $C$ corresponds to a swapping of $u_2$ and $u_1$. 

In Figure~\ref{suspended_pinch_chamber}, the equations $\theta_2=0$ and $\theta_1+\theta_2=0$ are walls of type I, 
and $\theta_1=0$ is a wall of type $\typeIII$. 
Thus, three chambers satisfying $\theta_1>0$ are in the same GIT region, and also the ones $\theta_1<0$ are in the same GIT region. 
Each GIT region induces the flop graph of projective crepant resolutions of $\Spec R_{2,1}$.

\begin{figure}[H]
\begin{center}
{\scalebox{0.8}{
\begin{tikzpicture}
\newcommand{\edgewidth}{0.035cm} 
\newcommand{\noderad}{0.08} 
\newcommand{\circlerad}{4} 

\draw[->, line width=\edgewidth] (-\circlerad,0)--(\circlerad,0); \draw[->, line width=\edgewidth] (0,-\circlerad)--(0,\circlerad); 
\coordinate (wall1) at (135:\circlerad+0.5);  \coordinate (wall2) at (315:\circlerad+0.5);  \draw[line width=\edgewidth] (wall1)--(wall2); 
\node at (\circlerad+0.5,0) {\Large$\theta_1$}; \node at (0,\circlerad+0.5) {\Large$\theta_2$}; 

\node at (45:\circlerad-1.8)
{\scalebox{0.8}{
\begin{tikzpicture}
\CrepantResolb
\end{tikzpicture}}};
\node at (55:\circlerad-0.7) {\Large$(u_3, u_2, u_1)$};

\node at (112:\circlerad-1.2)
{\scalebox{0.8}{
\begin{tikzpicture}
\CrepantResolb
\end{tikzpicture}}};
\node at (112:\circlerad) {\Large$(u_3, u_1, u_2)$};

\node at (160:\circlerad-1.2)
{\scalebox{0.8}{
\begin{tikzpicture}
\CrepantResola
\end{tikzpicture}}};
\node at (150:\circlerad) {\Large$(u_1, u_3, u_2)$};

\node at (225:\circlerad-1.8)
{\scalebox{0.8}{
\begin{tikzpicture}
\CrepantResolc
\end{tikzpicture}}};
\node at (225:\circlerad-0.3) {\Large$(u_1, u_2, u_3)$};

\node at (292:\circlerad-1.2)
{\scalebox{0.8}{
\begin{tikzpicture}
\CrepantResolc
\end{tikzpicture}}};
\node at (292:\circlerad) {\Large$(u_2, u_1, u_3)$};

\node at (340:\circlerad-1.2)
{\scalebox{0.8}{
\begin{tikzpicture}
\CrepantResola
\end{tikzpicture}}};
\node at (330:\circlerad) {\Large$(u_2, u_3, u_1)$};
\end{tikzpicture}
}}
\end{center}
\caption{The wall-and-chamber structure of $\Theta(Q)_\RR$}
\label{suspended_pinch_chamber}
\end{figure}
\end{example}

\begin{remark}
As we discussed in Section~\ref{sec_dimer_ab}, the consistent dimer models in $\{\Gamma_\perm \mid \perm\in\fkS_n\}$ associate to 
the toric diagram of $R_{a,b}$, and they are transformed into one another by the actions of adjacent transpositions. 
For any consistent dimer model $\Gamma_\perm$ with $\perm\in\fkS_n$, we have the same results shown in this section. 
In particular, the chambers in $\Theta(Q_\perm)_\RR$ are identified with the Weyl chambers of type $A_{n-1}$. 
Whereas, we have some differences in projective crepant resolutions associated to some chambers, which correspond to choices of simple roots 
in the theory of root systems. 
For example, let $a=2$, $b=1$, and $s_2=(2\, 3)\in\fkS_3$. 
Then we have the consistent dimer model $\Gamma_{s_2}$ as shown in the left of Figure~\ref{suspended_pinch_dimer2}. 
According to the convention \eqref{eq_label_z_w2}, we label the zigzag paths of $\Gamma_{s_2}$ as shown in the right of Figure~\ref{suspended_pinch_dimer2}.

\begin{figure}[H]
\begin{center}
{\scalebox{0.6}{
\begin{tikzpicture}
\newcommand{\noderad}{0.18cm} 
\newcommand{\edgewidth}{0.05cm} 
\newcommand{\nodewidthw}{0.05cm} 
\newcommand{\nodewidthb}{0.04cm} 
\newcommand{\zigzagwidth}{0.15cm} 
\newcommand{\zigzagcolor}{red} 
\newcommand{\zigzagcolortwo}{blue} 

\node at (0,0) {
\begin{tikzpicture}
\foreach \n/\a/\b in {1/1.5/3, 2/3.5/3} {\coordinate (B\n) at (\a,\b);} 
\foreach \n/\a/\b in {1/0.5/1,2/2.5/1} {\coordinate (W\n) at (\a,\b);} 
\draw[line width=\edgewidth]  (0,0) rectangle (4,4);
\foreach \w/\b in {1/1,2/2} {\draw[line width=\edgewidth] (W\w)--(B\b); };
\foreach \w/\s/\t in {1/0/2,1/0/0,1/1/0,2/2/0,2/3/0} {\draw[line width=\edgewidth] (W\w)--(\s,\t); };
\foreach \b/\s/\t in {1/1/4,1/2/4,2/3/4,2/4/4,2/4/2} {\draw[line width=\edgewidth] (B\b)--(\s,\t); };
\foreach \x in {1,2} {\filldraw [fill=black, line width=\nodewidthb] (B\x) circle [radius=\noderad] ;}; 
\foreach \x in {1,2} {\filldraw [fill=white, line width=\nodewidthw] (W\x) circle [radius=\noderad] ;}; 

\foreach \n/\x/\y in {2/0.5/3, 0/2/2, 1/3.5/1}{ 
\node[blue] (V\n) at (\x,\y) {\LARGE$\n$}; }; 
\end{tikzpicture}}; 

\node at (8,0) {
\begin{tikzpicture}
\draw[line width=\edgewidth]  (0,0) rectangle (4,4);
\foreach \w/\b in {1/1,2/2} {\draw[line width=\edgewidth] (W\w)--(B\b); };
\foreach \w/\s/\t in {1/0/2,1/0/0,1/1/0,2/2/0,2/3/0} {\draw[line width=\edgewidth] (W\w)--(\s,\t); };
\foreach \b/\s/\t in {1/1/4,1/2/4,2/3/4,2/4/4,2/4/2} {\draw[line width=\edgewidth] (B\b)--(\s,\t); };
\foreach \x in {1,2} {\filldraw [fill=black, line width=\nodewidthb] (B\x) circle [radius=\noderad] ;}; 
\foreach \x in {1,2} {\filldraw [fill=white, line width=\nodewidthw] (W\x) circle [radius=\noderad] ;}; 

\foreach \n/\x/\y in {2/0.5/3, 0/2/2, 1/3.5/1}{ 
\node[blue] (V\n) at (\x,\y) {\LARGE$\n$}; }; 

\draw[->, line width=\zigzagwidth, rounded corners, color=\zigzagcolor] (1,4)--(B1)--(W1)--(1,0); 
\draw[->, line width=\zigzagwidth, rounded corners, color=\zigzagcolor] (3,4)--(B2)--(W2)--(3,0); 
\draw[->, line width=\zigzagwidth, rounded corners, color=\zigzagcolortwo] (0,0)--(W1)--(0,2); 
\draw[->, line width=\zigzagwidth, rounded corners, color=\zigzagcolortwo] (4,2)--(B2)--(4,4); 

\node[red] at (1,-0.5) {\huge$u_3^\prime$}; \node[red] at (3,-0.5) {\huge$u_1^\prime$}; 
\node[blue] at (4,4.5) {\huge$u_2^\prime$}; 
\end{tikzpicture}}; 

\end{tikzpicture}}
}
\end{center}
\caption{The dimer model $\Gamma_{s_2}$ whose zigzag polygon is $\Delta(2,1)$ (left), 
the zigzag paths $u_1^\prime, u_2^\prime, u_3^\prime$ on $\Gamma_{s_2}$ whose slopes are either $(0,-1)$ or $(1,0)$ (right).}
\label{suspended_pinch_dimer2}
\end{figure}
Then, by Theorem~\ref{thm_main_wall} and Corollary~\ref{cor_chamber_zigzag_correspondence}, we have the wall-and-chamber structure of $\Theta(Q_{s_2})_\RR$ as shown in Figure~\ref{suspended_pinch_chamber2}. 
Note that the wall-and-chamber structure is the same as the one in Figure~\ref{suspended_pinch_chamber}, but the projective crepant resolution 
associated to each chamber is different. 

\begin{figure}[H]
\begin{center}
{\scalebox{0.8}{
\begin{tikzpicture}
\newcommand{\edgewidth}{0.035cm} 
\newcommand{\noderad}{0.08} 
\newcommand{\circlerad}{4} 

\draw[->, line width=\edgewidth] (-\circlerad,0)--(\circlerad,0); \draw[->, line width=\edgewidth] (0,-\circlerad)--(0,\circlerad); 
\coordinate (wall1) at (135:\circlerad+0.5);  \coordinate (wall2) at (315:\circlerad+0.5);  \draw[line width=\edgewidth] (wall1)--(wall2); 
\node at (\circlerad+0.5,0) {\Large$\theta_1$}; \node at (0,\circlerad+0.5) {\Large$\theta_2$}; 

\node at (45:\circlerad-1.8)
{\scalebox{0.8}{
\begin{tikzpicture}
\CrepantResola
\end{tikzpicture}}};
\node at (55:\circlerad-0.7) {\Large$(u_3^\prime, u_2^\prime, u_1^\prime)$};

\node at (112:\circlerad-1.2)
{\scalebox{0.8}{
\begin{tikzpicture}
\CrepantResolc
\end{tikzpicture}}};
\node at (112:\circlerad) {\Large$(u_3^\prime, u_1^\prime, u_2^\prime)$};

\node at (160:\circlerad-1.2)
{\scalebox{0.8}{
\begin{tikzpicture}
\CrepantResolc
\end{tikzpicture}}};
\node at (150:\circlerad) {\Large$(u_1^\prime, u_3^\prime, u_2^\prime)$};

\node at (225:\circlerad-1.8)
{\scalebox{0.8}{
\begin{tikzpicture}
\CrepantResola
\end{tikzpicture}}};
\node at (225:\circlerad-0.3) {\Large$(u_1^\prime, u_2^\prime, u_3^\prime)$};

\node at (292:\circlerad-1.2)
{\scalebox{0.8}{
\begin{tikzpicture}
\CrepantResolb
\end{tikzpicture}}};
\node at (292:\circlerad) {\Large$(u_2^\prime, u_1^\prime, u_3^\prime)$};

\node at (340:\circlerad-1.2)
{\scalebox{0.8}{
\begin{tikzpicture}
\CrepantResolb
\end{tikzpicture}}};
\node at (330:\circlerad) {\Large$(u_2^\prime, u_3^\prime, u_1^\prime)$};
\end{tikzpicture}
}}
\end{center}
\caption{The wall-and-chamber structure of $\Theta(Q_{s_2})_\RR$}
\label{suspended_pinch_chamber2}
\end{figure}
\end{remark}

\section{\bf Variations of stable representations under wall crossings}
\label{sec_variation_representation}

We keep Settings~\ref{setting_zigzag_for_a+b} and \ref{setting_zigzag_for_u}. 
In the previous section, we showed the correspondence between the chambers in $\Theta(Q)_\RR$ and the set $\{\calZ_\permII \mid \permII\in\fkS_n\}$ 
of sequences of zigzag paths (see Corollary~\ref{cor_chamber_zigzag_correspondence}). 
Also, using this correspondence, we can see the variation of projective crepant resolutions of $\Spec R_{a,b}$. 
In this section, we observe the variations of torus orbits in projective crepant resolutions under wall crossings. 
First, we recall that for a chamber $C_\permII$ the torus orbits in the projective crepant resolution $\calM_{C_\permII}$ of $\Spec R_{a,b}$ 
can be determined by the set $\PM_{C_\permII}(\Gamma)$ of $\theta$-stable perfect matchings of $\Gamma$ 
for some (and hence any) $\theta\in C_\permII$. 
Precisely, for an $r$-dimensional cone $\sigma\in\Sigma_{C_\permII}(r)$ $(r=1,2,3)$, 
the cosupport of a $\theta$-stable representation $M_\sigma$, which corresponds to a $(3-r)$-dimensional torus orbit $\calO_\sigma\subset\calM_{C_\permII}$, 
consists of the arrows dual to $\bigcup_{i=1}^r\sfP_i$, where $\sfP_1, \dots, \sfP_r$ are perfect matchings in $\PM_{C_\permII}(\Gamma)$ corresponding to the rays of $\sigma\in\Sigma_{C_\permII}(r)$, see Proposition~\ref{prop_stablerep_cosupport}. 
Thus, in the following, we will observe the variations of stable perfect matchings to understand the variations of torus orbits under wall crossings.

\begin{setting}
\label{setting_stablerep}
Let $C_\permII$ be a chamber in $\Theta(Q)_\RR$ corresponding to the sequence $\calZ_\permII$ for some $\permII\in\fkS_n$ (see Corollary~\ref{cor_chamber_zigzag_correspondence}). 
In the following, we use the notation $\Sigma_{\permII}\coloneqq\Sigma_{C_\permII}$, $\Delta_\permII\coloneqq\Delta_{C_\permII}$, 
and $\PM_\permII(\Gamma)\coloneqq\PM_{C_\permII}(\Gamma)$. 

We consider the set of elementary triangles $\{\Delta_{\permII,k}\}_{k=1}^n$ in the triangulation $\Delta_{\permII}$ as in Setting~\ref{setting_zigzag_for_u}. 
For any $k=1, \dots, n-1$, we denote by $\sigma_{\permII, k}$ the three-dimensional cone in the toric fan $\Sigma_{\permII}$ corresponding to $\Delta_{\permII,k}$, 
and denote by $\tau_{\permII, k}$ the two-dimensional cone in $\Sigma_{\permII}$ corresponding to the line segment $\Delta_{\permII,k}\cap\Delta_{\permII,k+1}$. 
\end{setting}

Suppose that $W_k=\{\theta\in\Theta(Q)_\RR \mid \sum_{v\in\calR_k}\theta_v=0\}$ is a wall of $C_\permII$ (see Theorem~\ref{thm_main_wall}) 
and the chamber adjacent to $C_\permII$ by the wall $W_k$ is $C_{\permII s_k}$. 
For $\theta\in C_\permII$, let $\sfP^\theta_q$ be the $\theta$-stable perfect matching corresponding to a lattice point $q\in\Delta(a,b)$. 
By Theorem~\ref{thm_adjacentchamber_reflection} (and its proof), 
we can observe the variations of stable perfect matchings as in Propositions~\ref{variation_stablePM} and \ref{variation_stablePM2} below. 

\begin{proposition}
\label{variation_stablePM}
Let the notation be the same as above. 
If the wall $W_k$ is of type {\rm I}, then we see that $\sfP^\theta_q=\sfP^{\theta^\prime}_q$ for any $\theta\in C_\permII$, $\theta^\prime\in C_{\permII s_k}$, 
and any lattice point $q\in\Delta(a,b)$. 
In particular, we have $\PM_\permII(\Gamma)=\PM_{\permII s_k}(\Gamma)$. 
\end{proposition}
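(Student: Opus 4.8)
The plan is to deduce this directly from Theorem~\ref{thm_adjacentchamber_reflection} together with Proposition~\ref{prop_bPM_zzsequence} (and \ref{prop_description_boundaryPM}), which express every $\theta$-stable perfect matching as an explicit union of zigs/zags of zigzag paths determined by the sequences $(z_{k_1},\dots,z_{k_a})$ and $(w_{k'_1},\dots,w_{k'_b})$ read off from the chamber. The key observation is that when the wall $W_k$ is of type $\mathrm{I}$, the triangles $\Delta_{C_\permII,k}$ and $\Delta_{C_{\permII s_k},k+1}$ form a parallelogram, the crossing is a flop, and (by the proof of Theorem~\ref{thm_adjacentchamber_reflection}) the action of $s_k$ swaps the two triangles in positions $k,k+1$ but leaves the assignment of zigzag paths to all the line segments on the lower and upper bases of $\Delta(a,b)$ unchanged. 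In other words, the subsequences $(z_{k_1},\dots,z_{k_a})$ and $(w_{k'_1},\dots,w_{k'_b})$ associated to $C_{\permII s_k}$ coincide with those associated to $C_\permII$; only the interleaving pattern $\calZ_\permII \mapsto \calZ_{\permII s_k}$ changes, and that interleaving does not enter the formulas \eqref{eq_bPM_from_zz1}, \eqref{eq_bPM_from_zz2} for the boundary perfect matchings.

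Concretely, first I would fix $\theta\in C_\permII$ and $\theta'\in C_{\permII s_k}$. By Theorem~\ref{thm_adjacentchamber_reflection} the sequence attached to $C_{\permII s_k}$ is $\calZ_{\permII s_k}$, obtained from $\calZ_\permII$ by the adjacent transposition $s_k$; since $W_k$ is of type $\mathrm{I}$, Theorem~\ref{thm_main_wall}(2) gives $[u_{\permII(k)}] = -[u_{\permII(k+1)}]$, so one of these two zigzag paths has slope $(0,-1)$ and the other $(0,1)$. Hence $s_k$ exchanges a ``$z$'' with a ``$w$'' in the combined sequence, which means it does not permute the relative order within $(z_{k_1},\dots,z_{k_a})$ nor within $(w_{k'_1},\dots,w_{k'_b})$: by Proposition~\ref{prop_chamber_correspond_zigzag}(b),(c) these two subsequences are identical for $C_\permII$ and $C_{\permII s_k}$. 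Then I would invoke Proposition~\ref{prop_bPM_zzsequence} for each chamber: the right-hand sides of \eqref{eq_bPM_from_zz1} and \eqref{eq_bPM_from_zz2} depend only on those subsequences together with the fixed corner perfect matchings $\sfP_{(0,0)},\sfP_{(a,0)},\sfP_{(0,1)},\sfP_{(b,1)}$ (which by Proposition~\ref{prop_cPM_unique} are the same for all generic parameters). Therefore $\sfP^\theta_{(i,0)} = \sfP^{\theta'}_{(i,0)}$ for all $i$ and $\sfP^\theta_{(j,1)} = \sfP^{\theta'}_{(j,1)}$ for all $j$, i.e. $\sfP^\theta_q = \sfP^{\theta'}_q$ for every lattice point $q\in\Delta(a,b)$; summing over $q$ gives $\PM_\permII(\Gamma) = \PM_{\permII s_k}(\Gamma)$.

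The step that needs the most care is justifying that a type~$\mathrm{I}$ wall-crossing genuinely fixes the zigzag subsequences $(z_{k_i})$ and $(w_{k'_j})$, rather than merely the unordered multisets of corner/boundary perfect matchings. Here I would lean on the identity \eqref{wallcross_PM} from the proof of Theorem~\ref{thm_adjacentchamber_reflection}, which already asserts $\sfP^\theta_{(i,0)} = \sfP^{\theta'}_{(i,0)}$ and $\sfP^\theta_{(j,1)} = \sfP^{\theta'}_{(j,1)}$ for a flop (``a flop preserves a toric divisor''), so in fact the present proposition is essentially a restatement of that identity plus the observation that these boundary matchings exhaust $\PM_\theta(\Gamma)$ for the polygon $\Delta(a,b)$, which has no interior lattice points. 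So the proof reduces to citing \eqref{wallcross_PM} and Setting~\ref{setting_zigzag_for_a+b}'s description $\PM_\theta(\Gamma_\perm) = \{\sfP^\theta_{(i,0)}\} \cup \{\sfP^\theta_{(j,1)}\}$; the main obstacle is purely expository, namely making the bookkeeping of indices under $s_k$ transparent. I expect the write-up to be short.
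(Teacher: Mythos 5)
Your proposal is correct and follows essentially the same route as the paper: the paper derives this proposition directly from the identity \eqref{wallcross_PM} in the proof of Theorem~\ref{thm_adjacentchamber_reflection} (a flop preserves every toric divisor, hence every $\theta$-stable boundary perfect matching), combined with the fact from Setting~\ref{setting_zigzag_for_a+b} that $\PM_\theta(\Gamma_\perm)$ consists entirely of boundary perfect matchings since $\Delta(a,b)$ has no interior lattice points. Your supplementary argument via the invariance of the subsequences $(z_{k_1},\dots,z_{k_a})$ and $(w_{k'_1},\dots,w_{k'_b})$ under an adjacent transposition swapping paths of opposite slope is a valid (and slightly more combinatorial) repackaging of the same content.
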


\begin{proposition}
\label{variation_stablePM2}
Let the notation be the same as above. 
Suppose that the wall $W_k$ is of type $\typeIII$, in which case $[u_{\permII(k)}]=[u_{\permII(k+1)}]$ 
and the elementary triangles $\Delta_{\permII,k}$ and $\Delta_{\permII,k+1}$ form a large triangle. 
Let $m\in\Delta(a,b)$ be the foot of the bisector on the triangle $\Delta_{\permII,k}\cup\Delta_{\permII,k+1}$. 
\begin{center}
{\scalebox{1}{
\begin{tikzpicture}
\newcommand{\edgewidth}{0.035cm} 
\newcommand{\noderad}{0.08} 

\foreach \n/\a/\b in {00/-0.2/0,10/1.25/0, 11/1.25/1.25, 20/2.7/0} {
\coordinate (V\n) at (\a,\b); 
};
\foreach \s/\t in {00/10,10/20,20/11,11/00} {\draw [line width=\edgewidth] (V\s)--(V\t) ;}; 
\draw [line width=\edgewidth] (V10)--(V11) ;
\foreach \n in {00,10,11,20}{\draw [fill=black] (V\n) circle [radius=\noderad] ; };

\newcommand{\nodeshift}{0.3cm} 
\node[xshift=0cm,yshift=-\nodeshift] at (V10) {\small$m$}; 
\node at (0.7,0.25) {\small$\Delta_{\permII,k}$}; \node at (1.85,0.25) {\small$\Delta_{\permII,k+1}$}; 
\end{tikzpicture}
}}
\end{center}
Then, for any $\theta\in C_\permII$, $\theta^\prime\in C_{\permII s_k}$, we see that $\sfP^\theta_q=\sfP^{\theta^\prime}_q$ 
if $q\in\Delta(a,b)$ is a lattice point with $q\neq m$, and $\sfP^{\theta^\prime}_m=\BS_{u_{\permII(k+1)}}\BS_{u_{\permII(k)}}(\sfP^\theta_m)$, 
where $\BS_{u_{\permII(k)}}$ and $\BS_{u_{\permII(k+1)}}$ are the zigzag switchings as in {\rm Subsection~\ref{subsec_switchings}}.  
In particular, we have 
\[
\PM_{\permII {s_k}}(\Gamma)=\big(\PM_{\permII}(\Gamma){\setminus} \{\sfP_m^\theta\}\big)\cup \big\{\BS_{u_{\permII(k+1)}}\BS_{u_{\permII(k)}}(\sfP^\theta_m)\big\}. 
\]
\end{proposition}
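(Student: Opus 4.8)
The plan is to deduce Proposition~\ref{variation_stablePM2} from Theorem~\ref{thm_adjacentchamber_reflection} together with the combinatorial descriptions of $\theta$-stable boundary perfect matchings collected in Proposition~\ref{prop_bPM_zzsequence}. Recall from the proof of Theorem~\ref{thm_adjacentchamber_reflection} that when $W_k$ is of type $\typeIII$, crossing $W_k$ is a divisor-to-curve contraction on both sides, $\Delta_C=\Delta_{C^\prime}$, and \eqref{wallcross_PM} holds for every lattice point except the one corresponding to the foot of the bisector of the large triangle $\Delta_{\permII,k}\cup\Delta_{\permII,k+1}$; that foot is exactly $m$. This already gives $\sfP^\theta_q=\sfP^{\theta^\prime}_q$ for all $q\neq m$, so the whole content is to identify $\sfP^{\theta^\prime}_m$ explicitly.

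To do this I would first recall the setup from the $\typeIII$ part of the proof of Theorem~\ref{thm_main_wall}: since $[u_{\permII(k)}]=[u_{\permII(k+1)}]$, both $\Delta_{\permII,k}$ and $\Delta_{\permII,k+1}$ share a side with the same base of $\Delta(a,b)$ (say the lower one; the upper case is symmetric), and in Proposition~\ref{prop_chamber_correspond_zigzag} terminology $u_{\permII(k)}=z_{k_i}$, $u_{\permII(k+1)}=z_{k_{i+1}}$ are two consecutive zigzag paths assigned to consecutive primitive segments of that base. The foot $m=(i,0)$ is the shared vertex of these two segments. By Proposition~\ref{prop_bPM_zzsequence}, for $\theta\in C_\permII$ the perfect matching $\sfP^\theta_m=\sfP^\theta_{(i,0)}$ has the form $\bigcup_{k=k_1}^{k_i}\Zag(z_k)\cup\bigcup_{k=k_{i+1}}^{k_a}\Zig(z_k)\cup(\sfP_{(0,0)}\cap\sfP_{(a,0)})$, i.e.\ it contains $\Zag(z_{k_i})$ and $\Zig(z_{k_{i+1}})$ among the ``active'' zigzag paths indexed by $\{k_1,\dots,k_a\}$. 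Crossing $W_k$ swaps the positions of $z_{k_i}$ and $z_{k_{i+1}}$ in the subsequence $(z_{k_1},\dots,z_{k_a})$ attached to the base — this is precisely the statement $\calZ_{\permII^\prime}=\calZ_{\permII s_k}$ from Theorem~\ref{thm_adjacentchamber_reflection} combined with condition (b) of Proposition~\ref{prop_chamber_correspond_zigzag}. After the swap, in $\sfP^{\theta^\prime}_{(i,0)}$ the $i$-th active zigzag path is now $z_{k_{i+1}}$ (contributing its zag) and the $(i+1)$-st is $z_{k_i}$ (contributing its zig), while all other lattice points on the base are unchanged, which forces $z_{k_i}\in I^\rmc$ and $z_{k_{i+1}}\in I$-type membership to both flip relative to $\sfP^\theta_m$. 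Comparing this with Definition~\ref{def_switching}, flipping the membership of $z_{k_i}$ from zag to zig and of $z_{k_{i+1}}$ from zig to zag is exactly $\BS_{z_{k_{i+1}}}\BS_{z_{k_i}}$ applied to $\sfP^\theta_m$ (the two switchings commute by the first lemma after Definition~\ref{def_switching}), i.e.\ $\BS_{u_{\permII(k+1)}}\BS_{u_{\permII(k)}}(\sfP^\theta_m)$; by Lemma (3)--(4) after that definition this double switching lands back on the lattice point $m$, consistently with $\sfP^{\theta^\prime}_m$ corresponding to $m$.

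Finally, the statement $\PM_{\permII s_k}(\Gamma)=\big(\PM_\permII(\Gamma)\setminus\{\sfP^\theta_m\}\big)\cup\{\BS_{u_{\permII(k+1)}}\BS_{u_{\permII(k)}}(\sfP^\theta_m)\}$ is immediate once the pointwise comparison is done: $\PM_C(\Gamma)$ is by definition the set of $\theta$-stable perfect matchings, which by Proposition~\ref{corresp_pm}(1) is in bijection with the lattice points of $\Delta(a,b)$, so knowing $\sfP^{\theta^\prime}_q$ for every lattice point $q$ determines $\PM_{\permII s_k}(\Gamma)$, and only the value at $m$ changes. The main obstacle I anticipate is bookkeeping rather than conceptual: one must check carefully that the zag/zig membership of $z_{k_i}$ and $z_{k_{i+1}}$ really flips (and that no other zigzag path's membership in $\sfP^\theta_m$ changes) when passing from $\calZ_\permII$ to $\calZ_{\permII s_k}$, using that $m=(i,0)$ is the unique lattice point affected and that Lemma~\ref{lem_zigzag_nointersection} guarantees each $z_\bullet$ has exactly one zig and one zag so the switching is unambiguous. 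I would also handle the symmetric case where $\Delta_{\permII,k}\cup\Delta_{\permII,k+1}$ sits on the upper base (so $u_{\permII(k)}=w_{k^\prime_j}$, $u_{\permII(k+1)}=w_{k^\prime_{j+1}}$ and \eqref{eq_bPM_from_zz2} is used in place of \eqref{eq_bPM_from_zz1}) by the same argument, remarking that it is completely parallel.
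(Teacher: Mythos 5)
Your proposal is correct and follows exactly the route the paper intends: the paper states Propositions~\ref{variation_stablePM} and \ref{variation_stablePM2} as consequences of Theorem~\ref{thm_adjacentchamber_reflection} and its proof (preservation of all stable perfect matchings except at the foot $m$, plus the identification $\calZ_{\permII'}=\calZ_{\permII s_k}$), and your explicit comparison of $\sfP^\theta_m$ and $\sfP^{\theta'}_m$ via Proposition~\ref{prop_bPM_zzsequence} and Definition~\ref{def_switching} is precisely the intended bookkeeping. No gaps.
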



\medskip

By using Propositions~\ref{variation_stablePM} and \ref{variation_stablePM2}, we can track the variations of 
stable perfect matchings under wall-crossings. 
Thus, we can also track the variations of stable representations corresponding to torus orbits in projective crepant resolutions 
by considering their support determined by stable perfect matchings. 
In the following, we pay attention to stable representations corresponding to exceptional curves in a projective crepant resolution 
and observe their variations under wall-crossings. 

\medskip

We here recall that for any zigzag paths $u_i$ as in Setting~\ref{setting_zigzag_for_u} 
both $\Zig(u_i)$ and $\Zag(u_i)$ consist of a single edge (see Lemma~\ref{lem_zigzag_nointersection}). 
We denote the arrows of $Q$ dual to edges in $\Zig(u_i)$ and $\Zag(u_i)$ by $\aZig(u_i)$ and $\aZag(u_i)$, respectively.

\begin{lemma}
\label{lem_zigzag_in_support}
Let the notation be as in {\rm Setting~\ref{setting_stablerep}}. 
For $\theta\in C_\permII$ and a two dimensional cone $\tau=\tau_{\permII, k}\in\Sigma_\permII(2)$, we consider the $\theta$-stable representation $M_\tau$.  
Then, for any zigzag path $u_i$ $(i=1, \dots, n)$, either $\aZig(u_i)$ or $\aZag(u_i)$ is contained in $\Supp M_\tau$. 
\end{lemma}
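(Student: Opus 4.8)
The statement to prove is Lemma~\ref{lem_zigzag_in_support}: for a two-dimensional cone $\tau=\tau_{\permII,k}\in\Sigma_\permII(2)$ and $\theta\in C_\permII$, each zigzag path $u_i$ contributes at least one of $\aZig(u_i),\aZag(u_i)$ to $\Supp M_\tau$. The cosupport of $M_\tau$ is, by Proposition~\ref{prop_stablerep_cosupport}, the set of arrows dual to $\sfP_1\cup\sfP_2$, where $\sfP_1,\sfP_2$ are the $\theta$-stable perfect matchings corresponding to the two rays $\rho_1,\rho_2$ spanning $\tau$. Equivalently, $\Supp M_\tau$ consists of the arrows dual to edges lying in \emph{neither} $\sfP_1$ nor $\sfP_2$. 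So the claim is: for every $i$, at least one of the two edges $\Zig(u_i)$, $\Zag(u_i)$ lies outside $\sfP_1\cup\sfP_2$.

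\textbf{Key steps.} First I would record that $\tau=\tau_{\permII,k}=\sigma_{\permII,k}\cap\sigma_{\permII,k+1}$, and that the line segment $\Delta_{\permII,k}\cap\Delta_{\permII,k+1}$ dual to $\tau$ has its two endpoints being lattice points of $\Delta(a,b)$ lying on the top edge and/or the bottom edge. Hence $\sfP_1$ and $\sfP_2$ are boundary perfect matchings, and by Proposition~\ref{prop_bPM_zzsequence} each of them has the form $\bigcup\Zig(\cdot)\cup\bigcup\Zag(\cdot)\cup(\text{corner intersection})$ determined by the sequences $(z_{k_1},\dots,z_{k_a})$ and $(w_{k'_1},\dots,w_{k'_b})$. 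Second, I would split into the two geometric cases of Figure~\ref{fig_triangulation_notation}: (i) the segment dual to $\tau$ joins a bottom lattice point to a top lattice point, so $\{\sfP_1,\sfP_2\}=\{\sfP^\theta_{(i,0)},\sfP^\theta_{(j,1)}\}$; (ii) the segment is horizontal, joining two consecutive bottom lattice points (or two top ones), so $\{\sfP_1,\sfP_2\}=\{\sfP^\theta_{(i,0)},\sfP^\theta_{(i+1,0)}\}$. Third, for a fixed zigzag path $u_i$ I would use Proposition~\ref{char_bound} / \eqref{eq_stable_boundaryPM}: for a bottom perfect matching $\sfP^\theta_{(i,0)}$, each $z_k$ contributes exactly one of $\Zig(z_k)$, $\Zag(z_k)$, and the $w$-type zigzags contribute their $\Zig$ to $\sfP_{(0,0)}$ only if the corner-intersection part $\sfP_{(0,0)}\cap\sfP_{(a,0)}$ contains those edges — but by \eqref{allzigzag_in_cornerPM} and Lemma~\ref{lem_zigzag_nointersection}(1), $z$-edges and $w$-edges are disjoint, so a bottom perfect matching contains \emph{none} of the $w_k$-edges. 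The upshot: if $u_i=w_{k'}$ for some $k'$, then $w_{k'}$'s two edges lie entirely outside every bottom perfect matching, and similarly the intersection-part clause of Proposition~\ref{char_bound} shows that in the top perfect matching $\sfP^\theta_{(j,1)}$ exactly one of $\Zig(w_{k'}),\Zag(w_{k'})$ appears; so at least one edge of $w_{k'}$ avoids $\sfP_1\cup\sfP_2$. The symmetric statement holds if $u_i=z_k$.

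\textbf{Finishing.} In case (ii), both $\sfP_1$ and $\sfP_2$ are bottom perfect matchings, so every $w$-type zigzag has both edges in the support automatically; for a $z$-type zigzag $z_k$, Lemma~\ref{lem_condition_stable_boundaryPM} (monotonicity: once a zigzag switches from $\Zig$ to $\Zag$ it stays $\Zag$) forces that $\sfP^\theta_{(i,0)}$ and $\sfP^\theta_{(i+1,0)}$ intersect $z_k$ in the same edge \emph{unless} $z_k=z_{k_{i+1}}$ is exactly the zigzag assigned to the segment between the two lattice points, in which case $\sfP_1\cap z_k=\Zig(z_k)$ and $\sfP_2\cap z_k=\Zag(z_k)$ cover both edges — but that is the single exceptional zigzag, and its two edges are the ones that \emph{do} get contracted; for that $z_k$ I need a separate argument. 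Here is where the main obstacle lies: for the zigzag $u_i$ that is precisely assigned to $\Delta_{\permII,k}\cap\Delta_{\permII,k+1}$, the naive count gives $\sfP_1\cup\sfP_2\supseteq\Zig(u_i)\cup\Zag(u_i)$, so I must instead argue that in this situation one of these two edges is in fact \emph{shared} by $\sfP_1$ and $\sfP_2$, or rework using the fundamental-hexagon picture of Subsection~\ref{subsec_jigsaw}: by Lemma~\ref{lem_property_interior} the edges of $\sfP_1\cup\sfP_2$ contained in the strict interior of $\Hex(\sigma_+)$ (or $\Hex(\sigma_-)$) vanish, and $\Supp M_\tau$ is the set of arrows dual to $\Hex(\sigma_+)$-interior edges, so I would instead show directly that each $u_i$ passes through the interior of $\Hex(\sigma_+)$ — combining Lemma~\ref{lem_zigzag_containPM}, which locates $\fkc_\pm$ on specific $\Zig$/$\Zag$ edges, with the fact (Proposition~\ref{prop_hex_property}) that the boundary of $\Hex$ consists of edges in a single $\sfP_i$ or in $\sfP_{i-1}\cap\sfP_{i+1}$. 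I expect the bookkeeping of which $\Zig$/$\Zag$ edge of the "active" zigzag lies on $\partial\Hex$ versus in its interior to be the delicate point, and I would handle it by the same case analysis (Case 1--4) already used in the proof of Theorem~\ref{thm_main_wall}, which tracks exactly these incidences.
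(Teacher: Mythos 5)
Your case (i) is essentially the paper's proof: the two rays of $\tau$ give one bottom and one top $\theta$-stable perfect matching, each $z$-type zigzag meets the bottom one in exactly one of its two edges and misses the top one entirely, and symmetrically for the $w$-type zigzags. But your case (ii) --- and with it the ``main obstacle'' you identify and leave unresolved --- is vacuous. By Setting~\ref{setting_stablerep}, $\tau_{\permII,k}$ is the cone over $\Delta_{\permII,k}\cap\Delta_{\permII,k+1}$, where $\Delta_{\permII,k}$ and $\Delta_{\permII,k+1}$ are consecutive elementary triangles in the order in which the horizontal line $L$ at height $\tfrac{1}{2}$ traverses them; their common side is the segment through which $L$ exits one triangle and enters the next, so it always joins a lattice point $(i,0)$ on the lower base to a lattice point $(j,1)$ on the upper base. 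A horizontal primitive segment of the boundary of $\Delta(a,b)$ is a side of only one elementary triangle (the trapezoid has height one and no lattice points off the two horizontal lines), so it never arises as $\Delta_{\permII,k}\cap\Delta_{\permII,k+1}$. Hence $\{\sfP_1,\sfP_2\}=\{\sfP^\theta_{(i,0)},\sfP^\theta_{(j,1)}\}$ always, the scenario in which both edges of the ``active'' zigzag land in $\sfP_1\cup\sfP_2$ never occurs, and no detour through fundamental hexagons or the Case 1--4 analysis of Theorem~\ref{thm_main_wall} is needed. As written, however, your argument is incomplete precisely because you defer this nonexistent case to unspecified further bookkeeping instead of observing that it cannot happen.

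A smaller point in case (i): the inference ``$z$-edges and $w$-edges are disjoint, so a bottom perfect matching contains none of the $w_k$-edges'' is a non sequitur, because $\sfP^\theta_{(i,0)}$ also contains the set $\sfP_{(0,0)}\cap\sfP_{(a,0)}$, which is not a priori free of $w$-edges. What you actually need is that a $\theta$-stable bottom perfect matching and a $\theta$-stable top perfect matching are disjoint; this is exactly the claim established in the proof of Lemma~\ref{lem_property_c} (a common edge would lie in all four corner perfect matchings and hence on no zigzag path, contradicting consistency). Citing that closes the gap: each $w$-edge lies in one of $\sfP_{(0,1)},\sfP_{(b,1)}$ and therefore in no bottom perfect matching, and dually each $z$-edge lies in no top perfect matching, so exactly one edge of each $u_i$ appears in $\sfP^\theta_{(i,0)}\cup\sfP^\theta_{(j,1)}$ and the other gives an arrow in $\Supp M_\tau$.
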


\begin{proof}
By Proposition~\ref{prop_stablerep_cosupport}, the cosupport of $M_\tau$ consists of the arrows dual to $\sfP^\theta_{(i,0)}\cup\sfP^\theta_{(j,1)}$ 
for some $i=0, \dots, a$ and $j=0, \dots, b$. 
Thus, the assertion follows from the description of $\theta$-stable perfect matchings as in \eqref{allzigzag_in_cornerPM}, \eqref{eq_bPM_from_zz1}, and 
\eqref{eq_bPM_from_zz2}. 
\end{proof}

We then consider a subset $V$ of the arrow set $Q_1$ such that $V$ contains either $\aZig(u_i)$ or $\aZag(u_i)$. 
In view of Lemma~\ref{lem_zigzag_in_support}, for a zigzag path $u_i$ $(i=1, \dots, n)$ we define 
\[
\BS_{u_i}(V)=
\begin{cases}
\big(V{\setminus}\{\aZig(u_i)\}\big) \cup \{\aZag(u_i)\}&\text{(if $\aZig(u_i)\in V$),} \vspace{8pt} \\
\big(V{\setminus}\{\aZag(u_i)\}\big) \cup \{\aZig(u_i)\}&\text{(if $\aZag(u_i)\in V$).}
\end{cases}
\]
Note that this can be considered as a variant of the zigzag switching, thus we use the same notation. 

\begin{theorem}
\label{thm_variation_support}
Let the notation be as in {\rm Setting~\ref{setting_stablerep}}. 
For $k=1, \dots, n-1$, let $\ell_k$ be an exceptional curves in $\calM_{C_\permII}$ which is the torus orbit $\calO_{\tau}$ 
for the two dimensional cone $\tau=\tau_{\permII, k}\in\Sigma_\permII(2)$. 
We consider the chamber $C_{\permII s_k}$ separated from $C_\permII$ by the wall $W_k$ determined by $\sum_{v\in\calR_k}\theta_v=0$. 
Then, for the cone $\tau^\prime\coloneqq\tau_{\permII s_k, k}\in\Sigma_{\permII s_k}(2)$, we see that 
\[
\Supp M_{\tau^\prime}=\BS_{u_{\permII(k+1)}}\BS_{u_{\permII(k)}}(\Supp M_\tau).
\]
\end{theorem}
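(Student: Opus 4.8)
The strategy is to reduce the statement on supports of $\theta$-stable representations to the already-established statement on $\theta$-stable perfect matchings, namely Proposition~\ref{variation_stablePM2} (and Proposition~\ref{variation_stablePM} in the trivial case). First I would recall that $\ell_k=\calO_\tau$ is an exceptional curve, so $\tau=\tau_{\permII,k}=\sigma_{\permII,k}\cap\sigma_{\permII,k+1}$ is the two-dimensional cone corresponding to the line segment $\Delta_{\permII,k}\cap\Delta_{\permII,k+1}$, and by Proposition~\ref{prop_stablerep_cosupport} the cosupport of $M_\tau$ is the set of arrows dual to $\sfP^\theta_{(i,0)}\cup\sfP^\theta_{(j,1)}$ where $(i,0)$ and $(j,1)$ (or the appropriate pair of boundary lattice points) are the endpoints of that segment. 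Hence $\Supp M_\tau = Q_1\setminus\{a\in Q_1 \mid a \text{ dual to an edge in }\sfP^\theta_{(i,0)}\cup\sfP^\theta_{(j,1)}\}$, and likewise $\Supp M_{\tau^\prime}$ is described by the two $\theta^\prime$-stable perfect matchings attached to the same pair of lattice points in $\Delta_{\permII s_k}$. The point is that the two endpoints of the segment $\tau$ are \emph{unchanged} under the wall-crossing $C_\permII\rightsquigarrow C_{\permII s_k}$: in the type I case $\Delta_\permII$ changes only by a diagonal flip not affecting these lattice points, and in the type $\typeIII$ case $\Delta_\permII=\Delta_{\permII s_k}$, so the segment and its endpoints are literally the same.

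The core of the argument is then a case split mirroring the proof of Theorem~\ref{thm_main_wall}. In the type I case ($[u_{\permII(k)}]=-[u_{\permII(k+1)}]$) I would invoke Proposition~\ref{variation_stablePM}: every $\theta$-stable perfect matching coincides with the corresponding $\theta^\prime$-stable one, so $\Supp M_\tau=\Supp M_{\tau^\prime}$. On the other hand, in this case $\BS_{u_{\permII(k)}}$ and $\BS_{u_{\permII(k+1)}}$ act on complementary lattice-point data, and one checks directly from Lemma~\ref{lem_zigzag_nointersection}(3) and the description \eqref{allzigzag_in_cornerPM}, \eqref{eq_bPM_from_zz1}, \eqref{eq_bPM_from_zz2} that $\BS_{u_{\permII(k+1)}}\BS_{u_{\permII(k)}}$ fixes $\Supp M_\tau$: the zigs/zags being swapped do not both lie in (or both lie outside) the relevant pair of perfect matchings, so the two switchings cancel. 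This gives the identity in the type I case.

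In the type $\typeIII$ case ($[u_{\permII(k)}]=[u_{\permII(k+1)}]$) I would use Proposition~\ref{variation_stablePM2}: the only $\theta$-stable perfect matching that changes is $\sfP^\theta_m$ at the foot $m$ of the bisector, and $\sfP^{\theta^\prime}_m=\BS_{u_{\permII(k+1)}}\BS_{u_{\permII(k)}}(\sfP^\theta_m)$. The key observation is that $m$ is exactly one of the two endpoints of the segment $\tau$ (indeed $\tau$ corresponds to the common edge of $\Delta_{\permII,k}$ and $\Delta_{\permII,k+1}$, one of whose endpoints is $m$), while the other endpoint is a corner or boundary point whose perfect matching is unchanged. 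Therefore the cosupport of $M_{\tau^\prime}$ differs from that of $M_\tau$ precisely by replacing (the arrows dual to edges of) $\sfP^\theta_m$ with those of $\BS_{u_{\permII(k+1)}}\BS_{u_{\permII(k)}}(\sfP^\theta_m)$. By the definition of the zigzag switching $\BS_{u_i}$ on perfect matchings (Definition~\ref{def_switching}), this amounts to swapping, for $i\in\{\permII(k),\permII(k+1)\}$, the edge $\Zig(u_i)$ with $\Zag(u_i)$ inside $\sfP^\theta_m$; dualizing, it swaps $\aZig(u_i)$ with $\aZag(u_i)$ in the cosupport, hence (using Lemma~\ref{lem_zigzag_in_support}, which guarantees exactly one of $\aZig(u_i),\aZag(u_i)$ is in $\Supp M_\tau$) swaps $\aZig(u_i)\leftrightarrow\aZag(u_i)$ in $\Supp M_\tau$ as well. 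This is precisely the operation $\BS_{u_{\permII(k+1)}}\BS_{u_{\permII(k)}}$ on the arrow-set side, so $\Supp M_{\tau^\prime}=\BS_{u_{\permII(k+1)}}\BS_{u_{\permII(k)}}(\Supp M_\tau)$.

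\textbf{Main obstacle.} The delicate point I expect is bookkeeping the passage between the perfect-matching picture and the arrow picture cleanly: one must be sure that the edges of $\sfP^\theta_m$ whose membership changes under $\BS$ are exactly the four edges $\Zig(u_{\permII(k)}),\Zag(u_{\permII(k)}),\Zig(u_{\permII(k+1)}),\Zag(u_{\permII(k+1)})$ (each a single edge by Lemma~\ref{lem_zigzag_nointersection}(3)), that no other edge of the cosupport is affected, and that the two zigzag paths $u_{\permII(k)},u_{\permII(k+1)}$ being switched are genuinely the ones indexing the bisected triangle — this is where Proposition~\ref{prop_chamber_correspond_zigzag} and the labelling \eqref{eq_label_z_w2} must be used to match $u_{\permII(k)},u_{\permII(k+1)}$ with the zigzag paths assigned to the two line segments of the large triangle, exactly as in the proof of Theorem~\ref{thm_main_wall}. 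Once that identification is pinned down, the rest is a direct comparison of the two descriptions of $\Supp M_{\tau^\prime}$ and needs no further geometry.
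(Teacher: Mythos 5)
Your type $\typeIII$ argument is essentially the paper's: the triangulation is unchanged, $\tau$ and $\tau'$ correspond to the same segment, only the perfect matching at the foot $m$ of the bisector changes, and by Proposition~\ref{variation_stablePM2} it changes exactly by the double zigzag switching, which dualizes to the stated operation on supports. That half is fine (modulo the routine check, via the claim in the proof of Lemma~\ref{lem_property_c}, that the swapped edges do not also lie in the other perfect matching of the cosupport).

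The type I case, however, contains a genuine error. You assert that the endpoints of the segment corresponding to $\tau$ are unchanged because ``the diagonal flip does not affect these lattice points.'' But the flip is precisely the replacement of that diagonal: $\tau=\tau_{\permII,k}$ corresponds to one diagonal of the parallelogram $\Delta_{\permII,k}\cup\Delta_{\permII,k+1}$ (say the one joining $(j-1,1)$ and $(i,0)$), while $\tau'=\tau_{\permII s_k,k}$ corresponds to the \emph{other} diagonal, joining $(i-1,0)$ and $(j,1)$. Hence, although each individual stable perfect matching is preserved by Proposition~\ref{variation_stablePM}, the \emph{pair} of perfect matchings whose union gives the cosupport changes from $\sfP^\theta_{(i,0)}\cup\sfP^\theta_{(j-1,1)}$ to $\sfP^{\theta'}_{(i-1,0)}\cup\sfP^{\theta'}_{(j,1)}$, so your conclusion $\Supp M_\tau=\Supp M_{\tau'}$ is false. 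Your companion claim that $\BS_{u_{\permII(k+1)}}\BS_{u_{\permII(k)}}$ ``fixes'' $\Supp M_\tau$ is also impossible: by definition each $\BS_{u_i}$ replaces whichever of $\aZig(u_i),\aZag(u_i)$ lies in the set by the other, and since $u_{\permII(k)}\neq u_{\permII(k+1)}$ share no edges (Lemma~\ref{lem_zigzag_nointersection}(1)), the composite is never the identity. The correct argument (as in the paper) uses Proposition~\ref{prop_bPM_zzsequence} to see that $\aZig(u_{\permII(k)}),\aZag(u_{\permII(k+1)})\in\Supp M_\tau$ while $\aZag(u_{\permII(k)}),\aZig(u_{\permII(k+1)})\in\Supp M_{\tau'}$, with all other arrows agreeing; i.e.\ both sides perform the same non-trivial swap, rather than both being trivial.
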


\begin{proof}
Let $\theta\in C_\permII$ and $\theta^\prime\in C_{\permII s_k}$. 

First, we assume that the wall $W_k$ is of type I, in which $\ell_k$ is floppable. 
We consider the parallelogram $\Delta_{\permII,k}\cup\Delta_{\permII,k+1}$ in the triangulation $\Delta_{\permII}$. 
As in the proof of Theorem~\ref{thm_main_wall}, we suppose that the vertices of the parallelogram are $(i-1, 0)$, $(i,0)$, $(j-1,1)$ and $(j,1)$. 
Also, we suppose that the diagonal connecting $(j-1, 1)$ and $(i,0)$ corresponds to $\tau$. 
Then the cone $\tau^\prime$ corresponds to the diagonal connecting $(i-1, 0)$ and $(j,1)$ in the triangulation $\Delta_{\permII s_k}$. 
By Proposition~\ref{variation_stablePM}, we have 
\[
\sfP^\theta_{(i-1,0)}=\sfP^{\theta^\prime}_{(i-1,0)}, \quad \sfP^\theta_{(j-1,1)}=\sfP^{\theta^\prime}_{(j-1,1)}, \quad 
\sfP^\theta_{(i,0)}=\sfP^{\theta^\prime}_{(i,0)}, \quad \sfP^\theta_{(j,1)}=\sfP^{\theta^\prime}_{(j,1)}. 
\]
By Proposition~\ref{prop_bPM_zzsequence}, we see that 
\[\Zag(u_{\permII(k)})\subset \sfP^\theta_{(i,0)}, \quad \Zig(u_{\permII(k+1)})\subset \sfP^\theta_{(j-1,1)}, \quad 
\Zig(u_{\permII(k)})\subset \sfP^{\theta^\prime}_{(i-1,0)}, \quad \Zag(u_{\permII(k+1)})\subset \sfP^{\theta^\prime}_{(j,1)} 
\]
which means that $\aZig(u_{\permII(k)}), \aZag(u_{\permII(k+1)})\in \Supp M_\tau$ and 
$\aZig(u_{\permII(k+1)}), \aZag(u_{\permII(k)})\in \Supp M_{\tau^\prime}$. 
Since $\Supp M_\tau$ and $\Supp M_{\tau^\prime}$ are the same except these arrows, we have the assertion. 

Next, we assume that the wall $W_k$ is of type $\typeIII$, in which case 
we consider the large triangle $\Delta_{\permII,k}\cup\Delta_{\permII,k+1}$ in the triangulation $\Delta_{\permII}$. 
As in the proof of Theorem~\ref{thm_main_wall}, we suppose that the vertices of the large triangle are $(i-1, 0)$, $(i,0)$, $(i+1,0)$ and $(j,1)$. 
Then, the triangulations $\Delta_\permII$ and $\Delta_{\permII s_k}$ are the same, and 
the cones $\tau$ and $\tau^\prime$ correspond to the line segment connecting $(i, 0)$ and $(j,1)$. 
By Proposition~\ref{variation_stablePM2}, we have 
\[
\sfP^{\theta^\prime}_{(i,0)}=\BS_{u_{\permII(k+1)}}\BS_{u_{\permII(k)}}(\sfP^\theta_{(i,0)}), 
\]
and the other stable perfect matchings are preserved. Thus we have the assertion. 
\end{proof}

By Theorem~\ref{thm_variation_support}, we see that the support quivers $Q^\tau$ and $Q^{\tau^\prime}$ are 
transformed into each other by ``reflections" of quivers at vertices contained in $\calR_k$. 
Precisely, let $(Q^\tau, \calJ_{Q^\tau})$ be the quiver with relations such that $(Q^\tau)_0=Q_0$, $(Q^\tau)_1=\Supp M_\tau$, 
and $\calJ_{Q^\tau}$ is the restriction of the relations $\calJ_Q$ on $Q^\tau$. 
Note that $\calJ_{Q^\tau}$ can be described as 
\[
\calJ_{Q^\tau}=\big\{\gamma_a^+-\gamma_a^- \mid a\in \sfP^\theta_{(i,0)}\cap\sfP^\theta_{(j,1)} \big\}, 
\]
but the claim shown in the proof of Lemma~\ref{lem_property_c} implies that $\calJ_{Q^\tau}=\varnothing$. 
Thus, we consider the quiver $Q^\tau$ with no relations. 
The quiver $Q^{\tau^\prime}$ is defined in a similar way and Theorem~\ref{thm_variation_support} shows that 
$Q^{\tau^\prime}$ coincides with the quiver obtained by reversing all arrows of $Q^\tau$ incident to a vertex in $\calR_k$. 
Note that if $\hd(a), \tl(a)\in\calR_k$ for an arrow $a\in (Q^\tau)_1$, then we reverse this arrow twice, and hence the orientation is restored. 

\section{\bf Notes on the wall-and-chamber structure for type $cD_4$}
\label{sec_D4}

In this section, we focus on the toric compound Du Val singularity of type $cD_4$ 
\[
(cD_4)\,: \, R\coloneqq \CC[x,y,z,w]/(xyz-w^2)
\]
which can be realized as the toric ring 
whose toric diagram is the triangle shown in the right of Figure~\ref{fig_hypersurf_lattice}, see Example~\ref{ex_toric2}. 
Note that $R$ is also isomorphic to the invariant subring $S^G$ of $S\coloneqq \CC[X, Y, Z]$ under the action of 
$G\coloneqq \ZZ/2\ZZ\times \ZZ/2\ZZ\cong \langle\mathrm{diag}(-1,-1,1)\rangle\times\langle\mathrm{diag}(1,-1,-1)\rangle\subset\SL(3,\CC)$. 
Since $R$ is a three-dimensional Gorenstein toric ring, we can apply results in Section~\ref{sec_pre_dimer}--\ref{section_boundaryPM} to $R$. 
First, since the toric diagram $\Delta_R$ is a triangle, a consistent dimer model $\Gamma$ satisfying $\Delta_R=\Delta_\Gamma$ 
is a hexagonal dimer model (i.e., any face of $\Gamma$ is a hexagon and any node of $\Gamma$ is 3-valent, 
which means that $\Gamma$ is homotopy-equivalent to a dimer model whose faces are all regular hexagons), see \cite{IN_steady,UY_McKay}. 
Precisely, $\Gamma$ is described as in Figure~\ref{fig_dimer_cD4}. 

\newcommand{\dimerDtype}{
\foreach \n/\a in {1/0, 2/60, 3/120, 4/180, 5/240, 6/300} 
{\coordinate (Ha1\n) at (\a:\boundaryrad); \path (Ha1\n) ++(0,{2*cos(30)}) coordinate (Ha2\n); \path (Ha2\n) ++(0,{2*cos(30)}) coordinate (Ha3\n); \path (Ha3\n) ++(0,{2*cos(30)}) coordinate (Ha4\n);} 
\foreach \m in {1,2,3,4} {\draw[line width=\edgewidth] (Ha\m1)--(Ha\m2)--(Ha\m3)--(Ha\m4)--(Ha\m5)--(Ha\m6)--(Ha\m1);}
\foreach \m/\n in {1/1,1/3, 1/5, 2/1, 2/3, 2/5, 3/1, 3/3, 3/5, 4/1, 4/3, 4/5} {\filldraw [fill=black, line width=\nodewidthb] (Ha\m\n) circle [radius=\noderad] ;}; 
\foreach \m/\n in {1/2,1/4, 1/6, 2/2, 2/4, 2/6, 3/2, 3/4, 3/6, 4/2, 4/4, 4/6} {\filldraw [fill=white, line width=\nodewidthw] (Ha\m\n) circle [radius=\noderad] ;}; 
\foreach \x/\y in {0/{2*cos(30)}, 0/{6*cos(30)}} {\node[blue] at (\x,{\y}) {\large$0$};} 
\foreach \x/\y in {0/0, 0/{4*cos(30)}} {\node[blue] at (\x,{\y}) {\large$1$};} 

\foreach \n/\a in {1/0, 2/60, 3/120, 4/180, 5/240, 6/300} 
{\path (Ha1\n) ++(1.5,{1*cos(30)}) coordinate (Hb1\n); \path (Hb1\n) ++(0,{2*cos(30)}) coordinate (Hb2\n); \path (Hb2\n) ++(0,{2*cos(30)}) coordinate (Hb3\n); } 
\foreach \m in {1,2,3} {\draw[line width=\edgewidth] (Hb\m1)--(Hb\m2)--(Hb\m3)--(Hb\m4)--(Hb\m5)--(Hb\m6)--(Hb\m1);}
\foreach \m/\n in {1/1,1/3, 1/5, 2/1, 2/3, 2/5, 3/1, 3/3, 3/5} {\filldraw [fill=black, line width=\nodewidthb] (Hb\m\n) circle [radius=\noderad] ;}; 
\foreach \m/\n in {1/2,1/4, 1/6, 2/2, 2/4, 2/6, 3/2, 3/4, 3/6} {\filldraw [fill=white, line width=\nodewidthw] (Hb\m\n) circle [radius=\noderad] ;}; 
\foreach \x/\y in {1.5/{3*cos(30)}} {\node[blue] at (\x,{\y}) {\large$2$};} 
\foreach \x/\y in {1.5/{1*cos(30)}, 1.5/{5*cos(30)}} {\node[blue] at (\x,{\y}) {\large$3$};} 

\foreach \n/\a in {1/0, 2/60, 3/120, 4/180, 5/240, 6/300} 
{\path (Hb1\n) ++(1.5,{-1*cos(30)}) coordinate (Hc1\n);  \path (Hc1\n) ++(0,{2*cos(30)}) coordinate (Hc2\n); \path (Hc2\n) ++(0,{2*cos(30)}) coordinate (Hc3\n); \path (Hc3\n) ++(0,{2*cos(30)}) coordinate (Hc4\n);} 
\foreach \m in {1,2,3,4} {\draw[line width=\edgewidth] (Hc\m1)--(Hc\m2)--(Hc\m3)--(Hc\m4)--(Hc\m5)--(Hc\m6)--(Hc\m1);}
\foreach \m/\n in {1/1,1/3, 1/5, 2/1, 2/3, 2/5, 3/1, 3/3, 3/5, 4/1, 4/3, 4/5} {\filldraw [fill=black, line width=\nodewidthb] (Hc\m\n) circle [radius=\noderad] ;}; 
\foreach \m/\n in {1/2,1/4, 1/6, 2/2, 2/4, 2/6, 3/2, 3/4, 3/6, 4/2, 4/4, 4/6} {\filldraw [fill=white, line width=\nodewidthw] (Hc\m\n) circle [radius=\noderad] ;}; 
\foreach \x/\y in {3/{2*cos(30)}, 3/{6*cos(30)}} {\node[blue] at (\x,{\y}) {\large$1$};} 
\foreach \x/\y in {3/0, 3/{4*cos(30)}} {\node[blue] at (\x,{\y}) {\large$0$};} 
}

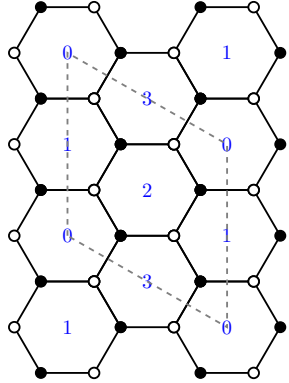
\begin{figure}[H]
\begin{center}
\scalebox{0.7}{
\begin{tikzpicture}
\newcommand{\edgewidth}{0.035cm} 
\newcommand{\boundaryrad}{1cm} 
\newcommand{\noderad}{0.1cm} 
\newcommand{\nodewidthw}{0.035cm} 
\newcommand{\nodewidthb}{0.025cm} 

\dimerDtype
\draw[gray, line width=\edgewidth, dashed] (0,{2*cos(30)})--(0,{6*cos(30)})--(3,{4*cos(30)})--(3,0)--(0,{2*cos(30)}); 
\end{tikzpicture}}
\end{center}
\caption{A dimer model $\Gamma$ associated to $\Delta_R$, where the dotted parallelogram stands for a fundamental domain of $\TT$.}
\label{fig_dimer_cD4}
\end{figure}

Let $\Delta=\Delta_\Gamma$. 
There are four triangulations of the triangle $\Delta$ which are regular by the argument in \cite[Section~3]{dais2001all}. 
Thus, any triangulation gives rise to a projective crepant resolution of $\Spec R$. 
In particular, the flop graph of projective crepant resolutions takes the form of Figure~\ref{fig_flop_cD4}. 
By Theorem~\ref{thm_crepant_dimer} (see also \cite[Theorem~1.1]{CrawIshii}), any projective crepant resolution of $\Spec R$ is obtained as 
the moduli space $\calM_C$ for some chamber $C$ in $\Theta(Q_\Gamma)_\RR$. 
Note that it is known that the quiver $Q_\Gamma$ coincides with the McKay quiver of $G=\ZZ/2\ZZ\times \ZZ/2\ZZ$, and for the chamber 
$$C_+\coloneqq\{\theta\in\Theta(Q_\Gamma)_\RR \mid \theta_v>0 \hspace{5pt} \text{for any $v\neq0$}\},$$ 
the moduli space $\calM_{C_+}$ is isomorphic to {\rm $G$-Hilb\,$\CC^3$}. 
Whereas, the \emph{skew group algebra} $S*G$ is isomorphic to the Jacobian algebra $A_{Q_\Gamma}$, thus it is an NCCR of $R$. 

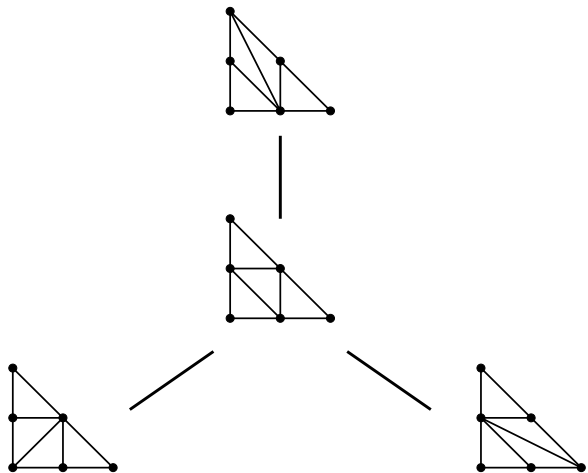
\begin{figure}[H]
\begin{center}
{\scalebox{1.1}{
\begin{tikzpicture}
\newcommand{\edgewidth}{0.035cm} 
\newcommand{\noderad}{0.08} 
\newcommand{\cDtriangle}{
\foreach \n/\a/\b in {00/0/0,10/1/0, 20/2/0, 01/0/1, 02/0/2, 11/1/1} {
\coordinate (V\n) at (\a,\b); 
};
\foreach \s/\t in {00/10,10/20,00/01,01/02,02/11,11/20} {
\draw [line width=\edgewidth] (V\s)--(V\t) ;}; 
\foreach \n in {00,10,20,01,02,11}{
\draw [fill=black] (V\n) circle [radius=\noderad] ; };
}

\node(CR1) at (0,0)
{\scalebox{0.6}{
\begin{tikzpicture}
\cDtriangle
\foreach \s/\t in {10/11,01/11,10/01} {
\draw [line width=\edgewidth] (V\s)--(V\t) ;}; 
\end{tikzpicture}
}};

\node(CR2) at (0,2.5)
{\scalebox{0.6}{
\begin{tikzpicture}
\cDtriangle
\foreach \s/\t in {10/11,10/02,10/01} {
\draw [line width=\edgewidth] (V\s)--(V\t) ;}; 
\end{tikzpicture}
}};

\node(CR3) at (3,-1.8)
{\scalebox{0.6}{
\begin{tikzpicture}
\cDtriangle
\foreach \s/\t in {20/01,01/11,10/01} {
\draw [line width=\edgewidth] (V\s)--(V\t) ;}; 
\end{tikzpicture}
}};

\node(CR4) at (-2.6,-1.8)
{\scalebox{0.6}{
\begin{tikzpicture}
\cDtriangle
\foreach \s/\t in {10/11,01/11,00/11} {
\draw [line width=\edgewidth] (V\s)--(V\t) ;}; 
\end{tikzpicture}
}};

\draw [line width=\edgewidth] (0,0.6)--(0,1.6); \draw [line width=\edgewidth] (0.8,-1)--(1.8,-1.7);  \draw [line width=\edgewidth] (-0.8,-1)--(-1.8,-1.7); 
\end{tikzpicture}
}}
\end{center}
\caption{The flop graph of projective crepant resolutions of the toric $cD_4$ singularity $R$, where each projective crepant resolution is denoted by the associated triangulation.}
\label{fig_flop_cD4}
\end{figure}

The wall-and-chamber structure of $\Theta(Q_\Gamma)_\RR$ has been studied in 
\cite[Section~6]{BCS_GIT}, \cite[Section~5]{craw_thesis}, \cite{MT_stability}, \cite[Section~5]{BM_crepant}, and \cite[Remark~7.5, Example~7.6]{Wem_MMP}. 
In what follows, we revisit the wall-and-chamber structure of $\Theta(Q_\Gamma)_\RR$ using the arguments similar to Theorem~\ref{thm_main_wall}. 

We fix the lower left vertex of $\Delta=\Delta_\Gamma$ as the origin. 
The dimer model $\Gamma$ has six zigzag paths $x_1, x_2, y_1, y_2, z_1, z_2$ as shown in Figure~\ref{fig_zigzag_cD4}. 
The corner perfect matchings $\sfP_{(0,0)}, \sfP_{(2,0)}, \sfP_{(0,2)}$ of $\Gamma$ corresponding to the vertices of $\Delta$, 
which are determined uniquely, can be obtained from these zigzag paths as follows. 

\begin{align*}
\sfP_{(0,0)}&=\Zig(x_1)\cup\Zig(x_2)=\Zag(z_1)\cup\Zag(z_2) \\
\sfP_{(2,0)}&=\Zig(y_1)\cup\Zig(y_2)=\Zag(x_1)\cup\Zag(x_2) \\
\sfP_{(0,2)}&=\Zig(z_1)\cup\Zig(z_2)=\Zag(y_1)\cup\Zag(y_2) \\
\end{align*}


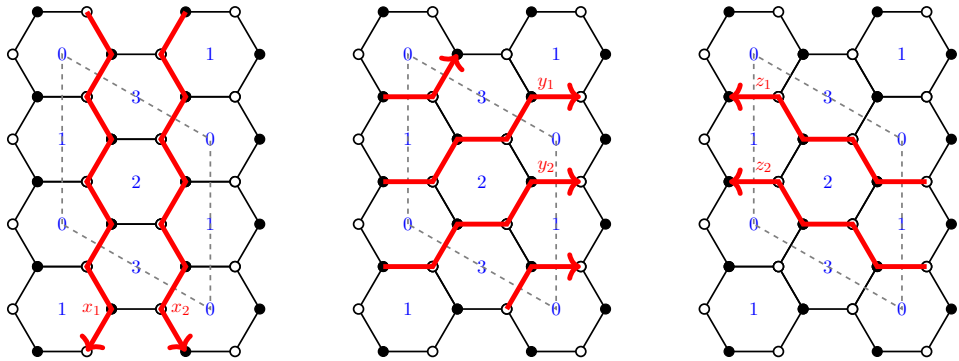
\begin{figure}[H]
\begin{center}
{\scalebox{0.65}{
\begin{tikzpicture}
\newcommand{\edgewidth}{0.035cm} 
\newcommand{\boundaryrad}{1cm} 
\newcommand{\noderad}{0.1cm} 
\newcommand{\nodewidthw}{0.035cm} 
\newcommand{\nodewidthb}{0.025cm} 
\newcommand{\zigzagwidth}{0.1cm} 

\node at (0,0){
\begin{tikzpicture}
\dimerDtype
\draw[gray, line width=\edgewidth, dashed] (0,{2*cos(30)})--(0,{6*cos(30)})--(3,{4*cos(30)})--(3,0)--(0,{2*cos(30)}); 
\draw[red, line width=\zigzagwidth, ->] (Ha42)--(Ha41)--(Ha32)--(Ha31)--(Ha22)--(Ha21)--(Ha12)--(Ha11)--(Ha16); 
\draw[red, line width=\zigzagwidth, ->] (Hc43)--(Hc44)--(Hc33)--(Hc34)--(Hc23)--(Hc24)--(Hc13)--(Hc14)--(Hc15); 
\node[red] at (0.6,0) {\large $x_1$}; \node[red] at (2.4,0) {\large $x_2$}; 
\end{tikzpicture}}; 

\node at (7,0){
\begin{tikzpicture}
\dimerDtype
\draw[gray, line width=\edgewidth, dashed] (0,{2*cos(30)})--(0,{6*cos(30)})--(3,{4*cos(30)})--(3,0)--(0,{2*cos(30)}); 
\draw[red, line width=\zigzagwidth, ->] (Ha13)--(Ha12)--(Hb13)--(Hb12)--(Hc23)--(Hc22); 
\draw[red, line width=\zigzagwidth, ->] (Ha23)--(Ha22)--(Hb23)--(Hb22)--(Hc33)--(Hc32); 
\draw[red, line width=\zigzagwidth, ->] (Ha33)--(Ha32)--(Hb33); 
\draw[red, line width=\zigzagwidth, ->] (Hc14)--(Hc13)--(Hc12); 
\node[red] at (2.8,2.9) {\large $y_2$}; \node[red] at (2.8,4.6) {\large $y_1$}; 
\end{tikzpicture}}; 

\node at (14,0){
\begin{tikzpicture}
\dimerDtype
\draw[gray, line width=\edgewidth, dashed] (0,{2*cos(30)})--(0,{6*cos(30)})--(3,{4*cos(30)})--(3,0)--(0,{2*cos(30)}); 
\draw[red, line width=\zigzagwidth, ->] (Hc22)--(Hc23)--(Hb22)--(Hb23)--(Ha32)--(Ha33); 
\draw[red, line width=\zigzagwidth, ->] (Hc12)--(Hc13)--(Hb12)--(Hb13)--(Ha22)--(Ha23); 
\node[red] at (0.2,2.9) {\large $z_2$}; \node[red] at (0.2,4.6) {\large $z_1$}; 
\end{tikzpicture}}; 
\end{tikzpicture}}}
\end{center}
\caption{The zigzag paths of $\Gamma$}
\label{fig_zigzag_cD4}
\end{figure}

These perfect matchings are $\theta$-stable for any generic parameter $\theta\in\Theta(Q_\Gamma)_\RR$, see Proposition~\ref{prop_cPM_unique}. 
On the other hand, by Proposition~\ref{prop_description_boundaryPM}, for a generic parameter $\theta\in\Theta(Q_\Gamma)_\RR$ 
and $\theta$-stable non-corner boundary perfect matchings 
$\sfP_{(1,0)}^\theta$, $\sfP_{(1,1)}^\theta$, $\sfP_{(0,1)}^\theta$ respectively corresponding to $(1,0), (1,1), (0,1)\in\Delta$, 
there exists a unique sequence $(x_{i_1}, x_{i_2}, y_{j_1}, y_{j_2}, z_{k_1}, z_{k_2})$ of zigzag paths such that 
$\{i_1, i_2\}=\{j_1, j_2\}=\{k_1, k_2\}=\{1,2\}$ and 
\begin{equation}
\label{eq_bPM_cD4}
\sfP_{(1,0)}^\theta=\Zag(x_{i_1})\cup\Zig(x_{i_2}), \quad \sfP_{(1,1)}^\theta=\Zag(y_{j_1})\cup\Zig(y_{j_2}), \quad
\sfP_{(0,1)}^\theta=\Zag(z_{k_1})\cup\Zig(z_{k_2}). 
\end{equation}
Thus we assign the zigzag paths $x_{i_1}, x_{i_2}, y_{j_1}, y_{j_2}, z_{k_1}, z_{k_2}$ to primitive side segments of $\Delta$ as in 
Figure~\ref{fig_zigzag_correspondnece_cD4}. 

\begin{figure}[H]
\begin{center}
{\scalebox{1}{
\begin{tikzpicture}
\newcommand{\edgewidth}{0.035cm} 
\newcommand{\noderad}{0.08} 
\foreach \n/\a/\b in {00/0/0,10/1/0, 20/2/0, 01/0/1, 02/0/2, 11/1/1} {
\coordinate (V\n) at (\a,\b); 
};
\foreach \s/\t in {00/10,10/20,00/01,01/02,02/11,11/20} {
\draw [line width=\edgewidth] (V\s)--(V\t) ;}; 
\foreach \n in {00,10,20,01,02,11}{
\draw [fill=black] (V\n) circle [radius=\noderad] ; };
\node at (0.5,-0.3) {\small$x_{i_1}$}; \node at (1.5,-0.3) {\small$x_{i_2}$}; 
\node at (1.9,0.5) {\small$y_{j_1}$}; \node at (0.9,1.5) {\small$y_{j_2}$}; 
\node at (-0.3,0.5) {\small$z_{k_2}$}; \node at (-0.3,1.5) {\small$z_{k_1}$}; 
\end{tikzpicture}
}}
\end{center}
\caption{The assignment of zigzag paths to primitive side segments of $\Delta$}
\label{fig_zigzag_correspondnece_cD4}
\end{figure}
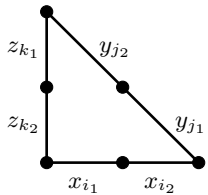

Next, by cutting out an elementary triangle from $\Delta$, we consider three types of trapezoids as shown in Figure~\ref{fig_tree_trapezoids}. 
We denote these trapezoids by $\Delta^x$, $\Delta^y$, and $\Delta^z$, respectively. 
A difference from the case of $cA_{a+b-1}$ is that one of the parallel sides of a trapezoid is contained in the interior of $\Delta$. 
Thus, there is no zigzag path whose slope coincides with the outer normal vector of such a side. 
Nevertheless, the symmetric difference of boundary perfect matchings corresponding to endpoints of the side takes the place of a zigzag path. 

\begin{figure}[H]
\begin{center}
{\scalebox{1}{
\begin{tikzpicture}
\newcommand{\edgewidth}{0.035cm} 
\newcommand{\noderad}{0.08} 
\newcommand{\xnodeshift}{0.1cm} 
\newcommand{\ynodeshift}{0.4cm} 
\newcommand{\trapezoidsetting}{
\foreach \n/\a/\b in {00/0/0,10/1/0, 20/2/0, 01/0/1, 02/0/2, 11/1/1} {
\coordinate (V\n) at (\a,\b); 
};}

\node at (0,-1.5) {$\Delta^x$}; \node at (4,-1.5) {$\Delta^y$}; \node at (8,-1.5) {$\Delta^z$}; 

\node at (0,0)
{\scalebox{0.8}{
\begin{tikzpicture}
\trapezoidsetting
\foreach \s/\t in {00/10,10/20,00/01,01/11,11/20} {
\draw [line width=\edgewidth] (V\s)--(V\t) ;}; 
\foreach \n in {00,10,20,01,11}{
\draw [fill=black] (V\n) circle [radius=\noderad] ; };
\node[xshift=-\xnodeshift,yshift=-\ynodeshift] at (V00) {\small$(0,0)$}; 
\node[xshift=\xnodeshift,yshift=-\ynodeshift] at (V20) {\small$(2,0)$}; 
\node[xshift=-\xnodeshift,yshift=\ynodeshift] at (V01) {\small$(0,1)$}; 
\node[xshift=\xnodeshift,yshift=\ynodeshift] at (V11) {\small$(1,1)$}; 
\end{tikzpicture}
}};

\node at (4,0)
{\scalebox{0.8}{
\begin{tikzpicture}
\trapezoidsetting
\foreach \s/\t in {10/20,10/01, 01/02,02/11,11/20} {
\draw [line width=\edgewidth] (V\s)--(V\t) ;}; 
\foreach \n in {10,20,01,02,11}{
\draw [fill=black] (V\n) circle [radius=\noderad] ; };
\node[xshift=-\xnodeshift,yshift=-\ynodeshift] at (V10) {\small$(1,0)$}; 
\node[xshift=\xnodeshift,yshift=-\ynodeshift] at (V20) {\small$(2,0)$}; 
\node[xshift=-\xnodeshift-0.5cm,yshift=0] at (V01) {\small$(0,1)$}; 
\node[xshift=-\xnodeshift-0.5cm,yshift=0] at (V02) {\small$(0,2)$}; 
\end{tikzpicture}
}};

\node at (8,0)
{\scalebox{0.8}{
\begin{tikzpicture}
\trapezoidsetting
\foreach \s/\t in {00/10,10/11, 00/01,01/02,11/02} {
\draw [line width=\edgewidth] (V\s)--(V\t) ;}; 
\foreach \n in {00,10,01,02,11}{
\draw [fill=black] (V\n) circle [radius=\noderad] ; };
\node[xshift=-\xnodeshift,yshift=-\ynodeshift] at (V00) {\small$(0,0)$}; 
\node[xshift=\xnodeshift,yshift=-\ynodeshift] at (V10) {\small$(1,0)$}; 
\node[xshift=\xnodeshift+0.5cm,yshift=0] at (V11) {\small$(1,1)$}; 
\node[xshift=-\xnodeshift-0.5cm,yshift=0] at (V02) {\small$(0,2)$}; 
\end{tikzpicture}
}};
\end{tikzpicture}
}}
\end{center}
\caption{}
\label{fig_tree_trapezoids}
\end{figure}

For each generic parameter $\theta\in\Theta(Q_\Gamma)_\RR$, the symmetric difference $x^\theta\coloneqq\sfP_{(1,1)}^\theta\ominus\sfP_{(0,1)}^\theta$ 
satisfies $[x^\theta]=-[x_1]=-[x_2]=(0,1)$. 
Similarly, $y^\theta\coloneqq\sfP_{(0,1)}^\theta\ominus\sfP_{(1,0)}^\theta$ and $z^\theta\coloneqq\sfP_{(1,0)}^\theta\ominus\sfP_{(1,1)}^\theta$ 
satisfy $[y^\theta]=-[y_1]=-[y_2]=(-1,-1)$, $[z^\theta]=-[z_1]=-[z_2]=(1,0)$. 
The slopes $\{[x_1], [x_2], [x^\theta]\}$ correspond to outer normal vectors of two parallel lines of the left trapezoid in Figure~\ref{fig_tree_trapezoids}. 
Also,  the slopes $\{[y_1], [y_2], [y^\theta]\}$, $\{[z_1], [z_2], [z^\theta]\}$ respectively correspond to outer normal vectors of two parallel lines of the center and right trapezoids in Figure~\ref{fig_tree_trapezoids}. 
The path $x^\theta$ and $x_i$ $(i=1,2)$ would intersect each other, but we can check that intersections are not transversal. 
Thus, we can define the regions $\calR(x_i, x^\theta)=\calR^+(x_i, x^\theta)$ and $\calR^-(x_i, x^\theta)$ 
in a similar way as in Subsection~\ref{subsec_main_wallcrossing}. The cases $y^\theta$ and $z^\theta$ are similar. 
Then we fix a total order on $\{x_1, x_2, x^\theta\}$ so that 
\[\begin{cases}
x_2<x^\theta<x_1 & \text{if $x^\theta$ contained in $\calR^+(x_1,x_2)$}, \\
x^\theta<x_2<x_1 & \text{if $x^\theta$ contained in $\calR^-(x_1,x_2)$}. 
\end{cases}\]
Also, we fix a total order on $\{y_1, y_2, y^\theta\}$ and $\{z_1, z_2, z^\theta\}$ in a similar way. 

\medskip

For a chamber $C\subset \Theta(Q_\Gamma)_\RR$, we consider the triangulation $\Delta_C$ of $\Delta$ corresponding to $\calM_C$. 
For any $\theta\in C$, we can assign the zigzag paths $x_1,x_2, y_1,y_2, z_1,z_2$ to primitive side segments of $\Delta$ as in Figure~\ref{fig_zigzag_correspondnece_cD4}. 
We also assign the paths $x_3\coloneqq x^\theta$, $y_3\coloneqq y^\theta$, and $z_3\coloneqq z^\theta$ to 
the line segments of $\Delta^x$, $\Delta^y$, and $\Delta^z$ whose outer normal vectors respectively correspond to $[x_3]$, $[y_3]$, and $[z_3]$. 
Then we create a new sequence of some paths in $\{x_1,x_2,x_3,y_1,y_2,y_3,z_1,z_2, z_3\}$ as follows. 
First, the triangulation $\Delta_C$ induces triangulations of at least two trapezoids of $\Delta^x$, $\Delta^y$, $\Delta^z$. 
Precisely, the center triangulation in Figure~\ref{fig_flop_cD4} induces triangulations of all trapezoids, 
and the remaining ones induce triangulations two trapezoids of $\Delta^x$, $\Delta^y$, $\Delta^z$. 
In what follows, when we consider the center triangulation in Figure~\ref{fig_flop_cD4}, we choose two trapezoids from $\Delta^x$, $\Delta^y$, $\Delta^z$ 
and their induced triangulations. 
For example, we assume that $\Delta_C$ induces triangulations of $\Delta^x$ and $\Delta^y$, 
and let $\{\Delta_{C,k}^x\}_{k=1}^3$ (resp. $\{\Delta_{C,k}^y\}_{k=1}^3$) be the set of elementary triangles 
in the triangulation of $\Delta^x$ (resp. $\Delta^y$) induced from $\Delta_C$. 
Note that we fix the index $k$ so that the line from $(0,\frac{1}{2})$ to $(2,\frac{1}{2})$ (resp. from $(\frac{3}{2}, 0)$ to $(0,\frac{3}{2})$) 
passes through $\Delta_{C,k}^x$ (resp. $\Delta_{C,k}^y$) first, then it passes through $\Delta_{C,k+1}^x$ (resp. $\Delta_{C,k+1}^y$) for any $k=1, 2$. 
The assignment of $x_1, x_2, x_3$ (resp. $y_1, y_2, y_3$) to the primitive line segments in the triangulation of $\Delta^x$ (resp. $\Delta^y$) 
determines the assignment of these paths to elementary triangles $\{\Delta_{C,k}^x\}_{k=1}^3$ (resp. $\{\Delta_{C,k}^y\}_{k=1}^3$). 
Then we define the sequence $(u_1, \dots, u_6)$ so that $u_k$ is the path assigned to $\Delta_{C,k}^x$ and $u_{k+3}$ is 
the path assigned to $\Delta_{C,k}^y$ for $k=1,2,3$. 
For the cases where $\Delta_C$ induces triangulations of $\Delta^y$ and $\Delta^z$ or $\Delta^z$ and $\Delta^x$, 
we define the sequence $(u_1, \dots, u_6)$ in a similar way, but when we consider 
the set $\{\Delta_{C,k}^z\}_{k=1}^3$ of elementary triangles in the triangulation of $\Delta^z$ induced from $\Delta_C$, 
we fix the index $k$ so that the line from $(\frac{1}{2}, \frac{3}{2})$ to $(\frac{1}{2}, 0)$ 
passes through $\Delta_{C,k}^z$ first, then it passes through $\Delta_{C,k+1}^z$ for any $k=1, 2$. 
For example, if we consider the triangulation $\Delta_C$ as in the left of Figure~\ref{fig_triangulation_assign_cD4}, 
then this induces triangulations of $\Delta^x$ and $\Delta^y$ (see the center and the right of Figure~\ref{fig_triangulation_assign_cD4}). 
If the paths $x_1,x_2,x_3,y_1,y_2,y_3$ are assigned as in Figure~\ref{fig_triangulation_assign_cD4}, 
then we have $(u_1, \dots, u_6)=(x_2, x_1, x_3, y_3, y_2, y_1)$. 

\begin{figure}[H]
\begin{center}
{\scalebox{1}{
\begin{tikzpicture}
\newcommand{\edgewidth}{0.035cm} 
\newcommand{\noderad}{0.08} 
\newcommand{\xnodeshift}{0.1cm} 
\newcommand{\ynodeshift}{0.4cm} 
\newcommand{\trapezoidsetting}{
\foreach \n/\a/\b in {00/0/0,10/1/0, 20/2/0, 01/0/1, 02/0/2, 11/1/1} {
\coordinate (V\n) at (\a,\b); 
};}


\node at (0,0)
{\scalebox{0.8}{
\begin{tikzpicture}
\foreach \n/\a/\b in {00/0/0,10/1/0, 20/2/0, 01/0/1, 02/0/2, 11/1/1} {
\coordinate (V\n) at (\a,\b); 
};
\foreach \s/\t in {00/10,10/20,00/01,01/02,02/11,11/20} {
\draw [line width=\edgewidth] (V\s)--(V\t) ;}; 
\foreach \n in {00,10,20,01,02,11}{
\draw [fill=black] (V\n) circle [radius=\noderad] ; };
\foreach \s/\t in {20/01,01/11,10/01} {
\draw [line width=\edgewidth] (V\s)--(V\t) ;}; 
\node at (0.5,-0.3) {\Large$x_2$}; \node at (1.5,-0.3) {\Large$x_1$}; 
\node at (1.8,0.7) {\Large$y_2$}; \node at (0.8,1.7) {\Large$y_1$}; 
\node at (-0.3,1.5) {\Large$z_2$}; \node at (-0.3,0.5) {\Large$z_1$}; 
\end{tikzpicture}
}};

\node at (4,0)
{\scalebox{0.8}{
\begin{tikzpicture}
\trapezoidsetting
\foreach \s/\t in {00/10,10/20,00/01,01/11,11/20, 01/10, 01/20} {
\draw [line width=\edgewidth] (V\s)--(V\t) ;}; 
\foreach \n in {00,10,20,01,11}{
\draw [fill=black] (V\n) circle [radius=\noderad] ; };
\node at (0.5,-0.3) {\Large$x_2$}; \node at (1.5,-0.3) {\Large$x_1$}; \node at (0.5,1.3) {\Large$x_3$}; 
\end{tikzpicture}
}};

\node at (8,0)
{\scalebox{0.8}{
\begin{tikzpicture}
\trapezoidsetting
\foreach \s/\t in {10/20,10/01, 01/02,02/11,11/20, 01/20, 01/11} {
\draw [line width=\edgewidth] (V\s)--(V\t) ;}; 
\foreach \n in {10,20,01,02,11}{
\draw [fill=black] (V\n) circle [radius=\noderad] ; };
\node at (1.8,0.7) {\Large$y_2$}; \node at (0.8,1.7) {\Large$y_1$}; 
\node at (0.2,0.3) {\Large$y_3$}; 
\end{tikzpicture}
}};
\end{tikzpicture}
}}
\end{center}
\caption{A triangulation of $\Delta$, the induced triangulations of $\Delta^x$ and $\Delta^y$, and an example of the assignment of the paths $x_1,x_2,x_3,y_2,y_1,y_3$.}
\label{fig_triangulation_assign_cD4}
\end{figure}

\medskip

We are now ready to state the theorem for type $cD_4$. 

\begin{theorem}
\label{thm_main_cD4_1}
Let $C$ be a chamber in $\Theta(Q_\Gamma)_\RR$ and $\Delta_C$ be the triangulation of $\Delta$ giving rise to 
the projective crepant resolution $\calM_C$. Let $\ell$ be an exceptional curve in $\calM_C$. 
We suppose that $\Delta_{C,1}$, $\Delta_{C,2}$ are elementary triangles in $\Delta_C$ such that $\Delta_{C,1}\cap \Delta_{C,2}$ is the line segment corresponding to $\ell$, 
in which case both $\Delta_{C,1}$ and $\Delta_{C,2}$ are appeared in two of triangulations of the trapezoids $\Delta^x$, $\Delta^y$, $\Delta^z$. 
Let $(u_1, \dots, u_6)$ be the sequence defined as above and suppose that $u_k$, $u_{k+1}$ respectively correspond to $\Delta_{C,1}$, $\Delta_{C,2}$. 
Note that $k$ is any of the index in $\{1,2,4,5\}$. Then we have the following. 
\begin{itemize}
\setlength{\parskip}{0pt} 
\setlength{\itemsep}{3pt}
\item[(1)] The equation \eqref{eq_wall_equation} derived from $\ell$ takes the form as 
$\sum_{v\in \calR}\theta_v=0$ where $\calR=\calR(u_k, u_{k+1})$, and 
\[
W\coloneqq \Big\{\theta\in\Theta(Q_\Gamma)_\RR \mid \sum_{v\in \calR}\theta_v=0 \Big\}
\]
is certainly a wall of $C$. 
\item[(2)] The wall $W$ is of type I $($resp. type {\typeIII}$)$ if and only if $[u_k]=-[u_{k+1}]$ $($resp. $[u_k]=[u_{k+1}]$$)$. 
\item[(3)] Any parameter $\theta\in C$ satisfies $\sum_{v\in \calR}\theta_v>0$ $($resp. $\sum_{v\in \calR}\theta_v<0$$)$ 
if $u_k< u_{k+1}$ $($resp. $u_{k+1}< u_k$$)$. 
\end{itemize}
\end{theorem}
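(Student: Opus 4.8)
The plan is to reduce Theorem~\ref{thm_main_cD4_1} to the machinery already developed for type $cA_{a+b-1}$ in Theorem~\ref{thm_main_wall}, exploiting the fact that the line segment $\ell$ together with the two elementary triangles $\Delta_{C,1}, \Delta_{C,2}$ meeting along it is contained in (at least) two of the three trapezoids $\Delta^x, \Delta^y, \Delta^z$. Concretely, once $\ell$ is fixed, I would choose one trapezoid, say $\Delta^x$, containing both $\Delta_{C,1}$ and $\Delta_{C,2}$; then the local picture of $\Sigma_C$ around the two-dimensional cone $\tau$ corresponding to $\ell$ is one of the two configurations in Figure~\ref{fig_triangulation_notation}, with the four relevant rays $\rho_0,\rho_1,\rho_2,\rho_3$ corresponding to lattice points of $\Delta^x$, and the $\theta$-stable perfect matchings $\sfP_0,\dots,\sfP_3$ described by \eqref{eq_bPM_cD4} in terms of the zigzag paths $x_1, x_2$ and the pseudo-zigzag path $x_3 = x^\theta$. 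The key observation making this work is that $x^\theta = \sfP^\theta_{(1,1)}\ominus\sfP^\theta_{(0,1)}$ plays exactly the role of a zigzag path: even though it is not literally a zigzag path, it divides $\TT$ into the two regions $\calR^{\pm}(x_i, x^\theta)$, its ``zig''/``zag'' edges are single edges, and the analogues of Lemmas~\ref{lem_property_c}, \ref{lem_property_interior}, \ref{lem_zigzag_containPM} hold because the relevant intersections $\sfP_1\cap\sfP_2$ of boundary perfect matchings corresponding to lattice points on the two parallel sides of $\Delta^x$ are again empty (by the same argument as in the proof of Lemma~\ref{lem_property_c}: a common edge would force it to lie in all corner perfect matchings $\sfP_{(0,0)}, \sfP_{(2,0)}, \sfP_{(0,2)}$, contradicting Proposition~\ref{zigzag_sidepolygon}).

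The main steps, in order, would be: (i) set up the local cone picture and identify $\sfP_0,\sfP_1,\sfP_2,\sfP_3$ via \eqref{eq_bPM_cD4} and Proposition~\ref{prop_description_boundaryPM} applied to the trapezoid $\Delta^x$ (or $\Delta^y$, $\Delta^z$); (ii) verify the three auxiliary lemmas (empty intersection $\sfP_1\cap\sfP_2$; interior edges of $\Hex(\sigma_\pm)$ avoid $\sfP_0\cup\sfP_1\cup\sfP_2\cup\sfP_3$; the cut edges $\fkc_\pm$ lie in $\Zig$ or $\Zag$ of the appropriate $u_k, u_{k+1}$) in this slightly more general setting; (iii) run the fundamental-hexagon / jigsaw-piece computation of $\deg(\calL_v|_\ell)$ verbatim as in the proof of Theorem~\ref{thm_main_wall}, splitting into the four cases according to whether $[u_k]=-[u_{k+1}]$ or $[u_k]=[u_{k+1}]$ and whether $u_k<u_{k+1}$ or $u_k>u_{k+1}$, yielding $\deg(\calL_v|_\ell)=\pm1$ for $v\in\calR(u_k,u_{k+1})$ and $0$ otherwise; (iv) conclude that the equation \eqref{eq_wall_equation} becomes $\sum_{v\in\calR}\theta_v=0$, giving (2) immediately from the slope dichotomy and (3) from the sign bookkeeping; and (v) prove that this equation genuinely cuts out a wall of $C$ by producing, as in Theorem~\ref{thm_main_wall}, an explicit subrepresentation $N$ of the $\theta$-stable representation $M_{\sigma_+}$ supported on $\calR_k$ (respectively its complement) with $\theta(N)>0$, and checking that the inequality $\sum_{v\in\calR}\theta_v>0$ is not implied by the other wall inequalities of $C$.

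The last point (v) is where I expect the genuine obstacle to lie, and where type $cD_4$ differs most from $cA_{a+b-1}$. In the $cA$ case the chambers were literally the Weyl chambers of $A_{n-1}$ and the indexing by permutations made the ``no redundancy'' argument a clean combinatorial statement about adjacent transpositions. Here there are only four triangulations and twenty-four chambers (the structure studied in \cite{BCS_GIT, craw_thesis, MT_stability, BM_crepant, Wem_MMP}), and the sequence $(u_1,\dots,u_6)$ is assembled from \emph{two} of the three trapezoid-triangulations, with the choice of which two being non-canonical for the central triangulation in Figure~\ref{fig_flop_cD4}. I would need to check that the wall equation obtained does not depend on this choice — that is, that the trapezoids $\Delta^x$ and $\Delta^y$ give the \emph{same} region $\calR$ when $\ell$ lies in both — and that the resulting six hyperplanes (for the three type-I walls and three type-$\typeIII$ walls adjacent to a given chamber) are pairwise distinct and cut out a genuine simplicial cone. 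This amounts to a finite but slightly delicate case analysis over the four triangulations and their flop/divisorial-contraction adjacencies; I would organize it by first handling a single explicit chamber (e.g.\ $C_+$, where $\calM_{C_+}\cong G\text{-Hilb}\,\CC^3$) completely, then using the $\fkS_3$-symmetry permuting $x,y,z$ and the flop moves of Figure~\ref{fig_flop_cD4} to propagate the conclusion to all chambers.
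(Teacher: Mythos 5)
Your overall strategy — treat $x_3,y_3,z_3$ as substitute zigzag paths, localize to a trapezoid, and rerun the fundamental-hexagon/jigsaw computation of Theorem~\ref{thm_main_wall} — is exactly the route the paper takes. But your step (ii) rests on a claim that is false in the $cD_4$ setting, and it is precisely the point where the paper's proof has to deviate from the $cA$ case. You assert that $\sfP_1\cap\sfP_2=\varnothing$ for the perfect matchings attached to the two rays of $\tau$, arguing that a common edge would lie in all three corner perfect matchings and contradict Proposition~\ref{zigzag_sidepolygon}. This does not work: take, say, $\sfP_1=\sfP^\theta_{(0,1)}=\Zag(z_{k_1})\cup\Zig(z_{k_2})$ and $\sfP_2=\sfP^\theta_{(1,1)}=\Zag(y_{j_1})\cup\Zig(y_{j_2})$. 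An edge of $\Gamma$ lies on exactly two of the six zigzag paths, and an edge lying on one $y$-path and one $z$-path (hence on no $x$-path) can perfectly well belong to both $\sfP_1$ and $\sfP_2$; it then lies in exactly two of the three corner perfect matchings, so no contradiction arises. Consequently the analogues of Lemmas~\ref{lem_property_c} and \ref{lem_property_interior} genuinely fail here: $\Hex(\sigma_\pm)$ can contain an interior edge belonging to $\sfP_0\cap\sfP_1\cap\sfP_2$, and $\fkc_\pm$ need not be a single edge. If you run the degree computation assuming these lemmas, the bookkeeping of which weak paths pick up a factor of $t_{\rho_0}$ or $t_{\rho_3}$ breaks down.

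The repair the paper uses is different from, and cannot be replaced by, your emptiness claim: one observes that any such troublesome interior edge $e\in\sfP_0\cap\sfP_1\cap\sfP_2$ is dual to an arrow lying in the cosupport of the $\theta$-stable representation $M_{\sigma_+}$, so the arrow is absent from $\widetilde{Q^{\sigma_+}}$ and never appears in the weak paths $\pth_v^\pm$ used to compute $\deg(\calL_v|_\ell)$; and although $\fkc_\pm$ may consist of several edges, the ones that are supported are still zigs or zags of the relevant paths in $\{u_1,\dots,u_6\}$, which is all that the case analysis requires. Your point (v) about independence of the choice of trapezoid when $\ell$ lies in two of them is a legitimate thing to check (the paper verifies it only on an example), but your count of twenty-four chambers is also off — the correct count is $32$, as follows from the paper's final corollary ($8$ GIT regions, each with $4$ chambers).
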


\begin{proof}
The proof is similar to Theorem~\ref{thm_main_wall}. We note some differences from the proof of Theorem~\ref{thm_main_wall}. 
\begin{itemize}
\setlength{\parskip}{0pt} 
\setlength{\itemsep}{3pt}
\item We sometimes use the paths $x_3, y_3, z_3$ as substitutes for zigzag paths. 
\item $\Hex(\sigma_+)$ (resp. $\Hex(\sigma_-)$) might contain an edge $e$ such that $e\in\sfP_0\cap\sfP_1\cap\sfP_2$ (resp. $e\in\sfP_1\cap\sfP_2\cap\sfP_3$) in its strict interior. 
Thus, Lemma~\ref{lem_property_interior} is false in this situation. 
Nevertheless, the arrow dual to such an edge $e$ is not supported by $\theta$-stable representations corresponding to $\sigma_+$ (resp. $\sigma_-$), 
which means the quiver $\widetilde{Q^{\sigma_+}}$ (resp. $\widetilde{Q^{\sigma_-}}$) does not contain the arrow dual to $e$. 
Thus, we do not need to take care of $e$ when we compute $\deg(\calL_v |_\ell)$. 
\item When an edge $e$ as above exists, $\fkc_-$ (resp. $\fkc_+$) is not a single edge. 
Nevertheless, the edges contained in $\fkc_-$ (resp. $\fkc_+$) and supported by $\theta$-stable representations corresponding to 
$\sigma_+$ (resp. $\sigma_-$) are all zigs or zags of some path in $\{u_1, \dots, u_6\}$. We substitute this fact for Lemma~\ref{lem_zigzag_containPM}. 
Note that although $x_3, y_3, z_3$ are not zigzag paths, we use the same terminologies ``zig" and ``zag" for an edge directed from white to black and from black to white. 
\end{itemize}
\end{proof}

\begin{example}
Let $C$ be a chamber in $\Theta(Q_\Gamma)_\RR$. 
Suppose that the triangulation $\Delta_C$ takes the form as in the left of Figure~\ref{fig_triangulation_assign_cD4}, 
in which the zigzag paths $x_1, x_2, y_1, y_2, z_1, z_2$ are assigned to primitive side segments. 
Then, for any $\theta\in C$, non-corner boundary $\theta$-stable perfect matchings take the forms as 
\[
\sfP_{(1,0)}^\theta=\Zag(x_2)\cup\Zig(x_1), \quad \sfP_{(1,1)}^\theta=\Zag(y_2)\cup\Zig(y_1), \quad
\sfP_{(0,1)}^\theta=\Zag(z_2)\cup\Zig(z_1). 
\]
From these perfect matchings, we have the paths $x_3=x^\theta$, $y_3=y^\theta$, and $z_3=z^\theta$ as in Figure~\ref{fig_paths_PM}. 
Also, we see that $x_2<x_3<x_1$, $y_2<y_3<y_1$ and $z_2<z_3<z_1$. 
\begin{figure}[H]
\begin{center}
{\scalebox{0.65}{
\begin{tikzpicture}
\newcommand{\edgewidth}{0.035cm} 
\newcommand{\boundaryrad}{1cm} 
\newcommand{\noderad}{0.1cm} 
\newcommand{\nodewidthw}{0.035cm} 
\newcommand{\nodewidthb}{0.025cm} 
\newcommand{\zigzagwidth}{0.1cm} 

\node at (0,-4.5) {\LARGE $x_3=\sfP^\theta_{(1,1)}\ominus\sfP^\theta_{(0,1)}$}; 
\node at (7,-4.5) {\LARGE $y_3=\sfP^\theta_{(0,1)}\ominus\sfP^\theta_{(1,0)}$}; 
\node at (14,-4.5) {\LARGE $z_3=\sfP^\theta_{(1,0)}\ominus\sfP^\theta_{(1,1)}$}; 

\node at (0,0){
\begin{tikzpicture}
\dimerDtype
\draw[gray, line width=\edgewidth, dashed] (0,{2*cos(30)})--(0,{6*cos(30)})--(3,{4*cos(30)})--(3,0)--(0,{2*cos(30)}); 
\draw[red, line width=\zigzagwidth, ->] (Hb16)--(Hb11)--(Hb12)--(Hb13)--(Ha21)--(Ha22)--(Ha31)--(Hb22)--(Hb31)--(Hb32); 
\node[red] at (2,4.5) {\large $x_3$}; 
\end{tikzpicture}}; 

\node at (7,0){
\begin{tikzpicture}
\dimerDtype
\draw[gray, line width=\edgewidth, dashed] (0,{2*cos(30)})--(0,{6*cos(30)})--(3,{4*cos(30)})--(3,0)--(0,{2*cos(30)}); 
\draw[red, line width=\zigzagwidth, ->] (Hc42)--(Hc43)--(Hc44)--(Hc33)--(Hb22)--(Hb23)--(Ha32)--(Ha33); 
\draw[red, line width=\zigzagwidth, ->] (Hc22)--(Hc23)--(Hc24)--(Hc13)--(Hb16); 
\node[red] at (0.5,3.8) {\large $y_3$}; 
\end{tikzpicture}}; 

\node at (14,0){
\begin{tikzpicture}
\dimerDtype
\draw[gray, line width=\edgewidth, dashed] (0,{2*cos(30)})--(0,{6*cos(30)})--(3,{4*cos(30)})--(3,0)--(0,{2*cos(30)}); 
\draw[red, line width=\zigzagwidth, ->] (Ha33)--(Ha32)--(Ha31)--(Ha22)--(Ha21)--(Hb12)--(Hb21)--(Hc22) ; 
\node[red] at (2.7,2.3) {\large $z_3$}; 
\end{tikzpicture}}; 
\end{tikzpicture}}}
\end{center}
\caption{}
\label{fig_paths_PM}
\end{figure}

\noindent 
On the other hand, the triangulation $\Delta_C$ induces the triangulations of $\Delta^x$ and $\Delta^y$. 
The assignments of paths as in the center and the right of Figure~\ref{fig_triangulation_assign_cD4} induce 
the sequence $(u_1, \dots, u_6)=(x_2, x_1, x_3, y_3, y_2, y_1)$. 

We first pay attention to the parallelogram whose vertices are $(1,0)$, $(2,0)$, $(1,1)$ and $(0,1)$. 
We consider the elementary triangles $\Delta_{C,1}$ and $\Delta_{C,2}$ in the triangulation $\Delta_C$ which form this parallelogram, 
in which $\Delta_{C,1}\cap \Delta_{C,2}$ is the diagonal connecting $(2,0)$ and $(0,1)$. 
In this situation, $u_2=x_1$, $u_3=x_3$ respectively correspond to $\Delta_{C,1}$, $\Delta_{C,2}$. 
Applying Theorem~\ref{thm_main_cD4_1}, we have that $\sum_{v\in\calR(u_2,u_3)}\theta_v=\theta_3=0$ is a wall of $C$, and it is of type I. 
Moreover, any $\theta\in C$ satisfies $\theta_3<0$ since $u_2>u_3$. 
Note that $u_4=y_3$, $u_5=y_2$ also respectively correspond to $\Delta_{C,1}$, $\Delta_{C,2}$, 
and even if we use these paths we have the same conclusion. 
We then pay attention to the triangle whose vertices are $(0,0)$, $(2,0)$, and $(0,1)$. 
We consider the elementary triangles $\Delta_{C,1}^\prime$ and $\Delta_{C,2}^\prime=\Delta_{C,1}$ in the triangulation $\Delta_C$ 
which form the above triangle, in which $\Delta_{C,1}^\prime\cap \Delta_{C,2}^\prime$ is the line segment connecting $(1,0)$ and $(0,1)$. 
In this situation, $u_1=x_2$, $u_2=x_1$ respectively correspond to $\Delta_{C,1}^\prime$, $\Delta_{C,2}^\prime$. 
Applying Theorem~\ref{thm_main_cD4_1}, we have that $\sum_{v\in\calR(u_1,u_2)}\theta_v=\theta_2+\theta_3=0$ is a wall of $C$, and it is of type $\typeIII$. 
Moreover, any $\theta\in C$ satisfies $\theta_2+\theta_3>0$ since $u_1<u_2$. 
Similarly, considering the triangle whose vertices are $(2,0)$, $(0,2)$, and $(0,1)$, 
we have the wall $\theta_1+\theta_3=0$ of type $\typeIII$, and any $\theta\in C$ satisfies $\theta_1+\theta_3>0$. 
In summary, we see that 
\[
C=\{\theta_3<0, \quad \theta_2+\theta_3>0, \quad \theta_1+\theta_3>0\}. 
\]
\end{example}

\begin{corollary}
The number of GIT regions in $\Theta(Q_\Gamma)_\RR$ is $8$, and each GIT region contains four chambers which correspond to projective crepant resolutions as in {\rm Figure~\ref{fig_flop_cD4}}. Thus, the number of chambers in $\Theta(Q_\Gamma)_\RR$ is $32$. 
\end{corollary}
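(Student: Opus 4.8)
The plan is to count GIT regions and chambers by combining the structural results already established for the $cD_4$ case with the flop graph of Figure~\ref{fig_flop_cD4}. First I would observe that, by Theorem~\ref{thm_main_cD4_1}(2) together with the description of the walls derived there, every wall of every chamber in $\Theta(Q_\Gamma)_\RR$ is either of type I or of type $\typeIII$; the type-I walls are precisely those across which the triangulation of $\Delta$ changes by a flip of a diagonal, hence correspond to edges of the flop graph in Figure~\ref{fig_flop_cD4}, while the type-$\typeIII$ walls preserve $\calM_C$ up to isomorphism. Deleting the type-I walls, each GIT region $G$ is a maximal union of chambers all giving the same projective crepant resolution; but by Theorem~\ref{thm_crepant_dimer} every projective crepant resolution of $\Spec R$ arises as some $\calM_C$, and all four projective crepant resolutions are connected by flops, so — exactly as in the proof of Proposition~\ref{prop_numberx_gallery} for the $cA_{a+b-1}$ case — a single GIT region $G$ already realizes all four resolutions of Figure~\ref{fig_flop_cD4}. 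Thus each GIT region contains exactly four chambers.

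Next I would count the total number of chambers directly. The argument in Theorem~\ref{thm_main_cD4_1} assigns to each chamber $C$ a sequence $(u_1,\dots,u_6)$ built from the triangulations of two of the three trapezoids $\Delta^x,\Delta^y,\Delta^z$; more usefully, the triangulation $\Delta_C$ together with the total orders fixed on $\{x_1,x_2,x^\theta\}$, $\{y_1,y_2,y^\theta\}$, $\{z_1,z_2,z^\theta\}$ determines, via the assignment of zigzag paths $x_1,x_2,y_1,y_2,z_1,z_2$ to the six primitive side segments of $\Delta$ (Figure~\ref{fig_zigzag_correspondnece_cD4}), a full ordering datum on each trapezoid. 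Concretely: there are $4$ triangulations of $\Delta$ (Figure~\ref{fig_flop_cD4}), and for a fixed triangulation the $\theta$-stable boundary perfect matchings $\sfP^\theta_{(1,0)},\sfP^\theta_{(1,1)},\sfP^\theta_{(0,1)}$ are governed by the orderings $(x_{i_1},x_{i_2})$, $(y_{j_1},y_{j_2})$, $(z_{k_1},z_{k_2})$ with $\{i_1,i_2\}=\{j_1,j_2\}=\{k_1,k_2\}=\{1,2\}$, i.e.\ by a choice of one of $2$ orders on each of the three pairs; by Theorem~\ref{thm_main_cD4_1}(3) distinct such orderings give distinct sign/inequality data and hence distinct chambers, and every chamber arises this way. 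That gives $4\cdot 2\cdot 2\cdot 2=32$ chambers, and therefore $32/4=8$ GIT regions.

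Finally I would package the three pieces: the number of chambers is $32$ (from the $4$ triangulations times $2^3$ orderings of the boundary-segment zigzag pairs, using Theorem~\ref{thm_main_cD4_1}(3) for the injectivity and the existence statement of Proposition~\ref{prop_description_boundaryPM} for surjectivity onto the list of orderings), each GIT region contains exactly $4$ chambers realizing the four resolutions of Figure~\ref{fig_flop_cD4} (same reasoning as Proposition~\ref{prop_numberx_gallery}), and hence the number of GIT regions is $8$. The main obstacle I anticipate is verifying rigorously that the $4\times 2^3$ combinatorial data all occur as genuine chambers and are pairwise distinct — i.e.\ ruling out coincidences or non-realizable configurations among the $32$ candidate sign vectors. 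This requires checking that for each triangulation the type-$\typeIII$ walls determined in Theorem~\ref{thm_main_cD4_1} actually separate the eight orderings into eight nonempty open cones (rather than some inequality system being infeasible or two orderings landing in the same cone), which one can do by exhibiting explicit parameters as in the worked example above, or by invoking the known description of $\Theta(Q_\Gamma)_\RR$ recorded in the references cited before Theorem~\ref{thm_main_cD4_1} (e.g.\ \cite{BCS_GIT,craw_thesis,MT_stability,BM_crepant}) as a cross-check.
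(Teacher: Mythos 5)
Your proposal is correct and follows essentially the same route as the paper: both arguments use the flop graph of Figure~\ref{fig_flop_cD4} together with the type-I walls from Theorem~\ref{thm_main_cD4_1} to show each GIT region contains exactly four chambers, and both identify the remaining degree of freedom with the $2^3=8$ orderings of the pairs $\{x_1,x_2\}$, $\{y_1,y_2\}$, $\{z_1,z_2\}$ governing the stable boundary perfect matchings (the paper counts the $8$ GIT regions directly via these sequences, you count $4\times 8=32$ chambers and divide by $4$ — the same computation rearranged). Your explicit flagging of the need to check that all $32$ sign configurations are realized and pairwise distinct is a fair point that the paper's own proof also leaves implicit.
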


\begin{proof}
We first choose a chamber $C$, in which $\calM_C$ corresponds to one of the triangulations in Figure~\ref{fig_flop_cD4}. 
Then we have the sequence $(x_{i_1}, x_{i_2}, y_{j_1}, y_{j_2}, z_{k_1}, z_{k_2})$ giving rise to $\theta$-stable perfect matchings as in \eqref{eq_bPM_cD4} for any $\theta\in C$. 
Let $G$ be a GIT region of $\Theta(Q_\Gamma)_\RR$ containing $C$. 
By Theorems~\ref{thm_main_cD4_1}, there exists a type I wall corresponding to any floppable curve in $\calM_C$. 
Since all projective crepant resolutions as in Figure~\ref{fig_flop_cD4} are connected by repetitions of flops 
and all chambers in $G$ are connected by crossings of walls of type I, these crepant resolutions can be obtained from chambers in $G$. 
Thus, any GIT region contains four chambers. 
Since a flop preserves any toric divisor, $\theta$-stable perfect matchings are the same for any chamber in the same GIT region. 
Thus, the same sequence of $\{x_1, x_2, y_1, y_2, z_1, z_2\}$ is assigned to any chamber in the same GIT region. 
Since there are eight choices of such sequences, we have the assertion. 
\end{proof}

\medskip
\subsection*{Acknowledgements} 
The author would like to thank Alastair Craw, Wahei Hara, and Michael Wemyss for their valuable comments. 
The author was supported by JSPS Grant-in-Aid for Early-Career Scientists JP20K14279, 
and is supported by JSPS Grant-in-Aid for Scientific Research (C) JP24K06698. 


\bibliographystyle{alpha}

\newcommand{\etalchar}[1]{$^{#1}$}

\end{document}